\theoremstyle{definition}
\newtheorem{Defn}{Definition}[section]
\theoremstyle{plain}
\newtheorem{Lemma}[Defn]{Lemma}
\newtheorem{prop}[Defn]{Proposition}
\newtheorem{theorem}[Defn]{Theorem}
\newtheorem{Corollary}[Defn]{Corollary}
\theoremstyle{remark}
\newtheorem{remark}[Defn]{Remark}
\newtheorem{Example}[Defn]{Example}
\title{A pair of homotopy-theoretic version of TQFT's induced by a Brown functor}
\author{Minkyu Kim}
\date{}
\begin{document}

\maketitle
\begin{abstract}
The purpose of this paper is to study some obstruction classes induced by a construction of a homotopy-theoretic version of projective TQFT (projective HTQFT for short).
A projective HTQFT is given by a symmetric monoidal projective functor whose domain is the cospan category of pointed finite CW-spaces instead of a cobordism category.
We construct a pair of projective HTQFT's starting from a $\mathsf{Hopf}^\mathsf{bc}_k$-valued Brown functor where $\mathsf{Hopf}^\mathsf{bc}_k$ is the category of bicommutative Hopf algebras over a field $k$ : the cospanical path-integral and the spanical path-integral of the Brown functor.
They induce obstruction classes by an analogue of the second cohomology class associated with projective representations.
In this paper, we derive some formulae of those obstruction classes.
We apply the formulae to prove that the dimension reduction of the cospanical and spanical path-integrals are lifted to HTQFT's.
In another application, we reproduce the Dijkgraaf-Witten TQFT and the Turaev-Viro TQFT from an ordinary $\mathsf{Hopf}^\mathsf{bc}_k$-valued homology theory.
\end{abstract}
\tableofcontents

%%%%%%%%%%%%%%%%%%%%%%%%%%%%%%%%

\section{Introduction}

The purpose of this paper is to study some obstruction classes induced by a construction of a homotopy-theoretic version of projective TQFT (a projective HTQFT for short).
A projective HTQFT's is given by a symmetric monoidal projective functor whose domain is the cospan category $\mathsf{Cosp}^\simeq_{\leq \infty} ( \mathsf{CW}^\mathsf{fin}_\ast )$ of pointed finite CW-spaces \cite{kim2020extension} instead of a cobordism category.
We construct a pair of projective HTQFT's starting from a $\mathsf{Hopf}^\mathsf{bc,vol}_k$-valued Brown functor $E$, which we call by {\it the cospanical path-integral} $\hat{\mathrm{PI}} (E)$ and {\it the spanical path-integral} $\check{\mathrm{PI}} (E)$.
Here, a $\mathsf{Hopf}^\mathsf{bc,vol}_k$-valued Brown functor is an assignment $E(K)$ of a finite-dimensional bisemisimple bicommutative Hopf algebra to a pointed finite CW-space $K$ satisfying Mayer-Vietoris axiom in an appropriate sense.
\begin{equation}\notag
\begin{tikzcd}
\{ \mathsf{Hopf}^\mathsf{bc,vol}_k\mathrm{-valued~}\mathrm{Brown~functors} \} \ar[r, shift left, "\hat{\mathrm{PI}}"] \ar[r, shift right, "\check{\mathrm{PI}}"']  & \{ \mathsf{C}_k\mathrm{-valued~}\mathrm{proj.~HTQFT's} \} .
\end{tikzcd}
\end{equation}
The codomain of those projective HTQFT's is a symmetric monoidal category $\mathsf{C}_k$ which is proved to be a pivotal category.
The category $\mathsf{C}_k$ contains $\mathsf{Hopf}^\mathsf{bc,vol}_k$, i.e. the category of finite-dimensional bisemisimple bicommutative Hopf algebras and Hopf homomorphisms, as its subcategory.

The (finite) path-integral \cite{DW} \cite{Wakui} \cite{FQ} \cite{SV} \cite{heuts2014ambidexterity} \cite{freed2009topological} \cite{monnier2015higher} and state-sum \cite{TV} \cite{BW} \cite{kirillov2010turaev} are formulated in various ways although they stem from the same idea.
Our construction is deeply motivated by the path-integral and state-sum, but a new technique is necessary since we do not use any triangulations or cellular structures on spaces ; moreover, we deal with Hopf algebras which might not be group Hopf algebras or function Hopf algebras over an arbitrary field.
We give a new approach by using a notion of {\it integrals along Hopf homomorphisms} \cite{kim2021integrals}.

The cospanical and spanical path-integrals are projective in the sense that it preserves compositions up to a scalar in $k^\ast = k \backslash \{ 0 \}$ whereas the induced mapping class group representation for each space is not projective (see Remark \ref{202102161038}).
By an analogue of the second cohomology class associated with projective representations, we obtain a pair of obstruction classes $\hat{\mathds{O}} (E)$ and $\check{\mathds{O}} (E)$ of the cospan category of spaces with coefficients in the multiplicative group $k^\ast$.
\begin{align}\notag
\hat{\mathds{O}} (E),~\check{\mathds{O}} (E) \in H^2 ( \mathsf{Cosp}^\simeq_{\leq \infty} ( \mathsf{CW}^\mathsf{fin}_\ast ) ; k^\ast ) 
\end{align}
In general, it is not obvious whether the obstruction classes vanish or not.
In this paper, we derive some formulae of the obstruction classes.
The first formula below follows from Corollary \ref{202006041143}.

\vspace{3mm}
{\bf Theorem.}
Let $E$ be a $\mathsf{Hopf}^\mathsf{bc,vol}_k$-valued Brown functor.
Then we have
\begin{align}\notag
\hat{\mathds{O}} ( \Sigma^\ast E) = \check{\mathds{O}} (E) 
\end{align}

where $\Sigma$ is the suspension functor.
\vspace{3mm}

A Brown functor of our main interest is a properly extensible $\mathsf{Hopf}^\mathsf{bc,vol}_k$-valued Brown functor which is introduced in this paper (see Definition \ref{202102171348}).
A $\mathsf{Hopf}^\mathsf{bc}_k$-valued homology theory \cite{kim2020homology} consists of a sequence of $\mathsf{Hopf}^\mathsf{bc}_k$-valued Brown functors.
It induces a (possibly, empty) family of $\mathsf{Hopf}^\mathsf{bc,vol}_k$-valued properly extensible Brown functors.
Some typical examples of a $\mathsf{Hopf}^\mathsf{bc}_k$-valued homology theory are obtained from generalized (co)homology theories such as ordinary (co)homology theories, MO-theory, stable (co)homotopy theory etc., via the group Hopf algebra functor or the function Hopf algebra functor ; or via the exponential functor \cite{touze} if the generalized (co)homology theory is valued at the category of vector spaces.
On the one hand, one could obtain a properly extensible $\mathsf{Hopf}^\mathsf{bc,vol}_k$-valued Brown functor from the loop space of a homotopy commutative H-group subject to a finite condition on its homotopy groups.
For the details, see subsection \ref{202102141031}.
For a properly extensible Brown functor, we have the following formula which follows from Theorem \ref{202001271630}.

\vspace{3mm}
{\bf Theorem.}
If a $\mathsf{Hopf}^\mathsf{bc,vol}_k$-valued Brown functor $E$ is properly extensible, then we have
\begin{align}\notag
\hat{\mathds{O}} (E) = \check{\mathds{O}} (E)^{-1} .
\end{align}
\vspace{0.1mm}

Note that the main theorems above are refined by considering {\it $d$-dimensional Brown functors} for $d \in \mathbb{N} \amalg \{ \infty \}$.

In application of the main theorems, we show that the tensor product of the cospanical and spanical path-integrals of $E$, $\hat{\mathrm{PI}} (E) \otimes \check{\mathrm{PI}} (E)$, is lifted to a HTQFT $Z$ whose associated homotopy invariant $Z(L)$ is the dimension of the Hopf algebra $E(L)$ (see Corollary \ref{202101301508}).

In the literature of topological field theory, the cartesian product of manifolds with a circle $\mathbb{T}$ induces the dimension reduction of topological field theories (for example see \cite{freed2009remarks}).
We deal with its pointed version by considering the smash product with $\mathbb{T}^+ = \mathbb{T} \amalg \{ \mathrm{pt} \}$.
We give an interesting phenomenon related with the dimension reduction.
By applying the main theorems, we show that the dimension reduction of the cospanical and spanical path-integrals are lifted to a HTQFT.

In the last application, we show that the obstruction classes associated with a bounded-below (or -above) homology theory vanish.
Furthermore, we construct a canonical solution of the associated coboundary equation.
By {\it the first ordinary homology theory} with coefficients in an appropriate bicommutative Hopf algebra, these results reproduce the untwisted Dijkgraaf-Witten-Freed-Quinn TQFT \cite{DW} \cite{Wakui} \cite{FQ} \cite{SV} \cite{heuts2014ambidexterity} \cite{freed2009topological} \cite{monnier2015higher} of abelian groups and Turaev-Viro-Barrett-Westbury TQFT \cite{TV} \cite{BW} \cite{kirillov2010turaev} of bicommutative Hopf algebras.

The results in this paper hold for cohomology theories in a dual way.

The vector spaces assigned to surfaces by TVBW theory are naturally isomorphic to the 0-eigenspaces (physically, the ground-state spaces) in the Kitaev lattice Hamiltonian model (a.k.a. {\it toric code}) \cite{BK}\cite{Kit}\cite{BMCA}.
In \cite{kim2019bicommutative}, we give a generalization of the Kitaev lattice Hamiltonian model based on chain complex theory of bicommutative Hopf algebras.
In that paper, we formulate {\it topological local stabilizer models} and prove that the assignment of the associated 0-eigenspaces extends to a $\mathsf{Hopf}^\mathsf{bc,vol}_k$-valued Brown functor.
As a consequence of this paper, the eigenspaces extend to a projective HTQFT which is lifted to a HTQFT under some conditions.
In other words, the relationship between TVBW theory and Kitaev lattice Hamiltonian model is generalized to an arbitrary ground field $k$ and pointed finite CW-complexes.

This paper is organized as follows.
In subsection \ref{202002070916}, we give a review of our another previous study about integrals along Hopf homomorphisms \cite{kim2021integrals} .
In subsection \ref{202102171833}, we introduce a notion of integrals along (co)spans in $\mathsf{Hopf}^\mathsf{bc,vol}_k$.
In section \ref{202002221748}, we introduce the symmetric monoidal category $\mathsf{C}_k$ and prove that $\mathsf{C}_k$ is a pivotal category.
In subsection \ref{202102141031}, we give an explanation of $\mathcal{A}$-valued Brown functors with some examples.
In subsection \ref{202102021122}, we give an overview of cospanical and spanical extensions \cite{kim2020extension}.
In subsection \ref{202102171836}, we define the notion of homotopy-theoretic version of TQFT's.
In subsection \ref{202102131116}, we construct the cospanical and spanical path-integrals of Brown functors.
In section \ref{202002221749}, we study the obstruction classes associated with the integral projective functors.
In subsection \ref{202002230946}, we give basic properties of the obstruction classes associated with the path-integrals of Brown functors.
In subsection \ref{202102150945}, we derive some formulae for properly extensible Brown functors which we call the inversion formulae.
In subsection \ref{202002211404}, we prove that the obstruction classes associated with the dimension reduction of the path-integrals vanish.
In subsection \ref{202001210015}, we prove that the obstruction classes induced by a bounded-below homology theory vanish.
Moreover we reproduce the abelian DW TQFT and TV TQFT from the results.
In appendix \ref{202002070917}, we give an overview of symmetric monoidal projective functors and the obstruction class.

\section*{Acknowledgements}
The author was supported by FMSP, a JSPS Program for Leading Graduate Schools in the University of Tokyo, and JPSJ Grant-in-Aid for Scientific Research on Innovative Areas Grant Number JP17H06461.

%%%%%%%%%%%%%%%%%%%%%%%%%%%%%%%%
\section{Integrals associated with Hopf homomorphisms}

In this section, we introduce an integral along a cospan of bicommutative Hopf algebras with a finite volume based on the results in \cite{kim2021integrals}.
It is necessary to formulate the path-integral of Brown functors in subsection \ref{202102131116}.

\subsection{Integrals along Hopf homomorphisms}
\label{202002070916}

In this subsection, we give an overview of the results in \cite{kim2021integrals} where the results are given based on a symmetric monoidal category $\mathcal{C}$ satisfying some assumptions.
Such assumptions automatically hold for $\mathcal{C} = ( \mathsf{Vec}_k , \otimes_k )$, the tensor category of vector spaces over a field $k$.
Here, we give those results in this specific case.
For a Hopf homomorphism $\xi :A \to B$, {\it a normalized generator integral along $\xi$} is a linear homomorphism $\mu : B \to A$ satisfying some axioms (see Definition 3.4, 3.11 in \cite{kim2021integrals}).

\begin{Example}
Let $A$ be a Hopf algebra with a normalized integral $\sigma_A \in A$, i.e. $\sigma_A \cdot a = a \cdot \sigma_ A = \epsilon_A ( a ) \cdot \sigma_ A$ and $\epsilon_A ( \sigma_A )= 1$.
If we consider $\sigma_A$ as a linear homomorphism from $k$ to $A$, then $\sigma_A$ is a normalized generator integral along the counit $\epsilon_A$.
Similarly, a normalized cointegral $\sigma^A : A \to k$ is a normalized generator integral along the unit $\eta_A : k \to A$.
\end{Example}

For a group $G$, we denote by $kG$ {\it the group Hopf} of $G$.
Its underlying vector space is generated by the elements of $G$, the multiplication is given by the group operation and the comultiplication is induced by the diagonal map on $G$.
Dually, we denote by $k^G$ {\it the function Hopf algebra} of $G$, i.e. the dual Hopf algebra of $kG$.

\begin{Example}
Let $\varrho : G \to H$ be a group homomorphism and $\xi : \varrho_\ast : kG \to k H$ be the induced Hopf homomorphism between group Hopf algebras.
Then the normalized generator integral $\mu_\xi : kH \to kG$ is given by $\mu_\xi (h) =  | \mathrm{Ker} ( \varrho ) |^{-1} \cdot \sum_{\varrho (g) = h} g$.
\end{Example}

\begin{Example}
\label{202102112335}
Let $\xi = \varrho^\ast : k^H \to k^G$ be the induced Hopf homomorphism between function Hopf algebras.
Then the normalized generator integral $\mu_\xi : k^G \to k^H$ is given by $(\mu_\xi (f))(h) = | \mathrm{Ker} ( \varrho ) |^{-1} \cdot \sum_{\varrho (g) = h} f (g)$.
In particular, if $f$ is the unit of the Hopf algebra $k^G$, then $(\mu_\xi (f))(h)$ is $1$ if $\varrho^{-1} (h) \neq \emptyset$ and $0$ otherwise.
\end{Example}

\begin{theorem}
\label{202002211508}
Let $A,B$ be bicommutative Hopf algebras and $\xi : A \to B$ be a Hopf homomorphism.
There exists a normalized generator integral $\mu_\xi$ along $\xi$ if and only if the following conditions hold :
\begin{enumerate}
\item
the kernel Hopf algebra $\mathrm{Ker_H} (\xi)$ has a normalized integral.
\item
the cokernel Hopf algebra $\mathrm{Cok_H} (\xi)$ has a normalized cointegral.
\end{enumerate}
Note that if a normalized integral along $\xi$ exists, then it is unique.
\end{theorem}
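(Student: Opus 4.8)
The plan is to reduce the statement to the two elementary cases of an epimorphism and a monomorphism, using the epi--mono factorization available in the abelian category $\mathsf{Hopf}^\mathsf{bc}_k$ (cf. \cite{kim2020homology}). Write $\xi = \iota \circ \pi$, where $\pi : A \to \mathrm{Im_H}(\xi)$ is the canonical epimorphism onto the image and $\iota : \mathrm{Im_H}(\xi) \to B$ is the canonical monomorphism; since $\iota$ is monic and $\pi$ is epic one has $\mathrm{Ker_H}(\pi) = \mathrm{Ker_H}(\xi)$ and $\mathrm{Cok_H}(\iota) = \mathrm{Cok_H}(\xi)$, so conditions (1) and (2) refer respectively to the kernel of $\pi$ and the cokernel of $\iota$. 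I would then prove that $\mu_\xi$ exists if and only if generator integrals $\mu_\pi$ and $\mu_\iota$ exist along both factors: in one direction $\mu_\pi \circ \mu_\iota : B \to \mathrm{Im_H}(\xi) \to A$ is checked against the defining axioms (Definition 3.4, 3.11 of \cite{kim2021integrals}) to be a generator integral along $\xi$; in the other direction, using a pseudo-inverse axiom of the form $\mu_\xi \circ \xi \circ \mu_\xi = \mu_\xi$, the composites $\pi \circ \mu_\xi$ and $\mu_\xi \circ \iota$ are shown to be generator integrals along $\iota$ and $\pi$. This leaves the two assertions: (a) an epimorphism admits a generator integral iff its kernel has a normalized integral, and (b) a monomorphism admits a generator integral iff its cokernel has a normalized cointegral.

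For the epimorphism case (a), consider the short exact sequence $0 \to K \xrightarrow{\kappa} A \xrightarrow{\pi} \mathrm{Im_H}(\xi) \to 0$ with $K = \mathrm{Ker_H}(\pi)$. Given a normalized integral $\sigma_K : k \to K$, I would form the ``averaging'' idempotent $P = m_A \circ ( (\kappa \sigma_K) \otimes \mathrm{id}_A ) : A \to A$. Using $\pi \circ \kappa = \eta \epsilon$ and $\epsilon_K(\sigma_K) = 1$ one gets $\pi \circ P = \pi$, and --- crucially invoking the commutativity of $A$ --- that $P$ coequalizes the two kernel-multiplication maps, so that $P$ descends along the epimorphism $\pi$ to a unique $\mu_\pi : \mathrm{Im_H}(\xi) \to A$ with $\mu_\pi \circ \pi = P$. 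The computation for $\varrho_\ast : kG \to kH$ confirms that this reproduces the fiberwise averaging $\mu_\pi(h) = |\mathrm{Ker}\varrho|^{-1}\sum_{\varrho(g)=h} g$, which guides the verification of the remaining axioms. Conversely, given $\mu_\pi$, the map $\mu_\pi \circ \eta_{\mathrm{Im_H}(\xi)}$ factors through $\kappa$ and yields a normalized integral of $K$.

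The monomorphism case (b) is Eckmann--Hilton dual to (a): I would either run the dual construction, building $\mu_\iota$ from the normalized cointegral $\sigma^Q : Q \to k$ of $Q = \mathrm{Cok_H}(\iota)$ by summing along the cokernel directions, or deduce (b) from (a) through the duality of $\mathsf{Hopf}^\mathsf{bc}_k$, under which a monomorphism dualizes to an epimorphism, a cokernel to a kernel, and a cointegral to an integral. Uniqueness then follows because a normalized integral and a normalized cointegral of a Hopf algebra are each unique when they exist --- the normalization $\epsilon_K(\sigma_K) = 1$, respectively $\sigma^Q(1) = 1$, fixing the single free scalar; since $\mu_\pi$ and $\mu_\iota$ are determined by $\sigma_K$ and $\sigma^Q$ through the constructions above and $\mu_\xi = \mu_\pi \circ \mu_\iota$, the integral $\mu_\xi$ is unique.

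I expect the main obstacle to be the compatibility bookkeeping hidden in the reduction and in the converse of (a) and (b): confirming that $\mu_\pi \circ \mu_\iota$, and dually that $\pi \circ \mu_\xi$ and $\mu_\xi \circ \iota$, satisfy every module- and comodule-compatibility axiom of a generator integral requires working at the level of the structure maps rather than with the element-level formulas that are transparent only in the group-Hopf-algebra examples. The descent of the idempotent $P$ along $\pi$, which is precisely the point where bicommutativity is indispensable, is the other delicate step.
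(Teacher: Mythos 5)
This theorem is stated in the paper only as part of a review of the companion work \cite{kim2021integrals} (the axioms of a normalized generator integral are themselves deferred to Definitions 3.4 and 3.11 there), so there is no proof in this paper to compare against; I can only assess your reconstruction on its own terms. Your strategy --- epi--mono factorization in the abelian category $\mathsf{Hopf}^\mathsf{bc}_k$, fiberwise averaging by the normalized integral of the kernel in the epimorphism case, and the coalgebraic dual construction in the monomorphism case --- is the natural route and is consistent with everything the paper does use downstream (Proposition \ref{202006030739} records exactly the section/retract behaviour of $\mu_\pi$ and $\mu_\iota$ your construction produces, and the composition rule of Proposition \ref{202005301216} gives $\mu_\pi \circ \mu_\iota = \mu_{\iota\circ\pi}$ with trivial correction factor since $\mathrm{Ker_H}(\iota)\cong k$). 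You also correctly locate the two delicate points: the descent of the idempotent $P$ along $\pi$, which rests on the fact that for bicommutative Hopf algebras the quotient $A \to A/\!/K$ coequalizes the two kernel-multiplication maps (a faithful-flatness statement, not a formality), and the axiom-by-axiom verification for the composite. Two cautions. First, for case (b) you should run the Eckmann--Hilton dual construction directly from the cointegral $\sigma^Q$ rather than appeal to a duality of $\mathsf{Hopf}^\mathsf{bc}_k$: the theorem is stated for arbitrary (not necessarily finite-dimensional) bicommutative Hopf algebras, and linear duality does not interchange monomorphisms and epimorphisms there. Second, your uniqueness argument shows that the integral you \emph{construct} is canonical, but to conclude that an arbitrary normalized generator integral along $\xi$ equals it you need the converse implications (that any such $\mu_\xi$ induces $\mu_\pi$ and $\mu_\iota$, hence the normalized (co)integrals) to be carried through, which your sketch gestures at via the pseudo-inverse identity but does not complete; that identity should be checked against, or derived from, the actual axioms in \cite{kim2021integrals} before being used.
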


\begin{Defn}
We introduce an invariant of bicommutative Hopf algebras $A$, called an inverse volume $\mathrm{vol}^{-1} (A)$.
It is defined as a composition $\sigma^A \circ \sigma_A \in k$ where $\sigma_A$ is a normalized integral and $\sigma^A$ is a normalized cointegral.
\end{Defn}

\begin{Defn}
\label{202001271558}
A bicommutative Hopf algebra $A$ {\it has a finite volume} if 
\begin{enumerate}
\item
It has a normalized integral $\sigma_A : k \to A$.
\item
It has a normalized cointegral $\sigma^A : A \to k$.
\item
Its inverse volume $\mathrm{vol}^{-1}( A) = ( \sigma^A \circ \sigma_A ) \in k$ is invertible.
\end{enumerate}
Denote by $\mathsf{Hopf}^\mathsf{bc,vol}_k$ the category of bicommutative Hopf algebras with a finite volume and Hopf homomorphisms.
\end{Defn}

\begin{remark}
See Corollary \ref{202006091350} for an equivalent description.
\end{remark}

As a corollary of Theorem \ref{202002211508}, we obtain the following statement.

\begin{Corollary}
\label{202001251639}
Let $A,B$ be bicommutative Hopf algebras with a finite volume.
For any Hopf homomorphism $\xi : A \to B$, there exists a unique normalized generator integral $\mu_\xi$ along $\xi$.
\end{Corollary}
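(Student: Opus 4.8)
The plan is to read the statement as a direct consequence of Theorem \ref{202002211508}. That theorem already supplies uniqueness unconditionally, so the only thing to produce is existence, which it reduces to two conditions: I must check that for an arbitrary Hopf homomorphism $\xi : A \to B$ between finite-volume bicommutative Hopf algebras, the kernel $\mathrm{Ker_H} (\xi)$ has a normalized integral and the cokernel $\mathrm{Cok_H} (\xi)$ has a normalized cointegral. Once these are verified, Theorem \ref{202002211508} yields the desired $\mu_\xi$ together with its uniqueness.

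The two verifications are formally dual. Passing to linear duals sends $\xi$ to $\xi^\ast : B^\ast \to A^\ast$, exchanges normalized integrals with normalized cointegrals, preserves the finite-volume condition (the inverse volume $\sigma^A \circ \sigma_A$ is carried to that of the dual), and identifies $\mathrm{Ker_H} (\xi)$ with $\mathrm{Cok_H} (\xi^\ast)$. Hence the cokernel statement for $\xi$ is the kernel statement for $\xi^\ast$, and it is enough to prove a single claim: whenever a bicommutative Hopf algebra occurs as a sub-object of a finite-volume one --- as $\mathrm{Ker_H} (\xi) \to A$ does in the exact sequence $\mathrm{Ker_H} (\xi) \to A \to \mathrm{Im_H} (\xi)$ of the abelian category of bicommutative Hopf algebras --- it admits a normalized integral.

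To prove this claim I would argue that the existence of a normalized integral descends to sub-Hopf-algebras. This is a Maschke-type phenomenon: a normalized integral is exactly a semisimplicity datum, and the running hypothesis that the inverse volume $\mathrm{vol}^{-1}(A) = \sigma^A \circ \sigma_A$ is invertible is the nondegeneracy that places us in the semisimple regime --- the group-algebra case $\mathrm{vol}^{-1}(kG) = |G|^{-1}$ shows precisely how this invertibility encodes the order being invertible in $k$. I would isolate the descent as a lemma and prove it through an equivalent description of finite volume as finite-dimensionality together with bisemisimplicity, after which closure under sub-objects reduces to the standard fact that a sub-Hopf-algebra of a (co)semisimple bicommutative Hopf algebra is again (co)semisimple, and dually for quotients.

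The main obstacle is this descent lemma. The delicate point is that the ground field $k$ is arbitrary, so I cannot appeal to characteristic-zero arguments; the role of the bicommutativity hypothesis is to keep the relevant extension theory and the behaviour of (co)integrals under sub- and quotient objects tractable, and the role of the invertible inverse volume is to guarantee that the averaging implicit in Maschke's argument is available. Everything else in the proof is a formal application of Theorem \ref{202002211508}.
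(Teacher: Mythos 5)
Your overall skeleton is the intended one: the paper derives this corollary from Theorem \ref{202002211508} with no further argument, the point being that for $\xi : A \to B$ in $\mathsf{Hopf}^\mathsf{bc,vol}_k$ the two hypotheses of that theorem (a normalized integral on $\mathrm{Ker_H}(\xi)$, a normalized cointegral on $\mathrm{Cok_H}(\xi)$) are automatic; uniqueness is already part of the theorem. So reducing everything to a single descent statement for sub-objects, with the cokernel case handled by duality, is consistent with what the paper does (the actual verification lives in \cite{kim2021integrals}, where it is extracted from the axioms of normalized generator integrals and the exactness properties of $\mathsf{Hopf}^\mathsf{bc}_k$, essentially the same closure property invoked later in Lemma \ref{201911131255}).

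The concern is with how you propose to prove the descent lemma. You route it through the equivalence ``finite volume $\Leftrightarrow$ finite-dimensional and bisemisimple'' and then through the fact that Hopf subalgebras of semisimple (resp.\ quotients of cosemisimple) finite-dimensional Hopf algebras are semisimple (resp.\ cosemisimple). Two issues. First, within this paper that equivalence is Corollary \ref{202006091350}, and the direction you need (finite volume $\Rightarrow$ finite-dimensional and bisemisimple) is proved via Corollary \ref{202006040804} and Proposition \ref{202005301300}, whose duality morphisms $\mathrm{i}_A, \mathrm{e}_A$ are built from the normalized integral $\mu_\nabla$ along the multiplication $\nabla : A \otimes A \to A$ --- i.e.\ from an instance of the very corollary you are proving. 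The same circularity affects your duality reduction, which silently assumes $A$ and $B$ are finite-dimensional so that $\xi^\ast$ is again a Hopf homomorphism between Hopf algebras of the relevant kind. Second, even granting finite-dimensionality, ``a Hopf subalgebra of a semisimple Hopf algebra is semisimple'' is not an elementary Maschke averaging in the direction you need it (you need it for the sub-object, and its dual for the quotient); it rests on the Nichols--Zoeller freeness theorem, so calling it a standard fact is defensible but it is the entire content of the lemma, not a formality. To make the argument non-circular you would either have to establish finite-dimensionality and bisemisimplicity from the finite-volume axioms by an independent argument, or verify the two kernel/cokernel conditions directly from the defining axioms of normalized (co)integrals and the multiplicativity of $\mathrm{vol}^{-1}$ on short exact sequences (Theorem \ref{202002211501}), which is the route the source \cite{kim2021integrals} takes.
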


We make use of the following proposition in this paper.

\begin{prop}
\label{202006030739}
Let $\xi : A \to B$ be a Hopf homomorphism between bicommutative Hopf algebras with a finite volume.
\begin{enumerate}
\item
If $\xi$ is an epimorphism in the category $\mathsf{Hopf}^\mathsf{bc}_k$, then we have $\xi \circ \mu_\xi = id_B$.
In other words, $\mu_\xi$ is a section of $\xi$ in the category $\mathsf{Vec}_k$.
\item
If $\xi$ is an monomorphism in the category $\mathsf{Hopf}^\mathsf{bc}_k$, then we have $\mu_\xi \circ \xi = id_A$.
In other words, $\mu_\xi$ is a retract of $\xi$ in the category $\mathsf{Vec}_k$.
\end{enumerate}
\end{prop}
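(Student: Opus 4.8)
The plan is to treat both statements uniformly, exploiting the self-duality of $\mathsf{Hopf}^\mathsf{bc,vol}_k$. First I record the categorical facts I will use. In the abelian category $\mathsf{Hopf}^\mathsf{bc}_k$ a Hopf homomorphism is an epimorphism if and only if it is surjective as a linear map (equivalently $\mathrm{Cok_H}(\xi) \cong k$), and a monomorphism if and only if it is injective (equivalently $\mathrm{Ker_H}(\xi) \cong k$); see \cite{kim2020homology}. Moreover the linear dual sends a finite-volume bicommutative Hopf algebra to one of the same type, interchanging $\sigma_A \leftrightarrow \sigma^A$, interchanging monomorphisms and epimorphisms, and satisfying $\mu_{\xi^\ast} = (\mu_\xi)^\ast$: indeed both sides are normalized generator integrals along $\xi^\ast$, so they agree by the uniqueness in Corollary \ref{202001251639}. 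Hence statement (2) for $\xi$ is statement (1) for $\xi^\ast$, and it suffices to prove (1).

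The core of the argument is the \emph{generalized inverse relation}
\[
\xi \circ \mu_\xi \circ \xi = \xi ,
\]
which I would extract from the defining axioms of a normalized generator integral in \cite{kim2021integrals}. Concretely, $\mu_\xi$ is $A$-linear in the sense that $\mu_\xi(\xi(a)\cdot b) = a \cdot \mu_\xi(b)$, where $B$ is regarded as an $A$-module through $\xi$. Writing $c := \xi \circ \mu_\xi$ as an endomorphism of $B$ in $\mathsf{Vec}_k$ and using that $\xi$ is an algebra homomorphism, one gets $c(\xi(a)\cdot b) = \xi(a)\cdot c(b)$; in particular $c$ acts as left multiplication by $b_0 := c(1_B) = \xi(\mu_\xi(1_B))$ on the image of $\xi$. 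The normalization built into the definition of $\mu_\xi$, the one responsible for the factor $|\mathrm{Ker}(\varrho)|^{-1}$ in Example \ref{202102112335}, is exactly what forces $b_0 = 1_B$, which is precisely the displayed relation evaluated at $1_A$.

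Granting $\xi\circ\mu_\xi\circ\xi = \xi$, statement (1) is immediate: since $\xi$ is an epimorphism in $\mathsf{Hopf}^\mathsf{bc}_k$ it is surjective, hence right-cancellable in $\mathsf{Vec}_k$, and cancelling $\xi$ from $(\xi\circ\mu_\xi)\circ\xi = \mathrm{id}_B\circ \xi$ yields $\xi\circ\mu_\xi = \mathrm{id}_B$. For the monomorphism case one obtains (2) by applying (1) to $\xi^\ast$ as explained above; equivalently one can argue directly, cancelling the injective $\xi$ on the left in $\xi\circ(\mu_\xi\circ\xi) = \xi\circ\mathrm{id}_A$ to get $\mu_\xi\circ\xi = \mathrm{id}_A$.

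I expect the main obstacle to be the second paragraph, namely pinning down the normalization constant so that $b_0 = 1_B$ exactly, equivalently verifying the generalized inverse relation with scalar precisely $1$ rather than some power of an inverse volume. This is where the finite-volume hypothesis of Definition \ref{202001271558} is essential: the invertibility of $\mathrm{vol}^{-1}$ on $\mathrm{Ker_H}(\xi)$ and $\mathrm{Cok_H}(\xi)$ guarantees that the normalized integral and cointegral furnished by Theorem \ref{202002211508} assemble into a genuinely normalized $\mu_\xi$, so that the volume of the kernel cancels rather than surviving as a spurious scalar. The remaining ingredients, namely the characterization of epimorphisms and monomorphisms in $\mathsf{Hopf}^\mathsf{bc}_k$ and the compatibility of $\mu$ with linear duality, are routine given the setup.
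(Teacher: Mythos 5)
The paper's own proof of this proposition is a one-line citation to Lemma~9.3 of \cite{kim2021integrals}, so there is no internal argument to compare against; your proposal has to be judged on its own terms. Its outer structure is sound: reducing (2) to (1) by linear duality together with the uniqueness of normalized generator integrals (Corollary \ref{202001251639}) is legitimate, and once one has the relation $\xi \circ \mu_\xi \circ \xi = \xi$, cancelling the surjective $\xi$ on the right (respectively the injective $\xi$ on the left) does finish the proof. The identification of epimorphisms with surjections and monomorphisms with injections in $\mathsf{Hopf}^\mathsf{bc}_k$ is also a genuine theorem about this abelian category, though it is imported rather than proved.

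The gap is in the second paragraph, and it is the whole of the mathematical content. You derive $\xi\circ\mu_\xi\circ\xi=\xi$ from two properties of $\mu_\xi$ --- the $A$-linearity $\mu_\xi(\xi(a)\cdot b)=a\cdot\mu_\xi(b)$ and the normalization $\xi(\mu_\xi(1_B))=1_B$ --- neither of which is stated in this paper, and the second of which you justify only by inspecting Example \ref{202102112335} and asserting that ``the normalization forces it.'' That is extrapolation from an example, not a derivation from Definitions~3.4 and~3.11 of \cite{kim2021integrals}, whose actual axioms you do not have in hand and are in effect reverse-engineering. Worse, the generalized inverse relation is essentially \emph{equivalent} to the proposition being proved: factoring $\xi=\iota\circ\pi$ through its image and using Proposition \ref{202005301216} (with $\lambda=\langle\partial(\iota,\pi)\rangle=1$) shows that $\xi\circ\mu_\xi\circ\xi=\xi$ holds if and only if statements (1) and (2) hold for $\pi$ and $\iota$ respectively. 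So the hard content has been relocated into an unproven lemma rather than resolved; as written, the argument is circular in spirit even though every individual claim is true.
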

\begin{proof}
It is immediate from Lemma 9.3 \cite{kim2021integrals}.
\end{proof}

The inverse volume induces a volume on the abelian category $\mathsf{Hopf}^\mathsf{bc,bs}_k$ consisting of bicommutative Hopf algebras with a normalized integral and a normalized cointegral.
Here, the volume on the abelian category is a generalization of the dimension of vector spaces and the order of abelian groups, which is also introduced in \cite{kim2021integrals}.

\begin{theorem}
\label{202002211501}
We regard the field $k$ as the multiplicative monoid.
Then the inverse volume $\mathrm{vol}^{-1}$ gives an $k$-valued volume on the abelian category $\mathsf{Hopf}^\mathsf{bc,bs}_k$, i.e. if $A \to B \to C$ is a short exact sequence in $\mathsf{Hopf}^\mathsf{bc,bs}_k$, then we have $\mathrm{vol}^{-1} (B) = \mathrm{vol}^{-1} (A) \cdot \mathrm{vol}^{-1} (C)$.
\end{theorem}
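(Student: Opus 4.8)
The plan is to build a normalized integral and a normalized cointegral of the middle term $B$ out of the data on $A$ and $C$, and then read off the inverse volume. Write the short exact sequence as $A \xrightarrow{i} B \xrightarrow{p} C$, so that $i$ is a monomorphism with $\mathrm{Ker_H}(i)=0$ and $\mathrm{Cok_H}(i)=C$, while $p$ is an epimorphism with $\mathrm{Ker_H}(p)=A$ and $\mathrm{Cok_H}(p)=0$. Since $A$ and $C$ lie in $\mathsf{Hopf}^\mathsf{bc,bs}_k$, they carry a normalized integral and a normalized cointegral, so Theorem \ref{202002211508} guarantees that the normalized generator integrals $\mu_i : B \to A$ and $\mu_p : C \to B$ both exist and are unique. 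By Proposition \ref{202006030739} (equivalently Lemma 9.3 of \cite{kim2021integrals}) they satisfy $\mu_i \circ i = id_A$ and $p \circ \mu_p = id_C$.

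First I would check the two identities $\sigma_B = \mu_p \circ \sigma_C$ and $\sigma^B = \sigma^A \circ \mu_i$. For the former it suffices to verify that $\mu_p \circ \sigma_C$ is a normalized integral of $B$ and to invoke the uniqueness of the normalized integral. Normalization is immediate: using $\epsilon_B = \epsilon_C \circ p$ together with $p \circ \mu_p = id_C$ one gets $\epsilon_B \circ \mu_p \circ \sigma_C = \epsilon_C \circ p \circ \mu_p \circ \sigma_C = \epsilon_C \circ \sigma_C = 1$. That $\mu_p \circ \sigma_C$ actually lies in the (one-dimensional) space of integrals of $B$ is the statement that $\mu_p$ transports integrals to integrals, which I would extract from the module-morphism axioms of a generator integral in \cite{kim2021integrals}. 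The identity $\sigma^B = \sigma^A \circ \mu_i$ is established dually, using $\eta_B = i \circ \eta_A$ and $\mu_i \circ i = id_A$ for the normalization $\sigma^A \circ \mu_i \circ \eta_B = \sigma^A \circ \eta_A = 1$.

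Granting these, the computation reduces to
\[
\mathrm{vol}^{-1}(B) = \sigma^B \circ \sigma_B = \sigma^A \circ \mu_i \circ \mu_p \circ \sigma_C .
\]
The crux, and what I expect to be the main obstacle, is the identity $\mu_i \circ \mu_p \circ \sigma_C = \mathrm{vol}^{-1}(C)\cdot \sigma_A$ of maps $k \to A$ (equivalently $\mu_i \circ \mu_p = \sigma_A \circ \sigma^C$ as maps $C \to A$). Conceptually this says that exactness, $p \circ i = 0$ with $\mathrm{im}(i)=\mathrm{Ker_H}(p)$, forces the composite of the retraction $\mu_i$ with the section $\mu_p$, restricted along the integral of $C$, to factor through the ground field and to produce precisely the integral of $A$ weighted by the inverse volume of $C$. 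I would prove it by showing that $\mu_i \circ \mu_p \circ \sigma_C$ is again an integral of $A$ (once more via the module-morphism property of the generator integrals), hence a scalar multiple of $\sigma_A$, and then pinning down the scalar; this last point is exactly where the generator-integral calculus of \cite{kim2021integrals} along the exact sequence is needed, and it is the only genuinely nontrivial step. Once it is in hand,
\[
\mathrm{vol}^{-1}(B) = \sigma^A \circ \big( \mathrm{vol}^{-1}(C)\cdot \sigma_A \big) = \big(\sigma^A \circ \sigma_A\big)\cdot \mathrm{vol}^{-1}(C) = \mathrm{vol}^{-1}(A)\cdot \mathrm{vol}^{-1}(C),
\]
which is the claim. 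As a sanity check, for group Hopf algebras $A=kG_1$, $B=kG_2$, $C=kG_3$ arising from $0 \to G_1 \to G_2 \to G_3 \to 0$ one has $\mathrm{vol}^{-1}(kG)=|G|^{-1}$, and the identity collapses to $|G_2|=|G_1|\cdot|G_3|$.
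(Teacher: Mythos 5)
The paper itself offers no proof of this theorem: it is imported verbatim from \cite{kim2021integrals}, where the notion of a volume on an abelian category is introduced, so there is no in-paper argument to measure yours against. Your skeleton --- transport $\sigma_C$ and $\sigma^A$ through the generator integrals $\mu_p$ and $\mu_i$ to manufacture $\sigma_B$ and $\sigma^B$, then evaluate $\sigma^B \circ \sigma_B$ --- is a sensible and workable route, and your use of Theorem \ref{202002211508} to get existence of $\mu_i$ and $\mu_p$ is correctly justified (the relevant kernels and cokernels are $A$, $C$, or trivial, all of which carry the required (co)integrals).

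The genuine gap is that you stop at precisely the step you yourself flag as the only nontrivial one: the identity $\mu_i \circ \mu_p = \sigma_A \circ \sigma^C$ is asserted together with a description of how one \emph{would} prove it, but no proof is given, and "the composite is an integral of $A$, hence a scalar multiple of $\sigma_A$, and then one pins down the scalar" is not an argument --- the entire content of the theorem sits in that scalar. The gap is fillable with a tool the paper already quotes, namely the composition rule of Proposition \ref{202005301216}: applied to $\xi = i$, $\xi^\prime = p$ it gives $\mu_i \circ \mu_p = \langle \partial(p,i) \rangle \cdot \mu_{p \circ i}$; exactness identifies $\mathrm{Ker_H}(p) \cong A$ and $\mathrm{Cok_H}(i) \cong C$ so that $\partial(p,i) = p \circ i = \eta_C \circ \epsilon_A$ is the trivial homomorphism, whence $\langle \partial(p,i) \rangle = (\sigma^C \circ \eta_C)\cdot(\epsilon_A \circ \sigma_A) = 1$; and a second application to $\epsilon_A$ and $\eta_C$ gives $\mu_{p\circ i} = \mu_{\eta_C \circ \epsilon_A} = \mu_{\epsilon_A}\circ\mu_{\eta_C} = \sigma_A \circ \sigma^C$. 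The same composition rule also disposes of your two preliminary identities (e.g. $\sigma_B = \mu_{\epsilon_B} = \mu_{\epsilon_C \circ p} = \mu_p \circ \mu_{\epsilon_C} = \mu_p \circ \sigma_C$, since $\langle\partial(\epsilon_C,p)\rangle = \epsilon_C(\sigma_C) = 1$), so there is no need to appeal to unstated "module-morphism axioms." One caveat you should make explicit: Propositions \ref{202006030739} and \ref{202005301216} are stated in this paper only for $\mathsf{Hopf}^\mathsf{bc,vol}_k$, whereas the theorem concerns $\mathsf{Hopf}^\mathsf{bc,bs}_k$, where inverse volumes need not be invertible; you must either verify that the underlying results of \cite{kim2021integrals} hold at that level of generality for the particular morphisms $i$, $p$, $\epsilon$, $\eta$ you use, or note that only the $\mathsf{Hopf}^\mathsf{bc,vol}_k$ case (Corollary \ref{202002211513}) is needed downstream.
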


By Theorem \ref{202002211501}, $\mathsf{Hopf}^\mathsf{bc,vol}_k \subset \mathsf{Hopf}^\mathsf{bc,bs}_k$ is closed under short exact sequences.
In particular, $\mathsf{Hopf}^\mathsf{bc,vol}_k$ is also an abelian category.
Then the following corollary is immediate from Theorem \ref{202002211501}.

\begin{Corollary}
\label{202002211513}
The inverse volume $\mathrm{vol}^{-1}$ gives an $k^\ast$-valued volume on the abelian category $\mathsf{Hopf}^\mathsf{bc,vol}_k$.
Here, we regard $k^\ast = k \backslash \{ 0 \}$ as the multiplicative group.
\end{Corollary}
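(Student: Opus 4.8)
The plan is to deduce the statement directly from Theorem \ref{202002211501} together with the defining condition of finite volume. First I would record the basic observation that the restriction of $\mathrm{vol}^{-1}$ to the subcategory $\mathsf{Hopf}^\mathsf{bc,vol}_k$ automatically lands in the multiplicative group $k^\ast$ rather than merely in the multiplicative monoid $k$. Indeed, by condition (3) of Definition \ref{202001271558}, every object $A$ of $\mathsf{Hopf}^\mathsf{bc,vol}_k$ has invertible inverse volume $\mathrm{vol}^{-1}(A) = (\sigma^A \circ \sigma_A) \in k^\ast$. So passing from the monoid-valued setting of Theorem \ref{202002211501} to the group-valued setting is a matter of restricting the domain to objects on which the invariant happens to be a unit.

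Next I would verify that $\mathsf{Hopf}^\mathsf{bc,vol}_k$ is an abelian subcategory of $\mathsf{Hopf}^\mathsf{bc,bs}_k$ that is closed under short exact sequences, so that invoking Theorem \ref{202002211501} on sequences inside $\mathsf{Hopf}^\mathsf{bc,vol}_k$ is legitimate. The key input is the two-out-of-three principle coming from multiplicativity. Given a short exact sequence $A \to B \to C$ in $\mathsf{Hopf}^\mathsf{bc,bs}_k$, Theorem \ref{202002211501} yields the identity $\mathrm{vol}^{-1}(B) = \mathrm{vol}^{-1}(A) \cdot \mathrm{vol}^{-1}(C)$ in $k$. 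Because $k^\ast$ is a group under multiplication, if any two of the three factors are invertible then so is the third: a product of two units is a unit, and if a product equals a unit with one factor a unit then the remaining factor is its quotient, hence again a unit. In particular, if $A$ and $C$ lie in $\mathsf{Hopf}^\mathsf{bc,vol}_k$ then $\mathrm{vol}^{-1}(B)$ is a product of two units and $B$ lies in $\mathsf{Hopf}^\mathsf{bc,vol}_k$; the sub- and quotient-object directions are handled identically. This closure under extensions, subobjects, and quotients exhibits $\mathsf{Hopf}^\mathsf{bc,vol}_k$ as an abelian subcategory on which the ambient short exact sequences restrict.

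Finally I would observe that the volume axiom for the restricted invariant is nothing other than the same multiplicativity relation, now read inside the group $k^\ast$. By the previous paragraph all three terms of $\mathrm{vol}^{-1}(B) = \mathrm{vol}^{-1}(A) \cdot \mathrm{vol}^{-1}(C)$ belong to $k^\ast$, so this is an identity of elements of $k^\ast$, and the normalization on the zero object gives $\mathrm{vol}^{-1}(0) = 1$. Hence $\mathrm{vol}^{-1}$ is a $k^\ast$-valued volume on $\mathsf{Hopf}^\mathsf{bc,vol}_k$, as claimed. I do not expect any genuine obstacle here; the only point requiring a moment of care is the closure-under-short-exact-sequences step, since it is what guarantees that Theorem \ref{202002211501} may be applied internally to $\mathsf{Hopf}^\mathsf{bc,vol}_k$, and this is exactly where the group structure of $k^\ast$ (as opposed to the bare monoid structure of $k$) is used.
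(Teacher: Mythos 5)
Your proposal is correct and follows essentially the same route as the paper: the paper likewise deduces the corollary immediately from Theorem \ref{202002211501}, after noting that the multiplicativity relation forces $\mathsf{Hopf}^\mathsf{bc,vol}_k \subset \mathsf{Hopf}^\mathsf{bc,bs}_k$ to be closed under short exact sequences (hence an abelian subcategory), so that the same identity, now read among units of $k$, is the $k^\ast$-valued volume axiom. You have merely spelled out the two-out-of-three unit argument that the paper leaves implicit.
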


\begin{prop}
\label{202102071242}
Consider the exact square diagram (\ref{201912190830}) for $\mathcal{A} = \mathsf{Hopf}^\mathsf{bc,vol}_k$.
Then we have
\begin{align}
\mu_{g^\prime} \circ g = f^\prime \circ \mu_f .
\end{align}
\end{prop}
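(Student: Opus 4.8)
The statement is a Beck--Chevalley (base--change) identity for generator integrals. Writing the exact square with source corner $A$ and sink corner $D$, so that $f\colon A\to B$, $f'\colon A\to C$, $g\colon B\to D$, $g'\colon C\to D$ with $g\circ f=g'\circ f'$, exactness means that
\begin{align}\notag
0 \to A \xrightarrow{\ (f,\,f')\ } B\oplus C \xrightarrow{\ g-g'\ } D \to 0
\end{align}
is short exact, i.e. the square is bicartesian. In particular the parallel pairs $\{f,g'\}$ and $\{f',g\}$ induce isomorphisms on kernels and on cokernels. Both composites $\mu_{g'}\circ g$ and $f'\circ\mu_f$ are morphisms $B\to C$ in $\mathsf{Vec}_k$, and the plan is to prove their equality by reducing, through the image factorization, to the two extreme cases in which the horizontal legs are epimorphisms or monomorphisms, where Proposition \ref{202006030739} becomes available.

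First I would set up the reduction. Using the functorial image factorization of the parallel pair, write $f=m\circ e$ and $g'=q\circ p$ with $e,p$ epimorphisms and $m,q$ monomorphisms, and assemble them into a refined diagram
\begin{equation}\notag
\begin{tikzcd}
A \ar[r, "e"] \ar[d, "f'"] & I \ar[r, "m"] \ar[d, "v"] & B \ar[d, "g"] \\
C \ar[r, "p"] & L \ar[r, "q"] & D
\end{tikzcd}
\end{equation}
both of whose squares are again exact (this is where the isomorphisms of kernels and cokernels along parallel arrows are used), with $e,p$ epimorphisms on the left and $m,q$ monomorphisms on the right. Granting that $\xi\mapsto\mu_\xi$ is contravariantly functorial along such factorizations, so that $\mu_{g'}=\mu_p\circ\mu_q$ and $\mu_f=\mu_e\circ\mu_m$, the general identity is assembled from the two special cases $\mu_p\circ v=f'\circ\mu_e$ and $\mu_q\circ g=v\circ\mu_m$ by the computation $\mu_{g'}\circ g=\mu_p\circ\mu_q\circ g=\mu_p\circ v\circ\mu_m=f'\circ\mu_e\circ\mu_m=f'\circ\mu_f$.

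It remains to treat the two cases. In the epimorphism case, exactness makes the left square a pullback, so $f'$ restricts to an isomorphism $\ker e\xrightarrow{\sim}\ker p$; by Corollary \ref{202002211513} the inverse volumes of the two kernels coincide. By Proposition \ref{202006030739}(1) we have $e\circ\mu_e=\mathrm{id}_I$ and $p\circ\mu_p=\mathrm{id}_L$, whence $p\circ(f'\circ\mu_e)=v\circ e\circ\mu_e=v=p\circ(\mu_p\circ v)$; thus $f'\circ\mu_e$ and $\mu_p\circ v$ are two lifts of $v$ through the epimorphism $p$, differing by a morphism into $\ker p$. To kill this difference I would invoke the explicit description of the generator integral along an epimorphism from \cite{kim2021integrals}, according to which $\mu_e$ (resp. $\mu_p$) is the $\ker e$- (resp. $\ker p$-) average of an arbitrary linear section; compatibility of these averages under the kernel isomorphism induced by $f'$ then forces equality. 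The monomorphism case $\mu_q\circ g=v\circ\mu_m$ is handled dually: either by the symmetric argument using Proposition \ref{202006030739}(2) and the cokernel isomorphism $\mathrm{cok}\,m\cong\mathrm{cok}\,q$, or by applying the linear-dual duality on $\mathsf{Hopf}^\mathsf{bc,vol}_k$, which interchanges integrals with cointegrals and epimorphisms with monomorphisms and carries the epimorphism case to the monomorphism case.

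The main obstacle is precisely the epimorphism case: precomposition with $p$ only shows that $f'\circ\mu_e$ and $\mu_p\circ v$ agree modulo $\ker p$, and closing this gap genuinely requires the kernel-averaging construction of $\mu_\xi$ from \cite{kim2021integrals} rather than the formal one-sided-inverse properties of Proposition \ref{202006030739} alone; the bookkeeping that the normalization constants (the inverse volumes of the kernels) match under $f'$ is what guarantees the two averaged sections coincide. A secondary point I would need to justify carefully is the contravariant functoriality $\mu_{g'}=\mu_p\circ\mu_q$ used in the gluing step, which I would extract from the composition behaviour of generator integrals in \cite{kim2021integrals}, together with the verification that the two squares of the refined diagram are themselves exact.
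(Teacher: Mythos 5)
Your proposal does not match the paper's argument: the paper proves this proposition by directly invoking Theorem 10.1 of \cite{kim2021integrals} and merely verifying its hypotheses, namely that every epimorphism in $\mathsf{Hopf}^\mathsf{bc,vol}_k$ has a section and every monomorphism a retract in $\mathsf{Vec}_k$, both supplied by Proposition \ref{202006030739}. Your attempt to give a self-contained proof instead contains a genuine gap at its very first step. You read ``exact square'' as meaning that $0 \to A \to B\oplus C \to D \to 0$ is short exact, i.e.\ that the square is bicartesian, and you then deduce that the parallel arrows induce \emph{isomorphisms} on kernels and cokernels. But by Definition \ref{202101301425} and Remark \ref{201912230956}, exactness only requires the chain complex $A \to B\oplus C \to D$ to be exact at the middle term, equivalently that $k_\Box$ is an epimorphism and $c_\Box$ is a monomorphism; neither need be an isomorphism (the square used to define the transposition $\mathrm{T}$ is exactly of this weaker kind). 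Your subsequent reduction leans on the stronger, incorrect reading: the exactness of the two refined squares, the identification $\ker e \xrightarrow{\sim} \ker p$, and the matching of the normalization constants $\mathrm{vol}^{-1}$ of the two kernels all require isomorphisms that exactness does not provide.

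Even granting the reduction, the epimorphism case --- which you correctly identify as the crux --- is not actually closed. Precomposition with $p$ shows only that $f'\circ\mu_e$ and $\mu_p\circ v$ agree modulo $\ker p$, and you resolve the ambiguity by appealing to an unverified ``kernel-averaging'' description of generator integrals and an unverified compatibility of these averages under the (alleged) kernel isomorphism. Neither claim is established in this paper or derivable from the formal properties quoted here (Proposition \ref{202006030739} and Proposition \ref{202005301216}); in effect you are re-deriving the content of Theorem 10.1 of \cite{kim2021integrals} while deferring its essential step to that same reference. One positive point: your gluing identity $\mu_{g'}=\mu_p\circ\mu_q$ for an epi--mono factorization \emph{can} be justified from Proposition \ref{202005301216}, since $\mathrm{Cok_H}(p)\cong k\cong\mathrm{Ker_H}(q)$ forces $\langle\partial(q,p)\rangle=1$; but this does not repair the two gaps above.
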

\begin{proof}
It follows from Theorem 10.1 in \cite{kim2021integrals}.
Note that an epimorphism $\pi$ in the category $\mathsf{Hopf}^\mathsf{bc,vol}_k$ has a section in $\mathsf{Vec}_k$.
In fact, the normalized integral $\mu_\pi$ along $\pi$ is a section of $\pi$ in $\mathsf{Vec}_k$ by Proposition \ref{202006030739}.
Similarly, any monomorphism in the category $\mathsf{Hopf}^\mathsf{bc,vol}_k$ has a retract in in $\mathsf{Vec}_k$. 
\end{proof}

The inverse volume of bicommutative Hopf algebras is generalized to Hopf homomorphisms.
For a Hopf homomorphism $\xi : A \to B$, we define $\langle \xi \rangle = \sigma^B \circ \xi \circ \sigma_A \in k$.
By using this notion, a composition rule of normalized integrals is represented as follows.

\begin{Defn}
\label{202101301421}
Let $\xi : A \to B$ and $\xi^\prime : B \to C$ be Hopf homomorphisms.
Consider a cokernel Hopf algebra $\mathrm{Cok_H} ( \xi )$ with the canonical homomorphism $\mathrm{cok_H} ( \xi ) : B \to \mathrm{Cok_H} ( \xi )$ and a kernel Hopf algebra $\mathrm{Ker_H} ( \xi^\prime )$ with the canonical homomorphism $\mathrm{ker_H} ( \xi^\prime ) : \mathrm{Ker_H} ( \xi^\prime ) \to B$.
We define a Hopf homomorphism $\partial ( \xi^\prime , \xi ) : \mathrm{Ker_H} ( \xi^\prime) \to \mathrm{Cok_H} ( \xi )$ by $\partial ( \xi^\prime , \xi ) \stackrel{\mathrm{def.}}{=} \mathrm{cok_H} ( \xi ) \circ \mathrm{ker_H} ( \xi^\prime )$.
\end{Defn}

\begin{prop}
\label{202005301216}
Let $\xi : A \to B, \xi^\prime : B \to C$ be morphisms in the category $\mathsf{Hopf}^\mathsf{bc,vol}_k$.
Then there exists a unique $\lambda \in k^\ast$ such that
\begin{align}
\mu_\xi \circ \mu_{\xi^\prime} = \lambda \cdot \mu_{\xi^\prime\circ\xi} .
\end{align}
Moreover, we have $\lambda = \langle \partial ( \xi^\prime , \xi ) \rangle$.
\end{prop}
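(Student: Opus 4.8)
The plan is to prove the existence and uniqueness of $\lambda$ separately, and then to pin down its value by reducing, via the image factorisations of $\xi$ and $\xi'$, to a single ``crossing'' case in which $\xi$ is a monomorphism and $\xi'$ an epimorphism. Uniqueness is the easy half: a normalised generator integral is never the zero map (this is part of its normalisation, cf. Definition 3.4, 3.11 in \cite{kim2021integrals}), so $\mu_{\xi'\circ\xi}\neq 0$ and the scalar in $\mu_\xi\circ\mu_{\xi'}=\lambda\cdot\mu_{\xi'\circ\xi}$ is determined if it exists. The whole existence argument then rests on two \emph{boundary cases}: if $\xi$ is an epimorphism, or if $\xi'$ is a monomorphism, then $\mu_\xi\circ\mu_{\xi'}=\mu_{\xi'\circ\xi}$, i.e. $\lambda=1$. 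I would prove these from the uniqueness of generator integrals (Corollary \ref{202001251639}): using the section/retract property of Proposition \ref{202006030739}, one checks that in each case $\mu_\xi\circ\mu_{\xi'}$ again satisfies the axioms characterising $\mu_{\xi'\circ\xi}$, the two cases being mutually dual.

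Granting the boundary cases, factor $\xi=\iota\circ\pi$ and $\xi'=\iota'\circ\pi'$ as an epimorphism followed by a monomorphism in the abelian category $\mathsf{Hopf}^\mathsf{bc,vol}_k$. The epimorphism case gives $\mu_\xi=\mu_\pi\circ\mu_\iota$ and $\mu_{\xi'}=\mu_{\pi'}\circ\mu_{\iota'}$ with no scalar, whence $\mu_\xi\circ\mu_{\xi'}=\mu_\pi\circ(\mu_\iota\circ\mu_{\pi'})\circ\mu_{\iota'}$. Applying the boundary cases once more to absorb the outer $\mu_\pi$ (epi) and $\mu_{\iota'}$ (mono) reduces the identity to the crossing composite $\mu_\iota\circ\mu_{\pi'}$. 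Since $\mathrm{Cok_H}(\iota)=\mathrm{Cok_H}(\xi)$ and $\mathrm{Ker_H}(\pi')=\mathrm{Ker_H}(\xi')$, the connecting homomorphism is unchanged, $\partial(\pi',\iota)=\partial(\xi',\xi)$, so it suffices to treat the crossing case and to identify its scalar with $\langle\partial\rangle$. It is worth recording that $\partial(\xi',\xi)=\mathrm{cok_H}(\xi)\circ\mathrm{ker_H}(\xi')$ is precisely the connecting map of the kernel--cokernel exact sequence $0\to\mathrm{Ker_H}(\xi)\to\mathrm{Ker_H}(\xi'\circ\xi)\to\mathrm{Ker_H}(\xi')\xrightarrow{\partial}\mathrm{Cok_H}(\xi)\to\cdots$, which is why $\langle\partial\rangle$ ought to measure the failure of $\mu$ to be strictly functorial.

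In the crossing case write $\iota\colon A\hookrightarrow B$ and $\pi\colon B\twoheadrightarrow C$, and set $K=\mathrm{Ker_H}(\pi)$, $Q=\mathrm{Cok_H}(\iota)$, so $\partial\colon K\to Q$. Having established the bare proportionality $\mu_\iota\circ\mu_\pi=\lambda\cdot\mu_{\pi\circ\iota}$ (by showing the left-hand side is again a generator integral along $\pi\circ\iota$ up to scalar), I extract $\lambda$ by pairing on the left with the cointegral $\sigma^A$ and on the right with the integral $\sigma_C$. Recalling $\sigma_\bullet=\mu_{\epsilon_\bullet}$ and $\sigma^\bullet=\mu_{\eta_\bullet}$, the boundary cases applied to $\epsilon_C\circ\pi=\epsilon_B$ and to $\iota\circ\eta_A=\eta_B$ yield the transfer identities $\mu_\pi\circ\sigma_C=\sigma_B$ and $\sigma^A\circ\mu_\iota=\sigma^B$. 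Hence the left pairing collapses to $\sigma^A\circ\mu_\iota\circ\mu_\pi\circ\sigma_C=\sigma^B\circ\sigma_B=\mathrm{vol}^{-1}(B)$. A parallel pairing computation, using that $\langle f\rangle=\mathrm{vol}^{-1}(\mathrm{Im_H}f)$ for any $f$ (itself a consequence of the same transfer identities through an image factorisation), evaluates the right pairing as $\sigma^A\circ\mu_{\pi\circ\iota}\circ\sigma_C=\mathrm{vol}^{-1}(C)\cdot\mathrm{vol}^{-1}\!\big(\mathrm{Ker_H}(\pi\circ\iota)\big)$. Feeding these into $\lambda=\mathrm{vol}^{-1}(B)\big/\big(\sigma^A\circ\mu_{\pi\circ\iota}\circ\sigma_C\big)$ and invoking the volume multiplicativity of Theorem \ref{202002211501} on the short exact sequences $K\to B\to C$ and $\mathrm{Ker_H}(\partial)\to K\to\mathrm{Im_H}(\partial)$ (noting $\mathrm{Ker_H}(\partial)=\mathrm{Ker_H}(\pi\circ\iota)$) collapses the volume ratio to $\lambda=\mathrm{vol}^{-1}(\mathrm{Im_H}\partial)=\langle\partial\rangle$, as desired.

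The main obstacle is the crossing case, in two linked respects. First, the scalar‑$1$ boundary cases are the true engine of the whole argument --- they drive the factorisation reduction, supply the transfer identities $\mu_\pi\circ\sigma_C=\sigma_B$ and $\sigma^A\circ\mu_\iota=\sigma^B$, and underlie the formula $\langle f\rangle=\mathrm{vol}^{-1}(\mathrm{Im_H}f)$ --- yet they must be wrung out of the defining axioms in \cite{kim2021integrals} rather than from the section/retract properties alone. Second, and more delicate, is matching the scalar with $\langle\partial\rangle$ \emph{on the nose} rather than with a raw volume ratio: this requires establishing the proportionality $\mu_\iota\circ\mu_\pi\propto\mu_{\pi\circ\iota}$ abstractly and then recognising the pairing $\sigma_K$--$\partial$--$\sigma^Q$ through the connecting homomorphism, a step where the base‑change identity of Proposition \ref{202102071242}, applied to the bicartesian squares built from $K\hookrightarrow B\twoheadrightarrow C$ and $A\hookrightarrow B\twoheadrightarrow Q$, is what keeps the bookkeeping of normalised integrals honest.
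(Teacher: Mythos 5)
The paper does not prove this proposition internally at all: its ``proof'' is a citation of Theorem 12.1 in \cite{kim2021integrals}, so any self-contained derivation is by construction a different route. Your reduction scheme and, especially, your identification of the scalar are correct and can be checked against the paper's own data. In the crossing case the pairing $\sigma^A\circ\mu_\iota\circ\mu_\pi\circ\sigma_C=\sigma^B\circ\sigma_B=\mathrm{vol}^{-1}(B)$ is right (both transfer identities follow from your boundary cases applied to $\epsilon_C\circ\pi=\epsilon_B$ and $\iota\circ\eta_A=\eta_B$), the evaluation $\sigma^A\circ\mu_{\pi\circ\iota}\circ\sigma_C=\mathrm{vol}^{-1}(C)\cdot\mathrm{vol}^{-1}(\mathrm{Ker_H}(\pi\circ\iota))$ agrees with Example \ref{202102112335} in the group-algebra case, and the final collapse via Theorem \ref{202002211501} applied to $K\to B\to C$ and $\mathrm{Ker_H}(\partial)\to K\to\mathrm{Im_H}(\partial)$, together with the identification $\mathrm{Ker_H}(\partial)\cong\mathrm{Ker_H}(\pi\circ\iota)$ (both are $A\cap K$ inside $B$), does give $\lambda=\mathrm{vol}^{-1}(\mathrm{Im_H}\partial)=\langle\partial\rangle$. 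The reduction to the crossing case via image factorisations is also sound, and $\partial(\pi',\iota)=\partial(\xi',\xi)$ as you say. This is genuinely more informative than the paper's citation, since it exposes why the defect is exactly $\mathrm{vol}^{-1}(\mathrm{Im_H}\partial)$.

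However, two load-bearing steps are asserted rather than proved, and they are precisely the content of the theorem the paper outsources. First, the scalar-$1$ boundary cases: you cannot take these from the paper, because the paper derives the monomorphism case (in the proof of Theorem \ref{201912302324}) \emph{from} Proposition \ref{202005301216}; deriving them independently requires verifying that $\mu_\xi\circ\mu_{\xi'}$ satisfies the defining axioms of a normalized generator integral along $\xi'\circ\xi$, and those axioms (Definition 3.4, 3.11 of \cite{kim2021integrals}) are not reproduced in the paper, so ``one checks'' is not a proof here. Second, and more seriously, the bare proportionality $\mu_\iota\circ\mu_{\pi'}=\lambda\cdot\mu_{\pi'\circ\iota}$ in the crossing case is the entire existence claim; your parenthetical justification --- that the left-hand side ``is again a generator integral along $\pi\circ\iota$ up to scalar'' --- cannot be literally right, since a nontrivial scalar multiple of a \emph{normalized} generator integral fails normalization. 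What is actually needed is that $\mu_\iota\circ\mu_{\pi'}$ is an (unnormalized) generator integral and that generator integrals along a fixed morphism are unique up to scalar; neither fact is available inside the paper. Until those two points are discharged from the axioms, the argument computes what $\lambda$ must be without establishing that it exists.
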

\begin{proof}
It follows from Theorem 12.1 in \cite{kim2021integrals}.
\end{proof}

\subsection{Integrals along (co)spans}
\label{202102171833}

In this subsection, we study an invariant of cospans in the category $\mathsf{Hopf}^\mathsf{bc,vol}_k$, called an integral along cospans.
In a parallel way, an integral along spans is defined and we give a comparison proposition.

\begin{Defn}
\label{202001110741}
Let $A_0, A_1$ be bicommutative Hopf algebras with a finite volume.
Consider a cospan $\Lambda = \left( A_0 \stackrel{\xi_0}{\to} B \stackrel{\xi_1}{\leftarrow} A_1 \right)$ in the category $\mathsf{Hopf}^\mathsf{bc,vol}_k$.
We define {\it an integral along $\Lambda$} by a linear homomorphism from $A_0$ to $A_1$,
\begin{align}
\int_\Lambda \stackrel{\mathrm{def.}}{=} \mu_{\xi_1} \circ \xi_0 .
\end{align}
Here, $\mu_{\xi_1}$ is the normalized integral along $\xi_1$.
A linear homomorphism $\varrho : A_0 \to A_1$ is {\it realized as a nontrivial integral along a cospan} if there exists a cospan $\Lambda$ in the category $\mathsf{Hopf}^\mathsf{bc,vol}_k$ and $\lambda \in k^\ast$ such that $\varrho = \lambda \cdot \int_\Lambda$.
\end{Defn}

\begin{Example}
Let $\Lambda = \left( k \stackrel{\eta_B}{\to} B \stackrel{\eta_B}{\leftarrow} k \right)$ be the cospan induced by the unit of $B$.
The normalized integral along $\eta_B$ is the normalized cointegral of $B$ so that $\int_\Lambda$ is the identity on the trivial Hopf algebra $k$.
\end{Example}

\begin{prop}
\label{202001241622}
If $\varrho : A_0 \to A_1$ and $\varrho^\prime : A_1 \to A_2$ are realized as a nontrivial integral along a cospan, then the composition $\varrho^\prime \circ \varrho$ is realized as a nontrivial integral along a cospan.
\end{prop}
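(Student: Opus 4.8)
The plan is to realize $\varrho' \circ \varrho$ as the integral along the \emph{composite} of the two given cospans, formed by a pushout, and to show that this integral coincides, up to a nonzero scalar, with $\int_{\Lambda'} \circ \int_\Lambda$. Write $\varrho = \lambda \cdot \int_\Lambda$ and $\varrho' = \lambda' \cdot \int_{\Lambda'}$ with $\lambda, \lambda' \in k^\ast$, where $\Lambda = \bigl( A_0 \xrightarrow{\xi_0} B \xleftarrow{\xi_1} A_1 \bigr)$ and $\Lambda' = \bigl( A_1 \xrightarrow{\xi_0'} B' \xleftarrow{\xi_1'} A_2 \bigr)$. Since $\mathsf{Hopf}^\mathsf{bc,vol}_k$ is an abelian category (Corollary \ref{202002211513} and the text following it), the pushout $P$ of $B \xleftarrow{\xi_1} A_1 \xrightarrow{\xi_0'} B'$ exists in $\mathsf{Hopf}^\mathsf{bc,vol}_k$; denote its structure morphisms by $j : B \to P$ and $j' : B' \to P$. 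I would then take the composite cospan to be $\Lambda'' = \bigl( A_0 \xrightarrow{j \circ \xi_0} P \xleftarrow{j' \circ \xi_1'} A_2 \bigr)$.

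Next I would compute $\int_{\Lambda''} = \mu_{j' \circ \xi_1'} \circ j \circ \xi_0$ and compare it with $\int_{\Lambda'} \circ \int_\Lambda = \mu_{\xi_1'} \circ \xi_0' \circ \mu_{\xi_1} \circ \xi_0$. First, applying the composition rule for normalized integrals (Proposition \ref{202005301216}) to $j' \circ \xi_1' : A_2 \to P$ gives $\mu_{j' \circ \xi_1'} = \lambda_1^{-1} \cdot \mu_{\xi_1'} \circ \mu_{j'}$, where $\lambda_1 = \langle \partial ( j' , \xi_1' ) \rangle \in k^\ast$. Second, the key input is the base-change identity $\mu_{j'} \circ j = \xi_0' \circ \mu_{\xi_1}$ attached to the pushout square, which I would deduce from Proposition \ref{202102071242}. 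Substituting these two facts yields $\int_{\Lambda''} = \lambda_1^{-1} \cdot \mu_{\xi_1'} \circ \xi_0' \circ \mu_{\xi_1} \circ \xi_0 = \lambda_1^{-1} \cdot \int_{\Lambda'} \circ \int_\Lambda$. Hence $\varrho' \circ \varrho = \lambda \lambda' \cdot \int_{\Lambda'} \circ \int_\Lambda = \lambda \lambda' \lambda_1 \cdot \int_{\Lambda''}$, and since $\lambda \lambda' \lambda_1 \in k^\ast$ this exhibits $\varrho' \circ \varrho$ as a nontrivial integral along the cospan $\Lambda''$, as required.

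The main obstacle is the base-change identity $\mu_{j'} \circ j = \xi_0' \circ \mu_{\xi_1}$. To invoke Proposition \ref{202102071242} I must first check that the pushout square with corners $A_1, B, B', P$ is an exact square in the sense of \cite{kim2021integrals}; note that this square need not be bicartesian (the comparison morphism $A_1 \to B \oplus B'$ may fail to be monic), so the hypothesis has to be verified directly against the definition rather than assumed. I also need to confirm the orientation of the exact-square formula, so that Proposition \ref{202102071242} produces precisely the diagonal identity $\mu_{j'} \circ j = \xi_0' \circ \mu_{\xi_1}$ rather than its companion $\mu_{j} \circ j' = \xi_1 \circ \mu_{\xi_0'}$; a direct check with group Hopf algebras confirms that the intended identity holds with trivial scalar. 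A secondary point, easily dispatched, is that $P$ genuinely lies in $\mathsf{Hopf}^\mathsf{bc,vol}_k$, which follows since the category is closed under short exact sequences (Theorem \ref{202002211501}) and hence contains the relevant cokernel.
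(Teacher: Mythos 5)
Your proposal is correct and follows essentially the same route as the paper: form the composite cospan via the pushout, apply the composition rule for normalized integrals (Proposition \ref{202005301216}) to the leg $j' \circ \xi_1'$, and use the base-change identity of Proposition \ref{202102071242} on the pushout square to match $\int_{\Lambda''}$ with $\int_{\Lambda'} \circ \int_\Lambda$ up to the unit $\lambda_1$. The only difference is that you make explicit the base-change step $\mu_{j'} \circ j = \xi_0' \circ \mu_{\xi_1}$ (and the exactness of the pushout square needed to invoke it, which holds since a pushout square in an abelian category has exact induced chain complex $A_1 \to B \oplus B' \to P$), whereas the paper uses this identity tacitly in passing from $\mu_{\xi'_2}\circ\xi'_1\circ\mu_{\xi_1}\circ\xi_0$ to $\mu_{\xi'_2}\circ\mu_{\varphi'}\circ\varphi\circ\xi_0$.
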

\begin{proof}
Suppose that $\varrho = \lambda \cdot \int_\Lambda$ and $\varrho^\prime = \lambda^\prime \cdot \int_{\Lambda^\prime}$ for some $\lambda,\lambda^\prime \in k^\ast$ and some cospans $\Lambda,\Lambda^\prime$ in the category $\mathsf{Hopf}^\mathsf{bc,vol}_k$.
Let $\Lambda = \left( A_0 \stackrel{\xi_0}{\to} B \stackrel{\xi_1}{\leftarrow} A_1 \right)$ and $\Lambda^\prime = \left( A_1 \stackrel{\xi^\prime_1}{\to} B^\prime \stackrel{\xi^\prime_2}{\leftarrow} A_2 \right)$.
Recall that the composition $\Lambda^\prime \circ \Lambda$ is defined by $\left( A_0 \stackrel{\varphi\circ\xi_0}{\to} B^{\prime\prime} \stackrel{\varphi^\prime\circ\xi^\prime_2}{\leftarrow} A_2 \right)$ where $B^{\prime\prime}$ is given by the pushout diagram (\ref{201912251725}).
We obtain $\int_{\Lambda^\prime} \circ \int_\Lambda = \mu_{\xi^\prime_2} \circ \mu_{\varphi^\prime} \circ \varphi \circ \xi_0$.
Since we have $\mu_{\xi^\prime_2} \circ \mu_{\varphi^\prime} = \lambda^{\prime\prime} \cdot \mu_{\varphi^\prime \circ \xi^\prime_2}$ for $\lambda^{\prime\prime} = \langle cok ( \xi^\prime_2 ) \circ ker (\varphi^\prime ) \rangle \in k^\ast$ by Proposition \ref{202005301216}, we obtain $\int_{\Lambda^\prime} \circ \int_\Lambda = \lambda^{\prime\prime} \cdot \int_{\Lambda^\prime \circ \Lambda}$, hence $\varrho^\prime \circ \varrho = \lambda^{\prime\prime\prime} \cdot \int_{\Lambda^\prime \circ \Lambda}$ where $\lambda^{\prime\prime\prime} = \lambda^{\prime\prime}\cdot \lambda^\prime \cdot \lambda$.
By definition, the composition $\varrho^\prime \circ \varrho$ is realized as a nontrivial integral along a cospan.
\begin{equation}
\label{201912251725}
\begin{tikzcd}
& B^{\prime\prime}  & \\
B \ar[ur, "\varphi"] & & B^\prime \ar[ul, "\varphi^\prime"'] \\
& A_1 \ar[ul, "\xi_1"] \ar[ur, "\xi^\prime_1"'] &
\end{tikzcd}
\end{equation}
\end{proof}

\begin{theorem}[Invariance of integrals]
\label{201912302324}
An equivalence $\Lambda \approx \Lambda^\prime$ (see subsection \ref{202102021122}) implies $\int_{\Lambda} = \int_{\Lambda^\prime}$.
\end{theorem}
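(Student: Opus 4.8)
The plan is to reduce the statement to its defining datum and then to establish that the normalized integral is natural with respect to isomorphisms of the apex. Recall that for two cospans with common feet, $\Lambda = \left( A_0 \stackrel{\xi_0}{\to} B \stackrel{\xi_1}{\leftarrow} A_1 \right)$ and $\Lambda^\prime = \left( A_0 \stackrel{\xi_0^\prime}{\to} B^\prime \stackrel{\xi_1^\prime}{\leftarrow} A_1 \right)$, an equivalence $\Lambda \approx \Lambda^\prime$ (subsection \ref{202102021122}) amounts to an isomorphism $\phi : B \to B^\prime$ of the apexes satisfying $\phi \circ \xi_0 = \xi_0^\prime$ and $\phi \circ \xi_1 = \xi_1^\prime$. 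Since the equivalence relation is generated by such apex-isomorphisms, and these are closed under composition and inversion, it suffices to treat a single $\phi$.

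First I would record that $\mu_\phi = \phi^{-1}$. Indeed, an isomorphism in $\mathsf{Hopf}^\mathsf{bc,vol}_k$ is simultaneously a monomorphism and an epimorphism, so Proposition \ref{202006030739} gives $\phi \circ \mu_\phi = id_{B^\prime}$ and $\mu_\phi \circ \phi = id_B$, whence $\mu_\phi = \phi^{-1}$. The heart of the argument is then the naturality $\mu_{\xi_1^\prime} = \mu_{\xi_1} \circ \phi^{-1}$ for $\xi_1^\prime = \phi \circ \xi_1$. I would obtain this from the composition rule (Proposition \ref{202005301216}) applied to the pair $A_1 \stackrel{\xi_1}{\to} B \stackrel{\phi}{\to} B^\prime$, which yields $\mu_{\xi_1} \circ \mu_\phi = \langle \partial ( \phi , \xi_1 ) \rangle \cdot \mu_{\phi \circ \xi_1}$. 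The correction scalar vanishes to $1$: by Definition \ref{202101301421}, $\partial ( \phi , \xi_1 ) = \mathrm{cok_H} ( \xi_1 ) \circ \mathrm{ker_H} ( \phi )$, and because $\phi$ is an isomorphism its kernel Hopf algebra is trivial, so $\mathrm{ker_H} ( \phi )$ is the unit $\eta_B$ and $\partial ( \phi , \xi_1 ) = \mathrm{cok_H} ( \xi_1 ) \circ \eta_B = \eta_{\mathrm{Cok_H} ( \xi_1 )}$ is the unit of the cokernel. Hence $\langle \partial ( \phi , \xi_1 ) \rangle = \sigma^{\mathrm{Cok_H} ( \xi_1 )} \circ \eta_{\mathrm{Cok_H} ( \xi_1 )} \circ \sigma_k = \sigma^{\mathrm{Cok_H} ( \xi_1 )} ( 1 ) = 1$, using the normalization of the cointegral together with $\sigma_k = id_k$. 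Combining with $\mu_\phi = \phi^{-1}$ gives $\mu_{\xi_1^\prime} = \mu_{\xi_1} \circ \phi^{-1}$.

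Finally I would assemble the pieces, using $\xi_0^\prime = \phi \circ \xi_0$:
\[
\int_{\Lambda^\prime} = \mu_{\xi_1^\prime} \circ \xi_0^\prime = \mu_{\xi_1} \circ \phi^{-1} \circ ( \phi \circ \xi_0 ) = \mu_{\xi_1} \circ \xi_0 = \int_\Lambda .
\]

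I expect the main obstacles to be two bookkeeping points. The first is confirming the precise definition of equivalence in subsection \ref{202102021122}: one must check that it is generated by apex-isomorphisms compatible with the legs rather than by a weaker zig-zag or a mere weak equivalence, and, if the feet are themselves allowed to differ by isomorphisms, that one simply conjugates by those isomorphisms as well. The second is the verification that the scalar $\langle \partial ( \phi , \xi_1 ) \rangle$ is exactly $1$, which is the crux of the naturality statement. If one prefers to avoid Proposition \ref{202005301216}, the identity $\mu_{\phi \circ \xi_1} = \mu_{\xi_1} \circ \phi^{-1}$ can instead be deduced from the uniqueness clause of Corollary \ref{202001251639} by checking directly that $\mu_{\xi_1} \circ \phi^{-1}$ satisfies the defining axioms of a normalized generator integral along $\phi \circ \xi_1$.
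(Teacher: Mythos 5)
Your core computation is sound, but as written it proves a strictly weaker statement than the theorem. The equivalence $\approx$ is \emph{not} generated by apex-isomorphisms: by Definition \ref{201912112309} it is generated by the preorder $\preceq$, where $\Lambda \preceq \Lambda^\prime$ requires only a \emph{monomorphism} $g : B \to B^\prime$ (with the feet literally equal) satisfying $g \circ f_0 = f^\prime_0$ and $g \circ f_1 = f^\prime_1$. This relation is genuinely larger than the one you treat: Example \ref{202102140944} exhibits $\approx$-equivalent cospans $(0 \to 0 \leftarrow 0)$ and $(0 \to A \leftarrow 0)$ whose apexes are not isomorphic. So the ``bookkeeping point'' you flagged at the end is exactly where the argument breaks, and the case you handle does not exhaust the equivalence classes.

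Fortunately the repair is immediate, and the repaired argument is precisely the paper's proof. Reduce to a single generating step $\Lambda \preceq \Lambda^\prime$ with $g$ a monomorphism; a general equivalence is a zig-zag of such steps and equality of integrals is symmetric and transitive. Your two key facts survive verbatim for a monomorphism: $\mathrm{Ker_H}(g) \cong k$ (a monomorphism in an abelian category has trivial kernel), so $\langle \partial ( g , f_1 ) \rangle = \sigma^{\mathrm{Cok_H}(f_1)} \circ \eta \circ \sigma_k = 1$ and Proposition \ref{202005301216} gives $\mu_{f_1} \circ \mu_g = \mu_{g \circ f_1}$; and the second part of Proposition \ref{202006030739} gives $\mu_g \circ g = Id_B$. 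These are the only properties you actually use --- the identities $\mu_\phi = \phi^{-1}$ and $\phi \circ \mu_\phi = id_{B^\prime}$ are never needed --- so the chain $\int_{\Lambda^\prime} = \mu_{g \circ f_1} \circ g \circ f_0 = \mu_{f_1} \circ \mu_g \circ g \circ f_0 = \mu_{f_1} \circ f_0 = \int_{\Lambda}$ goes through unchanged, which is exactly how the paper argues.
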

\begin{proof}
By the definition of $\approx$, it suffices to prove that a preorder $\Lambda \preceq \Lambda^\prime$ implies $\int_{\Lambda} = \int_{\Lambda^\prime}$ (see Definition \ref{201912112309}).
The inverse volume $\langle \mathrm{cok_H} (f_1) \circ \mathrm{ker_H}(g) \rangle =\sigma^{\mathrm{Cok_H}(f_1)} \circ \mathrm{cok_H} (f_1) \circ \mathrm{ker_H} (g) \circ \sigma_{\mathrm{Ker_H}(g)}$ is $1 \in k$ since $\mathrm{Ker_H}(g) \cong k$ and $\sigma^{\mathrm{Cok_H}(f_1)}$ is a normalized cointegral.
By Proposition \ref{202005301216}, we have $\mu_{f_1} \circ \mu_g = \mu_{g \circ f_1}$.
Hence, we obtain $\int_{\Lambda^\prime} = \mu_{g \circ f_1} \circ g \circ f_0 = \mu_{f_1} \circ \mu_g \circ g \circ f_0$.
By the second part in Proposition \ref{202006030739}, we have $\mu_g \circ g = Id_{B}$ so that we obtain $\int_{\Lambda^\prime} = \mu_{f_1} \circ f_0 = \int_{\Lambda}$.
It completes the proof.
\end{proof}

We introduce {\it an integral along a span} as an analogue of Definition \ref{202001110741}.

\begin{Defn}
Let $\mathrm{V} = \left( A_0 \stackrel{\xi_0}{\leftarrow} B \stackrel{\xi_1}{\to} A_1 \right)$ be a span in the category $\mathsf{Hopf}^\mathsf{bc,vol}_k$.
{\it An integral along $\mathrm{V}$} is defined by $\int^\mathrm{V} = \xi_1 \circ \mu_{\xi_0}$.
\end{Defn}

\begin{prop}
\label{202102071235}
For a span $\mathrm{V}$, we have
\begin{align}
\label{202102071251}
\int^\mathrm{V} = \int_{\mathrm{V}^\mathrm{T}} .
\end{align}
Here, $\mathrm{T}$ is the transposition between cospans and spans (see (\ref{202102071200}) in subsection \ref{202102021122}).
Especially, a linear homomorphism is realized by a nontrivial integral along a cospan if and only if it is realized by a nontrivial integral along a span.
\end{prop}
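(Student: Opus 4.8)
The plan is to unwind both sides of (\ref{202102071251}) and to recognise the right-hand side as the integral along the cospan produced from $\mathrm{V}$ by the transposition, which in the abelian category $\mathsf{Hopf}^\mathsf{bc,vol}_k$ is computed by a pushout. Writing $\mathrm{V} = \left( A_0 \stackrel{\xi_0}{\leftarrow} B \stackrel{\xi_1}{\to} A_1 \right)$, the transpose $\mathrm{V}^\mathrm{T}$ is the cospan $\left( A_0 \stackrel{j_0}{\to} P \stackrel{j_1}{\leftarrow} A_1 \right)$, where $(P, j_0, j_1)$ is the pushout of $A_0 \stackrel{\xi_0}{\leftarrow} B \stackrel{\xi_1}{\to} A_1$, so that $j_0 \circ \xi_0 = j_1 \circ \xi_1$. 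By Definition \ref{202001110741} we have $\int_{\mathrm{V}^\mathrm{T}} = \mu_{j_1} \circ j_0$, whereas $\int^\mathrm{V} = \xi_1 \circ \mu_{\xi_0}$ by definition. Hence the statement reduces to the single identity $\mu_{j_1} \circ j_0 = \xi_1 \circ \mu_{\xi_0}$, namely a Beck--Chevalley type commutation of an integral with a morphism across the pushout square.

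I would then read this identity off Proposition \ref{202102071242}. The commuting pushout square with apex $B$ and outer vertex $P$ is an \emph{exact square} of the shape (\ref{201912190830}); matching $f = \xi_0$, $g = j_0$, $f' = \xi_1$ and $g' = j_1$, the conclusion $\mu_{g'} \circ g = f' \circ \mu_f$ of Proposition \ref{202102071242} becomes exactly $\mu_{j_1} \circ j_0 = \xi_1 \circ \mu_{\xi_0}$, which gives $\int^\mathrm{V} = \int_{\mathrm{V}^\mathrm{T}}$. Since the integral along a cospan is invariant under the equivalence $\approx$ (Theorem \ref{201912302324}) while the transpose is only well defined up to that equivalence, no ambiguity arises on the right-hand side.

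For the final clause I would run the same reduction in the dual direction. If $\varrho = \lambda \cdot \int^\mathrm{V}$, then $\varrho = \lambda \cdot \int_{\mathrm{V}^\mathrm{T}}$ is realised along the cospan $\mathrm{V}^\mathrm{T}$. Conversely, given a cospan $\Lambda = \left( A_0 \stackrel{\eta_0}{\to} C \stackrel{\eta_1}{\leftarrow} A_1 \right)$, its transpose is the span $\Lambda^\mathrm{T} = \left( A_0 \stackrel{p_0}{\leftarrow} Q \stackrel{p_1}{\to} A_1 \right)$ given by the pullback $Q = A_0 \times_C A_1$; applying Proposition \ref{202102071242} to the exact pullback square with the matching $f = p_0$, $g = \eta_0$, $f' = p_1$, $g' = \eta_1$ yields $\mu_{\eta_1} \circ \eta_0 = p_1 \circ \mu_{p_0}$, that is $\int_\Lambda = \int^{\Lambda^\mathrm{T}}$. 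Thus $\varrho = \lambda \cdot \int_\Lambda$ is realised along the span $\Lambda^\mathrm{T}$, and the two notions of realisability coincide.

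The main obstacle is the middle step: confirming that the pushout (respectively pullback) square produced by the transposition is literally an instance of the exact square diagram (\ref{201912190830}) to which Proposition \ref{202102071242} applies, including the mono/epi data exploited in its proof (via Proposition \ref{202006030739}), and checking that the labelling of $f,g,f',g'$ is oriented so that the conclusion lands as $\mu_{j_1} \circ j_0 = \xi_1 \circ \mu_{\xi_0}$ rather than the transposed relation. Once the square is matched to the hypotheses, the remaining manipulations are purely formal.
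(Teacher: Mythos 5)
Your proposal is correct and follows essentially the same route as the paper, whose entire proof is that the identity is "a simple application of Proposition \ref{202102071242}"; you have simply made explicit the matching of the (exact) pushout/pullback square with the labels $f,g,f',g'$ in (\ref{201912190830}), which is exactly the intended computation. The details you supply — including the well-definedness up to $\approx$ via Theorem \ref{201912302324} and the dual argument for the final clause — are accurate.
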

\begin{proof}
It is a result of a simple application of Proposition \ref{202102071242}.
\end{proof}

%%%%%%%%%%%%%%%%%%%%%%%%%%%%

\section{The category $\mathsf{C}_k$}
\label{202002221748}

In this section, we introduce a symmetric monoidal category $\mathsf{C}_k$ consisting of bicommutative Hopf algebras with a finite volume.
It is a category in which integrals along cospans in $\mathsf{Hopf}^\mathsf{bc,vol}_k$ could lie without losing the Hopf algebra structures of the source and target.
Moreover, we prove that the category is a pivotal category.

\begin{Defn}
For a field $k$, we introduce a category $\mathsf{C}_k$ of bicommutative Hopf algebras over $k$ with a finite volume.
Consider a category $\mathcal{N}$ of bicommutative Hopf algebras with a finite volume defined as follows.
For two objects $A,B$ of $\mathcal{N}$, the morphism set $Mor_{\mathcal{N}} ( A, B )$ consists of linear homomorphisms.
Then the category $\mathsf{C}_k$ is the smallest subcategory of $\mathcal{N}$ which contains all the linear homomorphism realized as a nontrivial integral along a cospan.
Equivalently, it is the smallest subcategory which contains the following three classes of morphisms :
\begin{itemize}
\item
a Hopf homomorphism $\xi : A \to B$ for objects $A,B$ of $\mathcal{N}$,
\item
a morphism $\mu : A \to B$ in $\mathcal{N}$ which is a normalized integral along some Hopf homomorphism $\xi : B \to A$,
\item
an automorphism on the unit object $k$ in $\mathcal{N}$.
\end{itemize}
\end{Defn}

\begin{prop}
Any morphism in $\mathsf{C}_k$ is realized as a nontrivial integral along cospans. 
\end{prop}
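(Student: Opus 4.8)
The plan is to show that the class of morphisms already realized as nontrivial integrals along cospans is itself a subcategory, so that by minimality it cannot be properly contained in $\mathsf{C}_k$ and must coincide with it.

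First I would fix notation: let $\mathcal{I}$ denote the class of all linear homomorphisms between objects of $\mathcal{N}$ that are realized as a nontrivial integral along a cospan in the sense of Definition \ref{202001110741}. By the very definition of $\mathsf{C}_k$ as the smallest subcategory of $\mathcal{N}$ containing every such morphism, we have the inclusion $\mathcal{I} \subseteq \mathsf{C}_k$. It therefore suffices to establish the reverse inclusion, and for this it is enough to check that $\mathcal{I}$ is itself a subcategory of $\mathcal{N}$; the minimality of $\mathsf{C}_k$ then forces $\mathsf{C}_k \subseteq \mathcal{I}$, giving equality.

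To verify that $\mathcal{I}$ is a subcategory I would check closure under composition and the presence of all identities. Closure under composition is exactly the content of Proposition \ref{202001241622}. For the identities, I would fix an object $A$ and consider the constant cospan $\Lambda_A = \left( A \stackrel{id_A}{\to} A \stackrel{id_A}{\leftarrow} A \right)$. Since $id_A$ is simultaneously an epimorphism and a monomorphism in $\mathsf{Hopf}^\mathsf{bc}_k$, Proposition \ref{202006030739} gives $id_A \circ \mu_{id_A} = id_A$, whence $\mu_{id_A} = id_A$. Consequently $\int_{\Lambda_A} = \mu_{id_A} \circ id_A = id_A$, so that $id_A = 1 \cdot \int_{\Lambda_A}$ lies in $\mathcal{I}$ (with $\lambda = 1 \in k^\ast$). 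In particular every object of $\mathcal{N}$ is a source and target of a morphism of $\mathcal{I}$, so $\mathcal{I}$ is a genuine subcategory.

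Combining these observations, $\mathcal{I}$ is a subcategory of $\mathcal{N}$ containing every linear homomorphism realized as a nontrivial integral along a cospan, so minimality yields $\mathsf{C}_k \subseteq \mathcal{I}$, and together with $\mathcal{I} \subseteq \mathsf{C}_k$ we conclude $\mathsf{C}_k = \mathcal{I}$, which is the claim. I do not expect any serious obstacle here: the only nonformal ingredient is the identification $\mu_{id_A} = id_A$, a one-line consequence of Proposition \ref{202006030739}, while the substantive point — stability of the class under composition — is already packaged in Proposition \ref{202001241622}.
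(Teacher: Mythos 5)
Your proof is correct and follows essentially the same route as the paper: both arguments rest entirely on Proposition \ref{202001241622} (closure of realized morphisms under composition), the paper phrasing it as "every morphism of $\mathsf{C}_k$ is a composite of generators, and such a composite reduces to a single realized integral," while you phrase it as "the realized morphisms already form a subcategory, so minimality forces equality." Your explicit check that identities are realized via the cospan $(A \xrightarrow{id} A \xleftarrow{id} A)$ is a small point the paper leaves implicit, but it does not change the substance of the argument.
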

\begin{proof}
By definitions, any morphism in $\mathsf{C}_k$ is given by a composition of linear homomorphisms realized as nontrivial integrals along cospans.
By Proposition \ref{202001241622}, such a composition is reduced to only one linear homomorphism realized as a nontrivial integral along a cospan.
\end{proof}

\begin{Defn}
\label{202006081524}
Let $A$ be a bicommutative Hopf algebra with a finite volume.
Let $\Lambda = \left( k \stackrel{\eta}{\to} A \stackrel{\nabla}{\leftarrow} A \otimes A \right)$ be a cospan in $\mathsf{Hopf}^\mathsf{bc,vol}_k$.
We define morphisms $\mathrm{i}_A : k \to A \otimes A$ and $\mathrm{e}_A : A \otimes A \to k$ in $\mathsf{C}_k$ by
\begin{align}
\mathrm{i}_A &\stackrel{\mathrm{def.}}{=} \int_\Lambda , \\
\mathrm{e}_A &\stackrel{\mathrm{def.}}{=} a^{-1} \cdot \int_{\Lambda^\dagger} .
\end{align}
Here, $a = \mathrm{vol}^{-1} (A) \in k^\ast$ is the inverse volume of $A$.
\end{Defn}

\begin{prop}
\label{202005301300}
The morphisms $\mathrm{i}_A, \mathrm{e}_A$ give a symmetric self-duality of $A$ in $\mathsf{C}_k$.
\end{prop}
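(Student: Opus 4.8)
The plan is to verify that the pair $(\mathrm{i}_A,\mathrm{e}_A)$ satisfies the defining data of a symmetric self-duality of $A$ in $\mathsf{C}_k$: the two triangle (zig-zag) identities
\begin{align}\notag
(\mathrm{e}_A\otimes\mathrm{id}_A)\circ(\mathrm{id}_A\otimes\mathrm{i}_A)=\mathrm{id}_A,\qquad (\mathrm{id}_A\otimes\mathrm{e}_A)\circ(\mathrm{i}_A\otimes\mathrm{id}_A)=\mathrm{id}_A,
\end{align}
together with the symmetry conditions $\tau_{A,A}\circ\mathrm{i}_A=\mathrm{i}_A$ and $\mathrm{e}_A\circ\tau_{A,A}=\mathrm{e}_A$, where $\tau$ is the braiding. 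Since every morphism of $\mathsf{C}_k$ is a plain $k$-linear map, all of these are identities in $\mathsf{Vec}_k$, so I would first unwind the definitions to $\mathrm{i}_A=\mu_\nabla\circ\eta$ and $\mathrm{e}_A=a^{-1}\cdot\sigma^A\circ\nabla$, where $\mu_\nabla$ is the normalized integral along the multiplication $\nabla\colon A\otimes A\to A$, the functional $\sigma^A=\mu_\eta$ is the normalized cointegral, and $a=\mathrm{vol}^{-1}(A)$.

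The symmetry is the easy half. For $\mathrm{e}_A$ it is immediate from commutativity of $A$, since $\nabla\circ\tau=\nabla$ gives $\mathrm{e}_A\circ\tau=a^{-1}\sigma^A\circ\nabla\circ\tau=\mathrm{e}_A$. For $\mathrm{i}_A$ it suffices to show $\tau\circ\mu_\nabla=\mu_\nabla$: the map $\tau\circ\mu_\nabla$ is again a normalized integral along $\nabla$ (the integral axioms are preserved by the Hopf automorphism $\tau$ precisely because $\nabla\circ\tau=\nabla$), so it coincides with $\mu_\nabla$ by the uniqueness in Corollary \ref{202001251639}.

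The triangle identities are the substance, and I would reduce them to a single statement about the element $\omega:=\mathrm{i}_A(1)=\mu_\nabla(1_A)\in A\otimes A$. Writing $\omega=\sum_i u_i\otimes v_i$, the second zig-zag unwinds to the linear identity $a^{-1}\sum_i\sigma^A(v_i\,x)\,u_i=x$ for all $x\in A$, and the first follows from it using the symmetry relations and naturality of the braiding. Two properties of $\omega$ drive the computation. First, $\nabla$ is an epimorphism, so Proposition \ref{202006030739} gives $\nabla\circ\mu_\nabla=\mathrm{id}_A$, i.e. $\sum_i u_iv_i=1_A$. Second, I would identify $\omega$ with the canonical separability element $(\mathrm{id}_A\otimes S)\,\Delta(\sigma_A)=\sum(\sigma_A)_{(1)}\otimes S((\sigma_A)_{(2)})$ attached to the normalized integral $\sigma_A$ (viewed as the element $\sigma_A(1_k)\in A$) and the antipode $S$, which one checks satisfies the integral axioms for $\nabla$ and hence equals $\mu_\nabla(1_A)$ by uniqueness again. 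With this description the required identity becomes the Frobenius relation
\begin{align}\notag
\sum\sigma^A\!\big(S((\sigma_A)_{(2)})\,x\big)\,(\sigma_A)_{(1)}=\mathrm{vol}^{-1}(A)\cdot x,
\end{align}
which I would prove by pairing both sides with an arbitrary $y\in A$ under the nondegenerate form $(x,y)\mapsto\sigma^A(xy)$ and simplifying via the integral/cointegral relations, using $a=\sigma^A\circ\sigma_A=\mathrm{vol}^{-1}(A)\in k^\ast$ (Corollary \ref{202002211513}) to absorb the prefactor.

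I expect the scalar bookkeeping to be the main obstacle: the entire purpose of the normalization $a^{-1}$ in $\mathrm{e}_A$ is that $\omega$ pairs with $\sigma^A$ to give exactly $\mathrm{vol}^{-1}(A)\cdot\mathrm{id}_A$ rather than $\mathrm{id}_A$, and checking that this factor is precisely $a$ — and not a dimension or a square thereof — is where the definition of $\mathrm{vol}^{-1}$ must be used with care. A secondary subtlety is that $\mu_\nabla$ is only a morphism of $\mathsf{Vec}_k$, not of Hopf algebras, so none of the manipulations may invoke Hopf-functoriality of $\mu_\nabla$ directly; they must pass through the section identity $\nabla\circ\mu_\nabla=\mathrm{id}_A$ and the explicit separability description of $\omega$. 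As an alternative that stays inside the paper's cospan formalism, one could instead write each zig-zag as an integral along the composite of the defining cospans, evaluate the relevant pushout, and match the composition scalar of Proposition \ref{202005301216} against $a$; this avoids the Sweedler calculus but relocates the same scalar-matching difficulty into computing the pushout and the class $\langle\partial(-,-)\rangle$.
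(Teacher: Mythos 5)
Your proposal is correct in substance but proves the duality by a genuinely different route than the paper. The paper's proof is diagrammatic: it reduces the zig\-zag, using only the defining axioms of the normalized generator integral $\mu_\nabla$ from \cite{kim2021integrals}, to the composite $(\mathrm{id}_A\otimes\sigma^A)\circ\mu_\nabla$, recognizes this as a composite of normalized integrals $\mu_{\mathrm{id}_A\otimes\eta}\circ\mu_\nabla$, and then extracts the scalar $\mathrm{vol}^{-1}(A)$ from the composition formula of Proposition \ref{202005301216} --- exactly the ``alternative'' you sketch in your last sentence, which is therefore closer to the paper than your main line. Your main line instead works at the level of elements: you identify $\mu_\nabla(1_A)$ with the separability element $(\mathrm{id}_A\otimes S)\Delta(\sigma_A)$ and reduce the zig\-zag to a Frobenius relation for the form $\sigma^A\circ\nabla$. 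This is sound (and checks out on $kG$ and $k^G$), and it has the virtue of making the Casimir element and the role of the normalization $a^{-1}$ completely explicit; what it costs is that you must import or reprove two Larson--Sweedler-type facts the paper never needs, and there are two small points to tighten: the uniqueness statement of Corollary \ref{202001251639} applies to the map $\mu_\nabla$, not to the single element $\mu_\nabla(1_A)$, so to invoke it you must exhibit the full candidate $x\mapsto(x\otimes 1)\cdot(\mathrm{id}_A\otimes S)\Delta(\sigma_A)$ and verify the generator-integral axioms for it; and the Frobenius relation itself, with the precise factor $\mathrm{vol}^{-1}(A)$ rather than a dimension, still requires the pairing argument you describe. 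Neither is a gap in the idea, only in the execution. A further remark: your symmetry claims $\tau\circ\mu_\nabla=\mu_\nabla$ can also be obtained without touching the integral axioms, by applying Proposition \ref{202102071242} to the exact square with $f=\nabla$, $g=\mathrm{id}_A$, $f'=\tau$, $g'=\nabla$, which stays entirely inside the paper's formalism.
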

\begin{proof}
Since $A$ is bicommutative, we have $\tau \circ \mathrm{i}_A = \mathrm{i}_A$, $\mathrm{e}_A \circ \tau = \mathrm{e}_A$ where $\tau : A \otimes A \to A \otimes A ; x \otimes y \mapsto y \otimes x$.
All that remain is to prove that $\mathrm{i}_A , \mathrm{e}_A$ form a duality.
Let $e^\prime_A = \int_{\Lambda^\dagger}$.
Then a zigzag diagram is computed as Figure \ref{202005300947}.
Note that in the third equation we use the definition of the normalized integral $\mu_\nabla$.
The third equality holds since $\mu_\nabla$ is an integral along the multiplication $\nabla$.
Note that the normalized cointegral $\sigma^A$ is a normalized integral along the unit $\eta : k \to A$.
The last morphism $(id_A \otimes \sigma_A) \circ \mu_\nabla : A \to A$ is proportional to a normalized integral along the identity on $A$.
The proportional factor coincides with $\mathrm{vol}^{-1} (A)$ due to Proposition \ref{202005301216}.
It completes the proof since $\mathrm{i}_A,\mathrm{e}_A$ are symmetric.
\begin{figure}[h]
  \includegraphics[width=\linewidth]{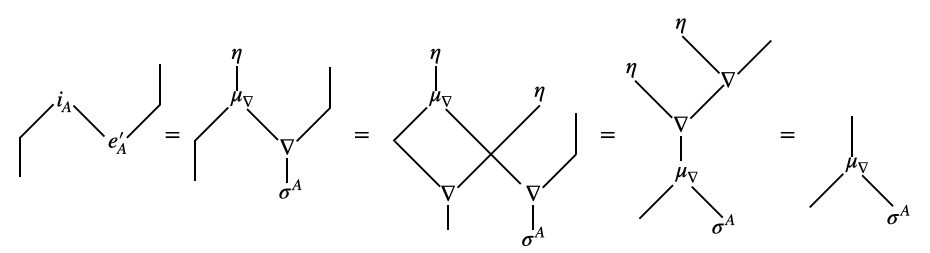}
  \caption{A part of the proof of Proposition \ref{202005301300}}
  \label{202005300947}
\end{figure}
\end{proof}

\begin{Corollary}
\label{202101232004}
The symmetric monoidal category $\mathsf{C}_k$ is a pivotal category.
Moreover, the dimension of an object $A$ in the pivotal category coincides with $\mathrm{vol}^{-1} (A)^{-1}$.
\end{Corollary}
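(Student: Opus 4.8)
The plan is to derive both assertions from Proposition \ref{202005301300}, which already equips each object $A$ with a \emph{symmetric} self-duality $(\mathrm{i}_A, \mathrm{e}_A)$ inside the symmetric monoidal category $\mathsf{C}_k$, whose tensor product is $\otimes$ and whose symmetry is the flip $\tau$. Since Proposition \ref{202005301300} verifies the zig-zag identities with $A^\ast = A$, every object of $\mathsf{C}_k$ is dualizable, so $\mathsf{C}_k$ is a rigid (autonomous) symmetric monoidal category. I would then invoke the classical fact that a rigid \emph{symmetric} (more generally, braided) monoidal category carries a canonical pivotal structure: the symmetry promotes each left duality to a two-sided duality, and the induced comparison morphisms $A \to A^{\ast\ast}$ assemble into a monoidal natural isomorphism $\mathrm{id}_{\mathsf{C}_k} \Rightarrow (-)^{\ast\ast}$.

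Concretely, because $A^\ast = A$ forces $A^{\ast\ast} = A$, and because the duality data are $\tau$-invariant (by Proposition \ref{202005301300} we have $\tau \circ \mathrm{i}_A = \mathrm{i}_A$ and $\mathrm{e}_A \circ \tau = \mathrm{e}_A$), this canonical comparison reduces to $\mathrm{id}_A$. Thus the family $\{\mathrm{id}_A\}_A$ furnishes the pivotal structure, and the only genuine verification is that this family is a \emph{monoidal} natural isomorphism; this follows from the coherence of the symmetric structure together with the fact that the chosen self-dualities are themselves symmetric and hence behave multiplicatively under $\otimes$. The same $\tau$-invariance makes the left and right traces agree, so $\mathsf{C}_k$ is in fact spherical and the categorical dimension is unambiguous.

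It then remains to identify that dimension. Under the trivial pivotal structure and using the $\tau$-invariance to absorb the braiding appearing in the trace, the dimension of $A$ is the scalar $\dim(A) = \mathrm{e}_A \circ \mathrm{i}_A \in \mathrm{End}_{\mathsf{C}_k}(k) \cong k$. Unwinding Definition \ref{202006081524}, I have $\mathrm{i}_A = \mu_\nabla \circ \eta$ and $\mathrm{e}_A = a^{-1}\cdot (\sigma^A \circ \nabla)$ with $a = \mathrm{vol}^{-1}(A)$, where I used that the integral along the reversed cospan $\Lambda^\dagger$ is $\mu_\eta \circ \nabla = \sigma^A \circ \nabla$. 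Hence
\[
\dim(A) = a^{-1}\cdot \sigma^A \circ \nabla \circ \mu_\nabla \circ \eta .
\]
Now $\nabla : A \otimes A \to A$ is a split epimorphism in $\mathsf{Hopf}^\mathsf{bc}_k$ (with section $\mathrm{id}_A \otimes \eta$), so Proposition \ref{202006030739}(1) gives $\nabla \circ \mu_\nabla = \mathrm{id}_A$; and $\eta : k \to A$ is a monomorphism in $\mathsf{Hopf}^\mathsf{bc}_k$, so Proposition \ref{202006030739}(2) gives $\sigma^A \circ \eta = \mu_\eta \circ \eta = \mathrm{id}_k$. Therefore $\dim(A) = a^{-1} = \mathrm{vol}^{-1}(A)^{-1}$, as claimed.

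The main obstacle is not this computation, which is essentially forced by the earlier propositions, but the coherence bookkeeping in the first two paragraphs: I must check that the object-wise symmetric self-dualities of Proposition \ref{202005301300} glue into a genuine monoidal, natural pivotal structure rather than a mere pointwise collection of dualities, and that they are compatible with morphisms of $\mathsf{C}_k$. In the symmetric setting this is standard, so the real work is the monoidality check for $\otimes$; once that is in place, sphericity and the equality of the two a priori distinct categorical dimensions with $\mathrm{vol}^{-1}(A)^{-1}$ are immediate from $\tau$-invariance.
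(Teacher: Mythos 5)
Your proposal is correct and follows essentially the same route as the paper: rigidity and the identity pivotal structure come from the symmetric self-dualities of Proposition \ref{202005301300}, and the dimension is computed as $\mathrm{e}_A \circ \mathrm{i}_A = \mathrm{vol}^{-1}(A)^{-1} \cdot \bigl(\sigma^A \circ \nabla \circ \mu_\nabla \circ \eta\bigr)$ using Proposition \ref{202006030739} for the epimorphism $\nabla$. The only cosmetic differences are that you justify $\sigma^A \circ \eta = 1$ via Proposition \ref{202006030739}(2) rather than directly from the normalization of the cointegral, and that you spell out the monoidality/coherence check for the pivotal structure that the paper leaves implicit.
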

\begin{proof}
By Proposition \ref{202005301300}, all the objects of $\mathsf{C}_k$ are rigid, i.e. there exist dualities.
Moreover, the monoidal natural isomorphism $(A^\ast)^\ast \cong A$ (the pivotal structure) is given by the identity since all the objects have symmetric self-dualities.
Here, we denote by $A^\ast$ the dual object of $A$ in $\mathsf{C}_k$.

All that remain is to compute the dimension $\mathrm{e}_A \circ \mathrm{i}_A$.
We use the fact that for a morphsim $\xi : A \to B$ in $\mathsf{Hopf}^\mathsf{bc,vol}_k$, if $\xi$ is an epimorphism in $\mathsf{Hopf}^\mathsf{bc,vol}_k$ then we have $\xi \circ \mu_\xi = id_B$.
It follows from Proposition \ref{202006030739}.
Note that $\nabla : A \otimes A \to A$ is an epimorphism in $\mathsf{Hopf}^\mathsf{bc,vol}_k$.
Hence we obtain,
\begin{align}
\mathrm{vol}^{-1} (A) \cdot \left( \mathrm{e}_A \circ \mathrm{i}_A \right) &= \sigma^A \circ \nabla \circ \mu_\nabla \circ \eta , \\
&= \sigma^A \circ \eta , \\
&= 1 .
\end{align}
\end{proof}

\begin{Corollary}
\label{202006040804}
A bicommutative Hopf algebra with a finite volume is finite-dimensional.
Moreover, we have $\mathrm{vol}^{-1}(A)^{-1}= ( \dim_k A ) \cdot 1 \in k$.
\end{Corollary}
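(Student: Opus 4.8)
The plan is to read off both sides of the desired equation from the self-duality $(\mathrm{i}_A, \mathrm{e}_A)$ of $A$ in $\mathsf{C}_k$: the scalar $\mathrm{e}_A \circ \mathrm{i}_A \in \mathrm{End}_{\mathsf{C}_k}(k) = k$ equals $\mathrm{vol}^{-1}(A)^{-1}$ by Corollary \ref{202101232004}, and I will show independently that, regarded as a composite of linear maps, it equals $(\dim_k A)\cdot 1$. Finite-dimensionality will drop out of the same duality.

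First I would record that $\mathsf{C}_k$ is, by construction, a subcategory of $\mathcal{N}$ whose morphisms are $k$-linear maps and whose monoidal product is $\otimes_k$; hence the maps $\mathrm{i}_A : k \to A \otimes A$ and $\mathrm{e}_A : A \otimes A \to k$ of Proposition \ref{202005301300}, together with their triangle (zigzag) identities, are literally identities of $k$-linear homomorphisms, and this already realizes $A$ as a dualizable object of $\mathsf{Vec}_k$. To deduce finiteness I would expand $\mathrm{i}_A(1) = \sum_{j=1}^{n} u_j \otimes v_j$ as a finite sum and apply one triangle identity, which writes an arbitrary $a \in A$ as a $k$-linear combination of the finitely many vectors $\{u_j\}_{j=1}^{n}$; hence $\dim_k A \leq n < \infty$.

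Having fixed a finite basis $\{e_i\}_{i=1}^{n}$ of $A$, I would then compute the scalar $\mathrm{e}_A \circ \mathrm{i}_A$ directly. Writing $\mathrm{i}_A(1) = \sum_{i,j} c_{ij}\, e_i \otimes e_j$ and $B_{ij} = \mathrm{e}_A(e_i \otimes e_j)$, the triangle identity forces $BC = I$, i.e. the matrices $(c_{ij})$ and $(B_{ij})$ are mutually inverse. Since $\mathrm{e}_A$ is symmetric ($\mathrm{e}_A \circ \tau = \mathrm{e}_A$, as recorded in Proposition \ref{202005301300}), the matrix $(B_{ij})$ is symmetric, and therefore
\begin{align}\notag
\mathrm{e}_A \circ \mathrm{i}_A = \sum_{i,j} c_{ij}\, B_{ij} = \mathrm{tr}\big( B^{-1} B \big) = n = \dim_k A .
\end{align}
Comparing this with Corollary \ref{202101232004} gives $\mathrm{vol}^{-1}(A)^{-1} = (\dim_k A)\cdot 1$, as required.

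The only point requiring care is the basis computation above, namely the bookkeeping that turns the triangle identities into the mutual-inverse relation $C = B^{-1}$ and lets the symmetry of $\mathrm{e}_A$ collapse $\sum_{i,j} c_{ij} B_{ij}$ to $\mathrm{tr}(I)$. I do not expect a genuine obstacle here; should one prefer to avoid coordinates, the same conclusion follows from the duality-independence of the categorical trace in a symmetric monoidal category, which guarantees that any self-duality of the finite-dimensional vector space $A$ computes its dimension $(\dim_k A)\cdot 1$, independently of the standard evaluation/coevaluation pairing between $A$ and its linear dual.
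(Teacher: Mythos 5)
Your proof is correct and follows essentially the same route as the paper: both deduce finite-dimensionality from the fact that the self-duality $(\mathrm{i}_A,\mathrm{e}_A)$ of Proposition \ref{202005301300} is carried by the (symmetric monoidal) forgetful functor to a duality in $\mathsf{Vec}_k$, and both identify $\mathrm{e}_A\circ\mathrm{i}_A$ with $(\dim_k A)\cdot 1$ before comparing with Corollary \ref{202101232004}. The only difference is cosmetic: you carry out the last identification by an explicit basis computation (correctly using the symmetry of $\mathrm{e}_A$ to reduce $\sum_{i,j}c_{ij}B_{ij}$ to $\mathrm{tr}(I)$), whereas the paper simply cites the duality-independence of the dimension in $\mathsf{Vec}_k$ --- the coordinate-free alternative you yourself mention at the end.
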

\begin{proof}
Note that the forgetful functor from $\mathsf{C}_k$ to $\mathsf{Vec}_k$ is a symmetric monoidal functor.
In particular, a rigid object in $\mathsf{C}_k$ with a distinguished duality is preserved via the forgetful functor.
The first claim is proved by Proposition \ref{202005301300} since a vector space with a duality is finite-dimensional.
Moreover the dimension of a vector space computed by a duality coincides with $( \dim_k A ) \cdot 1 \in k$ so that the second claim follows from Corollary \ref{202101232004}.
\end{proof}

\begin{Corollary}
\label{202006091350}
Let $A$ be a bicommutative Hopf algebra.
Then the following conditions are equivalent.
\begin{enumerate}
\item
$A$ has a finite volume
\item
$A$ is finite-dimensional and bisemisimple i.e. semisimple and cosemisimple.
\item
$A$ is finite-dimensional and $(\dim_k A ) \cdot 1 \in k$ is nonzero.
\end{enumerate}
\end{Corollary}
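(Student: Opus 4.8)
The plan is to establish the three conditions as a cycle of implications, using Corollary~\ref{202006040804} to handle all finite-dimensionality bookkeeping and reducing the remaining content to the classical integral-theoretic characterizations of (co)semisimplicity, specialized to the bicommutative setting where the antipode satisfies $S^2 = \mathrm{id}$. Concretely I would prove $(1)\Rightarrow(2)$, then $(2)\Leftrightarrow(3)$, and finally $(2)\Rightarrow(1)$.

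For $(1)\Rightarrow(2)$: a bicommutative Hopf algebra with a finite volume is finite-dimensional by Corollary~\ref{202006040804}, so it remains to produce bisemisimplicity. By Definition~\ref{202001271558} a finite volume supplies a normalized integral $\sigma_A$ and a normalized cointegral $\sigma^A$; the generalized Maschke theorem identifies the existence of a normalized integral with semisimplicity and, dually, the existence of a normalized cointegral with cosemisimplicity, so $A$ is bisemisimple. For $(2)\Leftrightarrow(3)$: since $A$ is commutative and cocommutative its antipode squares to the identity, whence $\mathrm{Tr}(S^2) = (\dim_k A)\cdot 1$; the Larson--Radford criterion states that a finite-dimensional Hopf algebra is simultaneously semisimple and cosemisimple exactly when $\mathrm{Tr}(S^2) \neq 0$, which in our case reads $(\dim_k A)\cdot 1 \neq 0$. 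This yields both directions at once.

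It remains to prove $(2)\Rightarrow(1)$, where the hard part lies. Given $(2)$, Maschke and its dual again furnish $\sigma_A$ and $\sigma^A$, so $A$ lies in $\mathsf{Hopf}^\mathsf{bc,bs}_k$ and the scalar $\mathrm{vol}^{-1}(A) = \sigma^A\circ\sigma_A$ is defined; the only thing to check is that it is invertible. The obstacle is that I cannot invoke Corollary~\ref{202006040804} for this, since that corollary already presupposes a finite volume, so its formula $\mathrm{vol}^{-1}(A)^{-1} = (\dim_k A)\cdot 1$ is not yet available. Instead I would re-derive the pairing identity $\sigma^A(\sigma_A)\cdot(\dim_k A) = 1$ directly from the defining properties of the normalized integral and cointegral (or cite the corresponding computation in \cite{kim2021integrals}), bicommutativity being exactly what makes the pairing between integral and cointegral nondegenerate. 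Combined with $(2)\Leftrightarrow(3)$, which guarantees $(\dim_k A)\cdot 1 \neq 0$, this identity gives $\mathrm{vol}^{-1}(A) = ((\dim_k A)\cdot 1)^{-1} \in k^\ast$, so $A$ has a finite volume and the cycle closes. Everything outside this nondegeneracy computation is routine manipulation of Maschke's theorem and its dual.
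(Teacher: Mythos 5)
Your proof is correct and follows essentially the same route as the paper: $(1)\Rightarrow(2)$ via Corollary~\ref{202006040804} plus the Larson--Sweedler characterization of (co)semisimplicity by normalized (co)integrals, $(2)\Leftrightarrow(3)$ via the Larson--Radford trace criterion with $S^2=\mathrm{id}$, and $(2)\Rightarrow(1)$ by showing $\sigma^A\circ\sigma_A$ is invertible. The only cosmetic difference is that for this last step the paper cites the invertibility of the integral--cointegral composition from the literature (Theorem 3.3 of \cite{BKLT}), whereas you propose the explicit Radford-type identity $\sigma^A(\sigma_A)\cdot(\dim_k A)=1$; both amount to the same known computation.
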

\begin{proof}
We first prove that (1) and (2) are equivalent.
A finite-dimensional Hopf algebra is semisimple if and only if it has a normalized integral \cite{LarSwe}.
In the same manner, the cosemisimplicity of a finite-dimensional Hopf algebra is equivalent with an existence of a normalized cointegral.
In Theorem 3.3 \cite{BKLT}, it is proved that the composition of a left (right) $Int A$-valued integral and a left (right) $Int A$-based integral of finite-dimensional Hopf algebra is invertible.
Hence, a bicommutative Hopf algebra $A$ has a finite volume if it is finite-dimensional, semisimple and cosemisimple.
Conversely, (2) follows from (1) due to Corollary \ref{202006040804} and the above discussion.
The equivalence of (2) and (3) follows from Theorem 2.5 (b) in \cite{larson1988finite} since a bicommutative Hopf algebra $A$ is involutory.
\end{proof}

%%%%%%%%%%%%%%%%%%%%%%%%%%%%%%%%

\section{Some extensions of Brown functors}
\label{202002211539}

In this section, we give an overview of \cite{kim2020extension}.

\subsection{$\mathcal{A}$-valued Brown functor}
\label{202102141031}

In this subsection, we give an explanation of Brown functors.
Brown functor is a well-studied object which is a functor from a homotopy category of pointed CW-spaces to an opposite category of pointed sets satisfying some axioms \cite{hatcher2002algebraic}.
For our convenience, we give a refinement of the definition by the dimension of CW-spaces and replace the target category with an abelian category $\mathcal{A}$.

\begin{Defn}

A CW-complex is a topological space consisting of so-called cells in an appropriate way \cite{hatcher2002algebraic}.
A topological space is a CW-space if it {\it has} a CW-complex structure.
Note that we do not fix any distinguished CW-complex structure.
As an analogue of the dimension of manifolds, the dimension of CW-complex is defined as the top dimension of cells.
The dimension of CW-space is the dimension of some distinguished CW-complex structure ; it is easy to see that it is well-defined.
We denote by $\mathsf{Ho} ( \mathsf{CW}^\mathsf{fin}_{\ast , \leq d} )$ the homotopy category of pointed finite CW-spaces $K$ with $\dim K \leq d$.
\end{Defn}

\begin{Defn}
\label{201912190819}
\rm
Consider a diagram in $\mathsf{CW}^\mathsf{fin}_{\ast,\leq r}$ which commutes up to a homotopy :
\begin{equation}
\label{201912231254}
\begin{tikzcd}
K_0 \ar[r, "f_0"] & L \\
T \ar[r, "g_1"'] \ar[u, "g_0"] & K_1 \ar[u, "f_1"'] 
\end{tikzcd}
\end{equation}
The diagram (\ref{201912231254}) is {\it approximated by a triad of spaces} if there exists a triad of pointed finite CW-spaces $(L^\prime , K^\prime_0 , K^\prime_1)$ such that the following induced diagram (\ref{201912231255}) is homotopy equivalent with the diagram (\ref{201912231254}).
In other words, there exist pointed homotopy equivalences $K_0 \simeq K^\prime_0$, $K_1 \simeq K^\prime_1$, $T \simeq K^\prime \cap K^\prime_1$ and $L \simeq K^\prime_0 \cup K^\prime_1$ such that the diagrams (\ref{201912231254}) and (\ref{201912231255}) coincide up to homotopies.
Note that we do not restrict the dimensions of $L^\prime, K^\prime_0 , K^\prime_1$ whereas $L,K_0,K_1,T$ are $r$-dimensional at most.
\begin{equation}
\label{201912231255}
\begin{tikzcd}
K^\prime_0 \ar[r, hookrightarrow] & K^\prime_0 \cup K^\prime_1 \\
K^\prime_0 \cap K^\prime_1 \ar[r, hookrightarrow] \ar[u, hookrightarrow] & K^\prime_1 \ar[u, hookrightarrow] 
\end{tikzcd}
\end{equation}
\end{Defn}

\begin{Defn}
\label{202101301425}
Let $\mathcal{A}$ be an abelian category.
A {\it square diagram} is a commutative diagram $\Box$ in $\mathcal{A}$ as follows :
\begin{equation}
\label{201912190830}
\begin{tikzcd}
B \ar[r, "g"] & D \\
A \ar[r , "f^\prime"'] \ar[u, "f"] &  C  \ar[u, "g^\prime"']
\end{tikzcd}
\end{equation}
The morphism $f$ induces a morphism $k_\Box : Ker_\mathcal{A} (f^\prime ) \to Ker_\mathcal{A} ( g)$.
The morphism $g^\prime$ induces a morphism $c_\Box : Cok_\mathcal{A} ( f^\prime ) \to Cok_\mathcal{A} ( g)$.
The square diagram is {\it exact} if the morphism $k_\Box$ is an epimorphism and the morphism $c_\Box$ is a monomorphism.
\end{Defn}

\begin{Defn}
\label{202005211445}
{\it An $\mathcal{A}$-valued $r$-dimensional Brown functor} is a functor $E : \mathsf{Ho} ( \mathsf{CW}^\mathsf{fin}_{\ast,\leq r} ) \to \mathcal{A}$ which sends the wedge sum to the biproduct in $\mathcal{A}$ and satisfies the Mayer-Vietoris axiom :
if for an arbitrary diagram (\ref{201912231254}) in $\mathsf{CW}^\mathsf{fin}_{\ast,\leq r}$ approximated by a triad of spaces, the induced square diagram in $\mathcal{A}$ is exact.
\begin{equation}
\begin{tikzcd}
E(K_0) \ar[r, "E(f_0)"] & E(L) \\
E(T) \ar[r , "E(g_1)"'] \ar[u, "E(g_0)"] &  E(K_1)  \ar[u, "E(f_1)"']
\end{tikzcd}
\end{equation}
Equivalently, the induced chain complex is exact by Remark \ref{201912230956}.
An $\mathcal{A}$-valued $\infty$-dimensional Brown functor is {\it an $\mathcal{A}$-valued Brown functor} for short.
\end{Defn}

\begin{remark}
\label{201912230956}
The exactness of a square diagram is represented in a familiar way as follows.
See \cite{kim2020extension} for the proof.
Consider the square diagram (\ref{201912190830}).
We define {\it the induced chain complex} $C(\Box)$ by
\begin{align}
A \stackrel{u_\Box}{\to} B \oplus C \stackrel{v_\Box}{\to} D
\end{align} 
where $u_\Box \stackrel{\mathrm{def.}}{=} ( f \oplus (-f^\prime) ) \circ \Delta_A$ and $v_\Box \stackrel{\mathrm{def.}}{=} \nabla_D \circ (g \oplus g^\prime)$ where $\Delta_A$ denotes the diagonal map and $\nabla_D$ denotes the sum operation.
Then the following conditions are equivalent :
\begin{enumerate}
\item
The square diagram $\Box$ is exact.
\item
The induced chain complex $C(\Box)$ is exact.
\end{enumerate}
Even though these conditions are equivalent, we make use of the exactness of square diagrams to define Brown functor in Definition \ref{202005211445} since a usual Brown functor is defined based on such a square diagram by requiring that it sends a pushout to a weak pullback.
If $\mathcal{A}= \mathsf{Ab}^\mathsf{op}$, the opposite category of abelian groups, then the definitions are equivalent.
Furthermore, the first condition is more convenient to study the cospanical and spanical extensions in \cite{kim2020extension} and path-integrals in subsection \ref{202102131116}.
\end{remark}

\begin{remark}
Recall that the target category of a usual Brown functor is the opposite category of pointed sets $\mathsf{Sets}^\mathsf{op}_\ast$ \cite{hatcher2002algebraic}.
We consider an abelian category $\mathcal{A}$ instead of $\mathsf{Sets}^\mathsf{op}_\ast$ since our main interest is $\mathcal{A} = \mathsf{Hopf}^\mathsf{bc,vol}_k$, i.e. the category of bicommutative Hopf algebras with a finite volume.
We essentially use the abelian property to show the existence of integrals in \cite{kim2021integrals} which is used to define the path-integral introduced in subsection \ref{202102131116}.
\end{remark}

\begin{remark}
Recall that the source category of a usual Brown functor is the homotopy category $\mathsf{Ho} ( \mathsf{CW}_\ast )$ whereas we consider only finite CW-spaces and refine the definition by the dimension of CW-spaces.
It is formally possible to extend the definition to infinite CW-spaces, but we do not have examples from which we can construct TQFT's.
\end{remark}

\begin{prop}
\label{202101271327}
Let $f : K \to L$, $g: L \to T$ be morphisms in $\mathsf{CW}^\mathsf{fin}_{\ast, \leq r}$.
Let $K \stackrel{f}{\to} L \stackrel{g}{\to} T$ be a mapping cone sequence, i.e. there is a homotopy equivalence $T \simeq C(f)$ under which $g$ is the canonical map.
Then it induces an exact sequence,
\begin{align}
E ( K ) \stackrel{E(f)}{\to} E ( L ) \stackrel{E(g)}{\to} E ( T ) .
\end{align}
\end{prop}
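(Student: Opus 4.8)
The plan is to realize the mapping cone sequence $K \xrightarrow{f} L \xrightarrow{g} T$ as a triad of spaces and then invoke the Mayer--Vietoris axiom directly. The key observation is that when $T \simeq C(f)$ is the mapping cone, the canonical square
\begin{equation}\notag
\begin{tikzcd}
L \ar[r, "g"] & T \\
K \ar[r] \ar[u, "f"] & \mathrm{pt} \ar[u]
\end{tikzcd}
\end{equation}
is (up to homotopy) a pushout square of the required form, since the mapping cone $C(f)$ is precisely the pushout of $L \xleftarrow{f} K \to \mathrm{CK}$ and the reduced cone $\mathrm{CK}$ is contractible. First I would set up this square so that it is approximated by a triad $(L', K', K'_1)$ in the sense of Definition \ref{201912190819}, with $K'_0 \simeq L$, $K'_1 \simeq \mathrm{CK} \simeq \mathrm{pt}$, $K'_0 \cap K'_1 \simeq K$, and $K'_0 \cup K'_1 \simeq T$. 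Care is needed here because Definition \ref{201912190819} does not constrain the dimensions of the approximating triad, which is exactly what lets us use a (possibly higher-dimensional) cone even when working with an $r$-dimensional Brown functor; this flexibility is what makes the argument go through for the refined, dimension-truncated Brown functors.

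Applying the Brown functor $E$ to this triad and using the Mayer--Vietoris axiom (Definition \ref{202005211445}), I would conclude that the induced square diagram
\begin{equation}\notag
\begin{tikzcd}
E(L) \ar[r, "E(g)"] & E(T) \\
E(K) \ar[r] \ar[u, "E(f)"] & E(\mathrm{pt}) \ar[u]
\end{tikzcd}
\end{equation}
is exact. Since $E$ sends the basepoint $\mathrm{pt}$ (the trivial CW-space, a zero object among wedge summands) to the zero object of $\mathcal{A}$, I would then reduce the exactness of this square to the exactness of the three-term sequence $E(K) \xrightarrow{E(f)} E(L) \xrightarrow{E(g)} E(T)$. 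Concretely, I would feed the square into the equivalent chain-complex formulation from Remark \ref{201912230956}: the induced chain complex $C(\Box)$ reads $E(K) \to E(L) \oplus 0 \to E(T)$, whose exactness at the middle is literally the exactness of $E(K) \xrightarrow{E(f)} E(L) \xrightarrow{E(g)} E(T)$ once the zero summand $E(\mathrm{pt}) = 0$ is discarded and the signs in $u_\Box$, $v_\Box$ are absorbed.

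The main obstacle I anticipate is the bookkeeping in the first step, namely verifying carefully that the mapping cone square is genuinely approximated by a triad in the technical sense of Definition \ref{201912190819} rather than merely being a homotopy pushout. This amounts to replacing $g$ up to homotopy by a cofibration-style inclusion so that $T$ is modeled as an honest union $K'_0 \cup K'_1$ with $K'_0 \cap K'_1 \simeq K$; the standard mapping-cylinder/cofibrant-replacement techniques should supply this, but one must check compatibility of all four homotopy equivalences with the maps $f$ and $g$ so that the square of Definition \ref{201912190819} commutes up to the required homotopies. Everything downstream — the passage to $E$, the vanishing $E(\mathrm{pt}) = 0$, and the extraction of the exact sequence via Remark \ref{201912230956} — is then formal.
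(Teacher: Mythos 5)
Your proposal is correct and follows essentially the same route as the paper: the paper also realizes the square $L \to T$, $K \to \ast$ as the triad $(C(f), M(f), C(K))$ with $M(f) \cap C(K) = K$ and $M(f) \cup C(K) = C(f)$, applies the Mayer--Vietoris axiom, and extracts the three-term exact sequence from the induced chain complex using $E(\ast) \cong 0$. The mapping-cylinder replacement you flag as the main bookkeeping step is exactly the device the paper uses.
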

\begin{proof}
It is easy to see that the diagram (\ref{202102021106}) is approximated by a triad of spaces in (\ref{21022211255}) in the sense of Definition \ref{201912190819} where $C(K)$ is the cone of $K$ and $M(f)$ is the mapping cylinder of $f$.
\begin{equation}
\label{202102021106}
\begin{tikzcd}
L \ar[r, "g"] & T \\
K \ar[r] \ar[u, "f"] & \ast \ar[u] 
\end{tikzcd}
\end{equation}

\begin{equation}
\label{21022211255}
\begin{tikzcd}
M(f) \ar[r, hookrightarrow] & C(f) \\
K \ar[r, hookrightarrow] \ar[u, hookrightarrow] & C(K) \ar[u, hookrightarrow] 
\end{tikzcd}
\end{equation}
Hence, the induced chain complex $E(K) \to E(L) \oplus E( \ast ) \to E( T ) $ is exact but $E(\ast) \cong 0$ so that our claim is proved.
\end{proof}

\begin{remark}
The CW-spaces $M(f)$, $C(K)$, $C(f)$ could be $(r+1)$-dimensional whereas the domain of $E$ contains $r$-dimensional ones at most.
It is the reason we require the Mayer-Vietoris axiom for diagrams {\it approximated by a triad of spaces} in Definition \ref{202005211445}.
\end{remark}

\begin{Example}
\label{202102131849}
A homotopy commutative H-group $X$ which is not necessarily a CW-space induces an $\mathsf{Ab}^\mathsf{op}$-valued Brown functor $F_X$ as follows.
We regard the unit of $X$ as a basepoint.
For a pointed CW-space $K$, let $F_X ( K )$ be the homotopy set of pointed maps from $K$ to $X$.
The set $F_X ( K )$ naturally becomes an abelian group due to the H-group structure on $X$.
Furthermore, $F_X$ is a $\mathsf{Ab}^\mathsf{op}$-valued $\infty$-dimensional Brown functor in the sense of Definition \ref{202005211445}.
For example, if $X$ is a twice-iterated loop space $\Omega^2 W$ of a pointed space $W$, then we obtain such a Brown functor.

For some finite conditions on $X$, we obtain an $(\mathsf{Ab}^\mathsf{fin})^\mathsf{op}$-valued Brown functor $F^\natural_X$.
Let $d \in \mathbb{N} \cup \{ \infty \}$.
Suppose that the $r$-th homotopy group $\pi_r ( X)$ is finite for $r \leq d$.
Then $F_X (K)$ is a finite group for $\dim K \leq d$.
Especially it induces an $(\mathsf{Ab}^\mathsf{fin})^\mathsf{op}$-valued $d$-dimensional Brown functor $F^\natural_X$.
\end{Example}

\begin{Example}
\label{202102131904}
We are interested in $\mathsf{Hopf}^\mathsf{bc}_k$-valued Brown functors.
For a field $k$, we have a natural way to obtain a $\mathsf{Hopf}^\mathsf{bc}_k$-valued Brown functor from an $\mathsf{Ab}$-valued Brown functor (e.g. consider a generalized homology theory).
By the group Hopf algebra functor $k(-) : \mathsf{Ab} \to \mathsf{Hopf}^\mathsf{bc}_k$, the category $\mathsf{Ab}$ is an abelian subcategory of $\mathsf{Hopf}^\mathsf{bc}_k$.
Hence, an $\mathsf{Ab}$-valued Brown functor $D$ induces a $\mathsf{Hopf}^\mathsf{bc}_k$-valued Brown functor $E$ by $E(K) = k D(K)$.
Moreover if $D(K)$ is finite and the characteristic of $k$ is coprime to the order of $D(K)$ for any $K$, then $E$ is a $\mathsf{Hopf}^\mathsf{bc,vol}_k$-valued Brown functor.

Analogously, there is a natural way to obtain a $\mathsf{Hopf}^\mathsf{bc}_k$-valued Brown functor from an $(\mathsf{Ab}^\mathsf{fin})^\mathsf{op}$-valued Brown functor (e.g. consider Example \ref{202102131849} or a generalized cohomology theory).
Note that $(\mathsf{Ab}^\mathsf{fin})^\mathsf{op}$ is an abelian subcategory of $\mathsf{Hopf}^\mathsf{bc}_k$ via the function Hopf algebra functor $k^{(-)} : (\mathsf{Ab}^\mathsf{fin})^\mathsf{op} \to \mathsf{Hopf}^\mathsf{bc}_k$.
Let $D$ be an $(\mathsf{Ab}^\mathsf{fin})^\mathsf{op}$-valued Brown functor such that the order of $D(K)$ is coprime to the characteristic of the field $k$ for any $K$.
Then it induces a $\mathsf{Hopf}^\mathsf{bc,vol}_k$-valued Brown functor $E$ by $E(K) = k^{D(K)}$.
\end{Example}

\begin{Example}
\label{202102230954}
Let $A$ be a bicommutative Hopf algebra.
For a pointed finite CW-space $K$ with a basepoint $\ast$, let $\pi_0 (K , \ast)$ be the set of components of $K$ except that of $\ast$.
Note that the assignment of $\pi_0 (K , \ast)$ to $K$ gives a functor.
It induces a functor $E : \mathsf{Ho} ( \mathsf{CW}^\mathsf{fin}_\ast ) \to \mathsf{Hopf}^\mathsf{bc}_k$ given by $E(K) = \otimes_{\pi_0 (K , \ast)} A$.
In particular, if $K$ is connected, then $E(K) = k$, i.e. the trivial Hopf algebra.
It is easy to see that $E$ is a $\mathsf{Hopf}^\mathsf{bc}_k$-valued Brown functor.
Moreover if $A$ has a finite volume, then $E$ is a $\mathsf{Hopf}^\mathsf{bc,vol}_k$-valued.
This example is generalized to Example \ref{202102111047}.
\end{Example}

\begin{Defn}
\label{202102231052}
A Hopf algebra $A$ is 
{\it induced by a group} if there exists a group $G$ such that $A \cong kG$ or a finite group $H$ such that $A \cong k^H$.
A $\mathsf{Hopf}^\mathsf{bc}_k$-valued ($d$-dimensional) Brown functor $E$ {\it induced by groups} if $E(K)$ is induced by a group for any $K$.
\end{Defn}

\begin{remark}
If $k$ is algebraically closed and the characteristic is zero (e.g. $k= \mathbb{C}$), then any finite-dimensional bicommutative Hopf algebra is bisemisimple by Corollary \ref{202006091350} so that it is a group Hopf algebra and simultaneously a function Hopf algebra.
\end{remark}

\begin{remark}
The $\mathsf{Hopf}^\mathsf{bc}_k$-valued Brown functors in Example \ref{202102131904} are induced by groups.
On the other hand, the $\mathsf{Hopf}^\mathsf{bc}_k$-valued Brown functor in Example \ref{202102230954} is induced by groups if and only if $A$ is induced by a group.
An example of $A$ which is not induced by a group is given in \cite{kim2020homology}.
It is possible to construct a $\mathsf{Hopf}^\mathsf{bc}_k$-valued Brown functor which is not induced by groups from tensor product of Brown functors which satisfy some weaker conditions.
See Example \ref{202102231114}, \ref{202102231147}.
\end{remark}

\begin{Example}
\label{202102231114}
If finite-dimensional Hopf algebras $A,B$ satisfy $|\mathrm{GE} (A)| < \dim_k A$ and $|\mathrm{GE} (B)| < \dim_k B$ where $\mathrm{GE}$ is the set of  group-like elements, then $C= A \otimes B^\vee$ is not induced by a group.
In fact, $|\mathrm{GE} (C)| = |\mathrm{GE} (A)| \cdot |\mathrm{GE} (B^\vee)| < \dim_k A \cdot \dim_k B = \dim_k C$, and similarly $|\mathrm{GE} (C^\vee)| < \dim_k C$.
We make use of this observation.
For example, take the Thom spectrum of the orthogonal group $\mathrm{MO}$ which induces a generalized homology theory $\widetilde{\mathrm{MO}}_\bullet$ and cohomology theory $\widetilde{\mathrm{MO}}^\bullet$.
Recall that $\widetilde{\mathrm{MO}}_q(K)$ and $\widetilde{\mathrm{MO}}^q(K)$ are finite for any $K$.
Let $E$ be a $\mathsf{Hopf}^\mathsf{bc}_k$-valued Brown functor given by $E(K) = k \widetilde{\mathrm{MO}}_2(K) \otimes k^{\widetilde{\mathrm{MO}}^{-2} (K)}$ for $k= \mathbb{F}_2$.
It is not induced by groups since $|\mathrm{GE} ( k^{\widetilde{\mathrm{MO}}_2(S^0)} )| = |\mathrm{GE} (k^{\widetilde{\mathrm{MO}}^{-2} (S^0)})| = |\mathrm{GE} ( k^{\mathbb{Z}/2} )| = 1 < 2$ due to $\widetilde{\mathrm{MO}}_2(S^0) \cong \widetilde{\mathrm{MO}}^{-2} (S^0) \cong \mathbb{Z}/2$. 
\end{Example}

\begin{Example}
\label{202102231147}
We give an example analogous to Example \ref{202102231114}.
Let $k$ be a field where the order of $\{ x \in k ~;~ x^{24} = 1\}$ is lower than $24$ (e.g. $\mathbb{R}$, $\mathbb{Q}$, a finite field with order lower than $24$).
Consider the generalized homology theory obtained from the sphere spectrum, $\widetilde{\pi}^s_\bullet$.
It is known that $\widetilde{\pi}^s_3 (S^0) \cong \mathbb{Z}/24$.
Hence the induced $\mathsf{Hopf}^\mathsf{bc,vol}_k$-valued Brown functor $E(K) = k \widetilde{\pi}^s_3(K) \otimes k^{\widetilde{\pi}^{-3}_s(K)}$ is not induced by groups since $\mathrm{GE} ( k^{\mathbb{Z}/24} ) \cong \mathrm{Hom} ( \mathbb{Z}/24 , k^\ast )$ and its order is lower than $24$.
\end{Example}

Given a Brown functor as above, one could naturally construct another Brown functor as follows.

\begin{Defn}
\label{202102231038}
Let $X$ be a pointed finite CW-space.
For a pointed finite CW-space $K$, we define $W_X ( K ) \stackrel{\mathrm{def.}}{=} K \wedge X$.
\end{Defn}

\begin{Example}
If $E$ is an $\mathcal{A}$-valued Brown functor, then the pullback $W^\ast_X E$, i.e. $(W^\ast_X E) (K) = E ( K \wedge X)$, is an $\mathcal{A}$-valued Brown functor.
If $\dim X = r$ and $E$ is $d$-dimensional, then $W^\ast_X E$ is $(d-r)$-dimensional.
\end{Example}

In the remaining part of this subsection, we give a way to obtain a (possibly empty) family of $\mathsf{Hopf}^\mathsf{bc,vol}_k$-valued Brown functors from a $\mathsf{Hopf}^\mathsf{bc}_k$-valued reduced homology theory.
A $\mathsf{Hopf}^\mathsf{bc}_k$-valued reduced homology theory is a sequence of functors from $\mathsf{Ho} ( \mathsf{CW}^\mathsf{fin}_\ast)$ to $\mathsf{Hopf}^\mathsf{bc}_k$ equipped with the suspension isomorphism, which satisfies the Eilenberg-Steenrod axioms except for the dimension axiom \cite{kim2020homology}.
Let $\widetilde{E}_\bullet$ be a $\mathsf{Hopf}^\mathsf{bc}_k$-valued reduced homology theory.

\begin{Example}
\label{202102040954}
Let $G$ be a finite group whose order is coprime to the characteristic of $k$ and $\widetilde{H}_\bullet ( - ; G)$ be the reduced ordinary homology theory with coefficients in $G$.
Consider a $\mathsf{Hopf}^\mathsf{bc}_k$-valued reduced homology theory $\widetilde{E}_\bullet$ given by $\widetilde{E}_q ( K ) = k \widetilde{H}_q ( K ; G)$, i.e. the group Hopf algebra of $\widetilde{H}_q ( K ; G)$.
It gives a $\mathsf{Hopf}^\mathsf{bc}_k$-valued homology theory since the group Hopf algebra functor from $\mathsf{Ab}^\mathsf{fin}$ to $\mathsf{Hopf}^\mathsf{bc}_k$ is an exact functor.

We have a similar example derived from function Hopf algebras.
Let $\widetilde{H}^\bullet ( - ; G)$ be the reduced ordinary cohomology theory with coefficients in $G$.
Then $\widetilde{E}_q ( K ) = k^{\widetilde{H}^q ( K ; G)}$, i.e. the function Hopf algebra of $\widetilde{H}^q ( K ; G)$, gives a $\mathsf{Hopf}^\mathsf{bc}_k$-valued reduced homology theory.
\end{Example}

\begin{Example}
\label{202102111047}
Let $A$ be a bicommutative Hopf algebra with a finite volume and $\widetilde{E}_\bullet = \widetilde{H}_\bullet ( - ; A)$ be the reduced ordinary homology theory with coefficients in $A$.
This example contains Example \ref{202102040954} up to an isomorphism.
In fact, we have isomorphisms of homology theories $\widetilde{H}_\bullet ( - ; k G ) \cong k \widetilde{H}_\bullet ( - ; G)$ and $\widetilde{H}_\bullet ( - ; k^G ) \cong k^{\widetilde{H}^\bullet ( - ; G)}$.
Note that $\widetilde{E}_\bullet$ is induced by groups if and only if $A$ is induced by a group (see Definition \ref{202102231052}).
\end{Example}

\begin{Example}
\label{202102111049}
The construction in Example \ref{202102040954} is applied to any generalized (co)homology theories.
Let $\widetilde{D}_\bullet$ be a generalized reduced homology theory.
The group Hopf algebra functor with coefficients in $k$ induces a $\mathsf{Hopf}^\mathsf{bc}_k$-valued homology theory by $\widetilde{E}_q (K) = k \widetilde{D}_q (K)$.
In a parallel way, a generalized reduced cohomology theory $\widetilde{D}^\bullet$ such that $\widetilde{D}^q (K)$ are finite induces a $\mathsf{Hopf}^\mathsf{bc}_k$-valued homology theory by $\widetilde{E}_q (K) = k^{\widetilde{D}^q (K)}$.
\end{Example}

\begin{Example}
\label{202102111055}
An exponential functor \cite{touze} gives a class of examples.
Let $R$ be another field.
Consider a bicommutative Hopf algebra $A$ over $k$ with an $R$-action $\alpha$.
Then there exists an assignment of a bicommutative Hopf algebra $(A , \alpha)^V$ to a finite-dimensional vector space $V$ over $R$.
It induces an assignment of a $\mathsf{Hopf}^\mathsf{bc}_k$-valued homology theory $\widetilde{E}_\bullet = (A,\alpha)^{\widetilde{D}_\bullet}$ to a $\mathsf{Vec}^\mathsf{fin}_R$-valued homology theory $\widetilde{D}_\bullet$.
Note that a $\mathsf{Vec}^\mathsf{fin}_R$-valued homology theory is nothing but a generalized homology theory such that $\widetilde{D}_q ( K )$ are finite-dimensional vector spaces over $R$.
A typical example of $\widetilde{D}_\bullet$ is given by the MO-theory which is a $\mathsf{Vec}^\mathsf{fin}_R$-valued homology theory for $R= \mathbb{F}_2$.
\end{Example}

\begin{Example}
\label{202102221028}
Moreover if $A$ is not induced by a group, then so the assigned Hopf algebra $(A,\alpha)^{\widetilde{D}_q (K)}$ is for any $K$.
For example, consider $k= \mathbb{F}_3$ and $R= \mathbb{F}_5$.
There exists a finite-dimensional bisemisimple bicommutative Hopf algebra $A$ with a $\mathbb{F}_5$-action which is not induced by a group \cite{kim2020homology}.
Hence, if $\widetilde{D}_\bullet$ is a $\mathsf{Vec}^\mathsf{fin}_{\mathbb{F}_5}$-valued homology theory, then $\widetilde{E}_\bullet = (A,\alpha)^{\widetilde{D}_\bullet}$ is a $\mathsf{Hopf}^\mathsf{bc,vol}_{\mathbb{F}_3}$-valued homology theory which is nontrivial.
Such a homology theory $\widetilde{D}_\bullet$ could be induced by a smash product $P \wedge H \mathbb{F}_5$ for some spectrum $P$ where $H \mathbb{F}_5$ is the Eilenberg-Maclane spectrum of the field $\mathbb{F}_5$.
\end{Example}

\begin{prop}
\label{202002221837}
For $-\infty \leq q_0 \leq q_1 \leq \infty$, the following conditions are equivalent :
\begin{enumerate}
\item
Let $q$ be an integer such that $q_0 \leq q \leq q_1$.
For any pointed finite CW-space $K$ such that $\dim  K  \leq (q - q_0)$, the Hopf algebra $\widetilde{E}_q ( K )$ has a finite volume.
In other words, the restriction $\widetilde{E}_q: \mathsf{Ho} ( \mathsf{CW}^\mathsf{fin}_{\ast, \leq (q - q_0)} ) \to  \mathsf{Hopf}^\mathsf{bc}_k$ factors through $\mathsf{Hopf}^\mathsf{bc,vol}_k$.
\item
The $q$-th coefficient $\widetilde{E}_q ( S^0 )$ has a finite volume for any integer $q$ such that $q_0 \leq q \leq q_1$.
Here, $S^0$ denotes the pointed 0-dimensional sphere.
\item
Let $r$ be any integer such that $0 \leq r \leq (q_1 - q_0)$.
If $q$ is an integer such that $q_0 + r  \leq q \leq q_1$, then the Hopf algebra $\widetilde{E}_q ( K )$ has a finite volume for any pointed $r$-dimensional finite CW-space $K$.
\end{enumerate}
\end{prop}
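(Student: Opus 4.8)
The plan is to handle the three conditions asymmetrically. Conditions (1) and (3) are two indexings of one and the same family of assertions: writing $r=\dim K$, the constraints $q_0\le q\le q_1$ and $r\le q-q_0$ of (1) are equivalent to $0\le r\le q_1-q_0$ and $q_0+r\le q\le q_1$, which are exactly the constraints of (3); thus (1)$\Leftrightarrow$(3) requires nothing beyond this bookkeeping. The implication (1)$\Rightarrow$(2) is a one-line specialization: $S^0$ is $0$-dimensional, so for $q_0\le q\le q_1$ one has $\dim S^0=0\le q-q_0$, and (1) yields that $\widetilde{E}_q(S^0)$ has a finite volume. All of the content lies in (2)$\Rightarrow$(1), which I would prove by induction on $\dim K$.

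I would phrase (2)$\Rightarrow$(1) as the assertion $(\ast)$: for every finite CW-space $K$ and every integer $q$ with $q_0+\dim K\le q\le q_1$, the Hopf algebra $\widetilde{E}_q(K)$ has a finite volume, which is just (1) rewritten. The algebraic engine is the numerical criterion of Corollary \ref{202006091350}, that $A$ has a finite volume iff $A$ is finite-dimensional and $(\dim_k A)\cdot 1\ne 0$ in $k$. Using this, together with the multiplicativity of $\mathrm{vol}^{-1}$ along short exact sequences (Theorem \ref{202002211501}), the identity $\mathrm{vol}^{-1}(A)^{-1}=(\dim_k A)\cdot 1$ (Corollary \ref{202006040804}), and the absence of zero-divisors in $k$, I would establish the key closure lemma: $\mathsf{Hopf}^\mathsf{bc,vol}_k$ is closed under subobjects, quotients, extensions, and finite biproducts inside $\mathsf{Hopf}^\mathsf{bc}_k$ (the subtle direction being subobjects and quotients, discussed below). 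I would prove this lemma first.

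The induction then proceeds on $\dim K$. In the base case $\dim K=0$ one has $K\simeq\bigvee_j S^0$, so $\widetilde{E}_q(K)\cong\bigoplus_j\widetilde{E}_q(S^0)$ by the wedge axiom, which has a finite volume for $q_0\le q\le q_1$ by (2) and biproduct closure. For the inductive step, with $\dim K=n\ge 1$ and $q_0+n\le q\le q_1$, I would apply Proposition \ref{202101271327} to the cofiber sequence $K^{(n-1)}\hookrightarrow K\to K/K^{(n-1)}\simeq\bigvee_j S^n$ of the skeletal filtration, obtaining exactness of $\widetilde{E}_q(K^{(n-1)})\xrightarrow{\iota_\ast}\widetilde{E}_q(K)\xrightarrow{p_\ast}\widetilde{E}_q(\bigvee_j S^n)$ at the middle term. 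This exhibits $\widetilde{E}_q(K)$ as an extension $0\to\mathrm{im}\,\iota_\ast\to\widetilde{E}_q(K)\to\mathrm{im}\,p_\ast\to 0$, where $\mathrm{im}\,\iota_\ast$ is a quotient of $\widetilde{E}_q(K^{(n-1)})$ and $\mathrm{im}\,p_\ast$ a subobject of $\widetilde{E}_q(\bigvee_j S^n)$. The inductive hypothesis applies to $K^{(n-1)}$ because $\dim K^{(n-1)}\le n-1$ and $q\ge q_0+n>q_0+\dim K^{(n-1)}$ with $q\le q_1$; and the suspension isomorphism gives $\widetilde{E}_q(\bigvee_j S^n)\cong\bigoplus_j\widetilde{E}_{q-n}(S^0)$, which has a finite volume since $q_0\le q-n\le q_1$. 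The closure lemma then forces $\widetilde{E}_q(K)$ to have a finite volume, completing the induction and hence the proof.

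The step I expect to be the main obstacle is the closure lemma, and within it closure under subobjects and quotients rather than extensions. Closure under extensions is precisely the closure of $\mathsf{Hopf}^\mathsf{bc,vol}_k$ under short exact sequences already recorded from Theorem \ref{202002211501}. For a subobject $A\hookrightarrow B$ with $B$ of finite volume, however, one must first know that $\dim_k A$ divides $\dim_k B$, so that from $(\dim_k B)\cdot 1=((\dim_k A)\cdot 1)\cdot(m\cdot 1)$ and the absence of zero-divisors in $k$ one deduces $(\dim_k A)\cdot 1\ne 0$; and symmetrically for quotients. Pinning down this multiplicativity of the $k$-dimension along short exact sequences in $\mathsf{Hopf}^\mathsf{bc}_k$, where the pieces are a priori only finite-dimensional, is the one genuinely delicate point, after which the topological bookkeeping with the skeletal filtration, the wedge axiom, and the suspension isomorphism is routine.
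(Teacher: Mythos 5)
Your overall strategy---reduce everything to (2)$\Rightarrow$(1)/(3) and run an induction on the skeleta of $K$, feeding the top cells through the suspension isomorphism back to the coefficients---is the same as the paper's, and the bookkeeping between (1) and (3) and the specialization to $S^0$ are exactly as you say. The gap is the one you yourself flag: your inductive step only uses the three-term exactness of $\widetilde{E}_q(K^{(n-1)})\to\widetilde{E}_q(K)\to\widetilde{E}_q(\bigvee_j S^n)$ at the middle term, so the two pieces of your extension are $\mathrm{Im}\,\iota_\ast$, a quotient of the finite-volume object $\widetilde{E}_q(K^{(n-1)})$ by the \emph{unknown} subobject $\mathrm{Ker}\,\iota_\ast$, and $\mathrm{Im}\,p_\ast$, a subobject of the finite-volume object $\widetilde{E}_q(\bigvee_j S^n)$ that is \emph{not} presented as the kernel of any map you control. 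To conclude you therefore need the one-sided closure statement that an arbitrary Hopf subalgebra (resp.\ quotient Hopf algebra) of a finite-volume bicommutative Hopf algebra again has finite volume, and via Corollary \ref{202006091350} this comes down to the divisibility $\dim_k A \mid \dim_k B$ for a short exact sequence of finite-dimensional bicommutative Hopf algebras, i.e.\ a Hopf-algebraic Lagrange theorem (Nichols--Zoeller). That statement is true, but it is nowhere established in the paper---Theorem \ref{202002211501} gives multiplicativity of $\mathrm{vol}^{-1}$ only for bisemisimple Hopf algebras, which is precisely what you are trying to prove about the subquotients---so as written your closure lemma is an unproved external input, not a consequence of the cited results.

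The paper sidesteps this entirely by using the five-term portion of the long exact sequence of the pair $(L,L^{(r)})$, namely $\widetilde{E}_{q+1}(L/L^{(r)})\to\widetilde{E}_q(L^{(r)})\to\widetilde{E}_q(L)\to\widetilde{E}_q(L/L^{(r)})\to\widetilde{E}_{q-1}(L^{(r)})$, in which all four outer terms are already known to have finite volume (the two involving $L/L^{(r)}\simeq\bigvee S^{r+1}$ via the suspension isomorphism and the inductive hypothesis). Exactness then identifies $\mathrm{Im}\,p_\ast$ with the kernel of a morphism between two finite-volume objects and $\mathrm{Ker}\,\iota_\ast$ with the image of such a morphism, so both pieces lie in $\mathsf{Hopf}^\mathsf{bc,vol}_k$ merely because it is an abelian subcategory of $\mathsf{Hopf}^\mathsf{bc}_k$ closed under extensions---facts already recorded after Theorem \ref{202002211501}. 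The fix for your argument is the same: extend your cofiber sequence one step further on each side (equivalently, use the Puppe sequence or the exact sequence of the pair) so that every subobject and quotient you form is a kernel or cokernel of a map between objects already known to have finite volume; then your delicate closure lemma is never needed.
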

\begin{proof}
(1) obviously implies (2).

We prove (3) from (2).
If $r=0$, then $K$ is 0-dimensional so that by (2), the Hopf algebra $\widetilde{E}_q ( K )$ has a finite volume for $q_0  \leq q \leq q_1$.
Hence (3) holds for $r = 0$.
Let $r$ be an integer such that $0 \leq r < (q_1 - q_0)$.
Suppose that if $q$ is an integer such that $q_0 + r  < q \leq q_1$, then the Hopf algebra $\widetilde{E}_q ( K )$ has a finite volume for a pointed $r$-dimensional finite CW-space $K$.
Let $L$ be a $(r+1)$-dimensional finite CW-complex.
Let $q$ be an integer such that $q_0 + r + 1 \leq q \leq q_1$.
Consider the long exact sequence associated with the pair $(L , L^{(r)})$ where $L^{(r)}$ is the $r$-skeleton of $L$.
\begin{align}
\widetilde{E}_{q+1} ( L / L^{(r)} )
\to
\widetilde{E}_q (L^{(r)} )
\to
\widetilde{E}_q ( L ) 
\to
\widetilde{E}_q ( L / L^{(r)} )
\to
\widetilde{E}_{q-1} (L^{(r)} )
\end{align}
By the assumption, the Hopf algebras $\widetilde{E}_q ( L^{(r)} ), \widetilde{E}_{q-1}(L^{(r)})$ have a finite volume.
Moreover the quotient complex $L/L^{(r)}$ is homeomorphic to a finite bouquet $\bigvee S^{r+1}$ of the pointed $(r+1)$-dimensional spheres.
From the isomorphism $\widetilde{E}_{q+1} ( \bigvee S^{r+1} ) \cong \widetilde{E}_q ( \bigvee S^{r} )$ and $\widetilde{E}_q ( \bigvee S^{r+1} ) \cong \widetilde{E}_{q-1} ( \bigvee S^{r} )$ and the assumption, the Hopf algebras $\widetilde{E}_{q+1} ( \bigvee S^{r+1} )$ and $\widetilde{E}_q ( \bigvee S^{r+1} )$ have a finite volume.
The Hopf algebra $\widetilde{E}_q ( L)$ has a finite volume since $\mathsf{Hopf}^\mathsf{bc,vol}_k \subset \mathsf{Hopf}^\mathsf{bc}_k$ is closed under short exact sequences.
It proves (3).

We prove (1) from (3).
Let $q$ be an integer such that $q_0 \leq q \leq q_1$.
Let $K$ be a pointed finite CW-space with $\dim K \leq (q - q_0)$.
Put $r = \dim K$.
Since $0 \leq r \leq (q_1 - q_0)$ and $q_0 + r \leq q \leq q_1$ by definitions, the Hopf algebra $\widetilde{E}_q (K)$ has a finite volume by (3).
It completes the proof.
\end{proof}

\begin{Defn}
\label{202001202100}
Denote by $\Gamma ( \widetilde{E}_\bullet )$ the set of integers $q \in \mathbb{Z}$ such that the $q$-th coefficient $\widetilde{E}_q ( S^0 )$ has a finite volume.
For $q \in \Gamma ( \widetilde{E}_\bullet )$, 
we define $d( \widetilde{E}_\bullet ; q) \stackrel{\mathrm{def.}}{=} (q- m(\widetilde{E}_\bullet ; q)) \geq 0$ where
\begin{align}
m(\widetilde{E}_\bullet ; q) \stackrel{\mathrm{def.}}{=} \inf \{ r \in \Gamma ( \widetilde{E}_\bullet ) ~;~ r \leq r^{\prime} \leq q \Rightarrow  r^{\prime} \in \Gamma (\widetilde{E}_\bullet) \} \geq - \infty .
\end{align}
\end{Defn}

\begin{Example}
For both of $\widetilde{E}_\bullet$'s in Example \ref{202102040954}, we have $\Gamma ( \widetilde{E}_\bullet ) = \mathbb{Z}$ and $d( \widetilde{E}_\bullet ; q ) = \infty$.
\end{Example}

\begin{Example}
For the ordinary homology theory in Example \ref{202102111047}, we have $\Gamma ( \widetilde{E}_\bullet ) = \mathbb{Z}$ and $d( \widetilde{E}_\bullet ; q ) = \infty$.
\end{Example}

\begin{Example}
For $\widetilde{E}_\bullet = k \widetilde{D}_\bullet$ in Example \ref{202102111049}, the set $\Gamma ( \widetilde{E}_\bullet )$ consists of $q \in \mathbb{Z}$ such that $\widetilde{D}_q ( S^0 )$ is finite and its order is coprime to the characteristic of $k$.
For example, let $k$ be a field with characteristic zero.
Consider the generalized reduced homology theory $\widetilde{D}_\bullet = \widetilde{\pi}^s_\bullet$ induced by the sphere spectrum, i.e. the stable homotopy theory.
Since the $q$-th coefficient $\widetilde{\pi}^s_q ( S^0)$ is finite if $q \neq 0$, the obtained $\mathsf{Hopf}^\mathsf{bc}_k$-valued homology theory $\widetilde{E}_\bullet = k \widetilde{D}_\bullet$ satisfies $\Gamma ( \widetilde{E}_\bullet ) = \mathbb{Z} \backslash \{ 0 \}$.
\begin{align}
d ( \widetilde{E}_\bullet ; q ) = 
\begin{cases}
(q-1) & \text{$q > 0$,} \\
\infty & \text{$q < 0$.}
\end{cases}
\end{align}
\end{Example}

\begin{Example}
Recall Example \ref{202102111055}.
If $A$ has a finite volume, then we have $\Gamma ( \widetilde{E}_\bullet ) = \mathbb{Z}$ and $d ( \widetilde{E}_\bullet ; q ) = \infty$.
In fact, for any $q$ if $\widetilde{D}_q ( S^0 ) \cong R^{\oplus n}$ for some $n$, then it is easy to verify that $(A, \alpha)^{\widetilde{D}_q ( S^0 )} \cong (A, \alpha)^{R^{\oplus n}} \cong A^{\otimes n}$ has a finite volume.
\end{Example}

\begin{Corollary}
\label{201910242156}
For $q \in \Gamma (\widetilde{E}_\bullet)$, the restriction $\widetilde{E}_q : \mathsf{Ho} ( \mathsf{CW}^\mathsf{fin}_{\ast,\leq d} ) \to \mathsf{Hopf}^\mathsf{bc}_k$ factors through $\mathsf{Hopf}^\mathsf{bc,vol}_k$ where $d = d( \widetilde{E}_\bullet ; q)$.
\end{Corollary}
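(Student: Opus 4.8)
The plan is to derive the corollary directly from Proposition \ref{202002221837} by choosing its bounds appropriately. I would set $q_1 = q$ and $q_0 = m(\widetilde{E}_\bullet ; q)$, so that $d = d(\widetilde{E}_\bullet ; q) = q - m(\widetilde{E}_\bullet ; q) = q_1 - q_0$ by Definition \ref{202001202100}. Since $q \in \Gamma(\widetilde{E}_\bullet)$ and the singleton $[q,q]$ trivially lies in $\Gamma(\widetilde{E}_\bullet)$, the index $q$ belongs to the set whose infimum defines $m$; hence that set is nonempty and its infimum satisfies $m(\widetilde{E}_\bullet ; q) \leq q$, giving $-\infty \leq q_0 \leq q_1 \leq \infty$ as required by the hypotheses of Proposition \ref{202002221837}.

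The key step is to verify condition (2) of Proposition \ref{202002221837} for this choice, namely that $\widetilde{E}_{q'}(S^0)$ has a finite volume for every integer $q'$ with $q_0 \leq q' \leq q_1$. First I would observe that the defining set $S = \{ r \in \Gamma(\widetilde{E}_\bullet) ~;~ r \leq r' \leq q \Rightarrow r' \in \Gamma(\widetilde{E}_\bullet) \}$ has infimum at most $q$. If this infimum $m$ is finite, then $S$ is a set of integers bounded below, so $m = \min S \in S$; thus $m \in \Gamma(\widetilde{E}_\bullet)$ and the whole interval $[m,q] \cap \mathbb{Z}$ lies in $\Gamma(\widetilde{E}_\bullet)$. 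If instead the infimum is $-\infty$, then $S$ is unbounded below, and for any $r' \leq q$ one may pick $r \in S$ with $r \leq r'$, whence $r' \in [r,q] \cap \mathbb{Z} \subseteq \Gamma(\widetilde{E}_\bullet)$; so $(-\infty , q] \cap \mathbb{Z} \subseteq \Gamma(\widetilde{E}_\bullet)$. In either case every integer $q'$ with $q_0 \leq q' \leq q_1$ lies in $\Gamma(\widetilde{E}_\bullet)$, which by the definition of $\Gamma(\widetilde{E}_\bullet)$ means precisely that $\widetilde{E}_{q'}(S^0)$ has a finite volume. This establishes condition (2).

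Finally, invoking the implication (2) $\Rightarrow$ (1) of Proposition \ref{202002221837} and specializing its conclusion to the index $q = q_1$, I would obtain that $\widetilde{E}_q(K)$ has a finite volume for every pointed finite CW-space $K$ with $\dim K \leq q - q_0 = d$. Since $\mathsf{Hopf}^\mathsf{bc,vol}_k$ is a full subcategory of $\mathsf{Hopf}^\mathsf{bc}_k$, this is exactly the statement that the restriction of $\widetilde{E}_q$ to $\mathsf{Ho}(\mathsf{CW}^\mathsf{fin}_{\ast,\leq d})$ factors through $\mathsf{Hopf}^\mathsf{bc,vol}_k$. I expect no serious obstacle, since the genuine content---propagating the finite-volume property from the coefficients $\widetilde{E}_{q'}(S^0)$ up the skeletal filtration through the long exact sequence and the closure of $\mathsf{Hopf}^\mathsf{bc,vol}_k$ under short exact sequences---has already been absorbed into Proposition \ref{202002221837}. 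The only care required is the bookkeeping in the case $m(\widetilde{E}_\bullet ; q) = -\infty$, where $d = \infty$ and the bound $\dim K \leq d$ imposes no restriction at all.
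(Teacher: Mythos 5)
Your proof is correct and follows the same route as the paper, which simply declares the corollary ``immediate from Proposition \ref{202002221837}''; you have merely made explicit the choice $q_1 = q$, $q_0 = m(\widetilde{E}_\bullet ; q)$ and the (routine but worth recording) verification that every integer in $[m(\widetilde{E}_\bullet ; q), q]$ lies in $\Gamma(\widetilde{E}_\bullet)$, including the case $m(\widetilde{E}_\bullet ; q) = -\infty$. No gaps.
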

\begin{proof}
It is immediate from Proposition \ref{202002221837}.
\end{proof}

\begin{Defn}
\label{202102142016}
Let $d = d( \widetilde{E}_\bullet ; q)$.
We define a $d$-dimensional $\mathsf{Hopf}^\mathsf{bc,vol}_k$-valued Brown functor $\widetilde{E}^\natural_q : \mathsf{Ho} ( \mathsf{CW}^\mathsf{fin}_{\ast,\leq d} ) \to \mathsf{Hopf}^\mathsf{bc,vol}_k$ by the induced functor in Corollary \ref{201910242156}.
\end{Defn}

%%%%%%%%%%%%%%%%%%%%%%%%%%%%%%%
\subsection{Cospanical and spanical extensions}
\label{202102021122}

In this subsection, we give the main theorems in \cite{kim2020extension}.
First we give a review about a cospan category of pointed finite CW-spaces denoted by $\mathsf{Cosp}^\simeq ( \mathsf{CW}^\mathsf{fin}_\ast )$, a cospan category of $\mathcal{A}$ denoted by $\mathsf{Cosp}^\approx ( \mathcal{A} )$ and a span category of $\mathcal{A}$ denoted by $\mathsf{Sp}^\approx ( \mathcal{A} )$.

\subsubsection{A cospan category of pointed CW-spaces}

Let $d \in \mathbb{N} \cup \{ \infty \}$.
Let $K_0, K_1$ be pointed finite CW-spaces with $\dim K_0, \dim K_1 \leq (d-1)$.
Consider cospans of pointed finite CW-spaces $\Lambda = ( K_0 \stackrel{f_0}{\to} L \stackrel{f_1}{\leftarrow} K_1 )$ and $\Lambda^\prime = ( K_0 \stackrel{f^\prime_0}{\to} L^\prime \stackrel{f^\prime_1}{\leftarrow} K_1 )$ with $\dim L , \dim L^\prime \leq d$.
The cospans $\Lambda , \Lambda^\prime$ are homotopy equivalent if there exists a homotopy equivalence $h : L \to L^\prime$ such that $f^\prime_0 \circ h \simeq f_0$ and $f^\prime_1 \circ h \simeq f_1$.
The cospan category $\mathsf{Cosp}^\simeq_{\leq d} ( \mathsf{CW}^\mathsf{fin}_\ast )$ of pointed finite CW-spaces has pointed finite CW-spaces $K$ with $\dim K \leq (d-1)$ as objects and the homotopy equivalence classes of cospans as morphisms.
The composition is given by gluing some mapping cylinders, which is an analogue of the composition of cobordisms.
Then $\mathsf{Cosp}^\simeq_{\leq d} ( \mathsf{CW}^\mathsf{fin}_\ast )$ is a dagger symmetric monoidal category where the monoidal structure is induced by the wedge sum and the dagger structure is induced by the reversing of cospans.

\begin{remark}
The category $\mathsf{Cosp}^\simeq_{\leq d} ( \mathsf{CW}^\mathsf{fin}_\ast )$ is an analogue of cobordism categories.
Nevertheless, there is a big difference between them.
In fact, $(\dim L - \dim K_0)$ and $(\dim L - \dim K_1)$ are not necessarily $1$ or positive.
\end{remark}

\subsubsection{A (co)span category of $\mathcal{A}$}

Let $\mathcal{A}$ be a small abelian category.
We have an equivalence relation $\approx$ of cospans in $\mathcal{A}$ which is generated by a preorder $\preceq$ of cospans.

\begin{Defn}
\label{201912112309}
We define a preorder $\preceq$ of cospans in $\mathcal{A}$.
For cospans $\Lambda = \left( A_0 \stackrel{f_0}{\to} B \stackrel{f_1}{\leftarrow} A_1 \right)$ and $\Lambda^\prime = \left( A^\prime_0 \stackrel{f^\prime_0}{\to} B^\prime \stackrel{f^\prime_1}{\leftarrow} A^\prime_1 \right)$ in $\mathcal{A}$, we define $\Lambda \preceq \Lambda^\prime$ if $A_0 = A^\prime_0$, $A_1 = A^\prime_1$ and there exists a {\it monomorphism} $g : B \to B^\prime$ in $\mathcal{A}$ such that $g \circ f_0 = f^\prime_0$ and $g \circ f_1 = f^\prime_1$.
\end{Defn}

The cospan category associated with $\mathcal{A}$ and $\approx$ is a dagger symmetric monoidal category $\mathsf{Cosp}^\approx ( \mathcal{A})$ whose morphism set is given by $\approx$-equivalence classes of cospans.
In particular, the composition is given by the push-forward construction and the monoidal structure is given by the biproduct in $\mathcal{A}$.
It is well-defined since the equivalence relation $\approx$ is compatible with the direct sum of cospans and the composition of cospans.

\begin{Example}
\label{202102140944}
There is only one $\approx$-equivalence class from the zero object $0$ to itself.
In fact, $(0 \to 0 \leftarrow 0) \preceq ( 0 \to A \leftarrow 0)$ for any object $A$ so that they are $\approx$-equivalent.
\end{Example}

One can define an analogous preorder and an equivalence relation of {\it spans}, and obtain a span category $\mathsf{Sp}^\approx ( \mathcal{A} )$.
Formally, it is given by $\mathsf{Sp}^\approx ( \mathcal{A} ) = \mathsf{Cosp}^\approx ( \mathcal{A}^\mathsf{op})$.
Note that we have a natural isomorphism of dagger symmetric monoidal categories
\begin{align}
\label{202102071200}
\mathrm{T} : \mathsf{Sp}^\approx ( \mathcal{A} ) \to \mathsf{Cosp}^\approx ( \mathcal{A} ) .
\end{align}
For a span $\Lambda = ( B \stackrel{f}{\leftarrow} A \stackrel{f^\prime}{\to} C )$ in $\mathcal{A}$, there exists a cospan $\Lambda^\prime = ( B \stackrel{g}{\to} D \stackrel{g^\prime}{\leftarrow} C )$ such that the square diagram (\ref{201912190830}) in Definition \ref{202101301425} is exact.
Then the functor $\mathrm{T}$ is given by $\mathrm{T} ( [ \Lambda ] ) \stackrel{\mathrm{def.}}{=} [ \Lambda^\prime]$.

\subsubsection{Cospanical and spanical extensions}

\begin{Defn}
\label{202101251135}
Let $F$ be a symmetric monoidal functor $\mathsf{Ho} ( \mathsf{CW}^\mathsf{fin}_{\ast , \leq (d-1)} ) \to \mathcal{A}$.
{\it A cospanical extension of $F$} is a dagger-preserving symmetric monoidal functor $F^\prime$ satisfying the following commutative diagram.
Here, $\iota_{cosp}$ assigns the equivalence class of a cospan $\left( A \stackrel{f}{\to} B \stackrel{Id_B}{\leftarrow} B \right)$ to a morphism $f : A \to B$.
\begin{equation}
\begin{tikzcd}
\mathsf{Ho} ( \mathsf{CW}_{\ast, \leq (d-1)} ) \ar[d, hookrightarrow]  \ar[r, "F"] & \mathcal{A} \ar[d, "\iota_{cosp}"] \\
\mathsf{Cosp}^\simeq_{\leq d} ( \mathsf{CW}^\mathsf{fin}_\ast ) \ar[r, "F^\prime"] & \mathsf{Cosp}^\approx ( \mathcal{A})
\end{tikzcd}
\end{equation}
Analogously, {\it a spanical extension of $F$} is a dagger-preserving symmetric monoidal functor $F^{\prime\prime}$ satisfying the following commutative diagram.
Here, $\iota_{sp}$ assigns the equivalence class of a span $\left( A \stackrel{Id_A}{\leftarrow} A \stackrel{f}{\to} B \right)$ to a morphism $f : A \to B$.
\begin{equation}
\begin{tikzcd}
\mathsf{Ho} ( \mathsf{CW}_{\ast, \leq (d-1)} ) \ar[d, hookrightarrow]  \ar[r, "F"] & \mathcal{A} \ar[d, "\iota_{sp}"] \\
\mathsf{Cosp}^\simeq_{\leq d} ( \mathsf{CW}^\mathsf{fin}_\ast ) \ar[r, "F^{\prime\prime}"] & \mathsf{Sp}^\approx ( \mathcal{A})
\end{tikzcd}
\end{equation}
\end{Defn}

\begin{remark}
In general, it is not obvious whether the functors $\iota_{cosp}$ is faithful or not.
In the case of $\mathcal{A} = \mathsf{Hopf}^\mathsf{bc,vol}_k$, i.e. the category of bicommutative Hopf algebras with a finite volume, it is faithful since $\iota_{cosp} ( f ) = \iota_{cosp} ( f^\prime )$ implies $\hat{\mathrm{I}}_k (\iota_{cosp} ( f )) = \hat{\mathrm{I}}_k (\iota_{cosp} ( f^\prime ))$ (see Definition \ref{202001241711}) which is equivalent with $f  =f^\prime$ by definitions.
\end{remark}

\begin{Defn}
We denote by $i_d : \mathsf{Ho} ( \mathsf{CW}^\mathsf{fin}_{\ast , \leq d} ) \to \mathsf{Ho} ( \mathsf{CW}^\mathsf{fin}_{\ast , \leq (d+1)} )$ the inclusion functor.
\end{Defn}

\begin{theorem}
\label{202001142323}
For $d \in \mathbb{N} \cup \{ \infty \}$, let $E$ be a $d$-dimensional $\mathcal{A}$-valued Brown functor.
There exists a unique cospanical extension of $E \circ i_{d-1}$.
\end{theorem}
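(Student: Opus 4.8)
The plan is to define the functor $F'$ by imitating the construction of the integral along cospans: on objects set $F'(K) = E(K)$ for $\dim K \leq d-1$, and on a morphism represented by a cospan $\Lambda = \left( K_0 \xrightarrow{f_0} L \xleftarrow{f_1} K_1 \right)$ with $\dim L \leq d$, set
\[
F'([\Lambda]) = \left[ E(K_0) \xrightarrow{E(f_0)} E(L) \xleftarrow{E(f_1)} E(K_1) \right] \in \mathsf{Cosp}^\approx(\mathcal{A}).
\]
This makes sense precisely because $E$ is a $d$-dimensional Brown functor and $\dim L \leq d$, so $E(L)$ is defined. Well-definedness on homotopy-equivalence classes of cospans is the first thing I would verify: a homotopy equivalence $h \colon L \to L'$ of legs induces an isomorphism $E(h)$ in $\mathcal{A}$ with $E(f_i') \circ E(h) = E(f_i)$; since an isomorphism is in particular a monomorphism, the two image cospans satisfy the preorder relation $\preceq$ of Definition \ref{201912112309} in both directions, hence are $\approx$-equivalent.

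Functoriality is the step I expect to be the main obstacle, and it is where the Mayer–Vietoris axiom does all the work. Preservation of identities is immediate. For composition, recall that the composite of $\Lambda = \left( K_0 \to L \leftarrow K_1 \right)$ and $\Lambda' = \left( K_1 \to L' \leftarrow K_2 \right)$ in $\mathsf{Cosp}^\simeq_{\leq d}$ has middle the double mapping cylinder $L'' = L \cup_{K_1} L'$, whereas the composite of the two image cospans in $\mathsf{Cosp}^\approx(\mathcal{A})$ is computed by the push-forward, i.e. the pushout $P$ of $E(L) \leftarrow E(K_1) \to E(L')$ in $\mathcal{A}$. The double mapping cylinder square is approximated by a triad, so Definition \ref{202005211445} applies and the induced square with corner $E(L'')$ is exact; by Remark \ref{201912230956} this means the chain complex $E(K_1) \to E(L) \oplus E(L') \to E(L'')$ is exact at its middle term. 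The universal property of the pushout yields a canonical morphism $\bar v \colon P \to E(L'')$, and exactness at the middle term is exactly the assertion that $\bar v$ is a monomorphism. As $\bar v$ is compatible with both legs, the pushout cospan satisfies $\left( E(K_0) \to P \leftarrow E(K_2) \right) \preceq E(\Lambda' \circ \Lambda)$, so the two are $\approx$-equivalent and $F'(\Lambda') \circ F'(\Lambda) = F'(\Lambda' \circ \Lambda)$. In short, functoriality is the translation of the Mayer–Vietoris axiom into the language of $\preceq$.

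The remaining structural properties are routine. Since $E$ carries wedge sums to biproducts (part of Definition \ref{202005211445}), $F'$ intertwines the wedge-monoidal structure on $\mathsf{Cosp}^\simeq_{\leq d}$ with the biproduct-monoidal structure on $\mathsf{Cosp}^\approx(\mathcal{A})$ compatibly with the symmetries; both dagger structures are the reversal of cospans, and $E(\Lambda^\dagger) = E(\Lambda)^\dagger$, so $F'$ preserves the dagger. The diagram of Definition \ref{202101251135} commutes on the nose, because $\iota_{cosp}$ sends $f \colon K_0 \to K_1$ (with $\dim K_i \leq d-1$) to $\left( K_0 \xrightarrow{f} K_1 \xleftarrow{\mathrm{id}} K_1 \right)$, whose image under $F'$ is $\iota_{cosp}(E(f))$.

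For uniqueness, let $F'$ and $F''$ be two cospanical extensions. They agree on objects and, by the commuting diagram together with dagger-preservation, on every $\iota_{cosp}(f)$ and its dagger whenever all spaces involved have dimension $\leq d-1$; it therefore suffices to factor an arbitrary $\Lambda = \left( K_0 \xrightarrow{f_0} L \xleftarrow{f_1} K_1 \right)$ into such generators. By cellular approximation I may assume $f_0, f_1$ land in the $(d-1)$-skeleton $L' = L^{(d-1)}$, and then $L$ is the homotopy cofiber of the wedge $\phi \colon W \to L'$ of the attaching maps of the top cells, where $W = \bigvee S^{d-1}$ has dimension $d-1$. Writing $c \colon W \to \ast$ for the collapse, a homotopy-pushout computation of the relevant double mapping cylinder shows
\[
\Lambda = \iota_{cosp}\big((\mathrm{id}_{K_1}, c)\big) \circ \iota_{cosp}\big((f_1, \phi)\big)^\dagger \circ \iota_{cosp}(f_0),
\]
where the intermediate objects $L'$ and $K_1 \vee W$ both have dimension $\leq d-1$. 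Hence every factor lies in the image of $\iota_{cosp}$ or is its dagger, so $F'$ and $F''$ agree on $\Lambda$. The delicate point here is verifying that this factorization realizes $L$ as the cofiber (homotopy pushout), not the naive collapse, of $\phi$; once this is established, the same factorization shows $\mathsf{Cosp}^\simeq_{\leq d}(\mathsf{CW}^\mathsf{fin}_\ast)$ is generated as a dagger symmetric monoidal category by the image of $\mathsf{Ho}(\mathsf{CW}^\mathsf{fin}_{\ast, \leq d-1})$, which gives uniqueness at once.
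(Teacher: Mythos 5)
Your construction is exactly the one the paper sketches (the paper defers all details to \cite{kim2020extension}): assign the image cospan $E(\Lambda)$, check invariance under homotopy equivalence via the monomorphism clause of $\preceq$, and derive preservation of composition from the Mayer--Vietoris exactness of the double-mapping-cylinder square, so the proposal is correct and follows the same route, with the uniqueness-via-generators argument being a reasonable filling-in of what the paper leaves unstated. Two cosmetic points only: the displayed identity in the well-definedness step should read $E(h)\circ E(f_i)=E(f_i')$, and in the uniqueness factorization one should take $W=(\coprod S^{d-1})^{+}$ rather than $\bigvee S^{d-1}$, since attaching maps of cells in components away from the basepoint need not be pointed.
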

\begin{proof}
We sketch a construction of the cospanical extension.
The details are explained in \cite{kim2020extension}.
We consider the assignment of the induced cospan $E( \Lambda)$ in $\mathcal{A}$ to a cospan $\Lambda = ( K_0 \to L \leftarrow K_1 )$ of pointed finite CW-spaces where $\dim L \leq d$, $\dim K_0, \dim K_1 \leq (d-1)$.
Then one can prove that a homotopy equivalence of $\Lambda \simeq \Lambda^\prime$ implies $E ( \Lambda ) \approx E( \Lambda^\prime )$, and the assignment $[ E ( \Lambda )]$ to $[ \Lambda ]$ preserves the compositions.
\end{proof}

\begin{Defn}
We denote by $\Sigma_d : \mathsf{Ho} ( \mathsf{CW}^\mathsf{fin}_{\ast , \leq d} ) \to \mathsf{Ho} ( \mathsf{CW}^\mathsf{fin}_{\ast , \leq (d+1)} )$ the suspension functor.
\end{Defn}

\begin{Defn}
\label{202101291058}
For a cospan $\Lambda = \left( K_0 \stackrel{f_0}{\to} L \stackrel{f_1}{\leftarrow} K_1 \right)$, we define a span of pointed finite CW-spaces by
\begin{align}
\mathrm{T}\Sigma (\Lambda ) \stackrel{\mathrm{def.}}{=} \left( \Sigma K_0 \stackrel{\tau_{K_0} \circ p_0}{\leftarrow} C (f_0 \vee f_1) \stackrel{p_1}{\to} \Sigma K_1 \right) .
\end{align}
Here, $p_0, p_1$ are the collapsing maps from the mapping cone $C(f_0 \vee f_1)$ to the suspensions $\Sigma K_0$ and $\Sigma K_1$ respectively.
The suspension is given by $\Sigma K = S^1 \wedge K$ and $\tau_K : \Sigma K \to \Sigma K ; [t , x] \mapsto [\bar{t} , x]$ where $\bar{t}$ is the conjugate.
\end{Defn}

\begin{theorem}
\label{202101251044}
For $d \in \mathbb{N} \cup \{ \infty \}$, let $E$ be a $d$-dimensional $\mathcal{A}$-valued Brown functor.
There exists a unique spanical extension of $E \circ \Sigma_{d-1}$.
\end{theorem}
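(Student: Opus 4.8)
The plan is to build the spanical extension explicitly by transporting the homotopy data of a cospan through the span $\mathrm{T}\Sigma(\Lambda)$ of Definition \ref{202101291058}, and then to read off functoriality from the Mayer-Vietoris axiom of $E$. On objects I set $F''(K) = E(\Sigma K)$, which agrees with $(E\circ\Sigma_{d-1})(K)$ for $\dim K \leq d-1$. On a morphism represented by a cospan $\Lambda = (K_0 \xrightarrow{f_0} L \xleftarrow{f_1} K_1)$ with $\dim L \leq d$ I set $F''([\Lambda]) = [E(\mathrm{T}\Sigma(\Lambda))]$, the $\approx$-class of the span in $\mathcal{A}$ obtained by applying $E$ to $\mathrm{T}\Sigma(\Lambda) = (\Sigma K_0 \xleftarrow{\tau_{K_0}\circ p_0} C(f_0\vee f_1) \xrightarrow{p_1} \Sigma K_1)$. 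The first thing to check is that $E$ may legitimately be applied: the legs $\Sigma K_0,\Sigma K_1$ have dimension at most $d$, and the apex $C(f_0\vee f_1)$, being the mapping cone of $f_0\vee f_1 : K_0\vee K_1 \to L$ out of the at most $(d-1)$-dimensional space $K_0\vee K_1$ into the at most $d$-dimensional $L$, also has dimension at most $d$; hence all three spaces lie in the domain $\mathsf{Ho}(\mathsf{CW}^\mathsf{fin}_{\ast,\leq d})$ of $E$.

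Next I would show that $F''$ is well defined on homotopy-equivalence classes. A homotopy equivalence $\Lambda\simeq\Lambda'$ induces a homotopy equivalence $\mathrm{T}\Sigma(\Lambda)\simeq\mathrm{T}\Sigma(\Lambda')$ of spans of spaces, since the suspension and mapping-cone constructions are homotopy functors and the collapsing maps $p_0,p_1$ are natural in the data. Applying $E$ turns a homotopy equivalence of apexes into an isomorphism in $\mathcal{A}$ compatible with both legs, and such an isomorphism witnesses both directions of the span analogue of the preorder $\preceq$ (Definition \ref{201912112309}); the two spans are therefore $\approx$-equivalent. This is the exact counterpart of the homotopy-invariance step already used for the cospanical extension in Theorem \ref{202001142323}.

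The heart of the argument, and the step I expect to be the main obstacle, is functoriality. Given composable cospans $\Lambda = (K_0\to L\leftarrow K_1)$ and $\Lambda' = (K_1\to L'\leftarrow K_2)$, whose composite has apex the homotopy pushout $L\cup_{K_1}L'$, I must compare $E(\mathrm{T}\Sigma(\Lambda'\circ\Lambda))$ with the composite of $E(\mathrm{T}\Sigma(\Lambda'))$ and $E(\mathrm{T}\Sigma(\Lambda))$ in $\mathsf{Sp}^\approx(\mathcal{A})$. That composite is computed from the exact square attached to the two legs over the shared object $E(\Sigma K_1)$, via the description of the transposition $\mathrm{T}$ by exact squares in subsection \ref{202102021122}. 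I would first exhibit a triad of spaces relating the apex of $\mathrm{T}\Sigma(\Lambda'\circ\Lambda)$ to $C(f_0\vee f_1)$, $C(f_1'\vee f_2')$ and $\Sigma K_1$, so that the Mayer-Vietoris axiom of $E$ (Definition \ref{202005211445}) furnishes precisely the exact square identifying $[E(\mathrm{T}\Sigma(\Lambda'\circ\Lambda))]$ with the span composite. The delicate point is organizing the homotopy pushouts so that the cone construction and the suspension interact correctly with the gluing along $K_1$; this homotopy bookkeeping, carried out at the space level in \cite{kim2020extension}, is where the real work lies. The symmetric monoidal structure is handled in parallel, since $\mathrm{T}\Sigma$ sends a wedge of cospans to a wedge of spans and $E$ carries wedges to biproducts; the dagger is matched by comparing $\mathrm{T}\Sigma(\Lambda^\dagger)$ with the reversal of $\mathrm{T}\Sigma(\Lambda)$, the wedge-swap isomorphism on the apex together with the conjugations $\tau$ reconciling the two legs up to $\approx$.

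Finally I would verify the defining square and uniqueness. For a map $f:K_0\to K_1$ the cospan $\iota(f) = (K_0\xrightarrow{f}K_1\xleftarrow{Id}K_1)$ has $\mathrm{T}\Sigma(\iota(f))$ with apex $C(f\vee Id_{K_1})$, which deformation retracts onto $\Sigma K_0$ because attaching the cone on $K_1$ along the identity contracts $K_1$; under this equivalence the legs become an isomorphism and $\Sigma f$, so $E(\mathrm{T}\Sigma(\iota(f)))$ is $\approx$-equivalent to $\iota_{sp}(E(\Sigma f)) = \iota_{sp}((E\circ\Sigma_{d-1})(f))$, giving $F''\circ\iota = \iota_{sp}\circ(E\circ\Sigma_{d-1})$. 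For uniqueness I would use the transposition isomorphism $\mathrm{T}:\mathsf{Sp}^\approx(\mathcal{A})\to\mathsf{Cosp}^\approx(\mathcal{A})$ of (\ref{202102071200}): since a direct check gives $\mathrm{T}\circ\iota_{sp} = \iota_{cosp}$, postcomposition with $\mathrm{T}$ is a bijection between spanical extensions of $E\circ\Sigma_{d-1}$ and cospanical extensions of the same functor, so the uniqueness of $F''$ reduces to the uniqueness of cospanical extensions that is part of the general extension theory of \cite{kim2020extension} underlying Theorem \ref{202001142323}.
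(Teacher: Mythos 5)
Your construction is exactly the one the paper uses: the paper's proof is a two-line sketch stating that the spanical extension is induced by assigning the span $E(\mathrm{T}\Sigma(\Lambda))$ to a cospan $\Lambda$, with all details (homotopy invariance, functoriality via Mayer--Vietoris, the dagger and monoidal compatibilities, uniqueness) deferred to \cite{kim2020extension}, and your proposal fills in precisely those steps along the same lines. The dimension count for the apex $C(f_0\vee f_1)$, the identification of $\mathrm{T}\Sigma(\iota(f))$ with $\iota_{sp}(E(\Sigma f))$, and the reduction of uniqueness through the transposition $\mathrm{T}$ are all consistent with the paper's framework.
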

\begin{proof}
We sketch a construction of the spanical extension.
The readers are referred to \cite{kim2020extension} for details.
In fact, it is induced by an assignment of a span $E ( \mathrm{T} \Sigma ( \Lambda ) )$ in $\mathcal{A}$ to a cospan of spaces $\Lambda$.
\end{proof}

For a cospan of CW-spaces $\Lambda$, the induced cospan $E ( \Sigma \Lambda )$ and span $E ( \mathrm{T} \Sigma \Lambda )$ are closely related with each other :
\begin{align}
E ( \Sigma \Lambda ) \approx \mathrm{T} ( E ( \mathrm{T} \Sigma ( \Lambda ) ) ) .
\end{align}
Here, $\mathrm{T}$ in the right hand side is explained above.
In fact, it holds if $\dim L \leq (d-1)$ and $\dim K_0 , \dim K_1 \leq (d-2)$ since the following square diagram is exact (see Definition \ref{202101301425}).
\begin{equation}
\begin{tikzcd}
E( \Sigma K_0) \ar[r, "E( \Sigma f_0)"] & E(\Sigma L) \\
E(C(f_0 \vee f_1)) \ar[r , "E(p_1)"'] \ar[u, "E(\tau_{K_0} \circ p_0)"] &  E( \Sigma K_1)  \ar[u, "E( \Sigma f_1)"']
\end{tikzcd}
\end{equation}
The observation gives the following theorem.

\begin{theorem}
\label{201912312039}
For $d \in \mathbb{N} \cup \{ \infty \}$, let $E$ be a $d$-dimensional $\mathcal{A}$-valued Brown functor.
Let $E^\prime = \Sigma^\ast_{d-1} E$ be the induced $(d-1)$-dimensional Brown functor.
For the cospanical extension $\hat{E}^\prime$ of $i^\ast_{d-2} E^\prime$ and the spanical extension $\check{E}$ of $\Sigma^\ast_{d-1} E$, the following diagram commutes up to a natural isomorphism.
\begin{equation}
\begin{tikzcd}
\mathsf{Cosp}^\simeq_{\leq (d-1)} ( \mathsf{CW}^\mathsf{fin}_\ast ) \ar[r, "\hat{E}^\prime"] \ar[d, hookrightarrow, "j_{d-1}"'] & \mathsf{Cosp}^\approx ( \mathcal{A} ) \\
\mathsf{Cosp}^\simeq_{\leq d} ( \mathsf{CW}^\mathsf{fin}_\ast ) \ar[r, "\check{E}"] & \mathrm{Sp}^\approx ( \mathcal{A} ) \ar[u, "\mathrm{T}"] 
\end{tikzcd}
\end{equation}
\end{theorem}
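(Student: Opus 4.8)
The plan is to prove the statement by comparing the two functors $\hat{E}^\prime$ and $\mathrm{T} \circ \check{E} \circ j_{d-1}$ from $\mathsf{Cosp}^\simeq_{\leq (d-1)} ( \mathsf{CW}^\mathsf{fin}_\ast )$ to $\mathsf{Cosp}^\approx ( \mathcal{A} )$ directly on objects and on morphisms, and then assembling the comparison into a natural isomorphism. On objects there is nothing subtle: an object is a pointed finite CW-space $K$ with $\dim K \leq d-2$, and since $j_{d-1}$ is an inclusion, $\mathrm{T}$ acts as the identity on objects, and both extensions restrict to their underlying functor on objects, each composite sends $K$ to $E^\prime (K) = E ( \Sigma K )$. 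Thus the candidate natural isomorphism will have identity components $E(\Sigma K) \to E(\Sigma K)$.

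For morphisms I would unwind the two constructions. A morphism is represented by a cospan $\Lambda = ( K_0 \stackrel{f_0}{\to} L \stackrel{f_1}{\leftarrow} K_1 )$ with $\dim L \leq d-1$ and $\dim K_0 , \dim K_1 \leq d-2$. By the construction of the cospanical extension (Theorem \ref{202001142323}), $\hat{E}^\prime ( [ \Lambda ] )$ is the class of the cospan $E(\Sigma\Lambda)$, namely $E(\Sigma K_0) \stackrel{E(\Sigma f_0)}{\to} E(\Sigma L) \stackrel{E(\Sigma f_1)}{\leftarrow} E(\Sigma K_1)$. By the construction of the spanical extension (Theorem \ref{202101251044}), $\check{E} ( j_{d-1} [ \Lambda ] )$ is the class of the span $E ( \mathrm{T}\Sigma ( \Lambda ) )$, and applying $\mathrm{T}$ turns this span into the $\approx$-class of a completing cospan. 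Hence the theorem reduces to the single equivalence $E ( \Sigma \Lambda ) \approx \mathrm{T} ( E ( \mathrm{T}\Sigma ( \Lambda ) ) )$, which is precisely the observation displayed immediately before the statement.

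To make that observation rigorous I would exhibit the square diagram in $\mathcal{A}$ whose left and bottom edges form the span $E ( \mathrm{T}\Sigma ( \Lambda ) )$ and whose top and right edges form the cospan $E(\Sigma\Lambda)$, and verify it is exact. Its exactness is an instance of the Mayer--Vietoris axiom (Definition \ref{202005211445}) for $E$, applied to the corresponding diagram of spaces built from the suspensions and the mapping cone $C(f_0 \vee f_1)$; this diagram of spaces is approximated by a triad, which is the homotopy-theoretic input borrowed from \cite{kim2020extension}. Since $\mathrm{T}$ sends a span to the $\approx$-class of any cospan that completes it to an exact square, the exactness identifies $\mathrm{T} ( E ( \mathrm{T}\Sigma ( \Lambda ) ) )$ with $E ( \Sigma \Lambda )$ in $\mathsf{Cosp}^\approx ( \mathcal{A} )$, as required. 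The point I would stress here is the role of $j_{d-1}$: restricting to $\mathsf{Cosp}^\simeq_{\leq (d-1)} ( \mathsf{CW}^\mathsf{fin}_\ast )$ forces $\dim \Sigma L \leq d$ as well as $\dim \Sigma K_i \leq d$ and $\dim C(f_0 \vee f_1) \leq d$, so that $E$ is defined on all four corners and the axiom is applicable; without this bound the equivalence can fail.

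Finally I would check naturality, i.e. that the identity components are compatible with composition and with the symmetric monoidal and dagger structures. Because $\hat{E}^\prime$ and $\mathrm{T} \circ \check{E} \circ j_{d-1}$ are both dagger-preserving symmetric monoidal functors that agree strictly on objects and as $\approx$-classes on every morphism, these identities automatically constitute a monoidal natural isomorphism; equivalently, one may observe that $\mathrm{T} \circ \check{E} \circ j_{d-1}$ is itself a cospanical extension of $i^\ast_{d-2} E^\prime$ and invoke the uniqueness asserted in Theorem \ref{202001142323}. I expect the only genuine work to be the bookkeeping of the dimension bounds together with the triad approximation needed to invoke Mayer--Vietoris uniformly over all morphisms of the restricted category; the formal assembly into a natural isomorphism is then routine.
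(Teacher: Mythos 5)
Your proposal is correct and follows the paper's own route: the paper establishes the theorem precisely by the displayed observation $E(\Sigma\Lambda)\approx \mathrm{T}(E(\mathrm{T}\Sigma(\Lambda)))$, justified by the exactness (Mayer--Vietoris) of the square diagram relating $E(\Sigma K_0)$, $E(\Sigma L)$, $E(C(f_0\vee f_1))$, $E(\Sigma K_1)$ under the same dimension bounds $\dim L\leq d-1$, $\dim K_i\leq d-2$ that you isolate. Your closing remark that one could alternatively observe $\mathrm{T}\circ\check{E}\circ j_{d-1}$ is itself a cospanical extension of $i^\ast_{d-2}E^\prime$ and invoke uniqueness is a clean way to finish the bookkeeping the paper leaves implicit.
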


%%%%%%%%%%%%%%%%%%%%%%%%%%%%%%%%
\section{Construction of TQFT's}

In this section, we give constructions of the path-integral of $\mathsf{Hopf}^\mathsf{bc,vol}_k$-valued Brown functors.
Formally speaking, it is given by a symmetric monoidal {\it projective} functor from a cospan category of CW-spaces to the category $\mathsf{C}_k$.
For the definition of symmetric monoidal projective functor, see the appendix.
The construction is formal and categorical in the sense that we compose (co)spanical extensions (see section \ref{202002211539}) with the integral projective functor for (co)spans introduced in this section.

\subsection{Homotopy-theoretic version of TQFT}
\label{202102171836}

In this subsection, we introduce a terminology {\it homotopy-theoretic version of TQFT} as an analogue of TQFT.
It is distinguished from TQFT in the sense that its source category is a cospan category of CW-spaces, not a cobordism category.
Obviously a homotopy-theoretic version of TQFT induces a TQFT in an appropriate way (see Proposition \ref{202102131241}).
The path-integral of $\mathsf{Hopf}^\mathsf{bc,vol}_k$-valued Brown functors, which is the main subject of this section, gives a homotopy-theoretic version of TQFT.

From now on, let $\mathcal{C}$ be a symmetric monoidal category with the unit object $\mathds{1}$.

\begin{Defn}
Let $d \in \mathbb{N} \cup \{ \infty \}$.
{\it A $\mathcal{C}$-valued homotopy-theoretic version of TQFT (HTQFT) with the supremal dimension $d$} is a symmetric monoidal functor from the cospan category $\mathsf{Cosp}^\simeq_{\leq d} ( \mathsf{CW}^\mathsf{fin}_\ast )$ to $\mathcal{C}$.
{\it A $\mathcal{C}$-valued projective HTQFT} is a symmetric monoidal projective functor from the cospan category $\mathsf{Cosp}^\simeq_{\leq d} ( \mathsf{CW}^\mathsf{fin}_\ast )$ to $\mathcal{C}$.
\end{Defn}

\begin{Example}
\label{202102131244}
The copanical extension of $E \circ i_{d-1}$ in Theorem \ref{202001142323} (the spanical extension of $E \circ \Sigma_{d-1}$ in Theorem \ref{202101251044}, resp.) is a $\mathsf{Cosp}^\approx ( \mathcal{A} )$-valued ($\mathsf{Sp}^\approx ( \mathcal{A} )$-valued, resp.) HTQFT.
\end{Example}

\begin{Example}
The example of our main interest is given by the path-integrals of $\mathsf{Hopf}^\mathsf{bc,vol}_k$-valued Brown functors in the following subsection.
\end{Example}

\begin{Defn}
\label{202102120959}
Let $Z$ be a $\mathcal{C}$-valued projective HTQFT with the supremal dimension $d$.
For a pointed finite CW-space $L$ with $\dim L \leq d$, let $\Lambda = ( \ast \to L \leftarrow \ast )$.
Then $Z ( [ \Lambda ] ) : Z( \ast ) \to Z(\ast)$ is a morphism in $\mathcal{C}$.
By the isomorphism $Z(\ast ) \cong \mathds{1}$, we regard $Z ( [ \Lambda ] )$ as an endomorphism on the unit object $\mathds{1}$.
We define {\it the induced $\mathrm{End}_\mathcal{C} ( \mathds{1} )$-valued homotopy invariant (of pointed finite CW-spaces with the dimension lower than or equal to $d$)} by $Z (L) \stackrel{\mathrm{def.}}{=} Z ( [ \Lambda ] ) \in \mathrm{End}_\mathcal{C} ( \mathds{1} )$ with abuse of notations.
\end{Defn}

\begin{Example}
Consider $\mathcal{C} = \mathsf{Cosp}^\approx ( \mathcal{A} )$ with the unit object $\mathds{1} = 0$, i.e. the zero object of $\mathcal{A}$.
The induced homotopy invariant of Example \ref{202102131244} is trivial in the sense that $Z(L) = Id_\mathds{1}$.
In fact, the endomorphism set $\mathrm{End}_\mathcal{C} ( \mathds{1} )$ consists of only the identity $Id_\mathds{1}$ due to Example \ref{202102140944}.
\end{Example}

\begin{Defn}
\label{202006081111}
Let $d \in \mathbb{N} \amalg \{ \infty \}$.
For $n \in \mathbb{N}$ such that $n \leq d$, let $\mathsf{Cob}_n$ be the $n$-dimensional cobordism category of unoriented smooth compact manifolds.
We define a symmetric monoidal functor $\Phi_{n,d} : \mathsf{Cob}_n \to \mathsf{Cosp}_{\leq d} ( \mathsf{CW}^\mathsf{fin}_\ast )$.
It assigns $M^+ = M \amalg \{ \mathrm{pt} \}$ to a closed $(n-1)$-manifold $M$.
It assigns the homotopy equivalence of the cospan induced by embeddings to an $n$-cobordism $N$ from $M_0$ to $M_1$ :
\begin{align}
\Phi_{n,d} ( [ M_0 \hookrightarrow N \hookleftarrow M_1 ] ) \stackrel{\mathrm{def.}}{=} [ M^+_0 \hookrightarrow N^+ \hookleftarrow M^+_1 ] .
\end{align}
\end{Defn}

\begin{remark}
Definition \ref{202006081111} is obviously well-defined.
Especially, the composition is preserved since boundaries have a collar neighborhood.
\end{remark}

A homotopy-theoretic version of TQFT naturally induces a TQFT via the functor $\Phi_{n,d}$ as follows.

\begin{prop}
\label{202102131241}
For $d \in \mathbb{N} \cup \{ \infty \}$ and a natural number $n \leq d$, a $\mathcal{C}$-valued (projective, resp.) HTQFT with the supremal dimension $d$ induces a $\mathcal{C}$-valued (projective, resp.) TQFT with dimension $n$.
\end{prop}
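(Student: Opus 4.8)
The plan is to realize the induced TQFT as the precomposition $Z \circ \Phi_{n,d}$ of the given HTQFT $Z : \mathsf{Cosp}^\simeq_{\leq d} ( \mathsf{CW}^\mathsf{fin}_\ast ) \to \mathcal{C}$ with the functor $\Phi_{n,d} : \mathsf{Cob}_n \to \mathsf{Cosp}^\simeq_{\leq d} ( \mathsf{CW}^\mathsf{fin}_\ast )$ from Definition \ref{202006081111}. Recall that $\Phi_{n,d}$ is a genuine symmetric monoidal functor: it sends a closed $(n-1)$-manifold $M$ to $M^+$ and an $n$-cobordism to the homotopy class of the cospan of one-point compactifications, and the remark following Definition \ref{202006081111} guarantees that it preserves composition (via the collar-neighbourhood argument) and is compatible with the monoidal structures (disjoint union of manifolds corresponds to the wedge sum after adjoining basepoints). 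So first I would simply set the induced TQFT to be $Z \circ \Phi_{n,d} : \mathsf{Cob}_n \to \mathcal{C}$.

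For the non-projective case I would invoke the elementary fact that a composite of two symmetric monoidal functors is again symmetric monoidal: the coherence isomorphisms of $Z \circ \Phi_{n,d}$ are obtained by pasting those of $Z$ and $\Phi_{n,d}$, and compatibility with the symmetry follows because each factor respects the symmetry. This already exhibits $Z \circ \Phi_{n,d}$ as a $\mathcal{C}$-valued TQFT of dimension $n$.

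For the projective case the only additional bookkeeping is with the scalar factors. Recalling the notion of symmetric monoidal projective functor from the appendix (section \ref{202002070917}), the functor $Z$ preserves composition only up to a factor $\lambda ( \beta , \alpha ) \in k^\ast$ governed by a $2$-cocycle. Since $\Phi_{n,d}$ is a strict, non-projective functor, for composable cobordisms $N, N^\prime$ one has $\Phi_{n,d} ( N^\prime \circ N ) = \Phi_{n,d} ( N^\prime ) \circ \Phi_{n,d} ( N )$, whence $(Z \circ \Phi_{n,d}) ( N^\prime \circ N ) = \lambda ( \Phi_{n,d} ( N^\prime ) , \Phi_{n,d} ( N ) ) \cdot \big( (Z \circ \Phi_{n,d}) ( N^\prime ) \circ (Z \circ \Phi_{n,d}) ( N ) \big)$. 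Thus $Z \circ \Phi_{n,d}$ is again a symmetric monoidal projective functor, whose associated $2$-cocycle is the pullback $\Phi_{n,d}^\ast \lambda$.

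The one point requiring genuine care, and the step I expect to be the main obstacle, is verifying that these scalar factors assemble into a legitimate projective-functor datum, i.e. that $\Phi_{n,d}^\ast \lambda$ satisfies the cocycle condition and the normalization conventions of the appendix. This is formal: pullback along a functor carries $2$-cocycles to $2$-cocycles, and the normalization on identities is preserved because $\Phi_{n,d}$ sends identities to identities. I would record this verification explicitly, citing section \ref{202002070917}, and then conclude.
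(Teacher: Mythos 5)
Your proposal is correct and follows exactly the paper's argument: the paper's proof is precisely the one-line observation that $Z \circ \Phi_{n,d}$ is the desired (projective) TQFT, with $\Phi_{n,d}$ the functor of Definition \ref{202006081111}. Your additional verifications (that a composite of symmetric monoidal functors is symmetric monoidal, and that the obstruction $2$-cocycle pulls back along the strict functor $\Phi_{n,d}$) are the routine details the paper leaves implicit.
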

\begin{proof}
For a (projective, resp.) HTQFT $Z$ with the supremal dimension $d$, the composition $Z \circ \Phi_{n,d}$ is (projective, resp.) TQFT with dimension $n$.
\end{proof}

\subsection{Path-integrals of $\mathsf{Hopf}^\mathsf{bc,vol}_k$-valued Brown functors}
\label{202102131116}

In this subsection, we define a pair of the path-integrals of $\mathsf{Hopf}^\mathsf{bc,vol}_k$-valued Brown functors : the cospanical path-integral and the spanical path-integral.

\subsubsection{Cospanical path-integral}

Before a formal explanation, we give our motivation of the cospanical path-integral.
Given a $\mathsf{Hopf}^\mathsf{bc,vol}_k$-valued Brown functor $E$, we obtain a cospan $( E( K_0 ) \to E( L ) \leftarrow E( K_1 ) )$ in $\mathsf{Hopf}^\mathsf{bc,vol}_k$ from a cospan of CW-spaces $( K_0 \to L \leftarrow K_1 )$.
The integral along the cospan gives a morphism from $E(K_0)$ to $E(K_1)$ in $\mathsf{C}_k$.
The assignment is packed into a functor which could be formally described as follows.

\begin{Defn}
\label{202001241711}
We define {\it the integral projective functor for cospans} $\hat{\mathrm{I}}_k$.
It is a symmetric monoidal projective functor $\hat{\mathrm{I}}_k : \mathsf{Cosp}^\approx ( \mathsf{Hopf}^\mathsf{bc,vol}_k ) \to \mathsf{C}_k$ which is the identity on objects and assigns the integral along $\Lambda$, i.e. $\hat{\mathrm{I}}_k ( [ \Lambda ] ) \stackrel{\mathrm{def.}}{=} \int_\Lambda$, to morphisms $[\Lambda]$.
It is a well-defined projective functor by Theorem \ref{201912302324} and Proposition \ref{202005301216}.
\end{Defn}

\begin{Defn}
{\it The cospanical path-integral of $E$} denoted by $\hat{\mathrm{PI}} (E)$ is a HTQFT with the supremal dimension $d$ defined as follows.
Let $\hat{E}$ be the cospanical extension of $E \circ i_{d-1}$ given by Theorem \ref{202001142323}.
We define a symmetric monoidal projective functor $\hat{\mathrm{PI}} (E) \stackrel{\mathrm{def.}}{=} \hat{\mathrm{I}}_k \circ \hat{E}$.
\begin{align}
\mathsf{Cosp}^\simeq_{\leq d} ( \mathsf{CW}^\mathsf{fin}_\ast ) \stackrel{\hat{E}}{\longrightarrow} \mathsf{Cosp}^\approx ( \mathsf{Hopf}^\mathsf{bc,vol}_k ) \stackrel{\hat{\mathrm{I}}_k}{\longrightarrow} \mathsf{C}_k .
\end{align}
\end{Defn}

\begin{Example}
\label{202102121407}
The $k^\ast$-valued homotopy invariant induced by $\hat{\mathrm{PI}} (E)$ (see Definition \ref{202102120959}) is trivial in the sense that $(\hat{\mathrm{PI}} (E ) ) ( L ) = 1 \in k^\ast$.
In fact, $\hat{E} ( [ \Lambda ] ) = [ k \stackrel{\eta}{\to} E(L) \stackrel{\eta}{\leftarrow} k ]$ (see the proof of Theorem \ref{202001142323}) where $\Lambda = ( \ast \to L \leftarrow \ast )$.
Hence, $\hat{\mathrm{I}}_k ( \hat{E} ( [ \Lambda ] ) ) = \sigma^{E(L)} \circ \eta_{E(L)} = 1$.
\end{Example}

\begin{remark}
By Example \ref{202102121407}, one might wonder that the cospanical path-integral (and the following spanical path-integral) is not interesting as an invariant.
As we discuss in the following sections, the cospanical path-integral possesses an obstruction cocycle.
If one solves the associated coboundary equation under some conditions, then the cospanical path-integral could be improved to a (not projective) functor which yields some nontrivial invariant.
\end{remark}

\begin{remark}
In \cite{freed2009topological} \cite{heuts2014ambidexterity} \cite{monnier2015higher}, gauge fields in DWFQ theory are described by classifying maps.
In this sense, DWFQ theory is a sigma-model whose target space is the classifying space.
Our result gives more examples of possible sigma-models which naturally have a quantization by path-integral.
In fact, a spectrum in algebraic-topological sense plays a role of the target space.
It is well-known that a spectrum induces a generalized cohomology theory of CW-spaces.
Such generalized cohomology theory is constructed by a homotopy set of maps from (the suspension spectrum of) spaces to the spectrum.
The function Hopf algebra of a generalized cohomology theory could induce a $\mathsf{Hopf}^\mathsf{bc}_k$-valued homology theory.
By subsection \ref{202102141031}, we obtain a family of $\mathsf{Hopf}^\mathsf{bc,vol}_k$-valued Brown functors to which the cospanical (or spanical in the following subsubsection) path-integral is applied.
\end{remark}

\begin{Lemma}
\label{202006041011}
The cospanical path-integral of $E$ satisfies the following strictly commutative diagram.
\begin{equation}
\begin{tikzcd}
\mathsf{Ho} (\mathsf{CW}_{\ast , \leq (d-1)}) \ar[d, hookrightarrow, "\iota"] \ar[r, hookrightarrow, "i_{d-1}"] & \mathsf{Ho} (\mathsf{CW}_{\ast , \leq d}) \ar[r, "E"] & \mathsf{Hopf}^\mathsf{bc,vol}_k \ar[d, hookrightarrow] \\
\mathsf{Cosp}^\simeq_{\leq d} ( \mathsf{CW}_\ast ) \ar[rr, "\hat{\mathrm{PI}} ( E)"] & & \mathsf{C}_k
\end{tikzcd}
\end{equation}
\end{Lemma}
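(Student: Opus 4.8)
The plan is to verify commutativity of the diagram separately on objects and on morphisms, with the object case being immediate. On objects, $\iota$ is the identity assignment $K \mapsto K$, while $\hat{\mathrm{I}}_k$ is the identity on objects and $\hat{E}$ sends $K$ to $E(K)$; hence both composites send $K$ to $E(K)$, which agrees with the right vertical inclusion $\mathsf{Hopf}^\mathsf{bc,vol}_k \hookrightarrow \mathsf{C}_k$ applied to $E(i_{d-1}(K)) = E(K)$. The substance is therefore the morphism case: I would take a morphism $f : K \to L$ in $\mathsf{Ho}(\mathsf{CW}_{\ast,\leq(d-1)})$ and chase it around both paths, showing that each yields the Hopf homomorphism $E(f)$, regarded as a morphism of $\mathsf{C}_k$ through the right vertical inclusion.

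For the bottom-then-right path, I would first unfold $\iota$: it sends $f$ to the equivalence class of the cospan of spaces $\Lambda = \left( K \stackrel{f}{\to} L \stackrel{Id_L}{\leftarrow} L \right)$. By the defining property of the cospanical extension (Definition \ref{202101251135}; cf.\ the construction in Theorem \ref{202001142323}), $\hat{E}$ carries this to $\iota_{cosp}(E(f))$, i.e.\ the class of the cospan $\left( E(K) \stackrel{E(f)}{\to} E(L) \stackrel{Id_{E(L)}}{\leftarrow} E(L) \right)$ in $\mathsf{Hopf}^\mathsf{bc,vol}_k$, where I use $E(Id_L) = Id_{E(L)}$. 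Applying $\hat{\mathrm{I}}_k$ (Definition \ref{202001241711}) then returns the integral along this cospan, which by Definition \ref{202001110741} equals $\mu_{Id_{E(L)}} \circ E(f)$.

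The one genuinely load-bearing step is the identification $\mu_{Id_{E(L)}} = Id_{E(L)}$. I would obtain this from Proposition \ref{202006030739}: the identity $Id_{E(L)}$ is an epimorphism in $\mathsf{Hopf}^\mathsf{bc,vol}_k$, so part (1) gives $Id_{E(L)} \circ \mu_{Id_{E(L)}} = Id_{E(L)}$, whence $\mu_{Id_{E(L)}} = Id_{E(L)}$; the uniqueness clause of Corollary \ref{202001251639} pins this down as well. Consequently the bottom-then-right composite sends $f$ to $\int_{E(\Lambda)} = E(f)$, which is exactly the top-then-right composite. I expect the only delicate point to be confirming that no projective scalar intervenes: although $\hat{\mathrm{PI}}(E)$ is merely a symmetric monoidal \emph{projective} functor in general, here the right leg of the relevant cospan is an identity, so the integral computation returns $E(f)$ on the nose rather than up to a factor in $k^\ast$. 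This is precisely what makes the diagram commute \emph{strictly}, as claimed, and it is the feature I would take care to isolate in the write-up.
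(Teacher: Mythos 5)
Your proof is correct and follows essentially the same route as the paper, which simply cites the defining commutativity of the cospanical extension (Theorem \ref{202001142323}); you additionally spell out the remaining step that $\hat{\mathrm{I}}_k \circ \iota_{cosp}$ restricts to the inclusion $\mathsf{Hopf}^\mathsf{bc,vol}_k \hookrightarrow \mathsf{C}_k$ via $\mu_{Id_{E(L)}} = Id_{E(L)}$, which the paper leaves implicit. This is a faithful, slightly more detailed version of the intended argument.
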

\begin{proof}
The commutativity of the diagram follows from Theorem \ref{202001142323}.
\end{proof}

\begin{remark}
\label{202102161038}
Let $K$ be a pointed finite CW-space with $\dim K \leq (d-1)$.
By Lemma \ref{202006041011}, the induced representation of the mapping class group $\mathrm{MCG} ( K)$ coincides with the representation induced by the Brown functor $E$ itself.
Here, $\mathrm{MCG} ( K )$ is the quotient set of pointed homotopy equivalences on $K$ divided by homotopies.
In other words, the induced representation of homotopy equivalences (or homeomorphisms) on each space is {\it not projective} whereas $\hat{\mathrm{PI}} (E)$ is projective in general.
\end{remark}

\subsubsection{Spanical path-integral}

A cospan of CW-spaces naturally induces a span of CW-spaces by Definition \ref{202101291058}.
By using such spans, we could define another type of path-integral of Brown functors which is explained in this subsubsection.

\begin{Defn}
\label{202102071245}
We define {\it the integral projective functor for spans} $\check{\mathrm{I}}_k$.
It is a symmetric monoidal projective functor $\check{\mathrm{I}}_k : \mathsf{Sp}^\approx ( \mathsf{Hopf}^\mathsf{bc,vol}_k ) \to \mathsf{C}_k$ which is the identity on objects and assigns the integral along $\mathrm{V}$, i.e. $\check{\mathrm{I}}_k ( [\mathrm{V} ] ) = \int^\mathrm{V}$, to a morphism $[\mathrm{V}]$.
\end{Defn}

\begin{prop}
\label{202102121414}
We have $\check{\mathrm{I}}_k = \hat{\mathrm{I}}_k \circ \mathrm{T}$.
\end{prop}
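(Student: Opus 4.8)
The plan is to reduce the claimed equality of projective functors to the pointwise identity recorded in Proposition \ref{202102071235}. Both $\check{\mathrm{I}}_k$ and $\hat{\mathrm{I}}_k \circ \mathrm{T}$ are symmetric monoidal projective functors out of $\mathsf{Sp}^\approx ( \mathsf{Hopf}^\mathsf{bc,vol}_k )$, so to establish that they coincide I would check agreement on objects, on morphisms, and finally on the projective multipliers. On objects the comparison is immediate: $\check{\mathrm{I}}_k$ is the identity on objects by Definition \ref{202102071245}, the transposition $\mathrm{T}$ is the identity on objects (a span $( A_0 \leftarrow B \to A_1 )$ and its transposed cospan share the same source $A_0$ and target $A_1$, cf. (\ref{202102071200})), and $\hat{\mathrm{I}}_k$ is the identity on objects by Definition \ref{202001241711}. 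Hence the composite $\hat{\mathrm{I}}_k \circ \mathrm{T}$ is likewise the identity on objects, matching $\check{\mathrm{I}}_k$.

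For the morphism level I would take an arbitrary span $\mathrm{V}$ and unwind both sides. On the one hand $\check{\mathrm{I}}_k ( [ \mathrm{V} ] ) = \int^\mathrm{V}$ by definition. On the other hand $(\hat{\mathrm{I}}_k \circ \mathrm{T})( [ \mathrm{V} ] ) = \hat{\mathrm{I}}_k ( [ \mathrm{V}^\mathrm{T} ] ) = \int_{\mathrm{V}^\mathrm{T}}$, where $\mathrm{V}^\mathrm{T} = \mathrm{T} ( \mathrm{V} )$ is the transposed cospan. These two linear homomorphisms are equal by Proposition \ref{202102071235}, which identifies the integral along a span with the integral along its transposed cospan; that identity in turn rests on the exact-square compatibility of Proposition \ref{202102071242}. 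Thus the two functors assign the same morphism to every span class $[ \mathrm{V} ]$.

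The remaining point, which I expect to be the only real (though minor) obstacle, is to confirm that the \emph{projective} structures agree and not merely the object- and morphism-assignments. Here I would exploit that $\mathrm{T}$ is a genuine, non-projective isomorphism of symmetric monoidal categories, so it preserves composition strictly; consequently the multiplier of $\hat{\mathrm{I}}_k \circ \mathrm{T}$ is just the pullback along $\mathrm{T}$ of the multiplier of $\hat{\mathrm{I}}_k$. Since the underlying morphism assignments of $\check{\mathrm{I}}_k$ and $\hat{\mathrm{I}}_k \circ \mathrm{T}$ already coincide, and since a nonzero composite of linear homomorphisms in $\mathsf{C}_k$ pins down its scalar factor uniquely, the two multipliers are forced to agree. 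This establishes $\check{\mathrm{I}}_k = \hat{\mathrm{I}}_k \circ \mathrm{T}$ as projective functors.
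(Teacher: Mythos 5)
Your proposal is correct and follows essentially the same route as the paper, which simply declares the identity ``immediate from Proposition \ref{202102071235}'': both functors are the identity on objects, and on a span class $[\mathrm{V}]$ they give $\int^{\mathrm{V}}$ and $\int_{\mathrm{V}^{\mathrm{T}}}$ respectively, which coincide by that proposition. Your third paragraph about matching the projective multipliers is harmless but unnecessary, since a symmetric monoidal projective functor is just the data $(F_o,F_m,\Psi)$ (the scalars are only required to exist, not chosen), so equality of the object- and morphism-assignments already forces equality of the functors.
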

\begin{proof}
It is immediate from Proposition \ref{202102071235}.
\end{proof}

\begin{Defn}
{\it The spanical path-integral of $E$} denoted by $\check{\mathrm{PI}} (E)$ is a HTQFT with the supremal dimension $d$ defined as follows.
It is a symmetric monoidal projective functor $\check{\mathrm{PI}} (E) \stackrel{\mathrm{def.}}{=} \check{\mathrm{I}}_k \circ \check{E}$ where $\check{E}$ is the spanical extension of $E \circ \Sigma_{d-1}$ (see Theorem \ref{202101251044}).
\begin{align}
\mathsf{Cosp}^\simeq_{\leq d} ( \mathsf{CW}^\mathsf{fin}_\ast ) \stackrel{\check{E}}{\longrightarrow} \mathsf{Sp}^\approx ( \mathsf{Hopf}^\mathsf{bc,vol}_k ) \stackrel{\check{\mathrm{I}}_k}{\longrightarrow} \mathsf{C}_k .
\end{align}
\end{Defn}

\begin{Example}
The $k^\ast$-valued homotopy invariant induced by $\check{\mathrm{PI}} (E)$ is trivial in the sense that $(\check{\mathrm{PI}} (E ) ) ( L ) = 1 \in k^\ast$.
It is verified in a parallel way with Example \ref{202102121407}.
\end{Example}

Some cospanical path-integral and spanical path-integral are related with each other as follows.

\begin{theorem}
\label{202001141422}
We have a natural isomorphism of symmetric monoidal projective functors in the strong sense,
\begin{align}
\check{\mathrm{PI}} ( i^\ast_{d-1} E ) \cong  \hat{\mathrm{PI}} ( \Sigma^\ast_{d-1} E  ) .
\end{align}
\end{theorem}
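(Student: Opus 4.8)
The plan is to unfold both path-integrals into their constituent integral projective functors and (co)spanical extensions, and then reduce the claim to Theorem~\ref{201912312039}. Write $F = i^\ast_{d-1}E$ and $E' = \Sigma^\ast_{d-1}E$, both $(d-1)$-dimensional Brown functors. By definition $\hat{\mathrm{PI}}(E') = \hat{\mathrm{I}}_k \circ \hat{E}'$, where $\hat{E}'$ is the cospanical extension of $E' \circ i_{d-2} = i^\ast_{d-2}E'$, and $\check{\mathrm{PI}}(F) = \check{\mathrm{I}}_k \circ \check{F}$, where $\check{F}$ is the spanical extension of $F \circ \Sigma_{d-2}$. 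Rewriting $\check{\mathrm{I}}_k = \hat{\mathrm{I}}_k \circ \mathrm{T}$ by Proposition~\ref{202102121414}, I obtain
\begin{align}\notag
\check{\mathrm{PI}}(F) = \hat{\mathrm{I}}_k \circ \mathrm{T} \circ \check{F}, \qquad \hat{\mathrm{PI}}(E') = \hat{\mathrm{I}}_k \circ \hat{E}' ,
\end{align}
so it suffices to construct a natural isomorphism $\mathrm{T} \circ \check{F} \cong \hat{E}'$ and whisker it with $\hat{\mathrm{I}}_k$.

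Next I would pin down the underlying functor that $\check{F}$ extends. Suspension and inclusion of dimension ranges commute, so $i_{d-1} \circ \Sigma_{d-2} = \Sigma_{d-1} \circ i_{d-2}$ as functors $\mathsf{Ho}(\mathsf{CW}^\mathsf{fin}_{\ast,\leq(d-2)}) \to \mathsf{Ho}(\mathsf{CW}^\mathsf{fin}_{\ast,\leq d})$, and hence
\begin{align}\notag
F \circ \Sigma_{d-2} = E \circ i_{d-1} \circ \Sigma_{d-2} = E \circ \Sigma_{d-1} \circ i_{d-2} = i^\ast_{d-2}E' .
\end{align}
Thus $\check{F}$ and $\hat{E}'$ are respectively the spanical and cospanical extensions of one and the same functor $i^\ast_{d-2}E'$ on $\mathsf{Ho}(\mathsf{CW}^\mathsf{fin}_{\ast,\leq(d-2)})$.

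The heart of the argument is to recognize $\check{F}$ as the restriction $\check{E} \circ j_{d-1}$, where $\check{E}$ is the spanical extension of $E \circ \Sigma_{d-1}$ furnished by Theorem~\ref{202101251044} and $j_{d-1} : \mathsf{Cosp}^\simeq_{\leq(d-1)}(\mathsf{CW}^\mathsf{fin}_\ast) \hookrightarrow \mathsf{Cosp}^\simeq_{\leq d}(\mathsf{CW}^\mathsf{fin}_\ast)$ is the inclusion. The composite $\check{E} \circ j_{d-1}$ is a dagger-preserving symmetric monoidal functor into $\mathsf{Sp}^\approx(\mathsf{Hopf}^\mathsf{bc,vol}_k)$, and I claim it is a spanical extension of $i^\ast_{d-2}E'$ in the sense of Definition~\ref{202101251135}. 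Indeed, for a CW-map $f : K \to L$ with $\dim K, \dim L \leq (d-2)$, the defining commutativity of $\check{E}$ shows that $\check{E}\circ j_{d-1}$ sends the cospan $( K \stackrel{f}{\to} L \stackrel{Id_L}{\leftarrow} L )$ to $\iota_{sp}((E\circ\Sigma_{d-1}\circ i_{d-2})(f)) = \iota_{sp}((i^\ast_{d-2}E')(f))$, which is exactly the value prescribed by $\iota_{sp}\circ(i^\ast_{d-2}E')$; on objects both assign $E(\Sigma K)$. By the uniqueness clause of Theorem~\ref{202101251044}, applied in supremal dimension $d-1$, this identifies $\check{F} \cong \check{E}\circ j_{d-1}$ by an isomorphism that is the identity on objects. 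Composing with the natural isomorphism $\mathrm{T}\circ\check{E}\circ j_{d-1} \cong \hat{E}'$ of Theorem~\ref{201912312039} then yields $\mathrm{T}\circ\check{F} \cong \hat{E}'$, and whiskering with $\hat{\mathrm{I}}_k$ gives $\check{\mathrm{PI}}(F) \cong \hat{\mathrm{PI}}(E')$.

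I expect the genuine difficulty to be bookkeeping rather than computation: one must keep straight that $F$, $E'$, $\check{F}$ and $\hat{E}'$ all live in supremal dimension $d-1$ while $\check{E}$ lives in supremal dimension $d$, and one must check that restricting $\check{E}$ along $j_{d-1}$ lands precisely in the range $\dim L \leq (d-1)$, $\dim K_0,\dim K_1 \leq (d-2)$ in which the exact square underlying Theorem~\ref{201912312039} is available. A final point to confirm is that the isomorphism persists as a natural isomorphism \emph{in the strong sense} after whiskering with the projective functor $\hat{\mathrm{I}}_k$; this holds because $\mathrm{T}\circ\check{F}\cong\hat{E}'$ is an honest natural isomorphism in $\mathsf{Cosp}^\approx(\mathsf{Hopf}^\mathsf{bc,vol}_k)$ that is the identity on objects, so $\hat{\mathrm{I}}_k$ carries it to a family of honest isomorphisms in $\mathsf{C}_k$ compatible with the projective symmetric monoidal structure.
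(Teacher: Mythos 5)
Your proof is correct and takes essentially the same route as the paper's: both reduce the claim to Theorem~\ref{201912312039} together with Proposition~\ref{202102121414} ($\check{\mathrm{I}}_k = \hat{\mathrm{I}}_k \circ \mathrm{T}$), then whisker with $\hat{\mathrm{I}}_k$. The only difference is that you justify the identification $\check{\mathrm{PI}}(i^\ast_{d-1}E) = j^\ast_{d-1}(\check{\mathrm{PI}}(E))$ via the uniqueness clause of Theorem~\ref{202101251044} applied to $F \circ \Sigma_{d-2} = i^\ast_{d-2}(\Sigma^\ast_{d-1}E)$, whereas the paper asserts this step ``by definitions''; your extra care is harmless and fills in exactly the bookkeeping the paper leaves implicit.
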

\begin{proof}
Since $j^\ast_{d-1} ( \check{\mathrm{PI}} (E ) ) = \check{\mathrm{PI}} ( i^\ast_{d-1} E )$ by definitions, it suffices to prove that $j^\ast_{d-1} ( \check{\mathrm{PI}} ( E ) ) \cong \hat{\mathrm{PI}} ( \Sigma^\ast_{d-1} E  )$.
Let $E^\prime =  i^\ast_{d-1} E$.
Let $\hat{E}^\prime$ be the cospanical extension of $E^\prime \circ i_{d-2}$ and $\check{E}$ be the spanical extension of $E \circ \Sigma_{d-1}$.
Consider the following diagram of functors.
The right triangle commutes due to Proposition \ref{202102121414}.
The left diagram commutes by Theorem \ref{201912312039} with $\mathcal{A} = \mathsf{Hopf}^\mathsf{bc,vol}_k$.
It completes the proof.
\begin{equation}
\begin{tikzcd}
\mathsf{Cosp}^\simeq_{\leq (d-1)} ( \mathsf{CW}^\mathsf{fin}_\ast )
\ar[d, "j_{d-1}"']
\ar[r, "\hat{E}^\prime"]
&
\mathsf{Cosp}^\approx ( \mathsf{Hopf}^\mathsf{bc,vol}_k )
\ar[r, "\hat{\mathrm{I}}_k"]
&
\mathsf{C}_k
\\
\mathsf{Cosp}^\simeq_{\leq d} ( \mathsf{CW}^\mathsf{fin}_\ast )
\ar[r, "\check{E}"']
&
\mathsf{Sp}^\approx ( \mathsf{Hopf}^\mathsf{bc,vol}_k )
\ar[u, "\mathrm{T}"]
\ar[ur, "\check{\mathrm{I}}_k"']
&
\end{tikzcd}
\end{equation}
\end{proof}
%%%%%%%%%%%%%%%%%
\section{Obstruction classes induced by integrals}
\label{202102141029}

In the preceding sections, we deal with some symmetric monoidal projective functors such as integral projective functors and path-integrals.
In this section, we study the induced obstruction cocycles and classes.
In particular, we solve the coboundary equations associated with them under some conditions.

\subsection{Obstruction classes associated with integral projective functors}
\label{202002221749}

In this subsection, we study the obstruction cocycles associated with the integral projective functors.
By appendix \ref{202002070917}, the symmetric monoidal projective functor $\hat{\mathrm{I}}_k$ induces a 2-cocycle $\omega ( \hat{\mathrm{I}}_k )$ and 2-cohomology class $\mathds{O} ( \hat{\mathrm{I}}_k )$ of $\mathsf{Cosp}^\approx ( \mathsf{Hopf}^\mathsf{bc,vol}_k )$ with coefficients in the multiplicative group $k^\ast$.
By Proposition \ref{201912280733}, the class $\mathds{O} ( \hat{\mathrm{I}}_k )$ is an obstruction class for the path-integral projective functor to be improved to {\it a symmetric monoidal functor}.

\begin{prop}
\label{202003201443}
The following conditions are equivalent.
\begin{enumerate}
\item
The obstruction cocycle $\omega ( \hat{\mathrm{I}}_k )$ vanishes.
\item
The obstruction class $\mathds{O} ( \hat{\mathrm{I}}_k )$ vanishes.
\item
For any bicommutative Hopf algebra $A$ with a finite volume, we have $( \dim_k A) \cdot 1 = 1 \in k$.
\end{enumerate}
\end{prop}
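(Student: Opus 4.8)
The obstruction cocycle $\omega(\hat{\mathrm{I}}_k)$ can be read off directly from Proposition \ref{202005301216}: the projectivity scalar of $\hat{\mathrm{I}}_k$ on a composable pair of cospans is a value $\langle \partial(\xi',\xi) \rangle$, hence a scalar of the form $\langle \psi \rangle = \sigma^{Q} \circ \psi \circ \sigma_{P}$ for a Hopf homomorphism $\psi : P \to Q$ in $\mathsf{Hopf}^\mathsf{bc,vol}_k$. The plan is to prove the equivalences in the cyclic order $(1) \Rightarrow (2) \Rightarrow (3) \Rightarrow (1)$. The implication $(1) \Rightarrow (2)$ is immediate, since a $2$-cocycle that is identically $1$ represents the trivial cohomology class.

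For $(3) \Rightarrow (1)$ I would evaluate the cocycle term by term. Factoring $\psi = m \circ e$ through its image Hopf algebra $J$, with $e : P \to J$ an epimorphism and $m : J \to Q$ a monomorphism, I would use the compatibility of normalized (co)integrals with this factorization underlying Proposition \ref{202006030739}: a normalized integral pushes forward along $e$ to the normalized integral of $J$, and a normalized cointegral pulls back along $m$ to the normalized cointegral of $J$. This yields $\langle \psi \rangle = \sigma^{J} \circ \sigma_{J} = \mathrm{vol}^{-1}(J)$. As a subquotient of finite-volume Hopf algebras, $J$ has a finite volume because $\mathsf{Hopf}^\mathsf{bc,vol}_k$ is closed under short exact sequences (Theorem \ref{202002211501}); hypothesis $(3)$, which says exactly that $\mathrm{vol}^{-1}(A)^{-1} = (\dim_k A) \cdot 1 = 1$ for every such $A$, then forces $\langle \psi \rangle = 1$. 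Thus $\omega(\hat{\mathrm{I}}_k)$ is identically $1$.

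The conceptual heart is $(2) \Rightarrow (3)$, where only the \emph{class} is assumed trivial. By Proposition \ref{201912280733} this means $\hat{\mathrm{I}}_k$ improves to an honest symmetric monoidal functor $\tilde{F} : \mathsf{Cosp}^\approx ( \mathsf{Hopf}^\mathsf{bc,vol}_k ) \to \mathsf{C}_k$, still the identity on objects and still preserving identities. I would then use that the cospan category is compact closed by the general structure of cospan categories over an abelian category: each object $A$ is self-dual, with coevaluation and evaluation given by the cospan $\Lambda = ( k \xrightarrow{\eta} A \xleftarrow{\nabla} A \otimes A )$ of Definition \ref{202006081524} and its transpose, and the zigzag identities holding strictly. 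The categorical dimension of $A$ in the cospan category is therefore an endomorphism of the unit object $k$; but $k$ is the zero object of the abelian category, so by Example \ref{202102140944} its only endomorphism is $\mathrm{id}_k$, whence $\dim A = \mathrm{id}_k$. Since a symmetric monoidal functor preserves duals and hence categorical dimensions, $\dim_{\mathsf{C}_k}(A) = \tilde{F}(\dim A) = \tilde{F}(\mathrm{id}_k) = \mathrm{id}_{\mathds{1}} = 1$. On the other hand $\dim_{\mathsf{C}_k}(A) = \mathrm{vol}^{-1}(A)^{-1} = (\dim_k A) \cdot 1$ by Corollaries \ref{202101232004} and \ref{202006040804}. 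Comparing the two expressions gives $(\dim_k A) \cdot 1 = 1$, which is $(3)$.

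The step I expect to be the main obstacle is $(2) \Rightarrow (3)$. The delicate points are, first, that the improvement furnished by Proposition \ref{201912280733} must genuinely be \emph{symmetric monoidal} rather than a bare functor, since it is precisely the preservation of categorical dimension that converts the vanishing of an abstract cohomology class into the concrete numerical identity $(3)$; and second, that the cospan category really is compact closed with the self-dualities above, so that the dimension is computed by an endomorphism of the zero object and collapses to $\mathrm{id}_k$. The remaining technical input, used in $(3) \Rightarrow (1)$, is the identity $\langle \psi \rangle = \mathrm{vol}^{-1}(J)$; this is routine given the integral calculus of \cite{kim2021integrals}, but the pushforward and pullback of normalized (co)integrals should be justified carefully.
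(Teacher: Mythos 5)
Your proof is correct and follows essentially the same route as the paper: $(1)\Rightarrow(2)$ is immediate, and $(2)\Rightarrow(3)$ uses Proposition \ref{201912280733} to obtain an honest symmetric monoidal functor, the self-duality of $A$ in $\mathsf{Cosp}^\approx ( \mathsf{Hopf}^\mathsf{bc,vol}_k )$ with categorical dimension forced to be the identity by Example \ref{202102140944}, and the comparison $\dim_{\mathsf{C}_k}(A) = \mathrm{vol}^{-1}(A)^{-1} = (\dim_k A)\cdot 1$ from Corollaries \ref{202101232004} and \ref{202006040804}, exactly as in the paper. The only divergence is in $(3)\Rightarrow(1)$, where the paper simply cites Proposition 11.9 of \cite{kim2021integrals} for $\langle \xi \rangle = 1$, whereas you reprove that fact inline via the image factorization $\langle \psi \rangle = \mathrm{vol}^{-1}(\mathrm{Im_H}(\psi))$ --- a correct and harmless unpacking of the cited result.
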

\begin{proof}
(1) $\Rightarrow$ (2) : It is obvious.

(2) $\Rightarrow$ (3) : Suppose that $\mathds{O} ( \hat{\mathrm{I}}_k )$ vanishes, i.e. $\mathds{O} ( \hat{\mathrm{I}}_k ) =1$.
By Proposition \ref{201912280733}, there exists a symmetric monoidal functor $F : \mathsf{Cosp}^\approx ( \mathsf{Hopf}^\mathsf{bc,vol}_k ) \to \mathsf{C}_k$ such that $F \cong_\mathrm{proj}  \hat{\mathrm{I}}_k$.
Let $A$ be a bicommutative Hopf algebra with a finite volume.
Note that the cospan $\Lambda$ and its dagger $\Lambda^\dagger$ in Definition \ref{202006081524} gives a self-duality of $A$ in $\mathsf{Cosp}^\approx ( \mathsf{Hopf}^\mathsf{bc,vol}_k )$.
Let $d$ be the dimension of $A$ in the symmetric monoidal category $\mathsf{Cosp}^\approx ( \mathsf{Hopf}^\mathsf{bc,vol}_k )$, i.e. $d = [ \Lambda^\dagger ] \circ [ \Lambda ]$.
Then $d$ is the identity on the unit object $k$ since the endomorphism set of $k$ in $\mathsf{Cosp}^\approx ( \mathsf{Hopf}^\mathsf{bc,vol}_k )$ has only the identity (see Example \ref{202102140944}).
Since $F$ is a symmetric monoidal functor, the dimension of $F(A) = A$ in $\mathsf{C}_k$ is $1 : k \to k$.

(3) $\Rightarrow$ (1) : 
Let $A$ be an arbitrary bicommutative Hopf algebra with a finite volume.
Then $A$ is dualizable and its dimension in $\mathsf{C}_k$ coincides with $\mathrm{vol}^{-1} (A)^{-1}$ by Corollary \ref{202101232004}.
By the assumption, we have $\mathrm{vol}^{-1} (A) = 1$.
By Proposition 11.9 in \cite{kim2021integrals}, we have $\langle \xi \rangle = 1 \in k$ for any homomorphism $\xi : B \to B^\prime$ between bicommutative Hopf algebras with a finite volume.
Therefore the cocycle $\omega ( \hat{\mathrm{I}}_k )$ vanishes by definitions.
\end{proof}

From the above proposition, we show some nontriviality of the second cohomology group of the symmetric monoidal category $\mathsf{Cosp}^\approx ( \mathsf{Hopf}^\mathsf{bc,vol}_k)$.

\begin{Corollary}
\label{202006081511}
Let $p$ be the characteristic of the ground field $k$.
\begin{enumerate}
\item
The obstruction class $\mathds{O} ( \hat{\mathrm{I}}_k )$ vanishes if and only if $p = 2$.
In that case, the cocycle $\omega ( \hat{\mathrm{I}}_k )$ vanishes.
\item
If $p \neq 2$, then the second cohomology group $H^2 ( \mathsf{Cosp}^\approx ( \mathsf{Hopf}^\mathsf{bc,vol}_k ); k^\ast )$ is not trivial.
\end{enumerate}
\end{Corollary}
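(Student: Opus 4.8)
The plan is to reduce everything to the numerical criterion in Proposition \ref{202003201443}, whose three conditions are equivalent: the cocycle $\omega(\hat{\mathrm{I}}_k)$ vanishes, the class $\mathds{O}(\hat{\mathrm{I}}_k)$ vanishes, and $(\dim_k A)\cdot 1 = 1 \in k$ for every bicommutative Hopf algebra $A$ with a finite volume. Thus both assertions of the corollary follow once I determine, in terms of the characteristic $p$, whether this last condition (call it $(\ast)$) holds.

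For the first statement I treat the two cases separately. Suppose $p = 2$. By the equivalence (1) $\Leftrightarrow$ (3) of Corollary \ref{202006091350}, any $A$ with a finite volume is finite-dimensional with $(\dim_k A)\cdot 1 \neq 0$ in $k$; in characteristic $2$ this forces $\dim_k A$ to be odd, whence $(\dim_k A)\cdot 1 = 1$. So $(\ast)$ holds, and by Proposition \ref{202003201443} both $\mathds{O}(\hat{\mathrm{I}}_k)$ and $\omega(\hat{\mathrm{I}}_k)$ vanish. Conversely, suppose $p \neq 2$. I would exhibit a single counterexample to $(\ast)$: take the group Hopf algebra $A = k\mathbb{Z}/2$, which is bicommutative of dimension $2$. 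Since $2\cdot 1 \neq 0$ in $k$ when $p \neq 2$, Corollary \ref{202006091350} guarantees that $A$ has a finite volume, yet $(\dim_k A)\cdot 1 = 2 \neq 1$ in the field $k$. Hence $(\ast)$ fails, and Proposition \ref{202003201443} shows that $\mathds{O}(\hat{\mathrm{I}}_k)$ does not vanish. Combining the two cases gives the claimed equivalence, together with the vanishing of the cocycle when $p = 2$.

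The second statement is then immediate: for $p \neq 2$, the previous paragraph produces a nonzero class $\mathds{O}(\hat{\mathrm{I}}_k) \in H^2(\mathsf{Cosp}^\approx(\mathsf{Hopf}^\mathsf{bc,vol}_k); k^\ast)$, which by itself witnesses that this cohomology group is nontrivial.

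The conceptual heart of the argument---and the only step that is not purely formal---is the dimensional dichotomy underlying the case $p=2$: the finite-volume hypothesis is exactly the nonvanishing of $(\dim_k A)\cdot 1$, and in characteristic $2$ this is equivalent to odd-dimensionality, which automatically forces $(\dim_k A)\cdot 1 = 1$. Once this observation is in place, no further computation is needed, since the obstruction-theoretic content is entirely carried by Proposition \ref{202003201443}. The one point requiring mild care is confirming that $k\mathbb{Z}/2$ (or equally $k^{\mathbb{Z}/2}$) is genuinely bicommutative and of finite volume, which is routine.
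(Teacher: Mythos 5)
Your proposal is correct and follows essentially the same route as the paper: both arguments reduce to the numerical criterion of Proposition \ref{202003201443}, observe that in characteristic $2$ the finite-volume condition forces $(\dim_k A)\cdot 1$ to be nonzero and hence equal to $1$, and for $p \neq 2$ exhibit a group Hopf algebra (the paper leaves this implicit, you concretely take $k\mathbb{Z}/2$) whose dimension is not $1$ in $k$. The only cosmetic difference is that you invoke Corollary \ref{202006091350} where the paper cites Corollary \ref{202006040804}; these encode the same fact.
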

\begin{proof}
Note that if $p\neq 2$, then there exists a bicommutative Hopf algebra with a finite volume whose dimension in $\mathsf{C}_k$ is not $1 \in k$.
Such examples could be obtained from group Hopf algebras.
If $p = 2$, then the dimension of any bicommutative Hopf algebra with a finite volume is $1\in k$ since the dimension should be invertible in $k$ by Corollary \ref{202006040804}.
It proves the first claim.
By the first claim, the class $\mathds{O} ( \hat{\mathrm{I}}_k ) \neq 1$ if $p \neq 2$.
It proves the second claim.
\end{proof}

In other words, the following coboundary equation associated with the integral projective functor $\hat{I}_k$ is solvable if and only if $p=2$.
\begin{align}
\omega ( \hat{I}_k ) = \delta \theta .
\end{align}
By repeating the discussion analogous to the above one, we obtain the following result.

\begin{Corollary}
Let $p$ be the characteristic of the ground field $k$.
\begin{enumerate}
\item
If $p \neq 0,2$, then the cohomology group $H^2 ( \mathsf{Cosp}^\approx ( \mathsf{Hopf}^\mathsf{bc,vol}_k ); \mathbb{F}^\ast_p )$ is not trivial.
\item
If $p = 0$, then the cohomology group $H^2 ( \mathsf{Cosp}^\approx ( \mathsf{Hopf}^\mathsf{bc,vol}_k ); \mathbb{Q}_{>0} )$ is not trivial.
\end{enumerate}
\end{Corollary}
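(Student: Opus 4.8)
The plan is to repeat the argument establishing Corollary \ref{202006081511}, but to first pin down the precise subgroup of $k^\ast$ in which the obstruction cocycle $\omega ( \hat{\mathrm{I}}_k )$ takes its values. Recall from Proposition \ref{202005301216} that the scalar correcting the composition of two integrals along cospans is $\langle \partial ( \xi^\prime , \xi ) \rangle$; these scalars are exactly the values of $\omega ( \hat{\mathrm{I}}_k )$ on composable pairs of morphisms. The key point is that every such value is a monomial in inverse volumes: by the volume formalism of \cite{kim2021integrals}, $\langle \xi \rangle$ for a Hopf homomorphism between bicommutative Hopf algebras with a finite volume is a product of factors of the form $\mathrm{vol}^{-1} ( A )$ and their inverses. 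By Corollary \ref{202006040804} we have $\mathrm{vol}^{-1} ( A )^{-1} = ( \dim_k A ) \cdot 1 \in k$, the image of a positive integer under the unique ring homomorphism $\mathbb{Z} \to k$, which is nonzero precisely because $A$ has a finite volume.

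Consequently the values of $\omega ( \hat{\mathrm{I}}_k )$ lie in the subgroup $H \leq k^\ast$ generated by the elements $( \dim_k A ) \cdot 1$, as $A$ ranges over bicommutative Hopf algebras with a finite volume. For $p \neq 0$ this subgroup is contained in $\mathbb{F}^\ast_p$, since $( \dim_k A ) \cdot 1$ lands in the prime field $\mathbb{F}_p$ and is nonzero. For $p = 0$ the element $( \dim_k A ) \cdot 1$ is a positive integer inside the prime field $\mathbb{Q} \subseteq k$, so $H \subseteq \mathbb{Q}_{>0}$. In either case $\omega ( \hat{\mathrm{I}}_k )$ may be regarded as a $2$-cocycle with coefficients in the smaller group, defining a class $\widetilde{\mathds{O}}$ in $H^2 ( \mathsf{Cosp}^\approx ( \mathsf{Hopf}^\mathsf{bc,vol}_k ) ; \mathbb{F}^\ast_p )$ respectively in $H^2 ( \mathsf{Cosp}^\approx ( \mathsf{Hopf}^\mathsf{bc,vol}_k ) ; \mathbb{Q}_{>0} )$.

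I would then invoke naturality of the cohomology of a category in the coefficient group. The inclusion $\mathbb{F}^\ast_p \hookrightarrow k^\ast$, respectively $\mathbb{Q}_{>0} \hookrightarrow k^\ast$, induces a homomorphism on $H^2$ sending $\widetilde{\mathds{O}}$ to $\mathds{O} ( \hat{\mathrm{I}}_k )$, simply because the two cocycles agree as functions. By Corollary \ref{202006081511} the class $\mathds{O} ( \hat{\mathrm{I}}_k )$ is nontrivial whenever $p \neq 2$, and both cases $p \neq 0 , 2$ and $p = 0$ satisfy this. Since a coboundary with coefficients in the subgroup maps to a coboundary with coefficients in $k^\ast$, the nontriviality of $\mathds{O} ( \hat{\mathrm{I}}_k )$ forces $\widetilde{\mathds{O}}$ to be nontrivial, whence the relevant cohomology group is not trivial.

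The main obstacle is the first step: verifying that every cocycle value $\langle \partial ( \xi^\prime , \xi ) \rangle$ is genuinely a monomial in inverse volumes, so that it lands in the prime field. This is where the explicit structure of $\langle - \rangle$ from \cite{kim2021integrals} must be used in full, refining the degenerate computation already recorded in Proposition \ref{202003201443}, where all volumes equal $1$ and hence $\langle \xi \rangle = 1$. Once the subgroup containment is secured, the remainder is the formal naturality of $H^2$ under a coefficient inclusion, exactly mirroring the proof of Corollary \ref{202006081511}.
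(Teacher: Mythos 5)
Your proposal is correct and follows essentially the same route as the paper: observe via Corollary \ref{202006040804} that the cocycle $\omega(\hat{\mathrm{I}}_k)$ takes values in $\mathbb{F}^\ast_p$ (resp.\ $\mathbb{Q}_{>0}$), view it as a cocycle with these smaller coefficients, and use the coefficient-extension map on $H^2$ together with the nontriviality of $\mathds{O}(\hat{\mathrm{I}}_k)$ from Corollary \ref{202006081511} to conclude. The only difference is that you make explicit (and rightly flag as the delicate point) the claim that every value $\langle \partial(\xi',\xi)\rangle$ is a monomial in inverse volumes, which the paper leaves implicit in its citation of Corollary \ref{202006040804}.
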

\begin{proof}
Let $G$ be the multiplicative group $\mathbb{F}^\ast_p$ if $p \neq 0,2$ or $\mathbb{Q}_{>0}$ if $p = 0$.
Then the obstruction cocycle $\omega ( \hat{\mathrm{I}}_k )$ has coefficients in $G$ due to Corollary \ref{202006040804}.
It induces a class $[ \omega ( \hat{\mathrm{I}}_k ) ] \in H^2 ( \mathsf{Cosp}^\approx ( \mathsf{Hopf}^\mathsf{bc,vol}_k ); G )$.
The coefficient extension map assigns $\mathds{O} ( \hat{\mathrm{I}}_k) \in H^2 ( \mathsf{Cosp}^\approx ( \mathsf{Hopf}^\mathsf{bc,vol}_k ); k^\ast )$ to the class $[ \omega ( \hat{\mathrm{I}}_k ) ] \in H^2 ( \mathsf{Cosp}^\approx ( \mathsf{Hopf}^\mathsf{bc,vol}_k ); G )$.
By Corollary \ref{202006081511}, the class $[ \omega ( \hat{\mathrm{I}}_k ) ]$ is nontrivial.
It completes the proof.
\end{proof}

%%%%%%%%%%%%%%%
\subsection{Obstruction classes associated with Brown functors}
\label{202002230946}
In this subsection, we define the obstruction classes of the cospanical and spanical path-integrals, and give a basic relation of them.
Throughout this subsection, let $E$ be a $d$-dimensional $\mathsf{Hopf}^\mathsf{bc,vol}_k$-valued Brown functor for $d \in \mathbb{N} \cup \{ \infty \}$.

\begin{Defn}
\label{202003031321}
We define a cocycle $\hat{\omega} ( E ) \stackrel{\mathrm{def.}}{=} \omega ( \hat{\mathrm{PI}} (E) )$ (see Definition \ref{202102161148}) and a cohomology class $\hat{\mathds{O}}(E) \stackrel{\mathrm{def.}}{=} \mathds{O} ( \hat{\mathrm{PI}} (E) )$ (see Proposition \ref{201912252055}) of the symmetric monoidal category $\mathsf{Cosp}^\simeq_{\leq d} ( \mathsf{CW}^\mathsf{fin}_\ast )$ with coefficients in $k^\ast$.
Equivalently, it is given by a pullback $\hat{\omega}(E) = \hat{E}^\ast ( \omega ( \hat{I}_k ) )$ and $\hat{\mathds{O}}(E) = \hat{E}^\ast ( \mathds{O} ( \hat{I}_k ) )$.
Analogously, we define a cocycle $\check{\omega} ( E) \stackrel{\mathrm{def.}}{=} \omega ( \check{\mathrm{PI}} (E) )$ and a cohomology class $\check{\mathds{O}}(E) \stackrel{\mathrm{def.}}{=} \mathds{O} ( \check{\mathrm{PI}} (E) )$ in $H^2 ( \mathsf{Cosp}^\simeq_{\leq d} (\mathsf{CW}^\mathsf{fin}_\ast ) ; k^\ast)$.
In other words, it is given by a pullback $\check{\mathds{O}}(E) = \check{E}^\ast ( \mathds{O} ( \check{I}_k ) )$.
\end{Defn}

\begin{remark}
The cohomology class $\hat{\mathds{O}}(E)$ ($\check{\mathds{O}}(E)$, resp.) is an obstruction class for $\hat{\mathrm{PI}} (E)$ ($\check{\mathrm{PI}} (E)$, resp.) to be lifted to a symmetric monoidal functor.
If we have a solution $\theta$ of the coboundary equation associated with $\hat{\mathrm{PI}} (E)$ ($\check{\mathrm{PI}} (E)$, resp.), then the lift $\theta^{-1} \cdot \hat{\mathrm{PI}} (E)$ ($\theta^{-1} \cdot \check{\mathrm{PI}} (E)$, resp.) is a homotopy-theoretic version of TQFT.
\end{remark}

\begin{Example}
Suppose that the characteristic of $k$ is $2$.
By Corollary \ref{202006081511}, the obstruction cocycles $\hat{\omega} (E)$ and $\check{\omega} (E)$ vanish.
Hence, the path-integrals $\hat{\mathrm{PI}} (E)$ and $\check{\mathrm{PI}} (E)$ are HTQFT's.
\end{Example}

We give a formula of obstruction cocycles (classes) as follows.

\begin{Corollary}
\label{202006041143}
For the induced obstruction cocycles, we have the following equation.
\begin{align}
\check{\omega} ( i^\ast_{d-1} E ) =  \hat{\omega} ( \Sigma^\ast_{d-1} E)  .
\end{align}
In particular, we have $\check{\mathds{O}} ( i^\ast_{d-1} E ) = \hat{\mathds{O}} ( \Sigma^\ast_{d-1} E )$.
\end{Corollary}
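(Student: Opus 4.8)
The plan is to reduce the statement entirely to the already-established Theorem \ref{202001141422}, which gives a natural isomorphism of symmetric monoidal projective functors $\check{\mathrm{PI}} ( i^\ast_{d-1} E ) \cong \hat{\mathrm{PI}} ( \Sigma^\ast_{d-1} E )$ in the strong sense. The key observation is that the obstruction cocycle $\omega$ is an invariant of a symmetric monoidal projective functor that is unchanged under strong natural isomorphism. First I would recall from the appendix (Definition \ref{202102161148} and the surrounding discussion of $\S$\ref{202002070917}) that $\omega(Z)$ is defined via the scalars $Z([\Lambda']) \circ Z([\Lambda]) = \omega(Z)([\Lambda'],[\Lambda]) \cdot Z([\Lambda'] \circ [\Lambda])$ measuring the failure of $Z$ to preserve composition, and that a strong natural isomorphism $Z \cong Z'$ forces these composition-defect scalars to agree, whence $\omega(Z) = \omega(Z')$ as genuine cocycles (not merely cohomologous).

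Next I would simply apply this to the two functors in Theorem \ref{202001141422}. By Definition \ref{202003031321}, $\check{\omega}(i^\ast_{d-1} E) = \omega(\check{\mathrm{PI}}(i^\ast_{d-1} E))$ and $\hat{\omega}(\Sigma^\ast_{d-1} E) = \omega(\hat{\mathrm{PI}}(\Sigma^\ast_{d-1} E))$. Since Theorem \ref{202001141422} asserts these two functors are isomorphic as projective functors in the strong sense, the cocycle-invariance under strong isomorphism yields
\begin{align}\notag
\check{\omega}(i^\ast_{d-1} E) = \omega(\check{\mathrm{PI}}(i^\ast_{d-1} E)) = \omega(\hat{\mathrm{PI}}(\Sigma^\ast_{d-1} E)) = \hat{\omega}(\Sigma^\ast_{d-1} E),
\end{align}
which is exactly the first displayed equation. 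The statement about cohomology classes then follows immediately by passing to cohomology, since equal cocycles represent equal classes: $\check{\mathds{O}}(i^\ast_{d-1} E) = [\check{\omega}(i^\ast_{d-1} E)] = [\hat{\omega}(\Sigma^\ast_{d-1} E)] = \hat{\mathds{O}}(\Sigma^\ast_{d-1} E)$.

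The main obstacle I anticipate is the precise bookkeeping at the level of \emph{cocycles} rather than \emph{classes}. It is cohomologically cheap to conclude that isomorphic projective functors induce the same obstruction class, but the corollary claims equality of the cocycles themselves, so I must be careful that the isomorphism in Theorem \ref{202001141422} is genuinely ``in the strong sense''—meaning the natural transformation is honestly monoidal and its component scalars do not perturb the composition defects. I would verify this by unwinding the definition of strong natural isomorphism of projective functors in the appendix and checking that the comparison scalars cancel in the defining equation for $\omega$, so that the cocycle is literally preserved and not just its cohomology class. Once that verification is in place, the remainder is formal.
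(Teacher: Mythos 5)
Your proposal is correct and follows exactly the paper's route: the paper's proof of this corollary is the one-line remark that it is immediate from Theorem \ref{202001141422}, and your argument simply fills in the (correct) verification that a natural isomorphism of symmetric monoidal projective functors in the strong sense preserves the obstruction cocycle on the nose, since $F'(f) = \Phi(y)\circ F(f)\circ\Phi(x)^{-1}$ makes the composition-defect scalars of the two functors literally coincide. Nothing further is needed.
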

\begin{proof}
It is immediate from Theorem \ref{202001141422}.
\end{proof}

There is a typical Brown functor induced by a $\mathsf{Hopf}^\mathsf{bc}_k$-valued homology theory.
See Definition \ref{202102142016}.
In application of Corollary \ref{202006041143}, we obtain the following result.

\begin{Corollary}
Let $\widetilde{E}_\bullet$ be a $\mathsf{Hopf}^\mathsf{bc}_k$-valued reduced homology theory.
Let $q \in \Gamma ( \widetilde{E}_\bullet )$ such that $(q+1) \in \Gamma ( \widetilde{E}_\bullet )$.
Denote by $j_d : \mathsf{Cosp}^\simeq_{\leq d} ( \mathsf{CW}^\mathsf{fin}_\ast ) \to \mathsf{Cosp}^\simeq_{\leq (d+1)} ( \mathsf{CW}^\mathsf{fin}_\ast )$ the inclusion functor where $d = d( \widetilde{E}_\bullet ; q)$.
Then we have a natural isomorphism of symmetric monoidal projective functors in the strong sense,
\begin{align}
j^\ast_d ( \check{\mathrm{PI}} (\widetilde{E}^\natural_{q+1}) ) \cong  \hat{\mathrm{PI}} (\widetilde{E}^\natural_q) .
\end{align}
In particular, we have $j^\ast_d ( \check{\omega} (\widetilde{E}^\natural_{q+1}) ) = \hat{\omega} ( \widetilde{E}^\natural_q) $.
\end{Corollary}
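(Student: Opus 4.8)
The plan is to deduce both statements from Theorem \ref{202001141422} and Corollary \ref{202006041143} applied to the single Brown functor $E = \widetilde{E}^\natural_{q+1}$, after a bookkeeping of the dimensions and an identification of the suspended functor via the suspension isomorphism of $\widetilde{E}_\bullet$.

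First I would pin down the dimensions. Write $d = d(\widetilde{E}_\bullet; q)$ and $d' = d(\widetilde{E}_\bullet; q+1)$, and recall from Definition \ref{202001202100} that $d(\widetilde{E}_\bullet; q) = q - m(\widetilde{E}_\bullet; q)$, where $m(\widetilde{E}_\bullet; q)$ is an infimum taken over the set $S_q = \{ r \in \Gamma(\widetilde{E}_\bullet) : r \leq r' \leq q \Rightarrow r' \in \Gamma(\widetilde{E}_\bullet) \}$. Since both $q$ and $q+1$ lie in $\Gamma(\widetilde{E}_\bullet)$, one checks directly that $S_q = S_{q+1}$ (the condition $[r,q] \subseteq \Gamma(\widetilde{E}_\bullet)$ together with $q+1 \in \Gamma(\widetilde{E}_\bullet)$ is equivalent to $[r,q+1] \subseteq \Gamma(\widetilde{E}_\bullet)$), whence $m(\widetilde{E}_\bullet; q+1) = m(\widetilde{E}_\bullet; q)$ and therefore $d' = d+1$ (with the convention $\infty + 1 = \infty$). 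This is precisely what makes $j_d \colon \mathsf{Cosp}^\simeq_{\leq d}(\mathsf{CW}^\mathsf{fin}_\ast) \to \mathsf{Cosp}^\simeq_{\leq (d+1)}(\mathsf{CW}^\mathsf{fin}_\ast)$ the relevant inclusion, since $\widetilde{E}^\natural_{q+1}$ is a $(d+1)$-dimensional $\mathsf{Hopf}^\mathsf{bc,vol}_k$-valued Brown functor by Definition \ref{202102142016}.

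Next I would invoke Theorem \ref{202001141422} for the $(d+1)$-dimensional Brown functor $E = \widetilde{E}^\natural_{q+1}$, which yields a natural isomorphism in the strong sense
\begin{align}
\check{\mathrm{PI}}(i^\ast_d \widetilde{E}^\natural_{q+1}) \cong \hat{\mathrm{PI}}(\Sigma^\ast_d \widetilde{E}^\natural_{q+1}).
\end{align}
On the left I would use the definitional identity $j^\ast_d(\check{\mathrm{PI}}(\widetilde{E}^\natural_{q+1})) = \check{\mathrm{PI}}(i^\ast_d \widetilde{E}^\natural_{q+1})$, exactly the identity exploited in the proof of Theorem \ref{202001141422}. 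On the right, the essential step is to identify the suspended functor: for a pointed finite CW-space $K$ with $\dim K \leq d$ one has $(\Sigma^\ast_d \widetilde{E}^\natural_{q+1})(K) = \widetilde{E}_{q+1}(\Sigma K)$, and the suspension isomorphism of the reduced homology theory $\widetilde{E}_\bullet$ supplies a natural isomorphism $\widetilde{E}_{q+1}(\Sigma K) \cong \widetilde{E}_q(K) = \widetilde{E}^\natural_q(K)$, both sides lying in $\mathsf{Hopf}^\mathsf{bc,vol}_k$ by the dimension range. Hence $\Sigma^\ast_d \widetilde{E}^\natural_{q+1} \cong \widetilde{E}^\natural_q$ as $d$-dimensional Brown functors, and applying $\hat{\mathrm{PI}}$ gives $\hat{\mathrm{PI}}(\Sigma^\ast_d \widetilde{E}^\natural_{q+1}) \cong \hat{\mathrm{PI}}(\widetilde{E}^\natural_q)$. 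Chaining the three isomorphisms produces $j^\ast_d(\check{\mathrm{PI}}(\widetilde{E}^\natural_{q+1})) \cong \hat{\mathrm{PI}}(\widetilde{E}^\natural_q)$.

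For the ``in particular'' clause I would pass to obstruction cocycles: since a natural isomorphism of projective functors in the strong sense leaves the obstruction cocycle unchanged, applying $\omega$ throughout (equivalently, applying Corollary \ref{202006041143} to $E = \widetilde{E}^\natural_{q+1}$ and transporting along $\Sigma^\ast_d \widetilde{E}^\natural_{q+1} \cong \widetilde{E}^\natural_q$) yields $j^\ast_d(\check{\omega}(\widetilde{E}^\natural_{q+1})) = \hat{\omega}(\widetilde{E}^\natural_q)$. The step I expect to require the most care is the suspension identification together with its naturality: one must verify that $\widetilde{E}_{q+1}(\Sigma -) \cong \widetilde{E}_q(-)$ is natural as a transformation of $\mathsf{Hopf}^\mathsf{bc,vol}_k$-valued functors on all of $\mathsf{Ho}(\mathsf{CW}^\mathsf{fin}_{\ast, \leq d})$ and is compatible with the monoidal structure, since it is this isomorphism, rather than any new computation, that carries the content of the corollary beyond the formal statement of Theorem \ref{202001141422}.
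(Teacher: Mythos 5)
Your proposal is correct and follows essentially the same route as the paper: the paper's proof consists of exactly the two steps you identify, namely the suspension isomorphism $\Sigma^\ast_d \widetilde{E}^\natural_{q+1} \cong \widetilde{E}^\natural_q$ followed by an application of Theorem \ref{202001141422} to $E = \widetilde{E}^\natural_{q+1}$. Your additional bookkeeping verifying $d(\widetilde{E}_\bullet; q+1) = d+1$ and the naturality of the suspension identification is left implicit in the paper but is exactly the right thing to check.
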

\begin{proof}
The suspension isomorphism gives $\Sigma^\ast_d \widetilde{E}^\natural_{q+1} \cong \widetilde{E}^\natural_q$.
The statement is immediate from Theorem \ref{202001141422}.
\end{proof}

%%%%%%%%%%%%%%%%%%%%%%%%%%%%%%%%%%%%%%
\section{Obstruction classes associated with properly extensible Brown functor}
\label{202102150935}

In the previous sections, we construct some HTQFT's and study basic properties of the associated obstruction classes.
We do not know whether the obstruction classes vanish or not in general.
In this section, we introduce a specific class of Brown functors for which some computation is possible.

\subsection{Inversion formulae}
\label{202102150945}

In this subsection, we study the obstruction cocycles for specific class of Brown functors, called {\it properly extensible} Brown functors.
We derive some formulae to solve the coboundary equations associated with the path-integrals of properly extensible Brown functors.

\begin{Defn}
\label{202102171348}
A $\mathsf{Hopf}^\mathsf{bc,vol}_k$-valued $d$-dimensional Brown functor $E$ is {\it properly extensible} if there exists a $\mathsf{Hopf}^\mathsf{bc}_k$-valued $(d+2)$-dimensional Brown functor $E_+$ equipped with a natural isomorphism $E_+ ( \Sigma K) \cong E(K)$ for $\dim K \leq d$.
We call a distinguished $E_+$ by {\it a proper extension of $E$}.
\end{Defn}

\begin{remark}
Note that the dimension of Brown functor $E_+$ needs to be $(d+2)$, not merely $(d+1)$.
In the proof of Theorem \ref{202001271630}, $E_+ ( \Sigma \partial )$ is well-defined since $E_+$ is $(d+2)$-dimensional where $\partial$ is a map to a CW-space with dimension lower than or equal to $(d+1)$.
It is essential for the claim $\beta ( [ \Lambda^\prime ] , [ \Lambda ] ) = \mathrm{vol}^{-1}  ( \mathrm{Im_H} ( E_+ ( \Sigma \partial ) ) )$.
\end{remark}

\begin{Example}
\label{202102151212}
Let $Y$ be a homotopy commutative H-group and $d \in \mathbb{N} \cup \{ \infty \}$.
Suppose that $\pi_r (Y)$ is finite for $r \leq (d+2)$.
Note that $X = \Omega Y$ is also a homotopy commutative H-group such that $\pi_r (X)$ is finite for $r \leq (d+1)$.
If the characteristic of $k$ is zero, then we obtain a $\mathsf{Hopf}^\mathsf{bc,vol}_k$-valued $d$-dimensional Brown functor $E(K) = k^{F^\natural_X (K)}$ by Example \ref{202102131904}.
Then $E$ is properly extensible.
In fact, $E_+ (K) = k^{F^\natural_Y (K)}$ gives a proper extension of $E$ since we have $F^\natural_X (K) = F^\natural_{\Omega Y} (K) \cong F^\natural_Y ( \Sigma K)$ due to the adjoint relation between the suspension and looping \cite{hatcher2002algebraic}.
\end{Example}

\begin{Example}
\label{202102142024}
Let $\widetilde{E}_\bullet$ be a $\mathsf{Hopf}^\mathsf{bc}_k$-valued homology theory.
For $q \in \Gamma ( \widetilde{E}_\bullet )$, the induced Brown functor $\widetilde{E}^\natural_q$ is properly extensible.
Let $d = d ( \widetilde{E}_\bullet ; q)$.
In fact, the restriction of $\widetilde{E}_{q+1}$ to $\mathsf{Ho} ( \mathsf{CW}^\mathsf{fin}_{\ast , \leq (d+1)} )$ is a proper extension of $\widetilde{E}^\natural_q$ due to the suspension isomorphism.
\end{Example}

\begin{Defn}
\label{202001141418}
Let $E$ be a $\mathsf{Hopf}^\mathsf{bc,vol}_k$-valued $d$-dimensional Brown functor.
We define a 1-cochain $\theta ( E )$ of the symmetric monoidal category $\mathsf{Cosp}^\simeq_{\leq d} ( \mathsf{CW}^\mathsf{fin}_\ast )$ with coefficients in $k^\ast$.
Let $[\Lambda]$ be a morphism in $\mathsf{Cosp}^\simeq_{\leq d} ( \mathsf{CW}^\mathsf{fin}_\ast )$ where $\Lambda = \left( K_0 \stackrel{f_0}{\to} L \stackrel{f_1}{\leftarrow} K_1 \right)$.
Then,
\begin{align}
\label{202002222221}
\left( \theta ( E ) \right)  ( [\Lambda] ) \stackrel{\mathrm{def.}}{=} \mathrm{vol}^{-1} ( E ( C (f_1) ) ) \in k^\ast .
\end{align}
Here, $C (f_1)$ is the mapping cone of the pointed map $f_1$.
Since $C(f_1)$ is a complex with the dimension lower than $d$, the bicommutative Hopf algebra $E ( C (f_1) )$ has a finite volume so that (\ref{202002222221}) is well-defined.
\end{Defn}

\begin{Lemma}
\label{201911131255}
Consider an exact sequence in the abelian category $\mathsf{Hopf}^\mathsf{bc}_k$ :
\begin{align}
C_1 \stackrel{\partial_1}{\to} A_0 \to B_0 \to C_0 \stackrel{\partial_0}{\to} A_{-1}  .
\end{align}
Suppose that $A_0,B_0,C_0$ have a finite volume.
Then the image Hopf algebras $\mathrm{Im_H} ( \partial_0 ), \mathrm{Im_H} ( \partial_1 )$ have a finite volume and we have
\begin{align}
\mathrm{vol}^{-1} ( \mathrm{Im_H} ( \partial_0 ) ) \cdot \mathrm{vol}^{-1} ( \mathrm{Im_H} ( \partial_1 ) )  = \mathrm{vol}^{-1} ( A_0 ) \cdot \mathrm{vol}^{-1} (B_0 )^{-1} \cdot \mathrm{vol}^{-1} (C_0 ) . 
\end{align}
\end{Lemma}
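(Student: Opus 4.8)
The plan is to split the given five-term exact sequence into three short exact sequences by the first isomorphism theorem in $\mathsf{Hopf}^\mathsf{bc}_k$, apply the multiplicativity of the inverse volume (Theorem \ref{202002211501}) to each, and cancel the two overlapping intermediate factors. Write the sequence as $C_1 \xrightarrow{\partial_1} A_0 \xrightarrow{\alpha} B_0 \xrightarrow{\beta} C_0 \xrightarrow{\partial_0} A_{-1}$, and set $P \stackrel{\mathrm{def.}}{=} \mathrm{Im_H}(\alpha) = \mathrm{Ker_H}(\beta)$ and $Q \stackrel{\mathrm{def.}}{=} \mathrm{Im_H}(\beta) = \mathrm{Ker_H}(\partial_0)$, the two identifications being exactness at $B_0$ and at $C_0$. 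Exactness at $A_0$ gives the further identification $\mathrm{Im_H}(\partial_1) = \mathrm{Ker_H}(\alpha)$, which will be used to keep the argument inside the finite-volume objects.

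First I would check that all four relevant image Hopf algebras have a finite volume. The key is that exactness expresses each of them as a subobject or a quotient of one of $A_0, B_0, C_0$: indeed $\mathrm{Im_H}(\partial_1) = \mathrm{Ker_H}(\alpha)$ and $P$ are the sub- and quotient-object of the short exact sequence $0 \to \mathrm{Ker_H}(\alpha) \to A_0 \to P \to 0$, while $Q$ and $\mathrm{Im_H}(\partial_0)$ are the sub- and quotient-object of $0 \to Q \to C_0 \to \mathrm{Im_H}(\partial_0) \to 0$. Since $\mathsf{Hopf}^\mathsf{bc,vol}_k \subset \mathsf{Hopf}^\mathsf{bc,bs}_k$ is closed under short exact sequences (Theorem \ref{202002211501}) and the middle term of each of these sequences has a finite volume by hypothesis, the outer terms — hence $\mathrm{Im_H}(\partial_0)$, $\mathrm{Im_H}(\partial_1)$, $P$, $Q$ — all have a finite volume, and by Corollary \ref{202002211513} all their inverse volumes lie in $k^\ast$.

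Next I would record the three short exact sequences supplied by exactness,
\begin{align*}
0 &\to \mathrm{Ker_H}(\alpha) \to A_0 \to P \to 0, \\
0 &\to P \to B_0 \to Q \to 0, \\
0 &\to Q \to C_0 \to \mathrm{Im_H}(\partial_0) \to 0,
\end{align*}
and apply Theorem \ref{202002211501} to each, obtaining $\mathrm{vol}^{-1}(A_0) = \mathrm{vol}^{-1}(\mathrm{Im_H}(\partial_1)) \cdot \mathrm{vol}^{-1}(P)$ (using $\mathrm{Ker_H}(\alpha) = \mathrm{Im_H}(\partial_1)$), $\mathrm{vol}^{-1}(B_0) = \mathrm{vol}^{-1}(P) \cdot \mathrm{vol}^{-1}(Q)$, and $\mathrm{vol}^{-1}(C_0) = \mathrm{vol}^{-1}(Q) \cdot \mathrm{vol}^{-1}(\mathrm{Im_H}(\partial_0))$. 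Substituting the first and third relations into $\mathrm{vol}^{-1}(A_0) \cdot \mathrm{vol}^{-1}(B_0)^{-1} \cdot \mathrm{vol}^{-1}(C_0)$ and cancelling the factors $\mathrm{vol}^{-1}(P)$ and $\mathrm{vol}^{-1}(Q)$ — legitimate since they lie in the group $k^\ast$ — yields exactly $\mathrm{vol}^{-1}(\mathrm{Im_H}(\partial_0)) \cdot \mathrm{vol}^{-1}(\mathrm{Im_H}(\partial_1))$, the asserted identity.

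The only genuine content lies in the finite-volume claim of the second paragraph, and within it the observation that exactness realizes $\mathrm{Im_H}(\partial_1)$ as a subobject of $A_0$ (namely $\mathrm{Ker_H}(\alpha)$) and $\mathrm{Im_H}(\partial_0)$ as a quotient of $C_0$, so that neither $C_1$ nor $A_{-1}$ — which are \emph{not} assumed to have a finite volume — ever enters the computation. Once the images are known to have a finite volume, everything reduces to a formal manipulation of the multiplicative invariant $\mathrm{vol}^{-1}$, the cancellation of the intermediate factors $\mathrm{vol}^{-1}(P)$ and $\mathrm{vol}^{-1}(Q)$ being precisely the mechanism by which the two ``outer'' images combine into the stated product.
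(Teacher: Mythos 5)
Your proof is correct and follows essentially the same route as the paper's: the paper truncates the five-term sequence to the exact sequence $k \to \mathrm{Im_H}(\partial_1) \to A_0 \to B_0 \to C_0 \to \mathrm{Coim_H}(\partial_0) \to k$ and applies the volume property of $\mathrm{vol}^{-1}$ in one stroke, which is exactly your decomposition into three short exact sequences with the intermediate factors cancelled. Your version is merely more explicit about why the images have finite volume and about how the alternating-product identity unpacks via Theorem \ref{202002211501}.
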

\begin{proof}
The first claim is proved by the fact that $\mathsf{Hopf}^\mathsf{bc,vol}_k$ is an abelian subcategory of $\mathsf{Hopf}^\mathsf{bc}_k$.
Note that the exact sequence induces the following exact sequence :
\begin{align}
k \to \mathrm{Im_H} ( \partial_1 ) \stackrel{\partial_1}{\to} A_0 \to B_0 \to C_0 \stackrel{\partial_0}{\to} \mathrm{Coim_H} ( \partial_0 ) \to k .
\end{align}
Since the inverse volume is a volume on the abelian category $\mathsf{Hopf}^\mathsf{bc,vol}_k$ (see Corollary \ref{202002211513}), we obtain the following equation :
\begin{align}
\mathrm{vol}^{-1} ( \mathrm{Im_H} ( \partial_1 ) ) \cdot \mathrm{vol}^{-1} ( B_0 ) \cdot \mathrm{vol}^{-1} ( \mathrm{Coim_H} ( \partial_0 ) ) = \mathrm{vol}^{-1} ( A_0 ) \cdot \mathrm{vol}^{-1} ( C_0 ) .
\end{align}
By the isomorphism $\mathrm{Coim_H} ( \partial_0 ) \cong \mathrm{Im_H} ( \partial_0 ) )$, the claim is proved.
\end{proof}

\begin{Lemma}
\label{202101301326}
Consider a commutative diagram in the category $\mathsf{Hopf}^\mathsf{bc}_k$.
If both of the upper and lower square diagrams are exact (see Definition \ref{202101301425}), then we have a Hopf isomorphism $\mathrm{Im_H} ( \partial ( \xi^\prime , \xi ) ) \cong \mathrm{Im_H} (  \partial ( \varphi^\prime , \varphi ) )$ (see Definition \ref{202101301421}).
\begin{equation}
\begin{tikzcd}
A_2 \ar[r] & B_2 \\
A_1 \ar[u, "\xi^\prime"]  \ar[r]  & B_1 \ar[u, "\varphi^\prime"] \\
A_0 \ar[u, "\xi"]  \ar[r] & B_0 \ar[u, "\varphi"]
\end{tikzcd}
\end{equation}
\end{Lemma}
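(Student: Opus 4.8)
The plan is to name the three horizontal Hopf homomorphisms $\alpha_0 : A_0 \to B_0$, $\alpha_1 : A_1 \to B_1$, $\alpha_2 : A_2 \to B_2$, so that the commutativity of the grid reads $\varphi \circ \alpha_0 = \alpha_1 \circ \xi$ (lower square) and $\varphi^\prime \circ \alpha_1 = \alpha_2 \circ \xi^\prime$ (upper square). First I would rewrite each image Hopf algebra as a subquotient of the middle objects. Since $\partial ( \xi^\prime , \xi ) = \mathrm{cok_H} ( \xi ) \circ \mathrm{ker_H} ( \xi^\prime )$ is the composite $\mathrm{Ker_H} ( \xi^\prime ) \hookrightarrow A_1 \twoheadrightarrow \mathrm{Cok_H} ( \xi )$, the second isomorphism theorem in the abelian category $\mathsf{Hopf}^\mathsf{bc}_k$ gives
\begin{align}
\mathrm{Im_H} ( \partial ( \xi^\prime , \xi ) ) \cong \mathrm{Ker_H} ( \xi^\prime ) / \left( \mathrm{Ker_H} ( \xi^\prime ) \cap \mathrm{Im_H} ( \xi ) \right) ,
\end{align}
and likewise $\mathrm{Im_H} ( \partial ( \varphi^\prime , \varphi ) ) \cong \mathrm{Ker_H} ( \varphi^\prime ) / \left( \mathrm{Ker_H} ( \varphi^\prime ) \cap \mathrm{Im_H} ( \varphi ) \right)$.

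Next I would produce the comparison map out of $\alpha_1$. The relation $\varphi^\prime \circ \alpha_1 = \alpha_2 \circ \xi^\prime$ shows $\alpha_1 ( \mathrm{Ker_H} ( \xi^\prime ) ) \subseteq \mathrm{Ker_H} ( \varphi^\prime )$, while $\varphi \circ \alpha_0 = \alpha_1 \circ \xi$ shows $\alpha_1 ( \mathrm{Im_H} ( \xi ) ) \subseteq \mathrm{Im_H} ( \varphi )$. Hence $\alpha_1$ carries $\mathrm{Ker_H} ( \xi^\prime ) \cap \mathrm{Im_H} ( \xi )$ into $\mathrm{Ker_H} ( \varphi^\prime ) \cap \mathrm{Im_H} ( \varphi )$ and descends to a Hopf homomorphism $\bar{\alpha}_1$ between the two subquotients above. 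It then remains to prove that $\bar{\alpha}_1$ is an isomorphism, for which it suffices to show it is both epimorphic and monomorphic in $\mathsf{Hopf}^\mathsf{bc}_k$.

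For this I would run a diagram chase phrased through the chain-complex form of exactness in Remark \ref{201912230956}. For surjectivity, exactness of the upper square says the induced complex $A_1 \to A_2 \oplus B_1 \to B_2$ is exact at the middle; given $y \in \mathrm{Ker_H} ( \varphi^\prime )$ the member $(0,y)$ lies in the kernel of the second map, hence in the image of the first, yielding $x^\prime \in \mathrm{Ker_H} ( \xi^\prime )$ with $\alpha_1 ( x^\prime ) = y$ after absorbing the sign in $u_\Box = ( f \oplus (-f^\prime) ) \circ \Delta$. Thus $\alpha_1$ already maps $\mathrm{Ker_H} ( \xi^\prime )$ onto $\mathrm{Ker_H} ( \varphi^\prime )$ and $\bar{\alpha}_1$ is epi. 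For injectivity, exactness of the lower square says $A_0 \to A_1 \oplus B_0 \to B_1$ is exact at the middle; if $x \in \mathrm{Ker_H} ( \xi^\prime )$ satisfies $\alpha_1 ( x ) = \varphi ( b )$ for some $b$, then $(x,-b)$ lies in the kernel of the second map, hence in the image of $A_0$, forcing $x \in \mathrm{Im_H} ( \xi )$. This is precisely the statement that $\bar{\alpha}_1$ has trivial kernel. Combining the two, $\bar{\alpha}_1$ is the asserted Hopf isomorphism, the upper square controlling surjectivity and the lower square controlling injectivity.

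The main obstacle is making this element-style chase legitimate in $\mathsf{Hopf}^\mathsf{bc}_k$ rather than merely in $\mathsf{Vec}_k$: I would justify each lifting step either by the calculus of members in an abelian category, or, more in keeping with the paper's conventions, by reading off the two memberships directly from the exactness of the induced chain complexes $C(\Box)$ of Remark \ref{201912230956}, where ``the kernel of the second map coincides with the image of the first'' is an honest categorical statement. The only bookkeeping subtlety is the sign twist in $u_\Box$, which is harmless since the subobjects $\mathrm{Ker_H}$ and $\mathrm{Im_H}$ are stable under the antipode and hence under negation.
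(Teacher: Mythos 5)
Your proof is correct and is in substance the paper's own argument: the paper forms the commutative square $\mathrm{Ker_H}(\xi^\prime) \to \mathrm{Cok_H}(\xi)$ over $\mathrm{Ker_H}(\varphi^\prime) \to \mathrm{Cok_H}(\varphi)$ and observes that the left vertical map $u$ is an epimorphism and the right vertical map $v$ is a monomorphism by the exactness of the two squares, whence the images of the horizontal maps agree. Your surjectivity and injectivity chases (via the induced chain complexes of Remark \ref{201912230956}) prove exactly that $u$ is epi and $v$ is mono, and your $\bar{\alpha}_1$ is the induced map on images written in subquotient form, so the two proofs coincide up to presentation.
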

\begin{proof}
The commutativity induces the following commutative diagram.
The homomorphism $u$ is an epimorphism and $v$ is a monomorphism since the square diagrams in the statement are exact.
Hence we obtain a natural isomorphism $\mathrm{Im_H} ( \partial ( \xi^\prime , \xi ) ) \cong \mathrm{Im_H} (  \partial ( \varphi^\prime , \varphi ) )$ since $\mathsf{Hopf}^\mathsf{bc}_k$ is an abelian category.
\begin{equation}
\begin{tikzcd}
\mathrm{Ker_H} ( \xi^\prime ) \ar[r, "\partial ( \xi^\prime ~ \xi )"] \ar[d, "u"] & \mathrm{Cok_H} ( \xi ) \ar[d, "v"] \\
\mathrm{Ker_H} ( \varphi^\prime ) \ar[r, "\partial ( \varphi^\prime ~ \varphi )"] & \mathrm{Cok_H} ( \varphi )
\end{tikzcd}
\end{equation}
\end{proof}

The obstruction cocycles associated with the cospanical and spanical path-integrals are inverses to each other up to a coboundary as follows.

\begin{theorem}[Inversion formula 1]
\label{202001271630}
If a $\mathsf{Hopf}^\mathsf{bc,vol}_k$-valued $d$-dimensional Brown functor $E$ is properly extensible, then the coboundary equation associated with $\hat{\mathrm{PI}} ( E ) \otimes \check{\mathrm{PI}} ( E )$ is solvable.
Furthermore, the 1-cochain $\theta ( E )$ gives a canonical solution :
\begin{align}
\hat{\omega} (E)  \cdot  \check{\omega} (E)   =  \delta ( \theta ( E )) .
\end{align}
Especially, we have $\hat{\mathds{O}} ( E ) = \check{\mathds{O}} ( E )^{-1}$.
\end{theorem}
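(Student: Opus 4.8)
The plan is to prove the pointwise cocycle identity $\hat{\omega}(E)\cdot\check{\omega}(E) = \delta(\theta(E))$ on every composable pair of morphisms of $\mathsf{Cosp}^\simeq_{\leq d}(\mathsf{CW}^\mathsf{fin}_\ast)$; the final assertion $\hat{\mathds{O}}(E) = \check{\mathds{O}}(E)^{-1}$ then follows at once by passing to cohomology classes, since $\hat{\mathds{O}}(E)\cdot\check{\mathds{O}}(E) = [\delta(\theta(E))] = 1$ in $H^2(\mathsf{Cosp}^\simeq_{\leq d}(\mathsf{CW}^\mathsf{fin}_\ast);k^\ast)$. Because the obstruction cocycle of a tensor product of projective functors is the product of the individual cocycles (see the appendix), the cocycle of $\hat{\mathrm{PI}}(E)\otimes\check{\mathrm{PI}}(E)$ is exactly $\hat{\omega}(E)\cdot\check{\omega}(E)$, so establishing this identity is the same as solving the stated coboundary equation with canonical solution $\theta(E)$.

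First I would fix a composable pair of cospans of spaces, say $\Lambda = (K_0\xrightarrow{f_0}L\xleftarrow{f_1}K_1)$ and $\Lambda' = (K_1\xrightarrow{f_0'}L'\xleftarrow{f_1'}K_2)$, with composite $\Lambda'\circ\Lambda$ whose right leg $f_1'':K_2\to L''$ has mapping cone $C(f_1'')$. Since $\hat{\omega}(E) = \hat{E}^\ast\omega(\hat{\mathrm{I}}_k)$ and $\check{\omega}(E) = \check{E}^\ast\omega(\check{\mathrm{I}}_k)$, I would evaluate each cocycle by applying $E$ to the Mayer-Vietoris square attached to the composite and reading off the scalar from Proposition \ref{202005301216}: this scalar is $\langle\partial(\,\cdot\,,\,\cdot\,)\rangle$ for a connecting Hopf homomorphism $\partial$ built from the cokernel and kernel of the legs of the pushout, as in the proof of Proposition \ref{202001241622}. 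The crucial reduction is to identify $\langle\partial\rangle$ with $\mathrm{vol}^{-1}(\mathrm{Im_H}(\partial))$, so that each cocycle value becomes the inverse volume of an image Hopf algebra; for the spanical side this uses $\check{\mathrm{I}}_k = \hat{\mathrm{I}}_k\circ\mathrm{T}$ (Proposition \ref{202102121414}) together with the span $E(\mathrm{T}\Sigma(\Lambda))$ of Definition \ref{202101291058}.

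The decisive step is to route the spanical computation through the proper extension $E_+$. Using the natural isomorphism $E_+(\Sigma(-))\cong E(-)$ and Lemma \ref{202101301326} applied to the stacked exact squares coming from the suspended composite, I would identify the image controlling $\check{\omega}(E)$ with $\mathrm{Im_H}(E_+(\Sigma\partial))$ for a suspended connecting map $\Sigma\partial$ whose target is a CW-space of dimension at most $(d+2)$; this is precisely where the hypothesis that $E_+$ be $(d+2)$-dimensional, rather than merely $(d+1)$-dimensional, is needed for $E_+(\Sigma\partial)$ to be defined, giving $\beta([\Lambda'],[\Lambda]) = \mathrm{vol}^{-1}(\mathrm{Im_H}(E_+(\Sigma\partial)))$. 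Both image Hopf algebras then sit as the images of the two connecting maps of a single five-term exact sequence in $\mathsf{Hopf}^\mathsf{bc}_k$ whose remaining terms are $E(C(f_1))$, $E(C(f_1'))$ and $E(C(f_1''))$. Applying Lemma \ref{201911131255}, the multiplicativity of $\mathrm{vol}^{-1}$ along such a sequence which itself follows from Corollary \ref{202002211513}, yields
\begin{align}\notag
\hat{\omega}(E)([\Lambda'],[\Lambda])\cdot\check{\omega}(E)([\Lambda'],[\Lambda]) = \mathrm{vol}^{-1}(E(C(f_1)))\cdot\mathrm{vol}^{-1}(E(C(f_1')))\cdot\mathrm{vol}^{-1}(E(C(f_1'')))^{-1},
\end{align}
and the right hand side is exactly the coboundary $\delta(\theta(E))([\Lambda'],[\Lambda])$ by Definition \ref{202001141418}.

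I expect the main obstacle to be the construction and exactness of this five-term sequence together with the correct bookkeeping of the proper extension: one must show that the connecting maps produced independently on the cospanical and spanical sides are the two ends of one exact sequence whose middle terms are the inverse volumes defining $\theta(E)$, and that the suspended connecting map $E_+(\Sigma\partial)$ genuinely computes the spanical image. The subsidiary identification $\langle\partial\rangle = \mathrm{vol}^{-1}(\mathrm{Im_H}(\partial))$ will also require care, since it rests on the compatibility of normalized integrals and cointegrals with epi-mono factorizations in $\mathsf{Hopf}^\mathsf{bc,vol}_k$.
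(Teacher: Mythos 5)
Your proposal is correct and follows essentially the same route as the paper's proof: both cocycle values are converted into inverse volumes of images of the connecting maps $E_+(\partial)$ and $E_+(\Sigma\partial)$ via the proper extension and Lemma \ref{202101301326}, and these are then combined through the five-term exact sequence using Lemma \ref{201911131255} to recover $\delta(\theta(E))$. The subtleties you flag (the identification $\langle\partial\rangle=\mathrm{vol}^{-1}(\mathrm{Im_H}(\partial))$, and the need for $E_+$ to be $(d+2)$-dimensional so that $E_+(\Sigma\partial)$ is defined) are exactly the points the paper relies on.
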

\begin{proof}
Consider composable morphisms $[ \Lambda^\prime ] , [ \Lambda ]$ in $\mathsf{Cosp}^\simeq_{\leq d} ( \mathsf{CW}^\mathsf{fin}_\ast )$ with the representatives,
\begin{align}
\Lambda = \left( K_0 \stackrel{f_0}{\to} L \stackrel{f_1}{\leftarrow} K_1 \right) ,~~~ \Lambda^\prime = \left( K_1 \stackrel{f^\prime_1}{\to} L^\prime \stackrel{f^\prime_2}{\leftarrow} K_2 \right) .
\end{align}
We introduce notations of maps associated with the composition $\Lambda^\prime \circ \Lambda$ and $\mathrm{T}\Sigma ( \Lambda )$, $\mathrm{T}\Sigma ( \Lambda^\prime )$, $\mathrm{T}\Sigma ( \Lambda^\prime \circ \Lambda )$ following Figure \ref{202001141113}, \ref{202001141124} (see Definition \ref{202101291058} for $\mathrm{T}\Sigma$).

\begin{figure}[h]
  \includegraphics[width=7.9cm]{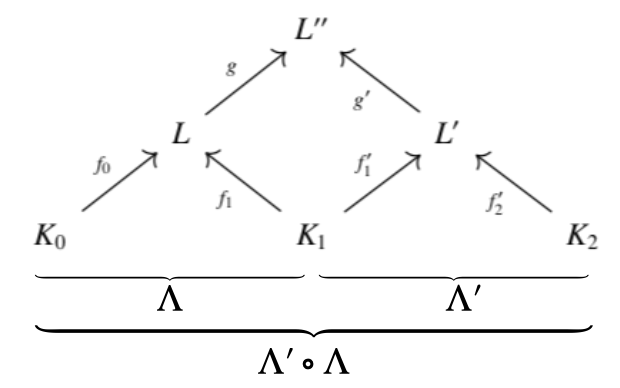}
  \caption{A part of the proof of Theorem \ref{202001271630}}
  \label{202001141113}
\end{figure}

\begin{figure}[h]
  \includegraphics[width=8.3cm]{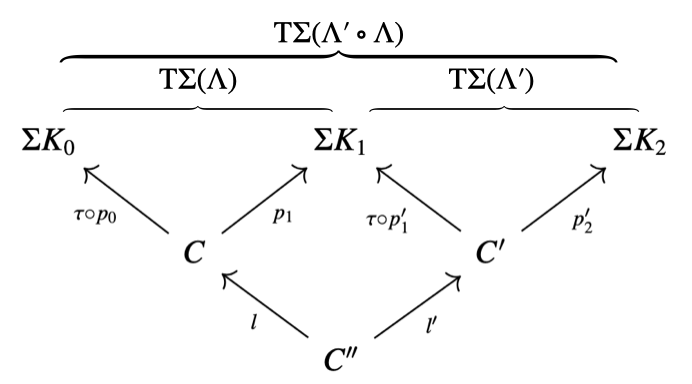}
  \caption{A part of the proof of Theorem \ref{202001271630}}
  \label{202001141124}
\end{figure}

For simplicity, denote by $\alpha = \hat{\omega} (E)$ and $\beta = \check{\omega} (E)$.
Recall the definition, $\alpha  ( [ \Lambda^\prime ] , [ \Lambda ] )  = \langle \partial ( E ( g^\prime ), E ( f^\prime_2 ) ) \rangle $ (see Definition \ref{202101301421} for $\partial ( \xi^\prime , \xi )$).
Choose a proper extension $E_+$ of $E$.
By the definition of $E_+$, we have $\alpha  ( [ \Lambda^\prime ] , [ \Lambda ] )  = \langle \partial ( E_+ ( \Sigma g^\prime ), E_+ ( \Sigma f^\prime_2 ) ) \rangle$.
Let $\partial : C ( g^\prime ) \to \Sigma C(f^\prime_2 )$ be the connecting map associated with the mapping cone sequence $C(f^\prime_2 ) \to C(g^\prime \circ f^\prime_2 ) \to C(g^\prime )$.
Note that the images of $\partial ( E_+ ( \Sigma g^\prime) , E_+ ( \Sigma f^\prime_2 ) )$ and $E_+ ( \partial )$ are canonically isomorphic with each other.
In fact, it is due to the following (horizontal and vertical) exact sequences associated with the mapping cone of $g^\prime$ and $f^\prime_2$.
These follow from Proposition \ref{202101271327}.
Hence we obtain $\alpha  ( [ \Lambda^\prime ] , [ \Lambda ] )   = \mathrm{vol}^{-1} ( \mathrm{Im_H} ( E_+ ( \partial ) ) )$.
\begin{equation}
\begin{tikzcd}
& E_+ ( \Sigma K_2 ) \ar[d, "E_+(\Sigma f^\prime_2)"'] & \\
E_+ ( C ( g^\prime ) ) \ar[r] \ar[dr, "E_+(\partial)"'] & E_+ ( \Sigma L^\prime ) \ar[r, "E_+(\Sigma g^\prime)"'] \ar[d]  & E_+ ( \Sigma L^{\prime\prime} ) \\
& E_+ ( \Sigma ( C ( f^\prime_2 ) ) ) & 
\end{tikzcd}
\end{equation}

On the one hand, we claim that $\beta  ( [ \Lambda^\prime ] , [ \Lambda ] ) = \mathrm{vol}^{-1} ( \mathrm{Im_H} ( E_+ ( \Sigma \partial ) ) )$.
In fact, for $F = \Sigma^\ast E_+ = E_+ \circ \Sigma$, the diagrams in Figure \ref{202001141113}, \ref{202001141124} induce the commutative diagram (\ref{202101301246}).
Note that the suspensions $\Sigma L, \Sigma L^\prime , \Sigma L^{\prime\prime}$ are $(d+1)$-dimensional at most.
We remark that $F$ of them are well-defined since the domain of $E_+$ contains all the pointed finite CW-spaces with $\dim \leq (d+2)$.
\begin{equation}
\label{202101301246}
\begin{tikzcd}
F( \Sigma K_0 ) \ar[r, "F(\Sigma f_0)"] & F( \Sigma L ) \ar[r, "F(\Sigma g)"] & F( \Sigma L^{\prime\prime} ) \\
F ( C ) \ar[u, "F(\tau \circ p_0)"] \ar[r, "F(p_1)"] & F ( \Sigma K_1 ) \ar[u, "F(\Sigma f_1)"] \ar[r, "F(\Sigma f^\prime_1)"] & F ( \Sigma L^\prime ) \ar[u, "F(\Sigma g^\prime)"] \\
F ( C^{\prime\prime} ) \ar[u, "F(l)"] \ar[r, "F(l^\prime)"] & F ( C^\prime ) \ar[u, "F(\tau \circ p^\prime_1)"] \ar[r, "F(p^\prime_2)"] &  F ( \Sigma K_2 ) \ar[u, "F(\Sigma f^\prime_2)"]
\end{tikzcd}
\end{equation}
All the square diagrams appearing in (\ref{202101301246}) are exact since $F$ is a $(d+1)$-dimensional Brown functor.
By iterated applications of Lemma \ref{202101301326}, we obtain
\begin{align}
& \mathrm{Im_H} ( \partial ( F(\tau \circ p_0) , F ( l ) ) ) ,\\
\cong& ~ \mathrm{Im_H} ( \partial ( F( \Sigma f_1) , F(\tau \circ p^\prime_1 ) ) ) ,\\
\cong& ~ \mathrm{Im_H} ( \partial ( F ( \Sigma g^\prime ) , F( \Sigma f^\prime_2 ) ) ) .
\end{align}
Hence, we obtain $\beta  ( [ \Lambda^\prime ] , [ \Lambda ] ) = \mathrm{vol}^{-1} (\mathrm{Im_H} ( \partial ( F ( \Sigma g^\prime ) , F( \Sigma f^\prime_2 ) ) ))$.
In a parallel way with the above computation of $\alpha ( [ \Lambda^\prime ] , [ \Lambda ] )$, we can obtain $\mathrm{vol}^{-1} (\mathrm{Im_H} ( \partial ( F ( \Sigma g^\prime ) , F( \Sigma f^\prime_2 ) ) )) = \mathrm{vol}^{-1} ( \mathrm{Im_H} ( E_+ ( \Sigma \partial ) ) )$.
It proves our claim.

Above all, we obtain $\alpha  ( [ \Lambda^\prime ] , [ \Lambda ] ) \cdot \beta  ( [ \Lambda^\prime ] , [ \Lambda ] ) = \mathrm{vol}^{-1} ( \mathrm{Im_H} ( E_+ ( \partial ) ) ) \cdot \mathrm{vol}^{-1} ( \mathrm{Im_H} ( E_+ ( \Sigma \partial ) ) )$.
By applying Lemma \ref{201911131255} to the induced exact sequence (\ref{202101271337}) which follows from Proposition \ref{202101271327}, one could verify that the product $\alpha  ( [ \Lambda^\prime ] , [ \Lambda ] ) \cdot \beta  ( [ \Lambda^\prime ] , [ \Lambda ] )$ coincides with $\delta ( \theta ( E )) ([ \Lambda^\prime ] , [ \Lambda ] )$.
It completes the proof.

\begin{align}
\label{202101271337}
E_+ (C(g^\prime) )
\stackrel{E_+ ( \partial )}{\to}
E_+ (\Sigma C(f^\prime_2))
\to
E_+ (\Sigma C(g^\prime \circ f^\prime_2 ) )
\to
E_+ (\Sigma C(g^\prime))
\stackrel{E_+ ( \Sigma\partial )}{\to}
E_+ (\Sigma^2 C(f^\prime_2 ) ) .
\end{align}
\end{proof}

\begin{remark}
There is another way to prove $\beta  ( [ \Lambda^\prime ] , [ \Lambda ] ) = \mathrm{vol}^{-1} ( \mathrm{Im_H} ( E_+ ( \Sigma \partial ) ) )$ in the proof of Theorem \ref{202001271630}.
Note that $\beta  ( [ \Lambda^\prime ] , [ \Lambda ] )  = \mathrm{vol}^{-1} ( \mathrm{Im_H} ( E_+ ( \partial^\prime ) ) )$ where $\partial^\prime : C( \tau \circ p_0 ) \to \Sigma C(l)$ is the connecting map associated with the mapping cone sequence $C(l) \to C( \tau \circ p_0 \circ l ) \to C( \tau \circ p_0 )$.
\begin{equation}
\begin{tikzcd}
& E_+ ( \Sigma C^{\prime\prime} ) \ar[d, "E_+(\Sigma l)"'] & \\
E_+ ( C ( \tau \circ p_0 ) ) \ar[r] \ar[dr, "E_+(\partial^\prime)"'] & E_+ ( \Sigma C ) \ar[r, "E_+(\Sigma \tau \circ p_0)"'] \ar[d]  & E_+ ( \Sigma \Sigma K_0 ) \\
& E_+ ( \Sigma ( C ( l ) ) ) & 
\end{tikzcd}
\end{equation}
We have $\mathrm{Im_H} ( E_+ ( \Sigma \partial ) ) \cong \mathrm{Im_H} ( E_+ ( \partial^\prime ) )$ since the suspension of the mapping cone sequence $C(f^\prime_2 ) \to C(g^\prime \circ f^\prime_2 ) \to C(g^\prime )$ is homotopy equivalent with the mapping cone sequence $C( l ) \to C( \tau \circ p_0 \circ l ) \to C( \tau \circ p_0 )$.
Especially $\mathrm{vol}^{-1} ( \mathrm{Im_H} ( E_+ ( \Sigma \partial ) ) ) = \mathrm{vol}^{-1} ( \mathrm{Im_H} ( E_+ ( \partial^\prime ) ) ) = \beta  ( [ \Lambda^\prime ] , [ \Lambda ] )$.
\end{remark}

In a simple application, we see that a properly extensible Brown functor possesses a natural HTQFT which is {\it not projective}.

\begin{Corollary}
\label{202101301508}
If a $\mathsf{Hopf}^\mathsf{bc,vol}_k$-valued  $d$-dimensional Brown functor $E$ is properly extensible, then the tensor product $\hat{\mathrm{PI}} (E) \otimes \check{\mathrm{PI}} (E)$ of cospanical and spanical path-integrals of $E$ is lifted to a $\mathsf{C}_k$-valued HTQFT with the supremal dimension $d$ given by $Z = \theta (E)^{-1} \cdot \left( \hat{\mathrm{PI}} (E) \otimes \check{\mathrm{PI}} (E) \right)$ (see Definition \ref{202001141545}).
In particular, $Z$ satisfies 
\begin{enumerate}
\item
$Z (K) = E(K) \otimes E(\Sigma K)$ for a pointed finite CW-space $K$ with $\dim K \leq (d-1)$.
\item
The induced homotopy invariant is given by $Z ( L ) = ( \dim_k E(L) ) \cdot 1 \in k^\ast$ for a pointed finite CW-space $L$ with $\dim L \leq d$.
\end{enumerate}
\end{Corollary}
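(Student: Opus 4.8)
The plan is to obtain this corollary as a direct application of the Inversion formula (Theorem~\ref{202001271630}) together with the standard obstruction-theoretic machinery for symmetric monoidal projective functors recalled in the appendix. The guiding observation is that the obstruction cocycle of a tensor product of two projective functors is the pointwise product of their obstruction cocycles, so that the obstruction cocycle of $\hat{\mathrm{PI}}(E) \otimes \check{\mathrm{PI}}(E)$ is $\hat{\omega}(E) \cdot \check{\omega}(E)$. First I would invoke Theorem~\ref{202001271630}, which identifies this product with the coboundary $\delta(\theta(E))$. By the appendix criterion (Proposition~\ref{201912280733} and Definition~\ref{202001141545}), a projective functor whose obstruction cocycle is a coboundary $\delta\theta$ is lifted by the twist $\theta^{-1}\cdot(-)$ to a strictly (not merely projectively) commuting symmetric monoidal functor. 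Applying this with $\theta = \theta(E)$ shows that $Z = \theta(E)^{-1} \cdot (\hat{\mathrm{PI}}(E) \otimes \check{\mathrm{PI}}(E))$ is a genuine $\mathsf{C}_k$-valued HTQFT with supremal dimension $d$.

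To establish property~(1), I would note that twisting by a $1$-cochain rescales only morphisms, so $Z$ and $\hat{\mathrm{PI}}(E) \otimes \check{\mathrm{PI}}(E)$ agree on objects. By Lemma~\ref{202006041011} the cospanical path-integral restricts on objects to $E$, so $\hat{\mathrm{PI}}(E)(K) = E(K)$; and since $\check{\mathrm{PI}}(E) = \check{\mathrm{I}}_k \circ \check{E}$ with $\check{E}$ the spanical extension of $E \circ \Sigma_{d-1}$, which sends $K$ to $E(\Sigma K)$ on objects, we get $\check{\mathrm{PI}}(E)(K) = E(\Sigma K)$. Hence $Z(K) = E(K) \otimes E(\Sigma K)$ for $\dim K \leq (d-1)$.

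For property~(2) I would compute the induced homotopy invariant via Definition~\ref{202102120959} on the cospan $\Lambda = (\ast \to L \leftarrow \ast)$. Both path-integrals contribute trivially, $\hat{\mathrm{PI}}(E)(L) = \check{\mathrm{PI}}(E)(L) = 1$ by Example~\ref{202102121407} and its spanical analogue, so $(\hat{\mathrm{PI}}(E) \otimes \check{\mathrm{PI}}(E))(L) = 1$ and therefore $Z(L) = \theta(E)^{-1}([\Lambda])$. Now $\theta(E)([\Lambda]) = \mathrm{vol}^{-1}(E(C(f_1)))$ for $f_1 : \ast \to L$; the mapping cone of the basepoint inclusion is (pointed) homotopy equivalent to $L$, so $\theta(E)([\Lambda]) = \mathrm{vol}^{-1}(E(L))$. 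Finally Corollary~\ref{202006040804} gives $\mathrm{vol}^{-1}(E(L))^{-1} = (\dim_k E(L)) \cdot 1$, yielding $Z(L) = (\dim_k E(L)) \cdot 1 \in k^\ast$.

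The substantive content is entirely carried by Theorem~\ref{202001271630}, so the main obstacle is not a difficult estimate but the bookkeeping: verifying that the obstruction cocycle of the tensor product is genuinely the product of the two cocycles, and that the twist $\theta(E)^{-1}\cdot(-)$ produces a functor satisfying the strict (rather than projective) monoidal coherence. The invariant computation then reduces to the elementary identification $C(\ast \to L) \simeq L$.
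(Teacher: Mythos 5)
Your proposal is correct and follows essentially the same route as the paper: the paper likewise deduces $\theta(E) \in \Theta\bigl( \hat{\mathrm{PI}}(E) \otimes \check{\mathrm{PI}}(E) \bigr)$ from Theorem \ref{202001271630} (implicitly using that the obstruction cocycle of a tensor product is the product of the cocycles), takes the lift of Definition \ref{202001141545}, and derives the two listed properties from the definitions together with Corollary \ref{202006040804}. Your write-up merely makes explicit the bookkeeping (the identification $C(\ast \to L) \simeq L$ and the triviality of the untwisted invariant via Example \ref{202102121407}) that the paper leaves to the reader.
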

\begin{proof}
By Theorem \ref{202001271630}, we have $\theta (E) \in \Theta ( \hat{\mathrm{PI}} (E) \otimes \check{\mathrm{PI}} (E) )$ (see Definition \ref{202003162027}).
Hence, the lift $\theta (E)^{-1} \cdot \left( \hat{\mathrm{PI}} (E) \otimes \check{\mathrm{PI}} (E) \right)$ is a symmetric monoidal functor.
Moreover the remaining claims follow from definitions and Corollary \ref{202006040804}.
\end{proof}

\begin{remark}
\label{202102161042}
There is a remark analogous to Remark \ref{202102161038}.
Let $K$ be a pointed finite CW-space with $\dim K \leq (d-1)$.
The mapping group representation on $Z(K)$ coincides with the representation induced by the Brown functor $E$ itself, in particular it is not projective.
In fact, we have $\theta ( [ K \stackrel{f}{\to} K \stackrel{Id_K}{\leftarrow} K ] ) = 1 \in k^\ast$ for a pointed map $f : K \to K$.
\end{remark}

\begin{Example}
One can apply Corollary \ref{202101301508} to the Brown functor $E$ in Example \ref{202102151212}.
In particular, $Z$ satisfies
\begin{enumerate}
\item
$Z (K) = k^{ [K, X] \times [\Sigma K , X ]}$ for a pointed finite CW-space $K$ with $\dim K \leq (d-1)$.
\item
The induced homotopy invariant is given by $Z ( L ) = | [ K , X ] | \cdot 1 \in k^\ast$ for a pointed finite CW-space $L$ with $\dim L \leq d$.
\end{enumerate}
\end{Example}

\begin{Example}
\label{202102160959}
Recall Example \ref{202102142024}.
Let $\widetilde{E}_\bullet$ be a $\mathsf{Hopf}^\mathsf{bc}_k$-valued homology theory and $q \in \Gamma ( \widetilde{E}_\bullet )$.
There exists a $\mathsf{C}_k$-valued homotopy-theoretic version of TQFT $Z$ with the supremal dimension $d = d ( \widetilde{E}_\bullet ; q)$ satisfying the following conditions.
It is obtained by considering $E= \widetilde{E}^\natural_q$ in Corollary \ref{202101301508}.
\begin{enumerate}
\item
$Z (K) = \widetilde{E}_q(K) \otimes \widetilde{E}_{q-1}(K)$ for a pointed finite CW-space $K$ with $\dim K \leq (d-1)$.
\item
The induced homotopy invariant is given by $Z ( L ) = ( \dim_k \widetilde{E}_q (L) ) \cdot 1 \in k^\ast$ for a pointed finite CW-space $L$ with $\dim L \leq d$.
\end{enumerate}
\end{Example}

\begin{Example}
\label{202102161025}
In the last part of subsection \ref{202102141031}, we give some examples of $\mathsf{Hopf}^\mathsf{bc}_k$-valued homology theories.
It is possible to construct concrete examples of Example \ref{202102160959} from such homology theories.
For convenience, we give one example here.
Consider the $\mathsf{Hopf}^\mathsf{bc}_k$-valued homology theory $\widetilde{E}_\bullet$ induced by the sphere spectrum in Example \ref{202102111049}.
For an integer $q > 0$ which clearly lies in $\Gamma ( \widetilde{E}_\bullet )$, we obtain a $\mathsf{Hopf}^\mathsf{bc,vol}_k$-valued Brown functor $\widetilde{E}^\natural_q$ with dimension $d = d( \widetilde{E}_\bullet ; q) = (q-1)$.
By a simple application of Example \ref{202102160959}, we obtain a HTQFT $Z$ with the supremal dimension $(q-1)$ such that 
\begin{enumerate}
\item
$Z (K) = k \left( \widetilde{\pi}^s_q(K) \otimes \widetilde{\pi}^s_{q-1} (K) \right)$ for a pointed finite CW-space $K$ with $\dim K \leq (q-2)$.
\item
The induced homotopy invariant is given by $Z ( L ) = | \widetilde{\pi}^s_q (L) | \in k^\ast$ for a pointed finite CW-space $L$ with $\dim L \leq (q-1)$.
\end{enumerate}
Moreover, for any $n \leq (q-1)$ there exists a $\mathsf{C}_k$-valued $n$-TQFT $Z$ satisfying the analogous conditions (see Proposition \ref{202102131241}).
\end{Example}

Before we close this subsection, we give some formulae obtained from Theorem \ref{202001271630}.
The formulae in Corollary \ref{202001271629} (Corollary \ref{202102151112}, resp.) show how the obstruction cocycles of some cospanical (spanical, resp.) path-integrals are related with each other.

\begin{Corollary}[Inversion formula 2]
\label{202001271629}
If a $\mathsf{Hopf}^\mathsf{bc,vol}_k$-valued  $d$-dimensional Brown functor $E$ is properly extensible, then the coboundary equation associated with $\hat{\mathrm{PI}} ( i^\ast_{d-1}  E) \otimes \hat{\mathrm{PI}} ( \Sigma^\ast_{d-1} E )$ is solvable.
Furthermore, the 1-cochain $\theta ( i^\ast_{d-1} E)$ gives a canonical solution :
\begin{align}
\hat{\omega} ( i^\ast_{d-1}  E)  \cdot  \hat{\omega} ( \Sigma^\ast_{d-1} E )  = \delta ( \theta ( i^\ast_{d-1} E) )  .
\end{align}
Especially, we have $\hat{\mathds{O}} ( i^\ast_{d-1} E  )  = \hat{\mathds{O}} ( \Sigma^\ast_{d-1} E  )^{-1}$.
\end{Corollary}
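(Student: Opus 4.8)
The plan is to derive this statement formally from Inversion formula 1 (Theorem \ref{202001271630}) combined with the cocycle comparison in Corollary \ref{202006041143}, so the only real work is a dimension check. First I would verify that the restricted Brown functor $i^\ast_{d-1} E$ is itself properly extensible. Suppose $E_+$ is a proper extension of the $d$-dimensional Brown functor $E$; by Definition \ref{202102171348} it is a $\mathsf{Hopf}^\mathsf{bc}_k$-valued $(d+2)$-dimensional Brown functor with $E_+ ( \Sigma K) \cong E(K)$ for $\dim K \leq d$. Restricting along $i_{d+1}$ produces the $(d+1)$-dimensional Brown functor $i^\ast_{d+1} E_+$. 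For a pointed finite CW-space $K$ with $\dim K \leq (d-1)$ we have $\dim \Sigma K \leq d \leq (d+1)$, hence
\begin{align}
( i^\ast_{d+1} E_+ ) ( \Sigma K ) = E_+ ( \Sigma K ) \cong E(K) = ( i^\ast_{d-1} E ) ( K ) .
\end{align}
Since $i^\ast_{d-1} E$ is a $(d-1)$-dimensional Brown functor, a proper extension is required to be $((d-1)+2) = (d+1)$-dimensional, and $i^\ast_{d+1} E_+$ meets exactly this requirement. Therefore $i^\ast_{d-1} E$ is properly extensible.

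Having established this, I would apply Theorem \ref{202001271630} with $i^\ast_{d-1} E$ playing the role of the properly extensible Brown functor. This yields the coboundary identity
\begin{align}
\hat{\omega} ( i^\ast_{d-1} E ) \cdot \check{\omega} ( i^\ast_{d-1} E ) = \delta ( \theta ( i^\ast_{d-1} E ) ) .
\end{align}
The next step is a direct substitution: Corollary \ref{202006041143} gives $\check{\omega} ( i^\ast_{d-1} E ) = \hat{\omega} ( \Sigma^\ast_{d-1} E )$, coming from the natural isomorphism $\check{\mathrm{PI}} ( i^\ast_{d-1} E ) \cong \hat{\mathrm{PI}} ( \Sigma^\ast_{d-1} E )$ of Theorem \ref{202001141422}. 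Replacing the spanical cocycle accordingly turns the identity into
\begin{align}
\hat{\omega} ( i^\ast_{d-1} E ) \cdot \hat{\omega} ( \Sigma^\ast_{d-1} E ) = \delta ( \theta ( i^\ast_{d-1} E ) ) ,
\end{align}
which is precisely the asserted coboundary equation, exhibiting $\theta ( i^\ast_{d-1} E )$ as the canonical solution.

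Finally, to obtain the statement about classes I would pass to cohomology in $H^2 ( \mathsf{Cosp}^\simeq_{\leq (d-1)} ( \mathsf{CW}^\mathsf{fin}_\ast ) ; k^\ast )$. The coboundary term $\delta ( \theta ( i^\ast_{d-1} E ) )$ represents the trivial class, and since $k^\ast$ is written multiplicatively the product of cocycles descends to the product of classes, giving $\hat{\mathds{O}} ( i^\ast_{d-1} E ) \cdot \hat{\mathds{O}} ( \Sigma^\ast_{d-1} E ) = 1$, that is, $\hat{\mathds{O}} ( i^\ast_{d-1} E ) = \hat{\mathds{O}} ( \Sigma^\ast_{d-1} E )^{-1}$. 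I do not expect a serious obstacle here: the entire content beyond the bookkeeping of the first paragraph is the mechanical chaining of Theorem \ref{202001271630} with Corollary \ref{202006041143}. The one point that deserves care is precisely the dimension shift showing that restricting the $(d+2)$-dimensional extension $E_+$ to dimension $\leq (d+1)$ still supplies a valid proper extension of the $(d-1)$-dimensional functor $i^\ast_{d-1} E$, which is why the definition of proper extensibility insists on the $+2$ rather than $+1$ dimensional margin.
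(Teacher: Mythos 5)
Your argument is correct, and it rests on the same two pillars as the paper's proof --- Inversion formula 1 (Theorem \ref{202001271630}) and the comparison $\check{\omega} ( i^\ast_{d-1} E ) = \hat{\omega} ( \Sigma^\ast_{d-1} E )$ from Corollary \ref{202006041143} --- but it applies the first pillar at a different level. The paper applies Theorem \ref{202001271630} to $E$ itself (which is properly extensible by hypothesis) to get $\hat{\omega}(E) \cdot \check{\omega}(E) = \delta(\theta(E))$, and then pulls the whole identity back along $j^\ast_{d-1}$, using the definitional compatibilities $\hat{\omega}(i^\ast_{d-1}E) = j^\ast_{d-1}(\hat{\omega}(E))$, $j^\ast_{d-1}(\check{\omega}(E)) = \check{\omega}(i^\ast_{d-1}E)$ and $j^\ast_{d-1}(\delta(\theta(E))) = \delta(\theta(i^\ast_{d-1}E))$. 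You instead apply the theorem directly to $i^\ast_{d-1}E$, which forces you to first check that the restriction of a properly extensible Brown functor is properly extensible; your verification that $i^\ast_{d+1}E_+$ serves as a $(d+1)$-dimensional proper extension of the $(d-1)$-dimensional functor $i^\ast_{d-1}E$ is correct (the restriction of a $(d+2)$-dimensional Brown functor to $\mathsf{Ho}(\mathsf{CW}^\mathsf{fin}_{\ast,\leq (d+1)})$ still satisfies the Mayer--Vietoris axiom, and $\dim \Sigma K \leq d \leq d+1$ for $\dim K \leq d-1$). What each route buys: the paper's version keeps all the work at the level of $E$ and only needs the (purely formal) naturality of the obstruction cocycles under restriction of the cospan category, while yours isolates a small stability lemma (proper extensibility is closed under $i^\ast$) that is not stated in the paper but is true and mildly reusable. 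Both are complete; the remaining substitution via Corollary \ref{202006041143} and the passage to cohomology classes are handled identically.
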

\begin{proof}
It follows from the following computations.
\begin{align}
\hat{\omega} ( i^\ast_{d-1}  E) 
&=
j^\ast_{d-1} (\hat{\omega} (E)) , \\
&=
j^\ast_{d-1} ( \check{\omega} (E) )^{-1} \cdot j^\ast_{d-1} ( \delta ( \theta (E))) ~~~(\because \mathrm{Theorem}~ \ref{202001271630} ), \\
&=
\hat{\omega} ( \Sigma^\ast_{d-1} E )^{-1} \cdot j^\ast_{d-1} ( \delta ( \theta (E))) ~~~(\because \mathrm{Corollary}~ \ref{202006041143} ) , \\
&=
\hat{\omega} ( \Sigma^\ast_{d-1} E)^{-1} \cdot \delta ( \theta ( i^\ast_{d-1} E) ) .
\end{align}
\end{proof}

\begin{Corollary}[Inversion formula 3]
\label{202102151112}
If a $\mathsf{Hopf}^\mathsf{bc,vol}_k$-valued  $d$-dimensional Brown functor $E$ is properly extensible, then the coboundary equation associated with $\check{\mathrm{PI}} ( i^\ast_{d-1}  E) \otimes \check{\mathrm{PI}} ( \Sigma^\ast_{d-1} E )$ is solvable.
Furthermore, the 1-cochain $\theta ( \Sigma^\ast_{d-1} E)$ gives a canonical solution :
\begin{align}
\check{\omega} ( i^\ast_{d-1}  E)  \cdot  \check{\omega} ( \Sigma^\ast_{d-1} E ) = \delta ( \theta ( \Sigma^\ast_{d-1} E) )  .
\end{align}
Especially, we have $\check{\mathds{O}} ( i^\ast_{d-1} E  )  = \check{\mathds{O}} ( \Sigma^\ast_{d-1} E  )^{-1}$.
\end{Corollary}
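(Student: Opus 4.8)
The plan is to deduce Inversion formula 3 from Inversion formula 1 (Theorem \ref{202001271630}) applied not to $E$ itself but to its suspended restriction $\Sigma^\ast_{d-1} E$, and then to substitute the basic relation of Corollary \ref{202006041143}. This is the natural counterpart of the derivation of Corollary \ref{202001271629}, but the route must differ: there one keeps $E$ fixed and pulls the inversion formula back along $j_{d-1}$, which works because $j^\ast_{d-1}\check{\omega}(E)=\hat{\omega}(\Sigma^\ast_{d-1}E)$ lands on the desired term. Attempting the same pullback here would instead require the relation $\hat{\omega}(i^\ast_{d-1}E)=\check{\omega}(\Sigma^\ast_{d-1}E)$, which is not among the available identities. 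Hence the cleanest route is to feed the $(d-1)$-dimensional functor $\Sigma^\ast_{d-1} E$ straight into Theorem \ref{202001271630}.

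First I would verify that $\Sigma^\ast_{d-1} E$ is itself properly extensible, which is the hypothesis Theorem \ref{202001271630} requires and the only step needing genuine care. Let $E_+$ be a proper extension of $E$, so that $E_+$ is a $(d+2)$-dimensional $\mathsf{Hopf}^\mathsf{bc}_k$-valued Brown functor with $E_+(\Sigma K)\cong E(K)$ for $\dim K\leq d$. I claim $\Sigma^\ast_{d+1} E_+$ serves as a proper extension of $\Sigma^\ast_{d-1} E$. Indeed $\Sigma^\ast_{d+1} E_+$ is $(d+1)$-dimensional, which is exactly $((d-1)+2)$ as Definition \ref{202102171348} demands, and for $\dim K\leq(d-1)$ one has $\dim \Sigma K\leq d$, whence
\[
(\Sigma^\ast_{d+1} E_+)(\Sigma K) = E_+(\Sigma^2 K) \cong E(\Sigma K) = (\Sigma^\ast_{d-1} E)(K),
\]
the middle isomorphism being the proper-extension property of $E_+$ applied to $\Sigma K$. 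The dimension bookkeeping is the crux here: the proper extension must climb two dimensions, which is precisely why Definition \ref{202102171348} insists on $(d+2)$ rather than $(d+1)$, and this is exactly what lets the suspended extension $\Sigma^\ast_{d+1}E_+$ land in the correct range.

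With this in hand I would apply Theorem \ref{202001271630} to the properly extensible $(d-1)$-dimensional Brown functor $\Sigma^\ast_{d-1} E$, obtaining $\hat{\omega}(\Sigma^\ast_{d-1} E)\cdot\check{\omega}(\Sigma^\ast_{d-1} E)=\delta(\theta(\Sigma^\ast_{d-1} E))$ together with $\theta(\Sigma^\ast_{d-1} E)$ as canonical solution. Substituting the identity $\hat{\omega}(\Sigma^\ast_{d-1} E)=\check{\omega}(i^\ast_{d-1} E)$ from Corollary \ref{202006041143} then yields exactly $\check{\omega}(i^\ast_{d-1} E)\cdot\check{\omega}(\Sigma^\ast_{d-1} E)=\delta(\theta(\Sigma^\ast_{d-1} E))$. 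Since the obstruction cocycle of a tensor product of projective functors is the product of the individual cocycles (as used already in Theorem \ref{202001271630}), this equation exhibits $\theta(\Sigma^\ast_{d-1} E)$ as a solution of the coboundary equation for $\check{\mathrm{PI}}(i^\ast_{d-1} E)\otimes\check{\mathrm{PI}}(\Sigma^\ast_{d-1} E)$. Finally, passing to cohomology kills the coboundary on the right-hand side, giving $\check{\mathds{O}}(i^\ast_{d-1} E)=\check{\mathds{O}}(\Sigma^\ast_{d-1} E)^{-1}$ and completing the argument.
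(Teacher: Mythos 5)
Your proof is correct, but it follows a genuinely different route from the paper's. The paper derives Inversion formula 3 by applying Theorem \ref{202001271630} to \emph{both} restrictions, rewriting $\check{\omega}(i^\ast_{d-1}E)=\hat{\omega}(i^\ast_{d-1}E)^{-1}\cdot\delta(\theta(i^\ast_{d-1}E))$ and $\check{\omega}(\Sigma^\ast_{d-1}E)=\hat{\omega}(\Sigma^\ast_{d-1}E)^{-1}\cdot\delta(\theta(\Sigma^\ast_{d-1}E))$, and then invoking Corollary \ref{202001271629} to replace $\hat{\omega}(i^\ast_{d-1}E)\cdot\hat{\omega}(\Sigma^\ast_{d-1}E)$ by $\delta(\theta(i^\ast_{d-1}E))$, after which the $\theta(i^\ast_{d-1}E)$ terms cancel. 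You instead apply Theorem \ref{202001271630} once, to $\Sigma^\ast_{d-1}E$, and substitute the identity $\check{\omega}(i^\ast_{d-1}E)=\hat{\omega}(\Sigma^\ast_{d-1}E)$ of Corollary \ref{202006041143}; this reaches the formula in one step and bypasses Inversion formula 2 entirely. Your version is more economical, and it has the additional merit of explicitly verifying the hypothesis that $\Sigma^\ast_{d-1}E$ is properly extensible (via $\Sigma^\ast_{d+1}E_+$, with the correct dimension count $(d-1)+2=d+1$), a point the paper's own chain of equalities uses implicitly for both $i^\ast_{d-1}E$ and $\Sigma^\ast_{d-1}E$ without comment. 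What the paper's longer route buys is uniformity: it exhibits all three inversion formulae as consequences of the single Theorem \ref{202001271630} applied at level $d$ together with pullback along $j_{d-1}$, whereas your argument applies the theorem one dimension down. Both are valid; your preliminary remark correctly identifies why the naive pullback strategy of Corollary \ref{202001271629} does not transfer verbatim.
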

\begin{proof}
\begin{align}
&\check{\omega} ( i^\ast_{d-1}  E ) \cdot \check{\omega} ( \Sigma^\ast_{d-1} E ) , \\
=&
\left( \hat{\omega} ( i^\ast_{d-1} E )^{-1} \cdot \delta ( \theta ( i^\ast_{d-1} E) ) \right) \cdot \left(  \hat{\omega} ( \Sigma^\ast_{d-1} E)^{-1} \cdot \delta ( \theta ( \Sigma^\ast_{d-1} E) ) \right) ~~~(\because \mathrm{Theorem}~ \ref{202001271630} ) , \\
=&
\left( \hat{\omega} ( i^\ast_{d-1} E ) \cdot \hat{\omega} ( \Sigma^\ast_{d-1} E ) \right)^{-1} \cdot \delta ( \theta ( i^\ast_{d-1} E) \cdot \theta ( \Sigma^\ast_{d-1} E) ) , \\
=&
\delta ( \theta ( i^\ast_{d-1} E) )^{-1} \cdot \delta ( \theta ( i^\ast_{d-1} E) \cdot \theta ( \Sigma^\ast_{d-1} E) ) ~~~ ( \because \mathrm{Corollary}~\ref{202001271629} ) , \\
=&
\delta ( \theta ( \Sigma^\ast_{d-1} E) ) .
\end{align}
\end{proof}

\begin{Corollary}
Let $\widetilde{E}_\bullet$ be a $\mathsf{Hopf}^\mathsf{bc}_k$-valued homology theory.
If $q, (q+1) \in \Gamma ( \widetilde{E}_\bullet )$ with $d= d(\widetilde{E}_\bullet ; q)$, then we have 
\begin{align}
j^\ast_d ( \hat{\mathds{O}} ( \widetilde{E}^\natural_{q+1} ) )  &= \hat{\mathds{O}} ( \widetilde{E}^\natural_q )^{-1} , \\
j^\ast_d ( \check{\mathds{O}} ( \widetilde{E}^\natural_{q+1} ) )  &= \check{\mathds{O}} ( \widetilde{E}^\natural_q )^{-1} .
\end{align}
\end{Corollary}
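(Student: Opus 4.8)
The plan is to obtain both identities as direct translations of the inversion formulae (Corollary \ref{202001271629} and Corollary \ref{202102151112}) applied to the upper Brown functor $\widetilde{E}^\natural_{q+1}$, with the suspension isomorphism of the homology theory providing the link back to $\widetilde{E}^\natural_q$. First I would settle the dimension bookkeeping. Since $q, q+1 \in \Gamma(\widetilde{E}_\bullet)$, the maximal interval of consecutive integers in $\Gamma(\widetilde{E}_\bullet)$ terminating at $q$ and the one terminating at $q+1$ have the same lower endpoint, whence $m(\widetilde{E}_\bullet; q+1) = m(\widetilde{E}_\bullet; q)$ and therefore $d(\widetilde{E}_\bullet; q+1) = d+1$. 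Consequently $\widetilde{E}^\natural_{q+1}$ is a $(d+1)$-dimensional $\mathsf{Hopf}^\mathsf{bc,vol}_k$-valued Brown functor, and by Example \ref{202102142024} it is properly extensible.

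Next I would record the suspension isomorphism in the form $\Sigma^\ast_d \widetilde{E}^\natural_{q+1} \cong \widetilde{E}^\natural_q$; this is the identification already used in the corollary immediately following Corollary \ref{202006041143}, and it follows because $\widetilde{E}^\natural_{q+1}(\Sigma K) = \widetilde{E}_{q+1}(\Sigma K) \cong \widetilde{E}_q(K) = \widetilde{E}^\natural_q(K)$ for $\dim K \leq d$.

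With these in hand, I would apply Inversion formula 2 (Corollary \ref{202001271629}) to the $(d+1)$-dimensional properly extensible Brown functor $\widetilde{E}^\natural_{q+1}$, which gives
\[
\hat{\mathds{O}}(i^\ast_d \widetilde{E}^\natural_{q+1}) = \hat{\mathds{O}}(\Sigma^\ast_d \widetilde{E}^\natural_{q+1})^{-1} = \hat{\mathds{O}}(\widetilde{E}^\natural_q)^{-1},
\]
the last equality being the suspension isomorphism above. On the left I would invoke the naturality of the obstruction cocycle under restriction of dimension, namely $\hat{\omega}(i^\ast_d E) = j^\ast_d(\hat{\omega}(E))$ (the same identity used at the start of the proof of Corollary \ref{202001271629}), which descends to $\hat{\mathds{O}}(i^\ast_d \widetilde{E}^\natural_{q+1}) = j^\ast_d(\hat{\mathds{O}}(\widetilde{E}^\natural_{q+1}))$. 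These combine to give the first identity. Replacing Corollary \ref{202001271629} by Inversion formula 3 (Corollary \ref{202102151112}) throughout and reading $\check{\phantom{o}}$ for $\hat{\phantom{o}}$ yields the second identity verbatim.

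The main obstacle, such as it is, lies entirely in the dimension bookkeeping $d(\widetilde{E}_\bullet; q+1) = d+1$ together with the verification that $\Sigma^\ast_d \widetilde{E}^\natural_{q+1}$ is well-defined and coincides with $\widetilde{E}^\natural_q$ as a Brown functor on $\mathsf{Ho}(\mathsf{CW}^\mathsf{fin}_{\ast, \leq d})$; once this is secured, the statement is a substitution into the inversion formulae combined with the restriction--pullback compatibility, so no genuinely new computation is required.
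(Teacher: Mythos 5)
Your proposal is correct and takes essentially the same route as the paper: the paper's proof simply cites the suspension isomorphism $\widetilde{E}^\natural_q \cong \Sigma^\ast_d \widetilde{E}^\natural_{q+1}$ together with Inversion formulae 2 and 3 (Corollaries \ref{202001271629} and \ref{202102151112}). Your additional bookkeeping — that $q,q+1\in\Gamma(\widetilde{E}_\bullet)$ forces $m(\widetilde{E}_\bullet;q+1)=m(\widetilde{E}_\bullet;q)$, hence $d(\widetilde{E}_\bullet;q+1)=d+1$, and the compatibility $\hat{\omega}(i^\ast_d E)=j^\ast_d(\hat{\omega}(E))$ — is exactly what the paper leaves implicit, so it is a faithful expansion rather than a different argument.
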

\begin{proof}
It is immediate from the suspension isomorphism $\widetilde{E}^\natural_q \cong \Sigma^\ast_d \widetilde{E}^\natural_{q+1}$ and Corollary \ref{202001271629}, \ref{202102151112}.
\end{proof}

%%%%%%%%%%%%%%%%%%%%%
\subsection{Dimension reduction}
\label{202002211404}

In this subsection, we give the first application of the results in subsection \ref{202102150945}.
We prove that {\it the dimension reduction} of the path-integral of a Brown functor is lifted to a homotopy-theoretic version of TQFT.
The dimension reduction in the topological field theory is a technique to derive a field theory by taking a product of manifolds with a circle $\mathbb{T}$.
We generalize the technique to deal with pointed finite CW-spaces by considering $\mathbb{T}^+ = \mathbb{T} \amalg \{ \mathrm{pt} \}$ instead of $\mathbb{T}$ as follows.

\begin{Defn}
\label{202102141850}
Recall Definition \ref{202102231038}.
Let $X$ be a pointed finite CW-space with $\dim X = r$.
For a HTQFT $Z$ with the supremal dimension $d$ we denote by the induced HTQFT $W^\ast_X Z = Z \circ W_X$ with the supremal dimension $(d-r)$.
{\it The dimension reduction of $Z$} is the induced HTQFT $W^\ast_{\mathbb{T}^+} Z$ with the supremal dimension $(d-1)$.
\end{Defn}

\begin{remark}
We have $W^\ast_{\mathbb{T}^+} Z ( K ) \cong Z ( K \wedge \mathbb{T}^+ )$.
For a CW-space $L$ without a basepoint, let $Z^\prime ( L ) = Z (L^+ )$.
Then we have $W^\ast_{\mathbb{T}^+} Z^\prime ( L ) \cong Z^\prime ( L \times \mathbb{T} )$ since $(L \times \mathbb{T} )^+ \cong L^+ \wedge \mathbb{T}^+$.
Based on this observation, we use the terminology {\it the dimension reduction} (for example see \cite{freed2009remarks}).
\end{remark}

\begin{Lemma}
\label{202002181018}
Let $E$ be a $\mathsf{Hopf}^\mathsf{bc,vol}_k$-valued $d$-dimensional Brown functor and $W^\ast_{\mathbb{T}^+} E$ be the induced $(d-1)$-dimensional Brown functor.
We have a natural isomorphism, $W^\ast_{\mathbb{T}^+} E \cong i^\ast_{d-1} E \otimes \Sigma^\ast_{d-1} E$.
\end{Lemma}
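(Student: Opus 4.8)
The plan is to reduce the asserted isomorphism of Brown functors to a natural isomorphism of Hopf algebras $E(K \wedge \mathbb{T}^+) \cong E(K) \otimes E(\Sigma K)$ for each pointed finite CW-space $K$ with $\dim K \le d-1$. Indeed, unwinding the definitions, $W^\ast_{\mathbb{T}^+}E(K) = E(K \wedge \mathbb{T}^+)$, while $i^\ast_{d-1}E(K) = E(K)$ and $\Sigma^\ast_{d-1}E(K) = E(\Sigma K)$; since $E$ carries wedge sums to biproducts and the biproduct in $\mathsf{Hopf}^\mathsf{bc,vol}_k$ is the tensor product, the right-hand side is the biproduct $E(K) \oplus E(\Sigma K)$. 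So it suffices to produce this biproduct decomposition naturally in $K$. Note at the outset that $K \wedge \mathbb{T}^+$ is \emph{not} pointed-homotopy equivalent to $K \vee \Sigma K$ in general (already for $K = S^0$ one has $K \wedge \mathbb{T}^+ = \mathbb{T}^+$, whose basepoint component is contractible), so the decomposition cannot be read off a space-level wedge splitting and must come from an exact-sequence argument.

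The geometric input is a splitting of $\mathbb{T}^+$. First I would record the pointed cofibration $S^0 \xrightarrow{\iota} \mathbb{T}^+ \xrightarrow{\pi} \mathbb{T}$, where $\iota$ sends the non-basepoint of $S^0$ to a point of the circle and $\pi$ collapses $\iota(S^0)$; this admits a retraction $\rho : \mathbb{T}^+ \to S^0$ collapsing the circle, so that $\rho \iota = \mathrm{id}_{S^0}$. Smashing with $K$ and using $K \wedge S^0 = K$, $K \wedge \mathbb{T} = \Sigma K$ yields a cofiber sequence $K \xrightarrow{i} K \wedge \mathbb{T}^+ \xrightarrow{q} \Sigma K$ together with a retraction $r = \mathrm{id}_K \wedge \rho$ satisfying $ri = \mathrm{id}_K$. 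All three spaces have dimension at most $d$, so Proposition \ref{202101271327} gives an exact sequence $E(K) \xrightarrow{E(i)} E(K \wedge \mathbb{T}^+) \xrightarrow{E(q)} E(\Sigma K)$ in which $E(i)$ is a split monomorphism with retraction $E(r)$.

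To upgrade this three-term sequence to a \emph{split short exact} sequence I must show $E(q)$ is an epimorphism, and this is where I expect the main obstacle. The naive route — extend the Puppe sequence one further step and use that $E(\Sigma i)$ is monic — fails, because the next term $\Sigma(K \wedge \mathbb{T}^+)$ may have dimension $d+1$, outside the range of the $d$-dimensional Brown functor. Instead I would work with the cofiber sequence $K \wedge \mathbb{T}^+ \xrightarrow{q} \Sigma K \xrightarrow{h} \Sigma K$, whose connecting map is $h = \mathrm{id}_K \wedge h_0$ for the connecting map $h_0 : \mathbb{T} \to \Sigma S^0$ of the base sequence. The key computation is that $h_0$ has degree zero: in the reduced homology sequence of $S^0 \xrightarrow{\iota} \mathbb{T}^+ \xrightarrow{\pi} \mathbb{T}$ the map $\iota_\ast$ is an isomorphism on $\tilde H_0$, which forces the connecting homomorphism out of $\tilde H_1(\mathbb{T})$ to vanish; hence $h_0 \simeq \ast$ and therefore $h = \mathrm{id}_K \wedge h_0 \simeq \ast$. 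Crucially this sequence again lies in dimension $\le d$, so Proposition \ref{202101271327} applies and, since $E$ annihilates a null-homotopic map (any such factors through the basepoint, with $E(\ast) = 0$), we get $\mathrm{im}(E(q)) = \ker(E(h)) = E(\Sigma K)$, i.e. $E(q)$ is epic.

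Finally I would assemble the split short exact sequence $0 \to E(K) \xrightarrow{E(i)} E(K \wedge \mathbb{T}^+) \xrightarrow{E(q)} E(\Sigma K) \to 0$ and verify that the natural morphism $\bigl(E(r), E(q)\bigr) : E(K \wedge \mathbb{T}^+) \to E(K) \oplus E(\Sigma K)$ is an isomorphism; injectivity uses $\ker E(q) = \mathrm{im}\,E(i)$ together with $E(r)E(i) = \mathrm{id}$, and surjectivity uses that $E(q)$ is epic. Because $\iota$, $\pi$, $\rho$, and the whole Puppe data are natural in $K$, the resulting isomorphism $E(K \wedge \mathbb{T}^+) \cong E(K) \otimes E(\Sigma K)$ is natural, giving the desired isomorphism $W^\ast_{\mathbb{T}^+}E \cong i^\ast_{d-1}E \otimes \Sigma^\ast_{d-1}E$ of $(d-1)$-dimensional Brown functors. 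The hard part is entirely the epimorphism step, and the degree-zero computation for $h_0$ is precisely what keeps that step inside the dimension range; the rest is formal manipulation in the abelian category $\mathsf{Hopf}^\mathsf{bc,vol}_k$ and naturality bookkeeping.
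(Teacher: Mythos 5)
Your proof is correct and follows essentially the same route as the paper: smash the cofibration $S^0 \to \mathbb{T}^+ \to \mathbb{T}$ with $K$, apply Proposition \ref{202101271327} to get exactness, use the collapse $\mathbb{T}^+ \to S^0$ for the splitting, and use that the connecting map $S^1 \to \Sigma S^0$ is null-homotopic to get surjectivity onto $E(\Sigma K)$. You supply somewhat more detail than the paper (the degree-zero computation for the connecting map and the explicit assembly of the split short exact sequence), but there is no substantive difference.
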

\begin{proof}
Let $S^n$ be the pointed $n$-sphere.
Consider the mapping cone sequence $S^0 \to \mathbb{T}^+ \to S^1 \stackrel{\partial}{\to} \Sigma S^0$.
Let $K$ be a pointed finite CW-space with $\dim K \leq (d-1)$.
By Proposition \ref{202101271327}, we obtain an exact sequence for a pointed finite CW-space $K$ with $\dim \leq (d-1)$ :
\begin{align}
E(K \wedge S^0) \stackrel{\xi}{\to} E(K \wedge \mathbb{T}^+ ) \to E ( K \wedge S^1 ) \stackrel{\xi^\prime}{\to} E(K \wedge \Sigma S^0) .
\end{align}
The collapsing map $\mathbb{T}^+ \to S^0$ induces a retract of $\xi$ in the category $\mathsf{Hopf}^\mathsf{bc,vol}_k$.
On the other hand, $\xi^\prime$ is trivial.
In fact, the pointed map $\partial : S^1 \to \Sigma S^0 \cong S^1$ is null-homotopic.
Thus, we obtain a natural isomorphism $E(K \wedge \mathbb{T}^+ ) \cong E(K\wedge S^0) \otimes E(K \wedge S^1) \cong E(K) \otimes E(\Sigma K)$.
It completes the proof.
\end{proof}

\begin{theorem}
\label{202006051120}
If a $\mathsf{Hopf}^\mathsf{bc,vol}_k$-valued  $d$-dimensional Brown functor $E$ is properly extensible, then the coboundary equations associated with the dimension reductions $W^\ast_{\mathbb{T}^+} \hat{\mathrm{PI}} ( E )$ and $W^\ast_{\mathbb{T}^+} \check{\mathrm{PI}} ( E )$ are solvable respectively.
Furthermore, we have following canonical solutions.
\begin{enumerate}
\item
$\omega ( W^\ast_{\mathbb{T}^+}  \hat{\mathrm{PI}} ( E ) ) = \delta ( \theta ( i^\ast_{d-1} E ) )$.
\item
$\omega ( W^\ast_{\mathbb{T}^+} \check{\mathrm{PI}} ( E ) ) = \delta ( \theta ( \Sigma^\ast_{d-1} E ) )$.
\end{enumerate}
In particular, the obstruction classes $W^\ast_{\mathbb{T}^+} \hat{\mathds{O}} ( E ), W^\ast_{\mathbb{T}^+} \check{\mathds{O}} ( E )$ vanish.
\end{theorem}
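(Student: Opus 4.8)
The plan is to reduce the statement entirely to Lemma \ref{202002181018} together with the two inversion formulae (Corollary \ref{202001271629} and Corollary \ref{202102151112}), so that no fresh computation with integrals is needed. The backbone of the argument is the identity of $k^\ast$-valued cochains
\begin{align}
\omega ( W^\ast_{\mathbb{T}^+} \hat{\mathrm{PI}} ( E ) ) = \hat{\omega} ( i^\ast_{d-1} E ) \cdot \hat{\omega} ( \Sigma^\ast_{d-1} E )
\end{align}
on $\mathsf{Cosp}^\simeq_{\leq (d-1)} ( \mathsf{CW}^\mathsf{fin}_\ast )$, together with its spanical analogue. Once these are established, Corollary \ref{202001271629} rewrites the right-hand side as $\delta ( \theta ( i^\ast_{d-1} E ) )$, proving (1), and Corollary \ref{202102151112} rewrites the spanical analogue as $\delta ( \theta ( \Sigma^\ast_{d-1} E ) )$, proving (2); the vanishing of $W^\ast_{\mathbb{T}^+} \hat{\mathds{O}} ( E ) = [ \omega ( W^\ast_{\mathbb{T}^+} \hat{\mathrm{PI}} ( E ) ) ]$ and of $W^\ast_{\mathbb{T}^+} \check{\mathds{O}} ( E )$ then follows since each is represented by a coboundary.

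First I would show that dimension reduction commutes with the cospanical path-integral at the level of obstruction cocycles, namely $\omega ( W^\ast_{\mathbb{T}^+} \hat{\mathrm{PI}} ( E ) ) = \hat{\omega} ( W^\ast_{\mathbb{T}^+} E )$. By definition $W^\ast_{\mathbb{T}^+} \hat{\mathrm{PI}} ( E ) = \hat{\mathrm{PI}} ( E ) \circ W_{\mathbb{T}^+}$, so its obstruction cocycle is the pullback of $\omega ( \hat{\mathrm{PI}} ( E ) )$ along the symmetric monoidal functor $W_{\mathbb{T}^+}$. The key point is that $- \wedge \mathbb{T}^+$ preserves composition of cospans (it preserves the mapping cylinders and pushouts used to define composition) and preserves mapping cone sequences, so the connecting maps $g', f'_2$ that govern the value of $\hat{\omega}$ on a pair $( [ \Lambda' ] , [ \Lambda ] )$ are merely smashed with $\mathbb{T}^+$; since $E ( - \wedge \mathbb{T}^+ ) = ( W^\ast_{\mathbb{T}^+} E ) ( - )$, this identifies the pulled-back cocycle with $\hat{\omega} ( W^\ast_{\mathbb{T}^+} E )$. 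The spanical case is verbatim the same with $\check{\omega}$ and $\check{\mathrm{PI}}$. Feeding in Lemma \ref{202002181018}, which gives a natural isomorphism $W^\ast_{\mathbb{T}^+} E \cong i^\ast_{d-1} E \otimes \Sigma^\ast_{d-1} E$, and using the invariance of the obstruction cocycle under natural isomorphisms of Brown functors, I may replace $\hat{\omega} ( W^\ast_{\mathbb{T}^+} E )$ by $\hat{\omega} ( i^\ast_{d-1} E \otimes \Sigma^\ast_{d-1} E )$.

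The remaining ingredient, which I expect to be the main obstacle since it is the only genuinely new computation, is the multiplicativity of the obstruction cocycle under tensor products of Brown functors,
\begin{align}
\hat{\omega} ( E_1 \otimes E_2 ) = \hat{\omega} ( E_1 ) \cdot \hat{\omega} ( E_2 )
\end{align}
and likewise for $\check{\omega}$. I would derive this from the description $\hat{\omega} ( E ) ( [ \Lambda' ] , [ \Lambda ] ) = \langle \partial ( E ( g' ) , E ( f'_2 ) ) \rangle = \mathrm{vol}^{-1} ( \mathrm{Im_H} ( \partial ( E ( g' ) , E ( f'_2 ) ) ) )$ already extracted in the proof of Theorem \ref{202001271630}. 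Because $\otimes_k$ is exact over a field, $\mathrm{Im_H}$ is multiplicative under $\otimes$ (the image of a tensor product of Hopf homomorphisms is the tensor product of the images), and $\mathrm{vol}^{-1}$ is multiplicative under $\otimes$ since $\sigma_{A \otimes B} = \sigma_A \otimes \sigma_B$ and $\sigma^{A \otimes B} = \sigma^A \otimes \sigma^B$. The care here lies in checking that $\partial ( - , - )$ of the tensored connecting maps has image the tensor product of the separate images, for which the exactness of $\otimes_k$ and the fact that kernels and cokernels are computed underlyingly in $\mathsf{Vec}_k$ are exactly what is required.

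Combining the three steps gives the backbone identity and its spanical counterpart; applying Corollary \ref{202001271629} and Corollary \ref{202102151112} yields the two displayed coboundaries in (1) and (2). Finally, since each obstruction cocycle equals an explicit coboundary, the induced classes $W^\ast_{\mathbb{T}^+} \hat{\mathds{O}} ( E )$ and $W^\ast_{\mathbb{T}^+} \check{\mathds{O}} ( E )$ are trivial, which is the concluding assertion.
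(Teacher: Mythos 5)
Your argument is correct and follows essentially the same route as the paper: the paper also commutes $W_{\mathbb{T}^+}$ past the path-integral, splits $W^\ast_{\mathbb{T}^+} E \cong i^\ast_{d-1} E \otimes \Sigma^\ast_{d-1} E$ via Lemma \ref{202002181018}, and then invokes the inversion formulae (it converts $\hat{\mathrm{PI}} ( \Sigma^\ast_{d-1} E )$ to $\check{\mathrm{PI}} ( i^\ast_{d-1} E )$ by Theorem \ref{202001141422} and applies Theorem \ref{202001271630}, which is exactly the content of your appeal to Corollary \ref{202001271629}, and it leaves the multiplicativity of $\hat{\mathrm{PI}}$ under $\otimes$ implicit where you spell it out at the cocycle level). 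One small correction: kernels and cokernels in $\mathsf{Hopf}^\mathsf{bc}_k$ are \emph{not} computed in the underlying category $\mathsf{Vec}_k$ (e.g.\ $\mathrm{Ker_H}$ of a map of group Hopf algebras is the group Hopf algebra of the group-theoretic kernel, not the linear kernel); the multiplicativity of $\mathrm{Im_H}$, $\partial(-,-)$ and $\mathrm{vol}^{-1}$ under $\otimes$ that you need follows instead from the fact that $\otimes$ is the biproduct of the abelian category $\mathsf{Hopf}^\mathsf{bc}_k$, so all these constructions are additive.
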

\begin{proof}
We have following natural isomorphisms of symmetric monoidal projective functors in the strong sense :
\begin{align}
W^\ast_{\mathbb{T}^+} \hat{\mathrm{PI}} ( E )
&\cong
\hat{\mathrm{PI}} ( W^\ast_{\mathbb{T}^+} E) , \\
&\cong
\hat{\mathrm{PI}} ( i^\ast_{d-1} E) \otimes \hat{\mathrm{PI}} ( \Sigma^\ast_{d-1} E ) ~~~(\because \mathrm{Lemma}~ \ref{202002181018} ) , \\
&\cong
\hat{\mathrm{PI}} ( i^\ast_{d-1} E) \otimes \check{\mathrm{PI}} ( i^\ast_{d-1} E ) ~~~(\because \mathrm{Theorem}~  \ref{202001141422} ) .
\end{align}
Then the first claim follows from Theorem \ref{202001271630}.
We leave the second claim to the readers.
\end{proof}

\begin{Corollary}
\label{202102161020}
If a $\mathsf{Hopf}^\mathsf{bc,vol}_k$-valued  $d$-dimensional Brown functor $E$ is properly extensible, then the dimension reductions $W^\ast_{\mathbb{T}^+} \hat{\mathrm{PI}} ( E )$ and $W^\ast_{\mathbb{T}^+} \check{\mathrm{PI}} ( E )$ are lifted to HTQFT's.
\end{Corollary}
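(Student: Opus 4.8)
The plan is to deduce this directly from Theorem \ref{202006051120}, which already supplies all the essential data, and then to invoke the general obstruction-theoretic principle of the appendix (Proposition \ref{201912280733}) that converts a solved coboundary equation into an honest lift. Recall that for a symmetric monoidal projective functor $Z$ the obstruction cocycle $\omega(Z)$ measures the failure of $Z$ to preserve composition strictly; whenever $\omega(Z) = \delta(\theta)$ for a $1$-cochain $\theta$ (equivalently $\theta \in \Theta(Z)$, see Definition \ref{202003162027}), the twisted assignment $\theta^{-1} \cdot Z$ (see Definition \ref{202001141545}) is a genuine symmetric monoidal functor, hence an HTQFT. This is exactly the pattern already used in Corollary \ref{202101301508}, and I would mirror that argument here.

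First I would invoke Theorem \ref{202006051120}, which provides the explicit coboundary solutions
\begin{align}
\omega ( W^\ast_{\mathbb{T}^+} \hat{\mathrm{PI}} ( E ) ) = \delta ( \theta ( i^\ast_{d-1} E ) ) , \qquad
\omega ( W^\ast_{\mathbb{T}^+} \check{\mathrm{PI}} ( E ) ) = \delta ( \theta ( \Sigma^\ast_{d-1} E ) ) .
\end{align}
These equations say precisely that $\theta ( i^\ast_{d-1} E ) \in \Theta ( W^\ast_{\mathbb{T}^+} \hat{\mathrm{PI}} ( E ) )$ and $\theta ( \Sigma^\ast_{d-1} E ) \in \Theta ( W^\ast_{\mathbb{T}^+} \check{\mathrm{PI}} ( E ) )$. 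Applying the principle above, the lifts
\begin{align}
\theta ( i^\ast_{d-1} E )^{-1} \cdot W^\ast_{\mathbb{T}^+} \hat{\mathrm{PI}} ( E ) , \qquad
\theta ( \Sigma^\ast_{d-1} E )^{-1} \cdot W^\ast_{\mathbb{T}^+} \check{\mathrm{PI}} ( E )
\end{align}
are then $\mathsf{C}_k$-valued symmetric monoidal functors out of $\mathsf{Cosp}^\simeq_{\leq (d-1)} ( \mathsf{CW}^\mathsf{fin}_\ast )$, i.e. HTQFT's with supremal dimension $(d-1)$, which is the assertion.

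Since essentially all the work is carried out in Theorem \ref{202006051120}, I expect this corollary to be routine; the only points requiring care are bookkeeping. I would check that the dimension reduction $W^\ast_{\mathbb{T}^+}$ lowers the supremal dimension from $d$ to $(d-1)$ (because $\dim \mathbb{T}^+ = 1$), so that the solutions $\theta ( i^\ast_{d-1} E )$ and $\theta ( \Sigma^\ast_{d-1} E )$ — $1$-cochains of the $(d-1)$-truncated cospan category — live on exactly the category underlying the functors being lifted, and that the two indices are correctly matched to the cospanical and spanical dimension reductions respectively. The genuine content that twisting a projective functor by such a $\theta$ restores strict compositionality is supplied by Proposition \ref{201912280733} together with Definition \ref{202001141545}, so there is no substantive obstacle beyond this index-matching.
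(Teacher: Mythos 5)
Your proposal is correct and takes essentially the same route as the paper: the paper's proof is the one-line observation that Theorem \ref{202006051120} furnishes the canonical lifts $\theta ( i^\ast_{d-1}E )^{-1} \cdot W^\ast_{\mathbb{T}^+} \hat{\mathrm{PI}} ( E )$ and $\theta ( \Sigma^\ast_{d-1}E )^{-1} \cdot W^\ast_{\mathbb{T}^+} \check{\mathrm{PI}} ( E )$, exactly as you conclude via Definition \ref{202001141545}. Your additional bookkeeping about the supremal dimension dropping to $(d-1)$ is accurate but not spelled out in the paper.
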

\begin{proof}
We have canonical lifts $\theta ( i^\ast_{d-1}E )^{-1} \cdot W^\ast_{\mathbb{T}^+} \hat{\mathrm{PI}} ( E )$ and $\theta ( \Sigma^\ast_{d-1}E )^{-1} \cdot W^\ast_{\mathbb{T}^+} \check{\mathrm{PI}} ( E )$ by Theorem \ref{202006051120}.
\end{proof}

\begin{remark}
\label{202102161022}
By the isomorphism $W^\ast_{\mathbb{T}^+} \hat{\mathrm{PI}} (E) \cong \hat{\mathrm{PI}} ( i^\ast_{d-1} E) \otimes \check{\mathrm{PI}} ( i^\ast_{d-1} E )$ in the proof of Theorem \ref{202006051120}, the HTQFT's in Corollary \ref{202102161020} are restrictions of HTQFT's in Corollary \ref{202101301508}.
\end{remark}

\begin{Example}
The above corollaries could be applied to the Brown functors given by Example \ref{202102151212}.
In other words, a nice homotopy commutative H-group provides $\mathsf{C}_k$-valued HTQFT's through the dimension reduction.
\end{Example}

We have another class of examples related with Example \ref{202102142024}.
Before we give it, we prove the following lemma.

\begin{Lemma}
\label{202002181042}
Let $\widetilde{F}_\bullet$ be a $\mathsf{Hopf}^\mathsf{bc}_k$-valued reduced homology theory.
Suppose that $\widetilde{F}_\bullet$ is a dimension reduction of another homology theory, i.e. $\widetilde{F}_\bullet = W^\ast_{\mathbb{T}^+} \widetilde{E}_\bullet$ for some $\mathsf{Hopf}^\mathsf{bc}_k$-valued homology theory $\widetilde{E}_\bullet$.
We have 
\begin{align}
\Gamma ( \widetilde{F}_\bullet ) = \Gamma (\widetilde{E}_\bullet ) \cap \left( \Gamma (\widetilde{E}_\bullet ) + 1\right) .
\end{align}
\end{Lemma}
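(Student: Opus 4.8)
The plan is to reduce the statement to a computation of the coefficient Hopf algebras $\widetilde{F}_q(S^0)$ together with a tensor-product criterion for finite volume. First I would compute the coefficients. By the definition of the dimension reduction we have $\widetilde{F}_q(S^0) = \widetilde{E}_q(S^0 \wedge \mathbb{T}^+) = \widetilde{E}_q(\mathbb{T}^+)$. Exactly as in the proof of Lemma \ref{202002181018}, the mapping cone sequence $S^0 \to \mathbb{T}^+ \to S^1 \stackrel{\partial}{\to} \Sigma S^0$ has null-homotopic connecting map $\partial$, so the induced map $\widetilde{E}_q(S^1) \to \widetilde{E}_q(\Sigma S^0)$ is trivial, while the collapsing map $\mathbb{T}^+ \to S^0$ splits the inclusion $\widetilde{E}_q(S^0) \to \widetilde{E}_q(\mathbb{T}^+)$; since $\widetilde{E}_\bullet$ carries the wedge to the biproduct, this yields a natural isomorphism $\widetilde{E}_q(\mathbb{T}^+) \cong \widetilde{E}_q(S^0) \otimes \widetilde{E}_q(S^1)$ in $\mathsf{Hopf}^\mathsf{bc}_k$. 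Applying the suspension isomorphism $\widetilde{E}_q(S^1) \cong \widetilde{E}_{q-1}(S^0)$, I obtain
\begin{align}\notag
\widetilde{F}_q(S^0) \cong \widetilde{E}_q(S^0) \otimes \widetilde{E}_{q-1}(S^0) .
\end{align}

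The central step is then the claim that for bicommutative Hopf algebras $A, B$ the biproduct $A \otimes B$ has a finite volume if and only if both $A$ and $B$ do. I would prove this using the equivalent description in Corollary \ref{202006091350}: $A \otimes B$ has a finite volume if and only if it is finite-dimensional with $(\dim_k (A \otimes B)) \cdot 1 \neq 0$ in $k$. Since $\dim_k (A \otimes B) = (\dim_k A)(\dim_k B)$ and every object of $\mathsf{Hopf}^\mathsf{bc}_k$ has dimension at least $1$ (the zero object being the trivial Hopf algebra $k$), finite-dimensionality of $A \otimes B$ is equivalent to finite-dimensionality of each factor. Moreover $(\dim_k (A \otimes B)) \cdot 1 = ((\dim_k A) \cdot 1) \cdot ((\dim_k B) \cdot 1)$, and because $k$ is a field, hence an integral domain, this product is nonzero if and only if both factors are nonzero. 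This establishes the criterion.

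Finally I would assemble the two steps. By the coefficient computation and the tensor-product criterion, the condition $q \in \Gamma(\widetilde{F}_\bullet)$, i.e. that $\widetilde{F}_q(S^0)$ has a finite volume, holds if and only if both $\widetilde{E}_q(S^0)$ and $\widetilde{E}_{q-1}(S^0)$ have a finite volume, that is, $q \in \Gamma(\widetilde{E}_\bullet)$ and $q - 1 \in \Gamma(\widetilde{E}_\bullet)$. Since $q - 1 \in \Gamma(\widetilde{E}_\bullet)$ is equivalent to $q \in \Gamma(\widetilde{E}_\bullet) + 1$, this is precisely $q \in \Gamma(\widetilde{E}_\bullet) \cap (\Gamma(\widetilde{E}_\bullet) + 1)$, which proves the asserted equality. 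I expect the main obstacle to be the tensor-product criterion for finite volume; the coefficient computation is a direct transcription of the argument already used for Lemma \ref{202002181018}, and the final identification of index sets is pure bookkeeping.
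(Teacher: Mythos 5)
Your proof is correct, and its overall architecture matches the paper's: reduce to the coefficient computation $\widetilde{F}_q(S^0) \cong \widetilde{E}_q(S^0) \otimes \widetilde{E}_{q-1}(S^0)$ (the paper gets this by citing Lemma \ref{202002181018}, whose proof is exactly the splitting argument you transcribe for $K = S^0$), and then establish that a tensor product of bicommutative Hopf algebras has a finite volume if and only if both factors do. Where you genuinely diverge is in the proof of that tensor-product criterion, which is the only nontrivial step. The paper argues structurally: it manufactures a normalized cointegral on $A$ by composing the inclusion $A \to A \otimes B$ with the normalized cointegral of $A \otimes B$, manufactures a normalized integral on $A$ from the projection $A \otimes B \to A$, and then invokes the multiplicativity of $\mathrm{vol}^{-1}$ over short exact sequences (Theorem \ref{202002211501}) to get $\mathrm{vol}^{-1}(A)\cdot\mathrm{vol}^{-1}(B) = \mathrm{vol}^{-1}(A \otimes B) \in k^\ast$ and hence invertibility of each factor. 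You instead use the numerical characterization in Corollary \ref{202006091350}: finite-dimensionality plus $(\dim_k A)\cdot 1 \neq 0$, combined with multiplicativity of dimension and the fact that a field has no zero divisors. Your route is shorter and arguably cleaner, but it outsources the work to Corollary \ref{202006091350}, which the paper derives from external results on integrals of finite-dimensional Hopf algebras, whereas the paper's argument for this lemma stays entirely within the volume formalism it has already built. Both are valid; your identification of the index sets at the end is the same bookkeeping as the paper's.
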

\begin{proof}
Note that the biproduct in the abelian category $\mathcal{A} = \mathsf{Hopf}^\mathsf{bc}_k$ is the tensor product of Hopf algebras.
By Lemma \ref{202002181018}, it suffices to prove that $\Gamma (\widetilde{E}_\bullet \otimes \widetilde{E}_{\bullet -1} ) = \Gamma (\widetilde{E}_\bullet ) \cap \Gamma (\widetilde{E}_{\bullet -1})$.
$\Gamma (\widetilde{E}_\bullet ) \cap \Gamma (\widetilde{E}_{\bullet -1})\subset \Gamma (\widetilde{E}_\bullet \otimes \widetilde{E}_{\bullet -1} )$ is clear.
Let $q \in \Gamma (\widetilde{E}_\bullet \otimes \widetilde{E}_{\bullet -1} )$, i.e. the Hopf algebra $\widetilde{E}_q ( S^0 ) \otimes \widetilde{E}_{q-1} ( S^0)$ has a finite volume.
We claim that $\widetilde{E}_q ( S^0 )$ and $\widetilde{E}_{q-1} ( S^0 )$ have a finite volume.
More generally, for a bicommutative Hopf algebras $A,B$, if the tensor product $A \otimes B$ has a finite volume, then $A$ has a finite volume.
In fact, the composition of the inclusion $i : A \to A \otimes B$ and the normalized cointegral on $A\otimes B$ induces a normalized cointegral on $A$.
In the same manner, $A$ is proved to have a normalized integral by using the projection $A\otimes B \to A$.
Likewise, the Hopf algebra $B$ has a normalized integral and a normalized cointegral.
In particular, the inverse volume of $A,B$ are defined.
By Theorem \ref{202002211501}, we obtain $\mathrm{vol}^{-1} (A) \cdot \mathrm{vol}^{-1} (B) = \mathrm{vol}^{-1} (A \otimes B)$.
The inverse volume $\mathrm{vol}^{-1} (A \otimes B)$ is invertible so that $\mathrm{vol}^{-1} (A)$ is invertible.
It proves that $A$ has a finite volume.
\end{proof}

\begin{Example}
\label{202102160937}
Let $\widetilde{F}_\bullet$ be a $\mathsf{Hopf}^\mathsf{bc}_k$-valued reduced homology theory.
Suppose that $\widetilde{F}_\bullet$ is a dimension reduction of another homology theory $\widetilde{E}_\bullet$ as before.
For $q \in \Gamma ( \widetilde{F}_\bullet ) = \Gamma (\widetilde{E}_\bullet ) \cap \left( \Gamma (\widetilde{E}_\bullet ) + 1\right)$, the cospanical path-integral $\hat{\mathrm{PI}} ( \widetilde{F}^\natural_q )$ is lifted to a HTQFT.
In fact, we have $\widetilde{F}^\natural_q  \cong W^\ast_{\mathbb{T}^+} \widetilde{E}^\natural_q$ by definitions so that $\hat{\mathrm{PI}} ( \widetilde{F}^\natural_q ) \cong W^\ast_{\mathbb{T}^+} \hat{\mathrm{PI}} ( \widetilde{E}^\natural_q )$.
Especially we have $\omega ( \hat{\mathrm{PI}} ( \widetilde{F}^\natural_q ) ) = \delta ( \theta ( i^\ast_{d-1} \widetilde{E}^\natural_q ) )$ by Theorem \ref{202006051120} where $d = d ( \widetilde{E}_\bullet ; q )$.
There is similar application for the spanical path-integral $\check{\mathrm{PI}} ( \widetilde{F}^\natural_q )$.
\end{Example}

\begin{Example}
There are various examples of $\mathsf{Hopf}^\mathsf{bc}_k$-valued homology theories.
See the last part of subsection \ref{202102141031}.
One could construct concrete examples of Example \ref{202102160937} from such homology theories.
For convenience, we give one example here.
Let $\widetilde{E}_\bullet$ be the $\mathsf{Hopf}^\mathsf{bc}_k$-valued homology theory induced by the stable homotopy theory following Example \ref{202102111049}, and $\widetilde{F}_\bullet$ be the dimension reduction of $\widetilde{E}_\bullet$.
Then we obtain $\Gamma ( \widetilde{F}_\bullet ) = \left( \mathbb{Z} \backslash \{ 0 \} \right) \cap \left( \mathbb{Z} \backslash \{ 1 \} \right) = \mathbb{Z} \backslash \{ 0 , 1\}$ by Lemma \ref{202002181042}.
For an integer $q$ with $q \geq 2$, the cospanical path-integral $\hat{\mathrm{PI}} ( \widetilde{F}^\natural_q )$ is lifted to a $\mathsf{C}_k$-valued HTQFT with the supremal dimension $d ( \widetilde{F}_\bullet ;q ) = (q-2)$.
Note that the HTQFT is a restriction of that in Example \ref{202102161025} (see Remark \ref{202102161022}).
\end{Example}

%%%%%%%%%%%%%%%%%%%%%

\subsection{Bounded-below homology theory}
\label{202001210015}

In this subsection, we study the obstruction cocycles associated with bounded homology theories.
In particular, we verify that some obstruction classes vanish mainly by using the inversion formulae.
The result implies a generalization of DW and TV TQFT's.

\begin{remark}
One could modify the results of this subsection for application to bounded-above homology theories.
\end{remark}

Let $\widetilde{E}_\bullet$ be a $\mathsf{Hopf}^\mathsf{bc}_k$-valued reduced homology theory which is bounded below.
In other words, there exists $q_0 \in \mathbb{Z}$ such that $q < q_0$ implies $\widetilde{E}_q ( K ) \cong k$ for any pointed finite CW-space $K$.
In this subsection, we prove that the obstruction class $\hat{\mathds{O}}(\widetilde{E}^\natural_q)$ vanishes for $q \in \Gamma ( \widetilde{E}_\bullet )$ such that $m( \widetilde{E}_\bullet ; q) = -\infty$ (see Definition \ref{202001202100}).

\begin{Example}
Let $\widetilde{D}_\bullet$ be a generalized homology theory which is bounded below.
Then the induced $\mathsf{Hopf}^\mathsf{bc}_k$-valued homology theory $\widetilde{E}_\bullet = k \widetilde{D}_\bullet$ is bounded below.
Let $q \in \mathbb{Z}$ such that $q^\prime < q$ implies that the $q^\prime$-th coefficient $\widetilde{D}_{q^\prime} ( S^0 )$ is finite and its order is coprime to the characteristic of $k$.
Then $q \in \Gamma ( \widetilde{E}_\bullet)$ satisfies $m ( \widetilde{E}_\bullet ; q ) = -\infty$ by definitions.
For example, one could consider $\widetilde{D}_\bullet = \widetilde{\mathrm{MO}}_\bullet$ with an arbitrary $q \in \Gamma ( \widetilde{E}_\bullet) = \mathbb{Z}$ in Example \ref{202102111049}.
\end{Example}

\begin{Example}
Let $A$ be a bicommutative Hopf algebra over $k$.
Let $\widetilde{E}_\bullet (-) = \widetilde{H}_\bullet ( - ; A)$ be the reduced ordinary homology theory with coefficients in $A$.
The homology theory $\widetilde{E}_\bullet$ is bounded below.
Suppose that the Hopf algebra $A$ has a finite volume.
Then we have $\Gamma ( \widetilde{E}_\bullet ) = \mathbb{Z}$ and $m ( \widetilde{E}_\bullet ; q ) = -\infty$ for any $q \in \Gamma ( \widetilde{E}_\bullet )$ by definitions.
The application of Corollary \ref{202006080930} to such $\widetilde{E}_\bullet$ gives a generalization of abelian Dijkgraaf-Witten TQFT and bicommutative Turaev-Viro TQFT to arbitrary field $k$, $q$ and top dimension $n$ of manifolds.
DW TQFT is based on $k= \mathbb{C}$, $q=1$, arbitrary $n$, and TV TQFT is based on an algebraically closed field $k$ with characteristic zero, $q=1$, $n=3$.
See the following subsubsections.
\end{Example}

\begin{Example}
Recall Example \ref{202102111055}.
If $\widetilde{D}_\bullet$ is bounded below, then so is $\widetilde{E}_\bullet$.
By definitions, we have $\Gamma ( \widetilde{E}_\bullet ) = \mathbb{Z}$ and $m ( \widetilde{E}_\bullet ; q ) = -\infty$ for any $q \in \Gamma ( \widetilde{E}_\bullet )$.
\end{Example}

\begin{Defn}
\label{202001252241}
We define a normalized 1-cochain $\theta_{\leq q} (\widetilde{E}_\bullet )$ of the symmetric monoidal category $\mathsf{Cosp}^\simeq_{\leq \infty} ( \mathsf{CW}^\mathsf{fin}_\ast )$ with coefficients in the multiplicative group $k^\ast$ by
\begin{align}
\theta_{\leq q} (\widetilde{E}_\bullet )
\stackrel{\mathrm{def.}}{=}
\prod_{l \geq 0} \theta ( \widetilde{E}^\natural_{q-l} )^{(-1)^l} 
\end{align}
\end{Defn}

\begin{theorem}
\label{202001141430}
Let $\widetilde{E}_\bullet$ be a $\mathsf{Hopf}^\mathsf{bc}_k$-valued reduced homology theory which is bounded below.
For $q \in \Gamma ( \widetilde{E}_\bullet )$ such that $m ( \widetilde{E}_\bullet ; q ) = -\infty$, the coboundary equation associated with $\hat{\mathrm{PI}} ( \widetilde{E}^\natural_q )$ is solvable.
Furthermore, we have a canonical solution as follows.
\begin{align}
\hat{\omega} ( \widetilde{E}^\natural_q ) = \delta ( \theta_{\leq q} (\widetilde{E}_\bullet ) ) .
\end{align}
In particular, the obstruction class $\hat{\mathds{O}} ( \widetilde{E}^\natural_q )$ vanishes.
\end{theorem}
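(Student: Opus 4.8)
The plan is to deduce the statement by iterating the second inversion formula (Corollary \ref{202001271629}) down the degrees and then telescoping the resulting relations, using boundedness below to guarantee that the telescope is finite.

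First I would unwind the hypotheses on the indexing. The condition $m(\widetilde{E}_\bullet ; q) = -\infty$ forces $q - l \in \Gamma(\widetilde{E}_\bullet)$ and $d(\widetilde{E}_\bullet ; q-l) = \infty$ for every $l \geq 0$ (see Definition \ref{202001202100}), so that each Brown functor $\widetilde{E}^\natural_{q-l}$ is $\infty$-dimensional and all the cochains occurring below live on the single category $\mathsf{Cosp}^\simeq_{\leq \infty}(\mathsf{CW}^\mathsf{fin}_\ast)$; in particular the inclusions $i_{d-1}, j_{d-1}$ and the suspension functors have $d = d-1 = \infty$, so their pullbacks are the relevant identity restrictions. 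By Example \ref{202102142024} each $\widetilde{E}^\natural_{q-l}$ is properly extensible, a proper extension being supplied by $\widetilde{E}_{q-l+1}$, so Corollary \ref{202001271629} applies to every $\widetilde{E}^\natural_{q'}$ with $q' \leq q$. Combining it with the suspension isomorphism $\Sigma^\ast \widetilde{E}^\natural_{q'} \cong \widetilde{E}^\natural_{q'-1}$ yields, for each such $q'$, the relation
\[
\hat{\omega}(\widetilde{E}^\natural_{q'}) \cdot \hat{\omega}(\widetilde{E}^\natural_{q'-1}) = \delta\bigl(\theta(\widetilde{E}^\natural_{q'})\bigr).
\]

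Next I would invoke boundedness below to make sense of the infinite product in Definition \ref{202001252241}. Since there is some $q_0$ with $\widetilde{E}_j(K) \cong k$ for all $K$ whenever $j < q_0$, the functor $\widetilde{E}^\natural_j$ is the trivial (unit-valued) Brown functor for $j < q_0$; by the normalization conventions both $\theta(\widetilde{E}^\natural_j) = 1$ and $\hat{\omega}(\widetilde{E}^\natural_j) = 1$ for such $j$. Hence only finitely many factors of $\theta_{\leq q}(\widetilde{E}_\bullet) = \prod_{l \geq 0} \theta(\widetilde{E}^\natural_{q-l})^{(-1)^l}$ are nontrivial, the cochain is well defined, and this local finiteness lets me rearrange the products freely.

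Finally I would telescope. Applying the group homomorphism $\delta$ to the alternating product and substituting the displayed relation gives
\[
\delta\bigl(\theta_{\leq q}(\widetilde{E}_\bullet)\bigr) = \prod_{l \geq 0} \delta\bigl(\theta(\widetilde{E}^\natural_{q-l})\bigr)^{(-1)^l} = \prod_{l \geq 0} \Bigl( \hat{\omega}(\widetilde{E}^\natural_{q-l}) \cdot \hat{\omega}(\widetilde{E}^\natural_{q-l-1}) \Bigr)^{(-1)^l}.
\]
In this alternating product each factor $\hat{\omega}(\widetilde{E}^\natural_{q-l})$ with $l \geq 1$ occurs exactly twice, once with exponent $(-1)^{l-1}$ and once with $(-1)^l$, so it cancels; only $\hat{\omega}(\widetilde{E}^\natural_q)$ survives, whence $\delta(\theta_{\leq q}(\widetilde{E}_\bullet)) = \hat{\omega}(\widetilde{E}^\natural_q)$ as claimed. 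The vanishing of $\hat{\mathds{O}}(\widetilde{E}^\natural_q)$ is then immediate, as it is by definition the class of the cocycle $\hat{\omega}(\widetilde{E}^\natural_q)$, now exhibited as a coboundary. The only real obstacle is bookkeeping: verifying that the product is locally finite (precisely where boundedness below is used) and checking that the normalizations render $\hat{\omega}$ and $\theta$ of the trivial Brown functor equal to $1$, so that the telescoping cancellation is legitimate.
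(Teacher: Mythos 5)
Your proposal is correct and follows essentially the same route as the paper: the paper's proof also rewrites Inversion formula 2 (Corollary \ref{202001271629}) as the recursion $\hat{\omega}(\widetilde{E}^\natural_{q'}) = \hat{\omega}(\widetilde{E}^\natural_{q'-1})^{-1}\cdot\delta(\theta(\widetilde{E}^\natural_{q'}))$ and iterates it down to $q_0$, where boundedness below kills the remaining terms. Your version merely makes explicit the telescoping cancellation and the verification that $\hat{\omega}$ and $\theta$ of the trivial Brown functor are $1$, which the paper leaves implicit.
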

\begin{proof}
Note that $m ( \widetilde{E}_\bullet ; q ) = -\infty$ is equivalent with that $q^\prime \leq q$ implies $q^\prime \in \Gamma ( \widetilde{E}_\bullet )$.
By Corollary \ref{202001271629}, we have $\omega ( \hat{\mathrm{PI}} ( \widetilde{E}^\natural_q ) ) = \omega ( \hat{\mathrm{PI}} ( \widetilde{E}^\natural_{q-1} ) )^{-1} \cdot \delta (\theta (\widetilde{E}^\natural_q ) )$ due to $d = \infty$.
We repeat this formula until the integer $q_0$.
Since the homology theory $\widetilde{E}_\bullet$ is assumed to be bounded below, we obtain the claim.
\end{proof}

\begin{Corollary}
\label{202102111300}
For $q \in \Gamma ( \widetilde{E}_\bullet )$ such that $m ( \widetilde{E}_\bullet ; q ) = -\infty$, the coboundary equation associated with $\check{\mathrm{PI}} ( \widetilde{E}^\natural_q )$ is solvable.
Furthermore, we have a canonical solution as follows.
\begin{align}
\check{\omega} ( \widetilde{E}^\natural_q ) = \delta ( \theta_{\leq (q-1)} (\widetilde{E}_\bullet ) ) .
\end{align}
In particular, the obstruction class $\check{\mathds{O}} ( \widetilde{E}^\natural_q )$ vanishes.
\end{Corollary}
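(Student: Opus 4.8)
The plan is to reduce the spanical statement to the cospanical one already established in Theorem \ref{202001141430}, exploiting the suspension relation between $\check{\omega}$ and $\hat{\omega}$ recorded in Corollary \ref{202006041143}. The first point to settle is the bookkeeping of dimensions: since $m(\widetilde{E}_\bullet ; q) = -\infty$, Definition \ref{202001202100} gives $d = d(\widetilde{E}_\bullet ; q) = \infty$, so $\check{\omega}(\widetilde{E}^\natural_q)$ and all the auxiliary cocycles live on the single category $\mathsf{Cosp}^\simeq_{\leq \infty}(\mathsf{CW}^\mathsf{fin}_\ast)$, and the truncation functors $i_{d-1}$, $\Sigma_{d-1}$ act on the full homotopy category.

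I would begin by observing that $m(\widetilde{E}_\bullet ; q) = -\infty$ means precisely that every integer $q' \leq q$ lies in $\Gamma(\widetilde{E}_\bullet)$. In particular $q-1 \in \Gamma(\widetilde{E}_\bullet)$ and $m(\widetilde{E}_\bullet ; q-1) = -\infty$ as well, so that Theorem \ref{202001141430} is available at the index $q-1$. Next I would apply Corollary \ref{202006041143} to the $\infty$-dimensional Brown functor $E = \widetilde{E}^\natural_q$, giving $\check{\omega}(i^\ast_{d-1}\widetilde{E}^\natural_q) = \hat{\omega}(\Sigma^\ast_{d-1}\widetilde{E}^\natural_q)$. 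Because $d = \infty$, the left-hand side is just $\check{\omega}(\widetilde{E}^\natural_q)$; using the suspension isomorphism $\Sigma^\ast_{d-1}\widetilde{E}^\natural_q \cong \widetilde{E}^\natural_{q-1}$ from Example \ref{202102142024}, the right-hand side becomes $\hat{\omega}(\widetilde{E}^\natural_{q-1})$. Hence $\check{\omega}(\widetilde{E}^\natural_q) = \hat{\omega}(\widetilde{E}^\natural_{q-1})$, an equality of cocycles on $\mathsf{Cosp}^\simeq_{\leq \infty}(\mathsf{CW}^\mathsf{fin}_\ast)$.

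It then remains to invoke Theorem \ref{202001141430} at the index $q-1$, which yields $\hat{\omega}(\widetilde{E}^\natural_{q-1}) = \delta(\theta_{\leq (q-1)}(\widetilde{E}_\bullet))$. Combining this with the previous identity gives $\check{\omega}(\widetilde{E}^\natural_q) = \delta(\theta_{\leq (q-1)}(\widetilde{E}_\bullet))$, which is exactly the asserted canonical solution, and the vanishing of $\check{\mathds{O}}(\widetilde{E}^\natural_q)$ follows since the cocycle is a coboundary. The only delicate step, and the place I expect to need care, is confirming that the identifications are literally equalities of cocycles on the same cohomology group: one must check that, under $d = \infty$, the functors $i_{d-1}$ and $\Sigma_{d-1}$ and the induced suspension isomorphism $\Sigma^\ast \widetilde{E}^\natural_q \cong \widetilde{E}^\natural_{q-1}$ are compatible with the common domain $\mathsf{Cosp}^\simeq_{\leq \infty}(\mathsf{CW}^\mathsf{fin}_\ast)$, so that the chain of equalities is not merely up to a coboundary but holds on the nose at the cocycle level.
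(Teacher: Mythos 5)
Your argument is correct, and it reaches the stated formula by a route that differs from the paper's in a small but genuine way. The paper's proof stays at the index $q$: it applies the inversion formula of Theorem \ref{202001271630} to get $\check{\omega}(\widetilde{E}^\natural_q) = \hat{\omega}(\widetilde{E}^\natural_q)^{-1}\cdot\delta(\theta(\widetilde{E}^\natural_q))$, substitutes $\hat{\omega}(\widetilde{E}^\natural_q)=\delta(\theta_{\leq q}(\widetilde{E}_\bullet))$ from Theorem \ref{202001141430}, and then uses the telescoping identity $\theta_{\leq q}(\widetilde{E}_\bullet)^{-1}\cdot\theta(\widetilde{E}^\natural_q)=\theta_{\leq (q-1)}(\widetilde{E}_\bullet)$ coming from Definition \ref{202001252241}. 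You instead shift the index: via Corollary \ref{202006041143} and the suspension isomorphism $\Sigma^\ast\widetilde{E}^\natural_q\cong\widetilde{E}^\natural_{q-1}$ you obtain $\check{\omega}(\widetilde{E}^\natural_q)=\hat{\omega}(\widetilde{E}^\natural_{q-1})$ on the nose, and then quote Theorem \ref{202001141430} at $q-1$ (legitimately, since $m(\widetilde{E}_\bullet;q)=-\infty$ forces $q-1\in\Gamma(\widetilde{E}_\bullet)$ and $m(\widetilde{E}_\bullet;q-1)=-\infty$). Your approach avoids the alternating-product manipulation entirely and lands directly on $\theta_{\leq(q-1)}$; its only extra burden is the point you yourself flag, namely that a natural isomorphism of Brown functors (here the suspension isomorphism, with $d=\infty$ so that $i_{d-1}$ and $j_{d-1}$ are identities) induces a natural isomorphism of the path-integrals \emph{in the strong sense} and hence an equality, not merely a cohomology, of obstruction cocycles. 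That fact is exactly what the unnumbered corollary following Corollary \ref{202006041143} records, so the step is sound; the two proofs are ultimately equivalent because Theorem \ref{202001141430} is itself proved from Corollary \ref{202001271629}, which combines Theorem \ref{202001271630} with Corollary \ref{202006041143}.
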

\begin{proof}
The Theorem \ref{202001271630} implies $\omega ( \check{\mathrm{PI}} ( \widetilde{E}^\natural_q ) ) = \omega ( \hat{\mathrm{PI}} ( \widetilde{E}^\natural_q ) )^{-1} \cdot \delta (\theta (\widetilde{E}^\natural_q ) )$.
By Theorem \ref{202001141430}, we have $\omega ( \check{\mathrm{PI}} ( \widetilde{E}^\natural_q ) ) = \delta ( \theta_{\leq q} (\widetilde{E}_\bullet )^{-1} ) \cdot \delta (\theta (\widetilde{E}^\natural_q ) )  =  \delta ( \theta_{\leq (q-1)} (\widetilde{E}_\bullet ) )$.
\end{proof}

\begin{Corollary}
\label{202006080930}
Let $q \in \Gamma ( \widetilde{E}_\bullet )$ such that $m ( \widetilde{E}_\bullet ; q ) = -\infty$.
Then there exists a $\mathsf{C}_k$-valued homotopy-theoretic version of TQFT $Z$ with the supremal dimension $\infty$ which satisfies the following conditions :
\begin{enumerate}
\item 
The diagram below commutes strictly.
\begin{equation}
\begin{tikzcd}
\mathsf{Ho} ( \mathsf{CW}^\mathsf{fin}_{\ast} ) \ar[d, hookrightarrow] \ar[r, " \widetilde{E}^\natural_q"] & \mathsf{Hopf}^\mathsf{bc,vol}_k \ar[d, hookrightarrow] \\
\mathsf{Cosp}^\simeq_{\leq \infty} ( \mathsf{CW}^\mathsf{fin}_\ast ) \ar[r, "Z"] & \mathsf{C}_k
\end{tikzcd}
\end{equation}
\item
The induced homotopy invariant is given by
\begin{align}
Z ( L ) = \left( \prod_{l \geq 0} \dim_k ( \widetilde{E}_{q-l} ( L))^{(-1)^l} \right) \cdot 1 \in k^\ast .
\end{align}
\end{enumerate}
\end{Corollary}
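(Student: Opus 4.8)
The plan is to obtain $Z$ as the canonical untwisting of the cospanical path-integral $\hat{\mathrm{PI}}(\widetilde{E}^\natural_q)$ by the $1$-cochain supplied in Theorem \ref{202001141430}. Since $m(\widetilde{E}_\bullet ; q) = -\infty$ forces $d(\widetilde{E}_\bullet ; q) = \infty$ (Definition \ref{202001202100}), the Brown functor $\widetilde{E}^\natural_q$ is $\infty$-dimensional and $\hat{\mathrm{PI}}(\widetilde{E}^\natural_q)$ is a projective HTQFT with supremal dimension $\infty$. By Theorem \ref{202001141430} its obstruction cocycle satisfies $\hat{\omega}(\widetilde{E}^\natural_q) = \delta(\theta_{\leq q}(\widetilde{E}_\bullet))$, so I would set $Z \stackrel{\mathrm{def.}}{=} \theta_{\leq q}(\widetilde{E}_\bullet)^{-1} \cdot \hat{\mathrm{PI}}(\widetilde{E}^\natural_q)$; this is a genuine (non-projective) symmetric monoidal functor, hence a $\mathsf{C}_k$-valued HTQFT, by the same lifting mechanism used in Corollary \ref{202101301508}.

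For condition (1) I would verify that the untwisting cochain $\theta_{\leq q}(\widetilde{E}_\bullet)$ is trivial on every morphism in the image of the embedding $\iota_{cosp}$. Such a morphism is represented by a cospan $\left( K \stackrel{f}{\to} L \stackrel{Id_L}{\leftarrow} L \right)$ whose right leg is an identity, so its mapping cone $C(Id_L)$ is contractible; by Definition \ref{202001141418} each factor evaluates as $\theta(\widetilde{E}^\natural_{q-l}) = \mathrm{vol}^{-1}(\widetilde{E}_{q-l}^\natural(\ast)) = \mathrm{vol}^{-1}(k) = 1$, whence the product $\theta_{\leq q}(\widetilde{E}_\bullet)$ is trivial there. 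Consequently $Z$ agrees with $\hat{\mathrm{PI}}(\widetilde{E}^\natural_q)$ on $\iota_{cosp}$-morphisms, and Lemma \ref{202006041011} identifies this restriction with $\widetilde{E}^\natural_q$ followed by the inclusion $\mathsf{Hopf}^\mathsf{bc,vol}_k \hookrightarrow \mathsf{C}_k$, giving the strict commutativity of the square.

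For condition (2) I would evaluate $Z$ on $\Lambda = (\ast \to L \leftarrow \ast)$. By Example \ref{202102121407} the path-integral factor contributes $\hat{\mathrm{PI}}(\widetilde{E}^\natural_q)(L) = 1$, so $Z(L) = \theta_{\leq q}(\widetilde{E}_\bullet)([\Lambda])^{-1}$. The right leg of $\Lambda$ is the basepoint inclusion $\ast \to L$, whose mapping cone is homotopy equivalent to $L$, so Definition \ref{202001141418} gives $\theta(\widetilde{E}^\natural_{q-l})([\Lambda]) = \mathrm{vol}^{-1}(\widetilde{E}_{q-l}(L))$. Substituting the defining product of Definition \ref{202001252241} and converting inverse volumes by $\mathrm{vol}^{-1}(A)^{-1} = (\dim_k A)\cdot 1$ from Corollary \ref{202006040804}, the two sign reversals — one from the untwisting $\theta^{-1}$ and one from the volume-to-dimension passage — compose to restore the exponent $(-1)^l$, yielding $Z(L) = \left(\prod_{l \geq 0} \dim_k(\widetilde{E}_{q-l}(L))^{(-1)^l}\right)\cdot 1$.

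The main obstacle I anticipate is bookkeeping rather than conceptual: confirming that the formally infinite product $\theta_{\leq q}(\widetilde{E}_\bullet)$ is pointwise finite and well-defined — which rests on boundedness below, since $\widetilde{E}_{q-l}(K) \cong k$ makes the $l$-th factor trivial once $q-l < q_0$ — and tracking the sign exponents through the two inversions so that they reassemble into exactly the claimed alternating product. Everything else is a direct assembly of Theorem \ref{202001141430}, Lemma \ref{202006041011}, Example \ref{202102121407}, and Corollary \ref{202006040804}.
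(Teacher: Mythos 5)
Your proposal is correct and follows exactly the paper's route: the paper likewise defines $Z = \theta_{\leq q}(\widetilde{E}_\bullet)^{-1}\cdot\hat{\mathrm{PI}}(\widetilde{E}^\natural_q)$ via Theorem \ref{202001141430}, notes $d(\widetilde{E}_\bullet;q)=q-m(\widetilde{E}_\bullet;q)=\infty$, and deduces the two conditions from Lemma \ref{202006041011} and Example \ref{202102121407}. You merely spell out the details the paper leaves implicit (triviality of $\theta_{\leq q}$ on $\iota_{cosp}$-morphisms via the contractible cone $C(Id_L)$, the identification $C(\ast\to L)\simeq L$, and the sign bookkeeping through $\mathrm{vol}^{-1}(A)^{-1}=(\dim_k A)\cdot 1$ from Corollary \ref{202006040804}), and all of these check out.
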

\begin{proof}
By Theorem \ref{202001141430}, $\theta_{\leq q} (\widetilde{E}_\bullet) \in \Theta ( \hat{\mathrm{PI}} ( \widetilde{E}^\natural_q ) )$.
Then the lift $Z = \theta_{\leq q} (\widetilde{E}_\bullet)^{-1} \cdot \hat{\mathrm{PI}} (\widetilde{E}^\natural_q)$ satisfies all the conditions by Example \ref{202102121407} and Lemma \ref{202006041011}.
Note that $d( \widetilde{E}_\bullet ; q) = q - m( \widetilde{E}_\bullet ; q) = \infty$.
\end{proof}

\begin{remark}
There is a remark analogous to Remark \ref{202102161038}, \ref{202102161042}.
Let $K$ be a pointed finite CW-space.
By Corollary \ref{202006080930}, the induced mapping group representation on $Z(K)$ coincides with the representation induced by the homology theory $\widetilde{E}_q$ itself.
\end{remark}

\begin{Example}
All of the homology theories in the beginning of this subsection extends to a $\mathsf{C}_k$-valued HTQFT with the supremal dimension $\infty$ as Corollary \ref{202006080930}.
By Proposition \ref{202102131241}, such a HTQFT induces a $\mathsf{C}_k$-valued $n$-TQFT for any $n \in \mathbb{N}$.
\end{Example}

\subsubsection{The untwisted Dijkgraaf-Witten-Freed-Quinn theory}

In this subsubsection, we give an overview of the Dijkgraaf-Witten-Freed-Quinn theory.
The Dijkgraaf-Witten TQFT \cite{DW} \cite{Wakui} is a 3-dimensional unitary TQFT constructed by using a triangulation of 3-manifolds and a 3-cocycle of a finite group $G$.
The construction is streamlined and generalized to any dimension and manifolds which might not be triangulable \cite{FQ} \cite{SV}.
The DW TQFT associated with the trivial group cocycle is called an untwisted DW TQFT.
For $n \in \mathbb{N}$, the untwisted DW invariant is a homotopy invariant of closed $n$-manifolds $M$ given by $Z^1_{\mathrm{DW},G} (M) = \prod_{i} \frac{|\mathrm{Hom} ( \pi_1 ( M_i ) , G)|}{|G|}$ where $i$ runs over the components of $M$.
The untwisted DW invariant extends to a unitary $n$-dimensional TQFT $Z^1_{\mathrm{DW},G}$ valued at the category of Hilbert spaces.
For simplicity, let $G$ be an abelian group.
It assigns a Hilbert space $Z^1_{\mathrm{DW},G} (N) \cong \mathbb{C}^{H^1( N ; G)}$ to a closed $(n-1)$-manifold $N$.
Its inner product is given by $\langle f, g \rangle =  |H^0 ( N ; G)|^{-1} \sum_{x} \overline{f(x)} g(x)$.
For an $n$-manifold $M$ possibly with boundaries, let $\mathrm{W} = ( \emptyset \to M \leftarrow \partial M)$ be the induced cobordism from the empty manifold to the boundary $\partial M$.
$Z^1_{\mathrm{DW},G} (\mathrm{W}) \in Z^1_{\mathrm{DW},G} ( \partial M )$ assigns the groupoid cardinality \cite{baez2009higher} of the homotopy fiber groupoid $r^{-1} (Q)$ to $[Q] \in H^1( \partial M ; G)$.
Here the functor $r : \mathcal{B} ( M ; G ) \to \mathcal{B} ( \partial M ; G)$ is the restriction where $\mathcal{B} ( X ; G )$ is the finite groupoid of principal $G$-bundles.
We identify $H^1( X ; G)$ with the isomorphism classes of principal $G$-bundles over $X$.
For an $n$-manifold $M$ possibly with boundaries, we have
\begin{align}
Z^1_{\mathrm{DW},G} (\mathrm{W}) : [Q] \mapsto 
\begin{cases}
\frac{|H^1 ( M , \partial M ; G) |}{|H^0 ( M , \partial M ; G)|} & (\exists P \in \mathcal{B} ( M ; G) \mathrm{~s.t.~} P|_{\partial M} \cong Q ) , \\
0 & (\mathrm{otherwise}) .
\end{cases}
\end{align}

%%%%%%%%%%%%%%%%%%%%%%%%%%
\subsubsection{Reproduction of the untwisted DW TQFT}
\label{202002041049}
We reproduce the untwisted DW TQFT in \cite{FQ}.
Let $G$ be a finite abelian group and $\widetilde{H}^\bullet ( - ; G)$ be the reduced ordinary cohomology theory with coefficients in $G$.
It induces a $\mathsf{Hopf}^\mathsf{bc}_k$-valued homology theory by $\widetilde{E}_\bullet = \mathbb{C}^{\widetilde{H}^\bullet ( - ; G)}$.
For such $\widetilde{E}_\bullet$ consider $Z$ in Corollary \ref{202006080930} for $q=1$.
In other words, $Z$ is the $\mathsf{C}_k$-valued homotopy-theoretic version of TQFT with the supremal dimension $\infty$ :
\begin{align}
Z = \theta_{\leq 1} ( \widetilde{E}_\bullet )^{-1} \cdot  \hat{\mathrm{PI}} ( \widetilde{E}^\natural_1 )
=
\frac{\theta (\widetilde{E}^\natural_0)}{\theta (\widetilde{E}^\natural_1)} \cdot  \hat{\mathrm{PI}} ( \widetilde{E}^\natural_1 ) .
\end{align}
For a pointed finite CW-space $K$, the corresponding object is $Z(K) = \widetilde{E}_1 ( K) = \mathbb{C}^{\widetilde{H}^1 (K ; G)}$.
It has a natural Hilbert space structure as follows.
On the one hand, the space $K$ has a duality in the cospan category $\mathsf{Cosp}^\simeq_{\leq \infty} ( \mathsf{CW}^\mathsf{fin}_\ast )$ which is given by $\mathrm{coev}_K = [\ast \to K \stackrel{\nabla}{\leftarrow} K \vee K]$ and $\mathrm{ev}_K = [K \vee K \stackrel{\nabla}{\to} K \leftarrow \ast ]$.
By the conjugate on $Z(K) = \mathbb{C}^{\widetilde{H}^1 (K; G)}$, we obtain a Hilbert space structure on $Z(K)$ :
\begin{align}
\langle f , g \rangle^\prime = ( Z (\mathrm{ev}_K) ) ( \bar{f} \otimes g ) .
\end{align}
We regard $Z(K)$ as a Hilbert space from now on.

\begin{Lemma}
\label{202102122037}
We have
\begin{align}
\langle f , g \rangle^\prime = \langle f , g \rangle .
\end{align}
\end{Lemma}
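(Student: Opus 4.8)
The plan is to unwind the definition of $Z(\mathrm{ev}_K)$ down to the normalized cointegral of the function Hopf algebra, and then to evaluate the telescoping $1$-cochain $\theta_{\leq 1}(\widetilde{E}_\bullet)$ on $[\mathrm{ev}_K]$ explicitly. Set $E := \widetilde{E}^\natural_1$, $A := \widetilde{E}_1(K) = \mathbb{C}^{\widetilde{H}^1(K;G)}$, and $S := \widetilde{H}^1(K;G)$.

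First I would identify the image of the cospan $\mathrm{ev}_K = [\,K\vee K \xrightarrow{\nabla} K \xleftarrow{\iota} \ast\,]$ under the cospanical extension $\hat{E}$. Since $\widetilde{E}_\bullet$ sends wedges to biproducts, $\widetilde{E}_1(K\vee K)\cong A\otimes A$. Tracing the two nested contravariant functors $\widetilde{H}^\bullet(-;G)$ and $\mathbb{C}^{(-)}$, the fold map $\nabla$ induces the diagonal on $\widetilde{H}^1$, hence the algebra multiplication $\nabla_A\colon A\otimes A\to A$, while the basepoint inclusion $\iota$ induces the unit $\eta_A\colon\mathbb{C}\to A$. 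Thus $\hat{E}(\mathrm{ev}_K)=[\Lambda^\dagger]$ with $\Lambda=(\,k\xrightarrow{\eta_A}A\xleftarrow{\nabla_A}A\otimes A\,)$ exactly the cospan of Definition \ref{202006081524}. By that definition, and since the integral along $\eta_A$ is the normalized cointegral $\sigma^A$, one gets $\hat{\mathrm{PI}}(E)(\mathrm{ev}_K)=\int_{\Lambda^\dagger}=\sigma^A\circ\nabla_A=\mathrm{vol}^{-1}(A)\cdot\mathrm{e}_A$.

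Next I would make $\int_{\Lambda^\dagger}$ explicit. For $A=\mathbb{C}^S$ the multiplication $\nabla_A$ is pointwise, and the cointegral, being the integral along the unit (Example \ref{202102112335}), is $\sigma^A(h)=|S|^{-1}\sum_{s\in S}h(s)$. Writing $f(s)$ for the value at $s\in S$, this yields $\int_{\Lambda^\dagger}(\bar{f}\otimes g)=|S|^{-1}\sum_{s\in S}\overline{f(s)}\,g(s)$. Then I would compute the scalar $\theta_{\leq 1}(\widetilde{E}_\bullet)([\mathrm{ev}_K])$. The second leg of $\mathrm{ev}_K$ is $\iota\colon\ast\to K$, whose mapping cone satisfies $C(\iota)\simeq K$, so by Definition \ref{202001141418} and Corollary \ref{202006040804} we have $\theta(\widetilde{E}^\natural_{1-l})([\mathrm{ev}_K])=\mathrm{vol}^{-1}(\widetilde{E}_{1-l}(K))=|\widetilde{H}^{1-l}(K;G)|^{-1}$. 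Because $\widetilde{H}^{j}(K;G)=0$ for $j<0$, the product of Definition \ref{202001252241} collapses to $\theta_{\leq 1}([\mathrm{ev}_K])=|\widetilde{H}^0(K;G)|\cdot|\widetilde{H}^1(K;G)|^{-1}$. Combining the pieces, $\langle f,g\rangle' = \theta_{\leq 1}([\mathrm{ev}_K])^{-1}\int_{\Lambda^\dagger}(\bar{f}\otimes g)=|\widetilde{H}^0(K;G)|^{-1}\sum_{s\in S}\overline{f(s)}\,g(s)$, which is the asserted inner product (for $K=N^+$ one has $\widetilde{H}^j(K;G)=H^j(N;G)$, recovering the normalization $|H^0(N;G)|^{-1}$).

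The main obstacle I anticipate is the bookkeeping in the first step: correctly tracing the two nested contravariant functors so that $\nabla$ becomes the algebra multiplication and $\iota$ the unit, thereby recognizing $\hat{E}(\mathrm{ev}_K)$ as precisely $\Lambda^\dagger$. The second delicate point is the evaluation of the telescoping cochain $\theta_{\leq 1}$, where the identification $C(\iota)\simeq K$ and the vanishing of reduced cohomology in negative degrees are exactly what make the factor $|\widetilde{H}^0(K;G)|\cdot|\widetilde{H}^1(K;G)|^{-1}$ cancel against $|S|^{-1}=|\widetilde{H}^1(K;G)|^{-1}$ to leave the single normalization $|\widetilde{H}^0(K;G)|^{-1}$.
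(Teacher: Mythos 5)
Your proof is correct and takes essentially the same route as the paper: identify $\hat{E}(\mathrm{ev}_K)$ with the cospan $(A\otimes A\xrightarrow{\nabla}A\xleftarrow{\eta}k)$, evaluate the integral via the normalized cointegral $\sigma^A(h)=|\widetilde{H}^1(K;G)|^{-1}\sum_{s}h(s)$, and then correct by the normalization coming from $\theta_{\leq 1}(\widetilde{E}_\bullet)$. The only difference is that you spell out the evaluation $\theta_{\leq 1}([\mathrm{ev}_K])=|\widetilde{H}^0(K;G)|\cdot|\widetilde{H}^1(K;G)|^{-1}$ using $C(\iota)\simeq K$ and the vanishing of reduced cohomology in negative degrees, a step the paper compresses into the phrase ``by the definition of $Z$''.
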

\begin{proof}
It suffices to prove that $( Z (\mathrm{ev}_K) ) ( f \otimes g ) = |H^0 ( K ; G)|^{-1} \sum_x f(x) g(x)$.
The morphism $(\hat{\mathrm{PI}} (\widetilde{E}^\natural_1) ) (\mathrm{ev}_K)$ coincides with $\int_\Lambda$ by definitions where $\Lambda = ( A \otimes A \stackrel{\nabla}{\to} A \leftarrow k)$ and $A = Z(K) = \mathbb{C}^{\widetilde{H}^1 (K; G)}$.
The normalized integral along the unit $k \to A$ is the normalized cointegral of $A$.
Note that the normalized cointegral of $A=\mathbb{C}^{\widetilde{H}^1 (K; G)}$ is given by $\sigma^A (f) = |\widetilde{H}^1 (K; G)|^{-1} \sum_x f(x)$.
Hence, $((\hat{\mathrm{PI}} (\widetilde{E}^\natural_1) ) (\mathrm{ev}_K)) (f \otimes g) = |H^1 ( K ; G)|^{-1} \sum_x f(x) g(x)$.
By the definition of $Z$, we obtain the result.
\end{proof}

\begin{remark}
In \cite{FQ}, the assigned linear homomorphisms and the inner products of the assigned vector spaces are related to some groupoid cardinalities \cite{baez2009higher}.
Such groupoid cardinalities naturally appear due to the canonical solution $\theta_{\leq 1} ( \widetilde{E}_\bullet )$ according to the proof of Lemma \ref{202102122037} and Theorem \ref{202102122133}.
\end{remark}

\begin{theorem}
\label{202102122133}
For a closed $(n-1)$-manifold $N$, we have a natural isomorphism between Hilbert spaces $Z ( \Phi_{\infty,n} (N) ) \cong Z^1_{\mathrm{DW},G} (N)$ where $\Phi_{\infty,n}$ is defined in Definition \ref{202006081111}.
Under the isomorphism, we have $Z ( \Phi_{\infty,n} (\mathrm{W} )) = Z^1_{\mathrm{DW},G} ( \mathrm{W} )$ for an $n$-cobordism $\mathrm{W}$.
\end{theorem}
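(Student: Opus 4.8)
The plan is to verify the claimed isomorphism first on objects and then on morphisms, computing $Z \circ \Phi_{\infty,n}$ directly from the defining formula $Z = \theta_{\leq 1} ( \widetilde{E}_\bullet )^{-1} \cdot \hat{\mathrm{PI}} ( \widetilde{E}^\natural_1 )$ and matching it term by term against the Freed--Quinn description recalled above. On objects, $\Phi_{\infty,n} (N) = N^+$ by Definition \ref{202006081111}, and since adjoining a disjoint basepoint turns reduced into unreduced cohomology we have $\widetilde{H}^1 ( N^+ ; G ) \cong H^1 ( N ; G)$, whence $Z ( \Phi_{\infty,n} (N) ) = \widetilde{E}_1 ( N^+ ) = \mathbb{C}^{H^1(N;G)} \cong Z^1_{\mathrm{DW},G} (N)$. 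The two Hilbert-space structures agree because, applying Lemma \ref{202102122037} to $K = N^+$ (so that $\widetilde{H}^0 ( N^+ ; G ) = H^0 ( N ; G )$), the inner product $\langle f , g \rangle^\prime$ induced by $Z ( \mathrm{ev}_{N^+} )$ is exactly $|H^0 ( N ; G )|^{-1} \sum_x \overline{f(x)} g(x)$.

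For the morphism statement, a cobordism $\mathrm{W}$ from $M_0$ to $M_1$ is sent by $\Phi_{\infty,n}$ to the cospan $\Lambda = ( M_0^+ \to \mathrm{W}^+ \leftarrow M_1^+ )$. Applying $\widetilde{E}^\natural_1 = \mathbb{C}^{H^1(-;G)}$ yields the cospan $\mathbb{C}^{H^1(M_0;G)} \stackrel{\xi_0}{\to} \mathbb{C}^{H^1(\mathrm{W};G)} \stackrel{\xi_1}{\leftarrow} \mathbb{C}^{H^1(M_1;G)}$ in $\mathsf{Hopf}^\mathsf{bc,vol}_k$, where $\xi_i$ is the function-Hopf homomorphism dual to the restriction $r_i^\ast : H^1 ( \mathrm{W} ; G) \to H^1 ( M_i ; G )$. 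By Example \ref{202102112335} the integral along $\Lambda$ is $\int_\Lambda = \mu_{\xi_1} \circ \xi_0$, given explicitly by
\begin{align}
\left( \int_\Lambda f \right) ( [Q_1] ) = | \mathrm{Ker} ( r_1^\ast ) |^{-1} \sum_{r_1^\ast (c) = [Q_1]} f ( r_0^\ast c ) .
\end{align}
In the special case $M_0 = \emptyset$, $M_1 = \partial M$, $\mathrm{W} = M$ and $f = 1$, this collapses to the indicator of $[Q_1] \in \mathrm{Im} ( r_1^\ast )$, reproducing the case distinction ``$\exists P$ with $P|_{\partial M} \cong Q$'' in the untwisted DW formula.

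It remains to compute the scalar correction and match the normalization. The right leg of $\Lambda$ is the inclusion $f_1 : M_1^+ \hookrightarrow \mathrm{W}^+$, whose mapping cone is homotopy equivalent to $\mathrm{W}/M_1$; hence $\widetilde{E}^\natural_q ( C(f_1) ) = \mathbb{C}^{H^q(\mathrm{W},M_1;G)}$, and by Corollary \ref{202006040804} its inverse volume is $|H^q ( \mathrm{W} , M_1 ; G )|^{-1}$. Since $\widetilde{E}_\bullet$ is bounded below with $\theta ( \widetilde{E}^\natural_q ) = 1$ for $q < 0$, Definition \ref{202001252241} gives $\theta_{\leq 1} ( \widetilde{E}_\bullet )^{-1} ( [\Lambda] ) = \theta ( \widetilde{E}^\natural_0 ) ( [\Lambda] ) / \theta ( \widetilde{E}^\natural_1 ) ( [\Lambda] ) = |H^1 ( \mathrm{W} , M_1 ; G )| / |H^0 ( \mathrm{W} , M_1 ; G )|$, which is precisely the groupoid cardinality of the relative bundle groupoid entering the Freed--Quinn weight. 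Multiplying this factor against $\int_\Lambda$ and comparing matrix entries against the recalled formula yields the identification $Z ( \Phi_{\infty,n} ( \mathrm{W} ) ) = Z^1_{\mathrm{DW},G} ( \mathrm{W} )$ in the case $M_0 = \emptyset$, and the general case reduces to this one by bending $\mathrm{W}$ along the self-dualities $\mathrm{coev}_N , \mathrm{ev}_N$ of $N^+$ in $\mathsf{Cosp}^\simeq_{\leq \infty} ( \mathsf{CW}^\mathsf{fin}_\ast )$, using that both functors are symmetric monoidal and respect these dualities.

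I expect the main obstacle to be the general cobordism case, where the DW description recalled above is stated only for $\mathrm{W} = ( \emptyset \to M \leftarrow \partial M )$: one must either invoke the gluing and groupoid-cardinality description of Freed--Quinn directly to match the matrix entry $| \mathrm{Ker} ( r_1^\ast ) |^{-1} \, | \{ c : r_1^\ast c = Q_1 , \, r_0^\ast c = Q_0 \} | \cdot |H^1 ( \mathrm{W} , M_1 ; G )| / |H^0 ( \mathrm{W} , M_1 ; G )|$, or carry out the duality reduction above and check compatibility of the dualities on both sides. The delicate point throughout is the bookkeeping of the normalization constants---orders of kernels and of the relative groups $H^0 ( \mathrm{W} , M_1 ; G )$ and $H^1 ( \mathrm{W} , M_1 ; G )$---so that they reassemble into the correct groupoid cardinalities; the computation of $\theta_{\leq 1} ( \widetilde{E}_\bullet )^{-1}$ via mapping cones and inverse volumes is exactly what supplies the Freed--Quinn normalization that is invisible in the bare integral $\int_\Lambda$.
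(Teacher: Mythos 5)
Your proposal is correct and follows essentially the same route as the paper's (sketched) proof: identify the underlying vector spaces and match the inner products via Lemma \ref{202102122037}, compute the integral along the induced cospan of function Hopf algebras via Example \ref{202102112335}, and reduce a general cobordism to the case $\emptyset \to M \leftarrow \partial M$ using the agreement of the pairings induced by $\mathrm{ev}_N$. The only substantive difference is that you make explicit the normalization computation $\theta_{\leq 1}(\widetilde{E}_\bullet)^{-1}([\Lambda]) = |H^1(\mathrm{W},M_1;G)|/|H^0(\mathrm{W},M_1;G)|$ via mapping cones and inverse volumes, a step the paper leaves implicit and only alludes to in the remark following Lemma \ref{202102122037}.
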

\begin{proof}
We sketch the proof.
The underlying vector spaces of $Z ( \Phi_{\infty,n} (N) )$ and $Z^1_{\mathrm{DW},G} ( N )$ are the same.
Moreover their inner products coincide with each other by Lemma \ref{202102122037}.
Next we compute $Z(\Phi_{\infty, n} (\mathrm{W}))$ for an $n$-cobordism $\mathrm{W} = ( \emptyset \to M \leftarrow \partial M)$.
It induces a cospan (\ref{202002070939}) in $\mathsf{Hopf}^\mathsf{bc,vol}_\mathbb{C}$.
By Example \ref{202102112335}, the integral along this cospan coincides with $Z^1_{\mathrm{DW},G} ( \mathrm{W} )$.
Since the pairings on $Z(N$) induced by $\mathrm{ev}_N$ coincide with each other, $Z ( \Phi_{\infty,n} (\mathrm{W} )) = Z^1_{\mathrm{DW},G} ( \mathrm{W} )$ holds for any cobordism $\mathrm{W}$.
It completes the proof.
\begin{equation}
\label{202002070939}
\begin{tikzcd}
& \mathbb{C}^{H^1 ( M ; G)} & \\
\mathbb{C}^{H^1 ( \emptyset ; G)} \ar[ur] & & \mathbb{C}^{H^1 ( \partial M ; G)} \ar[ul]
\end{tikzcd}
\end{equation}
\end{proof}

\begin{remark}
By Theorem \ref{202102122133}, we regard $Z = \theta_{\leq 1} ( \widetilde{E}_\bullet )^{-1} \cdot  \hat{\mathrm{PI}} ( \widetilde{E}^\natural_1 )$ as an extension of the untwisted DW TQFT $Z^1_{\mathrm{DW},G}$ to $\mathsf{Cosp}^\simeq_{\leq \infty} ( \mathsf{CW}^\mathsf{fin}_\ast )$.
Note that the terminology {\it extension} is not the same as the {\it extended TQFT} in the literature which deals with codimension more than 1.
\end{remark}

In the same manner, the untwisted higher abelian Dijkgraaf-Witten (unextended) TQFT \cite{monnier2015higher} is reproduced by our result.
Let $Z^q_{\mathrm{DW},G}$ be the untwisted higher abelian Dijkgraaf-Witten $n$-TQFT obtained by $q$-form gauge fields over $G$, and $Z = \theta_{\leq q} ( \widetilde{E}_\bullet )^{-1} \cdot \hat{\mathrm{PI}} ( \widetilde{E}^\natural_q )$.
The proof is similar with that of the above proposition.

\begin{prop}
For a closed $(n-1)$-manifold $N$, we have a natural isomorphism between Hilbert spaces $Z ( \Phi_{\infty,n} (N) ) \cong Z^q_{\mathrm{DW},G} (N)$.
Under the isomorphism, we have $Z ( \Phi_{\infty,n} (\mathrm{W} )) = Z^q_{\mathrm{DW},G} ( \mathrm{W} )$ for an $n$-cobordism $\mathrm{W}$.
\end{prop}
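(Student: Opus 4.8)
The plan is to follow the proof of Theorem \ref{202102122133} line for line, replacing the cohomological degree $1$ by $q$ throughout; the only genuinely new ingredient is the bookkeeping of the normalization factor, which for general $q$ is an alternating product rather than a single term. First I would identify the underlying vector spaces. By the definition $Z = \theta_{\leq q}(\widetilde{E}_\bullet)^{-1}\cdot\hat{\mathrm{PI}}(\widetilde{E}^\natural_q)$ together with Lemma \ref{202006041011} and Corollary \ref{202006080930}, one has $Z(\Phi_{\infty,n}(N)) = \widetilde{E}_q(N^+) = \mathbb{C}^{H^q(N;G)}$, which is exactly the space of $q$-form gauge fields underlying $Z^q_{\mathrm{DW},G}(N)$; hence as vector spaces the two agree on the nose, and it remains to match the Hilbert structures and the cobordism maps.

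Next I would prove the analogue of Lemma \ref{202102122037}, namely that the inner product $\langle f,g\rangle' = (Z(\mathrm{ev}_{N^+}))(\bar f\otimes g)$ induced by the self-duality $\mathrm{ev}_{N^+} = [N^+\vee N^+ \xrightarrow{\nabla} N^+ \leftarrow \ast]$ coincides with the higher-abelian Dijkgraaf--Witten inner product on $Z^q_{\mathrm{DW},G}(N)$. As in the case $q=1$, the morphism $(\hat{\mathrm{PI}}(\widetilde{E}^\natural_q))(\mathrm{ev}_{N^+})$ is the integral along the cospan $(A\otimes A \xrightarrow{\nabla} A \leftarrow k)$ with $A = \mathbb{C}^{H^q(N;G)}$, and the integral along the unit $k\to A$ is the normalized cointegral $\sigma^A(f)=|H^q(N;G)|^{-1}\sum_x f(x)$; this yields $((\hat{\mathrm{PI}}(\widetilde{E}^\natural_q))(\mathrm{ev}_{N^+}))(f\otimes g)=|H^q(N;G)|^{-1}\sum_x f(x)g(x)$. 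Computing $\theta(\widetilde{E}^\natural_{q-l})(\mathrm{ev}_{N^+}) = \mathrm{vol}^{-1}(\widetilde{E}_{q-l}(N^+)) = |H^{q-l}(N;G)|^{-1}$ via $C(f_1)\simeq N^+$ and Corollary \ref{202006040804}, and then inserting the correcting scalar $\theta_{\leq q}(\widetilde{E}_\bullet)$ of Definition \ref{202001252241}, the $|H^q(N;G)|^{-1}$ cancels the $l=0$ term and the inner product of $Z(\Phi_{\infty,n}(N))$ acquires the prefactor $\prod_{l\ge 1}|H^{q-l}(N;G)|^{(-1)^{l}}$ --- for $q=1$ this collapses to the single factor $|H^0(N;G)|^{-1}$ of Lemma \ref{202102122037}, and for general $q$ it is precisely the groupoid cardinality of the $q$-form gauge groupoid entering the inner product of $Z^q_{\mathrm{DW},G}(N)$.

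Finally I would compute the partition vector on a cobordism $\mathrm{W}=(\emptyset\to M\leftarrow\partial M)$. Applying $\widetilde{E}_q=\mathbb{C}^{H^q(-;G)}$ to $\Phi_{\infty,n}(\mathrm{W})=(\emptyset^+\to M^+\leftarrow(\partial M)^+)$ produces the cospan $\mathbb{C}^{H^q(\emptyset;G)}\to\mathbb{C}^{H^q(M;G)}\leftarrow\mathbb{C}^{H^q(\partial M;G)}$ in $\mathsf{Hopf}^\mathsf{bc,vol}_\mathbb{C}$, both legs being induced by the restriction $r\colon H^q(M;G)\to H^q(\partial M;G)$. By Example \ref{202102112335} the integral along this cospan, evaluated on the unit of $\mathbb{C}^{H^q(M;G)}$, is supported on the image of $r$ and, after inserting the weight from $\theta_{\leq q}$, reproduces the support-and-cardinality formula defining $Z^q_{\mathrm{DW},G}(\mathrm{W})$; since the two pairings already agree the equality $Z(\Phi_{\infty,n}(\mathrm{W}))=Z^q_{\mathrm{DW},G}(\mathrm{W})$ then holds for every cobordism. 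The main obstacle is the normalization in the last two steps: one must pin down the precise groupoid cardinality of the $q$-form gauge groupoid, together with the relative-cohomology weight $|H^\ast(M,\partial M;G)|$ appearing in the cobordism map, and verify term by term --- using the long exact sequence of the pair $(M,\partial M)$ --- that it coincides with the telescoping alternating product supplied by $\theta_{\leq q}(\widetilde{E}_\bullet)$; for $q=1$ this reduced to the single factors handled in Lemma \ref{202102122037} and Theorem \ref{202102122133}, whereas for general $q$ it requires the full identity of Definition \ref{202001252241} and Corollary \ref{202006080930}.
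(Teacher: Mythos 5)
Your proposal is correct and takes essentially the same route as the paper: the paper's own proof of this proposition is literally the remark that the argument is the same as for Theorem \ref{202102122133} (the $q=1$ case), and you carry out exactly that adaptation, with the normalizations $\prod_{l\geq 1}|H^{q-l}(N;G)|^{(-1)^l}$ for the inner product and $\prod_{l\geq 0}|H^{q-l}(M,\partial M;G)|^{(-1)^l}$ for the cobordism map emerging from $\theta_{\leq q}(\widetilde{E}_\bullet)$ just as you compute. If anything, your write-up supplies more detail than the paper does.
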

%%%%%%%%%%%%%%%%%%%%%%%%%
\subsubsection{Reproduction of TV TQFT}

The Turaev-Viro-Barrett-Westbury theory \cite{TV}\cite{BW}\cite{kirillov2010turaev} provides a 3-dimensional TQFT $Z_\mathrm{TV,\mathcal{C}}$ starting from a spherical fusion category $\mathcal{C}$ over an algebraically closed field $k$ of characteristic zero.
A typical example of $\mathcal{C}$ is given by the (finite-dimensional) representation category $\mathcal{C}= \mathsf{Rep}(A)$ of a finite-dimensional involutory Hopf algebra over $A$ the field $k$.
Especially, a finite-dimensional bicommutative Hopf algebra $A$ over $k$ induces a TV TQFT $Z_{\mathrm{TV},\mathsf{Rep}(A)}$.

\begin{prop}
Let $\widetilde{E}_\bullet$ be the ordinary homology theory $\widetilde{H}_\bullet ( - ; A)$ with coefficients in $A$.
For $Z = \theta_{\leq 1} ( \widetilde{E}_\bullet )^{-1} \cdot \hat{\mathrm{PI}} ( \widetilde{E}^\natural_1 )$, the composition $U \circ Z \circ \Phi_{\infty, 3}$ is naturally isomorphic to the TV TQFT $Z_{\mathrm{TV},\mathsf{Rep} ( A )}$.
Here, $U : \mathsf{C}_k \to \mathsf{Vec}^\mathsf{fin}_k$ is the forgetful functor.
\end{prop}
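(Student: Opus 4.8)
The plan is to run the argument in close parallel with the reproduction of the untwisted Dijkgraaf--Witten TQFT in Theorem \ref{202102122133}, now with the general bicommutative Hopf algebra $A$ in place of a group or function Hopf algebra, and to match the outcome against the triangulation-free description of the Turaev--Viro--Barrett--Westbury invariant. First I would note that $A$, being a finite-dimensional bicommutative Hopf algebra over an algebraically closed field $k$ of characteristic zero, is bisemisimple by Corollary \ref{202006091350}; hence $\mathsf{Rep}(A)$ is a (symmetric) spherical fusion category and $Z_{\mathrm{TV},\mathsf{Rep}(A)}$ is defined. Since the statement is an isomorphism of $\mathsf{Vec}^\mathsf{fin}_k$-valued $3$-TQFT's $U \circ Z \circ \Phi_{\infty,3}$ and $Z_{\mathrm{TV},\mathsf{Rep}(A)}$, it suffices to exhibit a monoidal natural isomorphism, i.e.\ to match the vector spaces assigned to closed surfaces and the linear maps assigned to $3$-cobordisms compatibly. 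Because $U$ forgets the self-duality and hence the pairing, the inner-product comparison needed in the Dijkgraaf--Witten case (Lemma \ref{202102122037}) may be dropped here, which streamlines the argument to a comparison of underlying vector spaces and linear homomorphisms only.

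For the state spaces, I would use Lemma \ref{202006041011} and Corollary \ref{202006080930} to identify, for a closed surface $N$, the vector space $U(Z(\Phi_{\infty,3}(N)))$ with the underlying vector space of $\widetilde{E}_1(N^+) = \widetilde{H}_1(N^+; A)$. The comparison with the TVBW state space of $N$ rests on the homological description of Turaev--Viro theory for the representation category of a bicommutative Hopf algebra, which is exactly the generalization of the correspondence between the Turaev--Viro--Barrett--Westbury vector spaces and the ground-state spaces of the Kitaev lattice model recalled in the introduction and carried out for $\mathsf{Hopf}^\mathsf{bc,vol}_k$ in \cite{kim2019bicommutative}. Invoking that identification, the TVBW state space of $N$ is naturally the $A$-homology $\widetilde{H}_1(N^+; A)$, giving the required isomorphism on objects.

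For the linear maps, I would evaluate $Z(\Phi_{\infty,3}(\mathrm{W}))$ on a $3$-cobordism $\mathrm{W} = (\emptyset \to M \leftarrow \partial M)$ as in the proof of Theorem \ref{202102122133}: it is the integral along the cospan $\widetilde{H}_1(\emptyset^+; A) \to \widetilde{H}_1(M^+; A) \leftarrow \widetilde{H}_1((\partial M)^+; A)$ in $\mathsf{Hopf}^\mathsf{bc,vol}_k$, rescaled by the canonical $1$-cochain $\theta_{\leq 1}(\widetilde{E}_\bullet)$ of Corollary \ref{202006080930}. The point is that the normalization $\theta_{\leq 1}$ contributes precisely the dimension-theoretic weights appearing in the Turaev--Viro state sum; by Corollary \ref{202006080930} the induced invariant of a closed $3$-manifold $M$ is
\begin{equation}\notag
\prod_{l \geq 0} \left( \dim_k \widetilde{H}_{1-l}(M^+; A) \right)^{(-1)^l} ,
\end{equation}
which I would identify with $Z_{\mathrm{TV},\mathsf{Rep}(A)}(M)$. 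Compatibility of these data with the object-level isomorphism then upgrades the pointwise matching to a natural isomorphism of symmetric monoidal functors.

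The hard part will be the comparison on linear maps, specifically reconciling the combinatorial, triangulation-based definition of $Z_{\mathrm{TV},\mathsf{Rep}(A)}$ with the triangulation-free, purely homological definition of $Z$. The cleanest route is to invoke that, for the symmetric fusion category $\mathsf{Rep}(A)$, the Turaev--Viro--Barrett--Westbury invariant is a homotopy invariant depending only on the $A$-homology, so that it coincides with the $A$-homological Dijkgraaf--Witten-type theory computed by $Z$; one then only has to check that the two normalizing conventions agree, which is the content of the factor $\theta_{\leq 1}(\widetilde{E}_\bullet)$. Once this homological identification is in hand, the remaining verifications are formal and run exactly as in the Dijkgraaf--Witten reproduction of Theorem \ref{202102122133}.
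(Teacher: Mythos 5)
Your proposal has a genuine gap: the step you yourself flag as ``the hard part'' --- reconciling the triangulation-based definition of $Z_{\mathrm{TV},\mathsf{Rep}(A)}$ with the homological theory $Z$ --- is resolved by \emph{invoking} a ``homological description of Turaev--Viro theory for the representation category of a bicommutative Hopf algebra,'' first via the Kitaev-lattice-model correspondence of \cite{kim2019bicommutative} for the state spaces, and then via the assertion that for a symmetric fusion category the TVBW invariant ``depends only on the $A$-homology.'' Neither of these identifications is established in this paper, and the second one is essentially the content of the proposition itself; as written, the argument is circular. The claimed identification of $\prod_{l\geq 0}(\dim_k \widetilde{H}_{1-l}(M^+;A))^{(-1)^l}$ with $Z_{\mathrm{TV},\mathsf{Rep}(A)}(M)$ is likewise asserted rather than derived.

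The paper's proof avoids all of this by one structural observation you did not make: since $k$ is algebraically closed of characteristic zero, Corollary \ref{202006091350} gives that $A$ is finite-dimensional bisemisimple, and a finite-dimensional bisemisimple \emph{bicommutative} Hopf algebra over such a field is necessarily a function Hopf algebra $A \cong k^G$ of a finite abelian group $G$ (see the remark after Definition \ref{202102231052}). Then $\mathsf{Rep}(k^G) = \mathsf{Vec}^G$, the equivalence $Z_{\mathrm{TV},\mathsf{Vec}^G} \cong Z^1_{\mathrm{DW},G}$ is a known result from the TVBW literature, and $\widetilde{H}_\bullet(-;k^G) \cong k^{\widetilde{H}^\bullet(-;G)}$ (Example \ref{202102111047}) reduces the whole statement to the Dijkgraaf--Witten reproduction of Theorem \ref{202102122133} carried out over $k$ in place of $\mathbb{C}$. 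To repair your proof you would either need to supply this reduction to the group case, or independently prove the homological description of $Z_{\mathrm{TV},\mathsf{Rep}(A)}$ for general bicommutative $A$, which is a substantially harder task than the proposition requires.
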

\begin{proof}
By Corollary \ref{202006091350}, the Hopf algebra $A$ is bisemisimple.
Hence it is a function Hopf algebra $A= k^G$ induced by a finite abelian group $G$ since $k$ is algebraically closed.
The representation category $\mathsf{Rep} ( k^G )$ coincides with the category $\mathsf{Vec}^G$ of $G$-graded (finite-dimensional) vector spaces.
The TV TQFT $Z_{\mathrm{TV},\mathsf{Vec}^G}$ is isomorphic to the untwisted DW TQFT $Z^1_{\mathrm{DW},G}$ where $Z^1_{\mathrm{DW},G}$ is constructed over the field $k$ not the complex field $\mathbb{C}$.
Therefore, we obtain the claim through an analogous result of the preceding subsubsection for the field $k$.
\end{proof}
%%%%%%%%%%%%%%%%%%%%%%%%%%%%%%%%%%%%
\appendix
%%%%%%%%%%%%%%%%%%%%%%%%%%%%%%%%%%%%
\section{Symmetric monoidal projective functor}
\label{202002070917}

In this appendix, we give an overview of symmetric monoidal projective functors and their associated obstruction classes.
Fix a symmetric monoidal category $\mathcal{E}$ satisfying the following assumptions : 
\begin{enumerate}
\item
The endomorphism set of the unit object $End_\mathcal{E} ( \mathds{1} )$ consists of automorphisms, i.e. $End_\mathcal{E} ( \mathds{1} ) = Aut_\mathcal{E} ( \mathds{1} )$.
\item
For any morphism $f : x \to y$ in $\mathcal{E}$, the {\it scalar-multiplication} $\lambda \in End_\mathcal{E} ( \mathds{1}) \mapsto \lambda \cdot f \in  Mor_\mathcal{E} ( x, y)$ (induced by the monoidal structure) is injective.
\end{enumerate}
A typical example of such $\mathcal{E}$ in this paper is given by the symmetric monoidal category $\mathsf{C}_k$ defined in section \ref{202002221748}.

\begin{Defn}
\label{201912260720}
Let $\mathcal{D}$ be a symmetric monoidal category.
Consider a triple $F = (F_o , F_m , \Psi)$ satisfying followings :
\begin{enumerate}
\item
$F_o$ assigns an object $F_o (x)$ of $\mathcal{E}$ to an object $x$ of $\mathcal{D}$.
\item
$F_m$ assigns a morphism $F_m(f) : F_o (x) \to F_o (y)$ of $\mathcal{E}$ to a morphism $f : x \to y$ of $\mathcal{D}$.
\item
$F_m ( Id_x ) = Id_{F_o (x)}$.
\item
For composable morphisms $f,g$ of $\mathcal{D}$, there exists $\lambda \in End_\mathcal{E} ( \mathds{1}_\mathcal{E} )$ such that $\left( F_m(g) \circ F_m(f) \right) = \lambda \cdot F_m (g\circ f)$.
\item
$\Psi$ is a natural isomorphism $\Psi_{x,x^\prime} : F_o (x) \otimes F_o(x^\prime) \to F_o(x \otimes x^\prime)$ which is compatible with the unitors, associators and symmetry of $\mathcal{D}$, $\mathcal{E}$.
\end{enumerate}
We refer to such an assignment $F$ as a {\it symmetric monoidal projective functor from $\mathcal{D}$ to $\mathcal{E}$}.
As a notation, we denote by $F : \mathcal{D} \to \mathcal{E}$.
By an abuse of notations, we also denote by $F_o (x) = F (x)$ and $F_m (f) = F (f)$.
\end{Defn}

\begin{Defn}
\label{201912260925}
Let $F,F^\prime : \mathcal{D} \to \mathcal{E}$ be symmetric monoidal projective functors.
Then a {\it natural isomorphism $\Phi : F \to F^\prime$ in the projective sense} is given as follows :
\begin{enumerate}
\item
For any object $x$ of $\mathcal{D}$, we have an isomorphism $\Phi (x) : F(x) \to F^\prime (x)$ in $\mathcal{E}$.
\item
For a morphism $f : x \to y $ of $\mathcal{D}$, there exists $\lambda, \lambda^\prime \in End_\mathcal{E} ( \mathds{1}_\mathcal{E} )$ such that $\lambda^\prime \cdot F^\prime (f) \circ \Phi (x) = \lambda \cdot \Phi ( y) \circ F (f)$.
\end{enumerate}
If there exists a projective natural isomorphism from $F$ to $F^\prime$, we denote by $F \cong_{\mathrm{proj}} F^\prime$.
A natural isomorphism $\Phi : F \to F^\prime$ of symmetric monoidal projective functors gives {\it a natural isomorphism in the strong sense} if the second condition holds for $\lambda = \lambda^\prime = Id_{\mathds{1}_\mathcal{E}}$.
In that case, we write $F \cong F^\prime$.
\end{Defn}

\begin{Defn}
Let $\mathcal{D}$ be a symmetric monoidal category and $G$ be an abelian group.
For $q \in \mathbb{N}$, {\it a $q$-cochain of $\mathcal{D}$ with coefficients in $G$} is an assignment $\omega ( f_1 , \cdots , f_q ) \in G$ to composable morphisms $f_1 , \cdots , f_q$ in $\mathcal{D}$, i.e. the composition $f_1 \circ \cdots \circ f_q$ is well-defined.
The assignment satisfies
\begin{align}
\omega ( f_1 , \cdots , f_q ) \cdot \omega ( g_1 , \cdots , g_q ) = \omega ( f_1 \otimes g_1 , \cdots , f_q \otimes g_q ) .
\end{align}
We denote by $C^q ( \mathcal{D} ; G)$ the abelian group consisting of $q$-cochains of $\mathcal{D}$ with coefficients in $G$.
A $q$-cochain $\omega$ is {\it normalized} if one of $f_l$ is an identity then $\omega (f_1, \cdots , f_q) =1$.
We define {\it the coboundary map} $\delta : C^q ( \mathcal{D} ; G) \to C^{q+1} ( \mathcal{D} ; G)$ by
\begin{align}\notag
&( \delta \omega ) ( f_1 , \cdots , f_{q+1} ) \\
&= \omega ( f_2 , \cdots , f_{q+1} ) \cdot \omega ( f_1 \circ f_2 , f_3,  \cdots , f_{q+1} )^{-1} \cdots \omega ( f_1, f_2, \cdots f_q \circ f_{q+1} )^{(-1)^q} \cdot \omega (f_1 , \cdots f_q )^{(-1)^{q+1}}  \notag
\end{align}
where $\omega \in C^q ( \mathcal{D} ; G)$.
{\it A $q$-cocycle $\omega$} is a $q$-cochain with $\delta \omega =1$.
It is easy to verify that $\delta \circ \delta$ is trivial.
We define $H^q ( \mathcal{D} ; G ) \stackrel{\mathrm{def.}}{=} \mathrm{Ker} (\delta : C^q \to C^{q+1} ) / \mathrm{Im} ( \delta : C^{q-1} \to C^q )$.
Similarly, we define $H^q_\mathrm{nor} ( \mathcal{D} ; G ) \stackrel{\mathrm{def.}}{=} \mathrm{Ker} (\delta : C^q_\mathrm{nor} \to C^{q+1}_\mathrm{nor} ) / \mathrm{Im} ( \delta : C^{q-1}_\mathrm{nor} \to C^q_\mathrm{nor} )$.
\end{Defn}

\begin{prop}
The induced map $H^2_\mathrm{nor} ( \mathcal{D} ; G ) \to H^2 ( \mathcal{D} ; G )$ is injective.
\end{prop}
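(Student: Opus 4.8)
The plan is to identify the map $H^2_\mathrm{nor}(\mathcal{D};G)\to H^2(\mathcal{D};G)$ with the map on cohomology induced by the inclusion of cochain complexes $C^\bullet_\mathrm{nor}(\mathcal{D};G)\hookrightarrow C^\bullet(\mathcal{D};G)$, and then to argue directly at the cochain level rather than through any abstract machinery. Concretely, injectivity amounts to the following assertion: if $\omega$ is a normalized $2$-cocycle whose class vanishes in $H^2(\mathcal{D};G)$, so that $\omega=\delta\eta$ for some a priori non-normalized $1$-cochain $\eta\in C^1(\mathcal{D};G)$, then in fact $\omega$ is already the coboundary of a \emph{normalized} $1$-cochain, whence its class in $H^2_\mathrm{nor}(\mathcal{D};G)$ is trivial. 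Thus the entire task reduces to controlling the values of the witnessing cochain $\eta$ on identity morphisms.

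The key step I would carry out is a single evaluation of the coboundary. For a $1$-cochain the defining formula specializes to $(\delta\eta)(f_1,f_2)=\eta(f_2)\cdot\eta(f_1\circ f_2)^{-1}\cdot\eta(f_1)$. I would then substitute $f_1=f_2=Id_b$ for an object $b$ of $\mathcal{D}$; since $Id_b\circ Id_b=Id_b$, the middle factor cancels one of the outer factors and the expression collapses to $(\delta\eta)(Id_b,Id_b)=\eta(Id_b)$. On the other hand $\omega=\delta\eta$ is normalized and the pair $(Id_b,Id_b)$ contains an identity, so $\omega(Id_b,Id_b)=1$. Comparing the two computations yields $\eta(Id_b)=1$ for every object $b$. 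Since for a $1$-cochain the normalization condition is precisely vanishing on identities, this shows that $\eta$ already lies in $C^1_\mathrm{nor}(\mathcal{D};G)$. Hence $\omega=\delta\eta$ exhibits $\omega$ as a normalized coboundary, and injectivity follows.

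Two routine points must be verified to make this rigorous, neither of which I expect to be a genuine obstacle. First, one should check that $\delta$ sends normalized cochains to normalized cochains, so that $H^\bullet_\mathrm{nor}$ is well-defined and the inclusion is a chain map; this is immediate from inspecting the coboundary formula together with the tensor-multiplicativity required of cochains. Second, I deliberately introduce no correction term: the coboundary datum $\eta$ witnessing vanishing in the full complex turns out to be automatically normalized, a phenomenon special to degree $1$. The mildly subtle point to emphasize is therefore just the careful bookkeeping in the coboundary evaluation, and the observation that $\omega$ need not even be assumed to be a cocycle for the argument to run, since being a coboundary already forces it. In higher degrees the corresponding statement would instead require the standard fact that the normalized subcomplex is a deformation retract of the full cochain complex, realized by an explicit chain homotopy built from the degeneracies; but at degree $2$ the direct evaluation above bypasses this entirely.
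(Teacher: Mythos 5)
Your proposal is correct and is essentially the paper's own argument: the paper likewise takes a normalized $2$-cocycle $\omega=\delta\theta$ with $\theta$ an arbitrary $1$-cochain and checks that $\theta$ vanishes on identities, hence is already normalized. Your explicit evaluation of $(\delta\eta)(Id_b,Id_b)=\eta(Id_b)$ just spells out the step the paper dismisses as ``easy to check.''
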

\begin{proof}
Suppose that $[ \omega ] \in H^2_\mathrm{nor} ( \mathcal{D} ; G )$ with $[\omega ] \in H^2 ( \mathcal{D} ; G )$ trivial.
Let $\omega$ be a normalized 2-cocycle with $\omega = \delta \theta$ for some 1-cochain $\theta$.
Then it is easy to check $\theta (f) =1$ for an identity $f$.
Hence, $[ \omega] \in H^2_\mathrm{nor} ( \mathcal{D} ; G )$ is trivial.
\end{proof}

\begin{Defn}
\label{202102161148}
Let $F : \mathcal{D} \to \mathcal{E}$ be a symmetric monoidal projective functor.
Note that the automorphism group of the unit object $Aut_\mathcal{E} ( \mathds{1})$ is an abelian group since $\mathcal{E}$ is a symmetric monoidal category.
We define a 2-cochain $\omega (F) \in C^2 ( \mathcal{D} ; Aut_\mathcal{E} ( \mathds{1}) )$ as follows.
For composable morphisms $g,f$ in $\mathcal{D}$, we define $( \omega (F) )(g,f) \in Aut_\mathcal{E} ( \mathds{1})$ by
\begin{align}
\label{201911101800}
F(g) \circ F(f)  = ( \omega (F) ) (g,f) \cdot F(g \circ f ) .
\end{align}
Then the assignment $\omega (F)$ is a well-defined 2-cochain with coefficients in $Aut_\mathcal{E} ( \mathds{1} )$.
\end{Defn}

\begin{prop}
\label{201912252055}
Let $F : \mathcal{D} \to \mathcal{E}$ be a symmetric monoidal projective functor.
Then the 2-cochain $\omega (F)$ is a normalized 2-cocycle.
We define 
\begin{align}
\mathds{O} ( F) \stackrel{\mathrm{def.}}{=} [ \omega (F) ] \in H^2 ( \mathcal{D} ; Aut_\mathcal{E} ( \mathds{1} ) ) .
\end{align}
\end{prop}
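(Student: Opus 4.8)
The plan is to derive the cocycle identity straight from the two structural properties of a symmetric monoidal projective functor, namely associativity of composition in $\mathcal{E}$ together with the interchange between $\otimes$ and $\circ$. Before anything else I would record that the scalar $(\omega(F))(g,f)$ in the defining relation (\ref{201911101800}) is uniquely determined: by assumption (2) on $\mathcal{E}$ the scalar-multiplication $\lambda \mapsto \lambda \cdot F(g\circ f)$ is injective, so $\omega(F)$ is a well-defined assignment valued in $Aut_\mathcal{E}(\mathds{1})$. I would also use repeatedly the centrality of scalars in a symmetric monoidal category, i.e. $\lambda \cdot (a \circ b) = (\lambda \cdot a) \circ b = a \circ (\lambda \cdot b)$, which lets every scalar factor slide freely out of a composite. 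That $\omega(F)$ obeys the tensor-multiplicativity required of a $2$-cochain is already recorded in Definition \ref{202102161148}; it follows from the naturality of $\Psi$ together with the interchange law $(g\otimes g')\circ(f\otimes f') = (g\circ f)\otimes(g'\circ f')$, so what remains for the proposition is normalization and the cocycle equation.

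Normalization is immediate: if $f = Id_x$ then $F(g)\circ F(Id_x) = F(g)\circ Id_{F(x)} = F(g) = F(g\circ Id_x)$, forcing $(\omega(F))(g,Id_x)=1$, and symmetrically $(\omega(F))(Id_y,g)=1$. For the cocycle condition I would fix composable morphisms $f_1,f_2,f_3$ and evaluate the triple composite $F(f_1)\circ F(f_2)\circ F(f_3)$ in the two bracketings. Expanding the left-associated bracketing with (\ref{201911101800}) twice and sliding the scalars out gives
\begin{align}
\bigl(F(f_1)\circ F(f_2)\bigr)\circ F(f_3)
= (\omega(F))(f_1,f_2)\cdot(\omega(F))(f_1\circ f_2,f_3)\cdot F(f_1\circ f_2\circ f_3),
\end{align}
while the right-associated bracketing yields
\begin{align}
F(f_1)\circ\bigl(F(f_2)\circ F(f_3)\bigr)
= (\omega(F))(f_2,f_3)\cdot(\omega(F))(f_1,f_2\circ f_3)\cdot F(f_1\circ f_2\circ f_3).
\end{align}
Associativity of $\circ$ in $\mathcal{E}$ identifies the two left-hand sides, and injectivity of scalar-multiplication lets me cancel the common factor $F(f_1\circ f_2\circ f_3)$. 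The resulting equality of scalars rearranges to exactly $(\delta\,\omega(F))(f_1,f_2,f_3)=1$ in the $q=2$ case of the coboundary formula, which is the cocycle condition; the class $\mathds{O}(F)=[\omega(F)]$ is then well-defined.

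The computations are essentially bookkeeping, so the only genuinely load-bearing points — and the places I would be most careful — are the two invocations of assumption (2): once to see that $\omega(F)$ is single-valued, and once to perform the cancellation that turns the associativity identity into a scalar identity. Without injectivity of scalar-multiplication the cochain would not even be well-defined, so I expect that hypothesis, rather than any diagram chase, to be the crux of the argument.
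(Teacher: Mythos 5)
Your proof is correct and follows essentially the same route as the paper's: the cocycle identity is extracted from the two bracketings of a triple composite via associativity and the injectivity of scalar-multiplication (assumption (2) on $\mathcal{E}$), and normalization comes from $F(Id_x) = Id_{F(x)}$. Your additional remarks on well-definedness of $\omega(F)$ and centrality of scalars are implicit in the paper but harmless to make explicit.
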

\begin{proof}
The 2-cocycle condition is immediate from the associativity of compositions : By the following equality, we obtain $\omega ( f^\prime , f) \cdot \omega ( f^{\prime\prime} , f^\prime \circ f ) \cdot F(f^{\prime\prime} \circ f^\prime \circ f) = \omega ( f^{\prime\prime} ,f^\prime) \cdot \omega ( f^{\prime\prime} \circ f^\prime ,f ) \cdot F(f^{\prime\prime} \circ f^\prime \circ f)$.
\begin{align}
F(  f^{\prime\prime} ) 
\circ
\left( F(  f^\prime ) \circ  F (  f ) \right)
=
\left( F (  f^{\prime\prime} ) \circ  F (  f^{\prime} ) \right) \circ F (  f ) 
\end{align}
By the assumption on $\mathcal{E}$ in the beginning of this section, we obtain 
\begin{align}
\omega ( f^\prime , f) \cdot \omega ( f^{\prime\prime} , f^\prime \circ f)  = \omega (f^{\prime\prime} , f^\prime ) \cdot \omega ( f^{\prime\prime} \circ f^\prime ,  f ) .
\end{align}
It proves that the 2-cochain $\omega$ is a 2-cocycle.
Moreover we have $\omega (F) ( Id_y , f ) = 1 = \omega (F) ( f , Id_x ) $ for any morphism $f: x \to y$ since we have $F(Id_x) = Id_{F(x)}$ for arbitrary object $x$ of $\mathcal{D}$.
Hence, the 2-cocycle $\omega (F)$ is normalized.
It completes the proof.
\end{proof}

\begin{Defn}
\label{202003162027}
Let $F : \mathcal{D} \to \mathcal{E}$ be a symmetric monoidal projective functor.
{\it The coboundary equation associated with $F$} is the following equation of 1-cochain $\theta$ of $\mathcal{D}$ :
\begin{align}
\omega (F) =  \delta \theta .
\end{align}
We define its solution set $\Theta ( F )$ as follows :
\begin{align}
\Theta ( F ) \stackrel{\mathrm{def.}}{=} \{ \theta \in C^1 ( \mathcal{D} ; Aut_\mathcal{E} ( \mathds{1} ) ) ~;~ \omega (F) =  \delta \theta  \} .
\end{align}
An element $\theta \in \Theta (F)$ is called the {\it complementary 1-cochain for a symmetric monoidal projective functor $F$}.
Note that any $\theta \in \Theta (F)$ is normalized.
It is obvious that $\mathds{O} (F) = 1 \in H^2 ( \mathcal{D} ; Aut_\mathcal{E} ( \mathds{1} ) )$ if and only if $\Theta (F) \neq \emptyset$.
\end{Defn}

\begin{Defn}
\label{202001141545}
Let $F : \mathcal{D} \to \mathcal{E}$ be a symmetric monoidal projective functor.
For $\theta \in \Theta (F)$, we define a {\it lift of $F$} by a complementary 1-cochain $\theta$ as a symmetric monoidal functor $( \theta^{-1} \cdot F ) : \mathcal{D} \to \mathcal{E}$ given by 
\begin{align}
(\theta^{-1} \cdot F) (x) &\stackrel{\mathrm{def.}}{=} F(x) \\
(\theta^{-1} \cdot F) (f ) &\stackrel{\mathrm{def.}}{=} \theta (f)^{-1} \cdot F(f) .
\end{align}
The assignment $( \theta^{-1} \cdot F )$ is verified to be a symmetric monoidal functor by definitions.
\end{Defn}

\begin{prop}
\label{201912280733}
Let $F : \mathcal{D} \to \mathcal{E}$ be a symmetric monoidal projective functor.
The induced obstruction is trivial, i.e. $\mathds{O} ( F ) = 1$ if and only if there exists a symmetric monoidal functor $F^\prime : \mathcal{D} \to \mathcal{E}$ such that $F \cong_\mathrm{proj} F^\prime$.
\end{prop}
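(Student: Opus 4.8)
The plan is to prove the two implications separately, relying on two facts already recorded in the excerpt: by Definition \ref{202003162027}, $\mathds{O}(F) = 1$ in $H^2(\mathcal{D}; \mathrm{Aut}_\mathcal{E}(\mathds{1}))$ is equivalent to $\Theta(F) \neq \emptyset$, and by Definition \ref{202001141545}, every $\theta \in \Theta(F)$ produces a genuine symmetric monoidal functor $\theta^{-1}\cdot F$.

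For the direction ``$\mathds{O}(F) = 1 \Rightarrow$ existence of $F'$'', I would first choose a complementary $1$-cochain $\theta \in \Theta(F)$, available precisely because $\mathds{O}(F) = 1$. Setting $F' := \theta^{-1}\cdot F$ gives an honest symmetric monoidal functor with $F'(x) = F(x)$ and $F'(f) = \theta(f)^{-1}\cdot F(f)$ by Definition \ref{202001141545}. It then remains to exhibit a projective natural isomorphism $\Phi : F \to F'$, and I claim the identities $\Phi(x) := \mathrm{Id}_{F(x)}$ suffice: for $f : x\to y$ the equality $F'(f)\circ\Phi(x) = \theta(f)^{-1}\cdot \Phi(y)\circ F(f)$ verifies condition (2) of Definition \ref{201912260925} with $\lambda' = \theta(f)$ and $\lambda = \mathrm{Id}_{\mathds{1}}$, so $F \cong_\mathrm{proj} F'$.

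For the converse, suppose $\Phi : F \to F'$ is a projective natural isomorphism with $F'$ an honest symmetric monoidal functor. Since the $\lambda,\lambda'$ of Definition \ref{201912260925} are invertible, for each morphism $f : x\to y$ I would record a single scalar $\mu(f)\in\mathrm{Aut}_\mathcal{E}(\mathds{1})$ with $F'(f)\circ\Phi(x) = \mu(f)\cdot\Phi(y)\circ F(f)$; the injectivity of scalar multiplication (the standing hypothesis on $\mathcal{E}$) makes $\mu(f)$ well defined, and the monoidal compatibility of $\Phi$ makes $\mu$ tensor-multiplicative, so $\mu \in C^1(\mathcal{D};\mathrm{Aut}_\mathcal{E}(\mathds{1}))$. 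Then for composable $f : x\to y$, $g : y\to z$ I would compute $\Phi(z)\circ F(g)\circ F(f)$ in two ways: once by collapsing $F(g)\circ F(f) = \omega(F)(g,f)\cdot F(g\circ f)$ and pushing $\Phi$ across $F(g\circ f)$, and once by pushing $\Phi$ through $F(g)$ and then $F(f)$ separately and using $F'(g)\circ F'(f) = F'(g\circ f)$. Both expressions have the form (scalar)$\,\cdot F'(g\circ f)\circ\Phi(x)$; equating the scalars by injectivity of scalar multiplication yields $\omega(F)(g,f) = \mu(g\circ f)\cdot\mu(g)^{-1}\cdot\mu(f)^{-1}$, which is precisely $(\delta\mu^{-1})(g,f)$ in the abelian group $\mathrm{Aut}_\mathcal{E}(\mathds{1})$. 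Hence $\omega(F)$ is a coboundary and $\mathds{O}(F) = [\omega(F)] = 1$.

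The main obstacle I anticipate is the bookkeeping in the converse rather than any conceptual difficulty: one must confirm that $\mu$ is genuinely a $1$-cochain (its tensor-multiplicativity is exactly where the monoidal naturality of $\Phi$ is used) and one must track the scalars so that the two evaluations of $\Phi(z)\circ F(g)\circ F(f)$ collapse to comparable expressions before cancelling $F'(g\circ f)\circ\Phi(x)$. Identifying the resulting expression with $\delta(\mu^{-1})$ is then a direct application of the coboundary formula for $1$-cochains together with the commutativity of $\mathrm{Aut}_\mathcal{E}(\mathds{1})$.
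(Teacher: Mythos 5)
Your forward direction is exactly the paper's argument: pick $\theta \in \Theta(F)$, set $F' = \theta^{-1}\cdot F$ via Definition \ref{202001141545}, and observe that the identities $\mathrm{Id}_{F(x)}$ give a projective natural isomorphism (your explicit choice $\lambda' = \theta(f)$, $\lambda = \mathrm{Id}_{\mathds{1}}$ is the verification the paper leaves implicit). Where you genuinely differ is that the paper's proof stops there --- it proves only the implication $\mathds{O}(F)=1 \Rightarrow F \cong_{\mathrm{proj}} F'$ and declares the proof complete, leaving the converse unaddressed. Your converse argument is the right one and the scalar bookkeeping checks out: with $F'(f)\circ\Phi(x) = \mu(f)\cdot\Phi(y)\circ F(f)$, the two evaluations of $\Phi(z)\circ F(g)\circ F(f)$ give $\omega(F)(g,f) = \mu(g\circ f)\cdot\mu(g)^{-1}\cdot\mu(f)^{-1}$, which is $(\delta\mu^{-1})(g,f)$ under the paper's coboundary formula for $q=1$ (using commutativity of $\mathrm{Aut}_\mathcal{E}(\mathds{1})$). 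The one caveat is the point you yourself flag: for $\mu^{-1}$ to be an element of $C^1(\mathcal{D};\mathrm{Aut}_\mathcal{E}(\mathds{1}))$ it must be tensor-multiplicative, and Definition \ref{201912260925} as written does not actually impose any compatibility of $\Phi$ with the monoidal structure maps $\Psi$, so strictly speaking you are using a condition the paper's definition omits (or else you should note that $\omega(F)=\delta(\mu^{-1})$ holds as an identity of functions on composable pairs and that this already forces $[\omega(F)]=1$ once multiplicativity is secured). Net effect: your proof is more complete than the paper's, which establishes only one of the two claimed implications.
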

\begin{proof}
Suppose that $\mathds{O} (F) = 1 \in H^2 ( \mathcal{D} ; Aut_\mathcal{E} ( \mathds{1} ) )$.
By definition of $o(F)$, we choose a normalized 1-cochain $\theta \in \Theta ( F)$.
We define a symmetric monoidal functor $F^\prime = \left( \theta^{-1} \cdot F \right)$
We have a natural isomorphism $F \to F^\prime$ between symmetric monoidal projective functors.
In fact, the identity $Id_{F(x)} : F ( x) \to F(x) = F^\prime (x)$ for any object $x$ of $\mathcal{D}$ gives a natural isomorphism between symmetric monoidal projective functors.
It completes the proof.
\end{proof}

\bibliography{exntension_of_homology_to_tqft}{}
\bibliographystyle{plain}

\end{document}